\numberwithin{equation}{section}
\def\amsbb{\use@mathgroup \M@U \symAMSb}
\DeclarePairedDelimiter{\ceil}{\lceil}{\rceil}
\DeclarePairedDelimiter\floor{\lfloor}{\rfloor}
\newcommand\Z{\ensuremath{\mathbb{Z}}} 
\newcommand\N{\ensuremath{\mathbb{N}}} 
\newcommand\Q{\ensuremath{\mathbb{Q}}} 
\newcommand\R{\ensuremath{\mathbb{R}}} 
\newcommand\C{\ensuremath{\mathbb{C}}}
\newcommand{\D}{\mathbb{D}}
\newcommand{\ubar}[1]{\underline{#1}}
\newcommand{\supp}{{\rm supp}}
\newcommand{\M}{\mathcal M}
\newcommand{\CZ}{{\rm CZ}}
\newcommand{\wind}{{\mathrm{wind}}}
\newcommand{\ind}{{\mathrm{ind}}}
\newcommand{\jtil}{{\widetilde{J}}}
\newcommand{\util}{{\widetilde{u}}}
\newcommand{\vtil}{\widetilde{v}}
\newcommand{\loc}{{\rm loc}}
\newcommand{\Ker}{\operatorname{Ker}}
\newtheorem{theorem}{Theorem}[section]
\newtheorem{proposition}[theorem]{Proposition}
\newtheorem{lemma}[theorem]{Lemma}
\newtheorem{corollary}[theorem]{Corollary}
\newtheorem{prop}[theorem]{Proposition}
\newtheorem{lem}[theorem]{Lemma}
\theoremstyle{definition}
\newtheorem{definition}[theorem]{Definition}
\newtheorem{remark}[theorem]{Remark}
\newtheorem{example}[theorem]{Example}
\begin{document}

\title[Proof of Hofer-Wysocki-Zehnder's two or infinity conjecture]{Proof of Hofer-Wysocki-Zehnder's two or infinity conjecture}

\begin{abstract}

We prove that if a Reeb flow on a closed connected three-manifold has more than two simple periodic orbits, then it has infinitely many, as long as the associated contact structure has torsion first Chern class.  
As a special case, we prove a conjecture of Hofer-Wysocki-Zehnder published in 2003 asserting that a smooth and autonomous Hamiltonian flow on $\R^4$ has either two or infinitely many simple periodic orbits on any regular compact connected energy level that is transverse to the radial vector field.  Other corollaries settle some old problems about Finsler metrics: we show that every Finsler metric on $S^2$ has either two or infinitely many prime closed geodesics; and we show that a Finsler metric on $S^2$ with at least one closed geodesic that is not irrationally elliptic must have infinitely many prime closed geodesics.   
The novelty of our work is that we do not make any nondegeneracy hypotheses.   
\end{abstract}

\author{Dan Cristofaro-Gardiner}
\author{Umberto Hryniewicz}
\author{Michael Hutchings}
\author{Hui Liu}
%\address{}

\maketitle

\tableofcontents

\section{Introduction}

\subsection{The main result and corollaries}

Let $Y$ be a closed oriented three-manifold. A {\bf contact form\/} on $Y$ is a $1$-form $\lambda$ such that $\lambda\wedge d\lambda>0$ everywhere. Given a contact form~$\lambda$, the two-plane field $\xi=\Ker(\lambda)\subset TY$ is oriented by $d\lambda$, and this oriented two-plane field is called the {\bf contact structure\/} associated to $\lambda$. In addition, $\lambda$ has an associated {\bf Reeb vector field\/} $R$ defined by the equations
\[
d\lambda(R, \cdot) = 0, \quad \quad \lambda(R) = 1.
\]
The flow of $R$ is called the {\bf Reeb flow\/}.
A periodic orbit of the Reeb flow, which we call a {\bf Reeb orbit\/} for short, is a map $\gamma:\R/T\Z\to Y$ for some $T>0$ such that $\gamma'(t)=R(\gamma(t))$. Here $T$ is the period of $\gamma$. We declare two Reeb orbits to be equivalent if they differ by reparametrizing the domain $\R/T\Z$ by a translation. Any Reeb orbit $\gamma$ is a degree $d$ cover of its image for some $d\ge 1$. We refer to the integer $d$ as the {\bf covering multiplicity\/} of $\gamma$, and we say that $\gamma$ is {\bf simple\/} when $d=1$, i.e.\ the map $\gamma$ is an embedding.

Our main result is the following:

\begin{theorem}
\label{thm:main_intro}
Let $Y$ be a closed connected three-manifold, let $\lambda$ be a contact form on $Y$, and let $\xi=\Ker(\lambda)$ be the associated contact structure. Assume that the first Chern class $c_1(\xi) \in H^2(Y;\mathbb{Z})$ is torsion. Then $\lambda$ has either two or infinitely many simple Reeb orbits.
\end{theorem}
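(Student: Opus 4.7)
I argue by contradiction, assuming $\lambda$ has only finitely many simple Reeb orbits $\gamma_1,\dots,\gamma_N$ with $3\le N<\infty$. The fundamental tool is embedded contact homology (ECH) together with its sequence of spectral invariants $c_k(\lambda)\in\R_{>0}$, which is well defined because $c_1(\xi)$ is torsion and which is $C^0$-continuous in $\lambda$. Two further inputs are essential. First, the Weyl law states that
\[
\lim_{k\to\infty}\frac{c_k(\lambda)^2}{k}=2\,\mathrm{vol}(Y,\lambda).
\]
Second, when $\lambda$ is nondegenerate, each $c_k(\lambda)$ equals the symplectic action $\sum_i m_i\mathcal{A}(\gamma_i)$ of some ECH generator (an orbit set $\prod_i \gamma_i^{m_i}$ with $m_i=1$ for hyperbolic $\gamma_i$).

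I would first treat the nondegenerate case, which is already in the literature (Cristofaro-Gardiner--Hutchings--Pomerleano). With only $N$ simple orbits the ECH action spectrum lies inside the finitely generated sub-semigroup $\sum_i \Z_{\ge 0}\,\mathcal{A}(\gamma_i)$, and matching the growth of this semigroup to the quadratic Weyl rate forces $N\le 2$ (and in fact both orbits must be irrationally elliptic, with rotation numbers determined by $\mathrm{vol}(Y,\lambda)$).

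To handle the degenerate case I would choose a sequence of $C^\infty$-small nondegenerate perturbations $\lambda_n=f_n\lambda\to\lambda$, so that $c_k(\lambda_n)\to c_k(\lambda)$ for every $k$ and the asymptotic Weyl slopes of the perturbations converge to that of $\lambda$. The serious obstacle is that $\lambda_n$ need not have only finitely many simple orbits: a degenerate iterate $\gamma_i^p$ with a rational rotation number can bifurcate into a cluster of short simple orbits of $\lambda_n$, and a priori this cluster can be arbitrarily large. Hence the nondegenerate theorem cannot be applied to $\lambda_n$ directly.

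The heart of the proof, and the main obstacle, is to control this bifurcation quantitatively. I would fix a uniform action cutoff $L>0$ and use compactness for Reeb orbits with bounded period to show that, for $n$ large, every simple $\lambda_n$-orbit of action $\le L$ is $C^0$-close to some iterate of one of the $\gamma_i$; thus every ECH generator of $\lambda_n$ with action $\le L$ is supported in a tubular neighborhood of $\gamma_1\cup\cdots\cup\gamma_N$. By the Weyl law, this description covers the first $\sim L^2/(2\,\mathrm{vol})$ spectral invariants of $\lambda_n$. I would then analyze the Conley--Zehnder indices, asymptotic writhe, and linking data of these localized generators to show that, in the double limit $n\to\infty$ and $L\to\infty$, the asymptotic count of admissible ECH actions is governed by combinatorial data attached only to $\{\gamma_1,\dots,\gamma_N\}$; matching this count to the Weyl rate then forces $N\le 2$ exactly as in the nondegenerate case. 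The key technical input is a local ECH index analysis showing that each bifurcation cluster near a fixed degenerate orbit of $\lambda$ contributes to the ECH generating function of $\lambda_n$ in the same way as a single elliptic or hyperbolic orbit would, so that the CGHP semigroup argument survives the perturbation.
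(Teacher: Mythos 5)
There is a genuine gap at the core of your plan: the claim that ``matching the growth of this semigroup to the quadratic Weyl rate forces $N\le 2$'' is not a valid argument, and it is not how the nondegenerate case is actually proved. If there are $N$ simple orbits with actions $\mathcal{A}(\gamma_1),\dots,\mathcal{A}(\gamma_N)$, the sub-semigroup $\sum_i\Z_{\ge0}\mathcal{A}(\gamma_i)$ has roughly $L^N$ elements below action $L$, while the Weyl law only demands about $L^2/(2\,\mathrm{vol})$ spectral values below $L$. For $N\ge 3$ the semigroup is far larger than needed, so no contradiction can arise from counting; volume/Weyl arguments of this type can obstruct having too \emph{few} orbits (as in the ``from one Reeb orbit to two'' argument), but they cannot distinguish three simple orbits from two. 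Consequently the final step of your plan, both in the nondegenerate model case and in the ``double limit'' for the degenerate case, does not close. The nondegenerate theorem of Cristofaro-Gardiner--Hutchings--Pomerleano is not a semigroup-counting result: it uses the $U$-map curves (with the Weyl law and $J_0$ bounds serving only to control their topology and action) to produce a genus-zero Birkhoff section, and then the two-or-infinity dichotomy comes from Franks' theorem on area-preserving annulus/disk maps applied to the return map. Your proposal contains no surface-of-section or dynamical ingredient at all, and without one the dichotomy cannot be reached.

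For comparison, the paper also perturbs to nondegenerate forms $\lambda_k=f_k\lambda$, but instead of tracking spectral counts it extracts from the $U$-map a low-action \emph{holomorphic cylinder} $C_k$ for each $\lambda_k$ whose projection is an annular Birkhoff section, with uniform action bounds and a sign condition on the perturbed rotation numbers at its ends (this is where a new ``score'' bookkeeping for $U$-curves enters). It then passes to the limit at the level of the curves themselves, using weighted Fredholm theory, automatic transversality, and uniform exponential decay estimates valid for possibly degenerate asymptotic orbits, to obtain a compact moduli space of cylinders for $\lambda$ asymptotic to two simple orbits $\mathcal{L}=\gamma^+\cup\gamma^-$; lifting the Reeb flow to this moduli space with one marked point via the evaluation map yields a flow with an annular global cross-section, and Franks' theorem then shows the flow on $Y\setminus\mathcal{L}$ has no periodic orbits if it has finitely many, giving exactly two. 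Your proposed control of bifurcation clusters near degenerate orbits (``each cluster contributes like a single orbit'') is also unsubstantiated, but the decisive issue is upstream: some version of the Birkhoff-section-plus-Franks mechanism is needed, and purely spectral/index counting cannot replace it.
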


\begin{remark}
\label{rem:nondegenerate}
The conclusion of Theorem~\ref{thm:main_intro} was previously shown in \cite{CGHP} under the additional hypothesis that the contact form $\lambda$ is nondegenerate (see \S\ref{sec:orbitsets} for the definition of ``nondegenerate''). Subsequently, the hypothesis on the first Chern class was removed by Colin-Dehornoy-Rechtman \cite{CDR}, but still assuming nondegeneracy.   The novelty of Theorem~\ref{thm:main_intro} is that we do not require any 
 genericity hypothesis on the Reeb flow.  See \S\ref{sec:history} for more discussion of history and related results.
\end{remark}

The following is an important example of Theorem~\ref{thm:main_intro}. Let $Y\subset\R^4$ be a compact connected smooth hypersurface which is transverse to the radial vector field. Following the symplectic geometry convention, we call such a hypersurface {\bf star-shaped\/}. 
Indeed, $Y$ bounds a compact domain which is star-shaped in the usual sense.
The standard Liouville form on $\R^4$ defined by
\begin{equation}
\label{eqn:slf}
\lambda = \frac{1}{2} \sum_{i=1}^2\left(x_i\,dy_i - y_i\,dx_i\right)
\end{equation}
restricts to a contact form on $Y$. If $H:\R^4\to\R$ is a smooth function having $Y$ as a regular level set, then the Reeb vector field of $\lambda|_Y$ is parallel to the Hamiltonian vector field $X_H$ on~$Y$, so that Reeb orbits are the same as periodic orbits of $X_H$ on~$Y$ up to reparametrization.

Since $Y$ is diffeomorphic to $S^3$, Theorem~\ref{thm:main_intro} is applicable to this example to prove the following result, which was a conjecture of Hofer-Wysocki-Zehnder from \cite{fols}; Hofer-Wysocki-Zehnder proved this in \cite{fols}, under the additional hypothesis that the contact form is nondegenerate and the stable and unstable manifolds of all hyperbolic Reeb orbits intersect transversely.

\begin{corollary}
\label{cor:hwzconj}
\cite[Conjecture 1.13]{fols}
Every star-shaped hypersurface in $\R^4$, with the restriction of the standard Liouville form \eqref{eqn:slf}, has either two or infinitely many simple Reeb orbits.
\end{corollary}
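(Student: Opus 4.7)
The plan is straightforward: Corollary~\ref{cor:hwzconj} is designed to be an immediate consequence of Theorem~\ref{thm:main_intro}, so I will simply verify each of the three hypotheses of that theorem for the pair $(Y,\lambda|_Y)$, where $Y\subset\R^4$ is the given star-shaped hypersurface and $\lambda$ is the standard Liouville form in \eqref{eqn:slf}.

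For the topological hypothesis, I will observe that radial projection onto the unit sphere restricts to a diffeomorphism $Y\to S^3$: this is precisely what transversality to the radial vector field guarantees, so $Y$ is automatically a closed, connected, oriented three-manifold (oriented as the boundary of the enclosed star-shaped domain). For the contact condition, I will work with the Liouville vector field $V=\tfrac{1}{2}\sum_i(x_i\partial_{x_i}+y_i\partial_{y_i})$, which satisfies $i_V\,d\lambda=\lambda$ and is parallel to the radial field. The Cartan-type identity $i_V(d\lambda\wedge d\lambda)=2\,\lambda\wedge d\lambda$ combined with transversality of $V$ to $Y$ and the fact that $d\lambda\wedge d\lambda$ is a volume form on $\R^4$ shows that $\lambda\wedge d\lambda$ restricts to a nonvanishing top form on $Y$, positive in the correct orientation. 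The Chern class hypothesis is trivial: because $Y\cong S^3$, we have $H^2(Y;\Z)=0$, so $c_1(\xi)$ is automatically zero and in particular torsion.

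With all three hypotheses confirmed, Theorem~\ref{thm:main_intro} applies and gives the dichotomy for simple Reeb orbits of $\lambda|_Y$. As noted in the paragraph preceding the corollary, Reeb orbits of $\lambda|_Y$ coincide, up to reparametrization, with periodic orbits of the Hamiltonian vector field $X_H$ of any defining function $H$ for $Y$, so the dichotomy for Reeb orbits is exactly the dichotomy for simple periodic orbits of $X_H$ claimed by Hofer-Wysocki-Zehnder. There is no genuine obstacle at this stage of the argument: the entire content of Corollary~\ref{cor:hwzconj} has been packaged into Theorem~\ref{thm:main_intro}, whose proof occupies the remainder of the paper.
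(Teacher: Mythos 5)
Your proposal is correct and follows essentially the same route as the paper: observe that transversality to the radial (Liouville) vector field makes $\lambda|_Y$ a contact form and makes $Y$ diffeomorphic to $S^3$, so $H^2(Y;\Z)=0$ forces $c_1(\xi)$ to be torsion, and then apply Theorem~\ref{thm:main_intro}. The extra details you supply (the identity $i_V\,d\lambda=\lambda$ and the resulting positivity of $\lambda\wedge d\lambda$ on $Y$) are standard and consistent with the paper's brief treatment.
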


Theorem~\ref{thm:main_intro} also has implications for Finsler geometry:

\begin{corollary}
\label{cor:finsler1}
\cite[Problem 15]{paiva}
\cite[Conjecture 2.2.1]{BurnsMatveev}
\cite[Conjecture 1]{long}
Every Finsler metric on $S^2$ has either two or infinitely many prime closed geodesics.
\end{corollary}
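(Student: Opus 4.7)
The strategy is to deduce Corollary~\ref{cor:finsler1} directly from Theorem~\ref{thm:main_intro} via the standard translation from Finsler geodesic flows to Reeb flows on unit cosphere bundles. Given a Finsler metric $F$ on $S^2$, set $Y = \{(q,p)\in T^*S^2 : F^*(q,p)=1\}$, where $F^*$ denotes the dual Finsler norm, and equip $Y$ with the restriction $\lambda$ of the canonical Liouville $1$-form on $T^*S^2$. Since $Y$ is fiberwise star-shaped in $T^*S^2$, the form $\lambda$ is a contact form, and its Reeb flow is conjugate, via the Legendre transform associated to $F$, to the geodesic flow of $(S^2,F)$ on the standard unit tangent bundle. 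Under this conjugation, simple Reeb orbits of $\lambda$ correspond bijectively to prime closed geodesics of $F$, where for a possibly non-reversible Finsler metric a closed geodesic is understood as an oriented parametrized curve.

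To apply Theorem~\ref{thm:main_intro} one verifies its hypotheses. The manifold $Y$ is a smooth $S^1$-bundle over $S^2$ of Euler number $\pm 2$, and hence is diffeomorphic to $STS^2 \cong SO(3) \cong \mathbb{RP}^3$, which is closed, oriented, and connected. Since $H^2(\mathbb{RP}^3;\mathbb{Z}) \cong \mathbb{Z}/2$ is pure torsion, the first Chern class $c_1(\xi) \in H^2(Y;\mathbb{Z})$ of the contact structure $\xi = \Ker(\lambda)$ is automatically torsion. Theorem~\ref{thm:main_intro} therefore implies that $\lambda$ has either two or infinitely many simple Reeb orbits, and translating this dichotomy through the bijection above yields the corollary.

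In this reduction, the substantive content sits entirely in Theorem~\ref{thm:main_intro}; the only step requiring any care is the bijection between prime closed geodesics and simple Reeb orbits, in particular the claim that no simple Reeb orbit projects to a multiply covered geodesic and that distinct oriented prime geodesics lift to distinct simple orbits. This is routine from the observation that the footpoint projection $Y \to S^2$ sends any Reeb trajectory to a constant-$F$-speed geodesic, and that the Legendre transform preserves covering multiplicity. I therefore expect this identification to be the main, and only mild, obstacle in passing from Theorem~\ref{thm:main_intro} to Corollary~\ref{cor:finsler1}.
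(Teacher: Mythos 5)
Your proposal is correct and follows essentially the same route as the paper: pass to the unit (co)sphere bundle with its canonical contact form (the paper cites \cite[\S2.3.3]{HS13} for the contact form $\lambda_F$ on the unit tangent bundle, which is the Legendre-transform picture you describe), note that $H^2$ of this bundle, diffeomorphic to $\mathbb{RP}^3$, is torsion, and apply Theorem~\ref{thm:main_intro} together with the bijection between simple Reeb orbits and prime closed geodesics.
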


\begin{proof}
Let $F$ be a Finsler metric on $S^2$, and let $Y$ be the unit tangent bundle of $S^2$ associated to $F$.  As reviewed in \cite[\S2.3.3]{HS13}, there exists a contact form $\lambda_F$ on $Y$ such that prime closed geodesics on $S^2$ are in bijective correspondence with simple Reeb orbits on $Y$ under the projection map $Y\to S^2$. Thus the corollary follows from Theorem~\ref{thm:main_intro}, since $H^2(Y;\mathbb{Z})$ is torsion.
\end{proof}

\begin{remark}
It has been a longstanding conjecture
 (\cite[Problem 15]{paiva}, \cite[Conjecture 2.2.1]{BurnsMatveev}, \cite[Conjecture 1]{long}) 
  that every irreversible Finsler metric on $S^2$ has either two or infinitely many prime closed geodesics.   
  %  was proved by \cite{BangertLong}.   
 To put this result in context, it has long been known that there are infinitely many prime closed geodesics on every closed surface with a Riemannian metric; for positive genus surfaces this is due to Hadamard~\cite{Hadamard}, and the genus zero case was proved by Bangert and Franks \cite{Bangert2,Franks}. The results in the Riemannian case remain true in the Finsler case for positive genus surfaces, and can be proved by similar methods. However, the genus zero case presents a strong contrast, as Katok~\cite{Katok} has constructed Finsler metrics on $S^2$ with exactly two prime closed geodesics. 
  \end{remark}

\begin{remark}
A theorem outlined by Lyusternik-Schnirelmann \cite{ls} and proved by Grayson \cite[Corollary 0.2]{Grayson} asserts that every Riemannian metric on $S^2$ has at least three distinct embedded closed geodesics. Hence Corollary~\ref{cor:finsler1} recovers the result of Bangert and Franks \cite{Bangert2,Franks} asserting that every Riemannian metric on $S^2$ has infinitely many prime closed geodesics.
\end{remark}

Our previous work \cite{CGHHL} established strong restrictions in the case of two simple Reeb orbits\footnote{For examples with exactly two simple Reeb orbits, see \cite[Ex. 1.1]{CGHHL}.}, and so we also obtain as a corollary various criteria guaranteeing the existence of infinitely many simple Reeb orbits: 

\begin{corollary}
\label{cor_one_hyperbolic_implies_infinitely_many}
Let $Y$ be a closed three-manifold and let $\lambda$ be a contact form on $Y$ such that the associated contact structure $\xi=\Ker(\lambda)$ has $c_1(\xi)\in H^2(Y;\Z)$ torsion.  Assume that at least one of the following is true:
\begin{enumerate}[(a)]
\item $Y$ is not a lens space.
\item There is at least one Reeb orbit which is not irrationally elliptic (see \S\ref{sec:orbitsets} for the definition of ``irrationally elliptic'').
\item $\xi$ is overtwisted.
\end{enumerate}
Then there are infinitely many simple Reeb orbits.
\end{corollary}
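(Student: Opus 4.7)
The plan is to combine Theorem~\ref{thm:main_intro} with structural results from our previous paper \cite{CGHHL} on the ``exactly two'' case. By Theorem~\ref{thm:main_intro}, under the torsion $c_1(\xi)$ hypothesis the number of simple Reeb orbits is either exactly two or infinite. So the entire task reduces to ruling out the case of exactly two simple Reeb orbits under each of the hypotheses (a), (b), (c). I would therefore assume for contradiction that $\lambda$ has exactly two simple Reeb orbits $\gamma_1,\gamma_2$ and derive consequences strong enough to violate each alternative.

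The key input is that \cite{CGHHL} already establishes rigidity for such flows. I would invoke, in turn, three statements from \cite{CGHHL}: first, that if $\lambda$ has exactly two simple Reeb orbits, then $Y$ is diffeomorphic to a lens space (including $S^3$ as the case $L(1,0)$), which contradicts hypothesis (a); second, that both simple orbits $\gamma_1,\gamma_2$ must be irrationally elliptic, contradicting hypothesis (b); and third, that in this situation the contact structure $\xi=\ker\lambda$ must be (universally) tight, contradicting hypothesis (c) since an overtwisted contact structure is by definition not tight. Each contradiction forces the alternative in the dichotomy, namely infinitely many simple Reeb orbits.

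In more detail, I would first state Theorem~\ref{thm:main_intro} as the starting point and reduce to showing that ``exactly two simple Reeb orbits'' is incompatible with each of (a), (b), (c). Then I would cite the relevant lemma/theorem numbers from \cite{CGHHL} explicitly for the three implications: (two orbits $\Rightarrow$ $Y$ is a lens space), (two orbits $\Rightarrow$ both irrationally elliptic), and (two orbits $\Rightarrow$ $\xi$ tight). Since these are already theorems in \cite{CGHHL}, no new argument is required here; the proof is essentially a bookkeeping application.

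The only potentially subtle point, and the one I would be most careful about, is the tightness/overtwistedness implication for (c): one must make sure that the result quoted from \cite{CGHHL} indeed produces tightness of $\xi$ in the Eliashberg sense (so that overtwistedness is genuinely excluded), and not merely some weaker dynamical condition such as the Reeb flow being dynamically convex. Assuming the statement in \cite{CGHHL} is phrased in terms of tightness of $\xi$, the corollary follows immediately; if not, a short supplementary argument using dynamical convexity and filling by holomorphic disks (as in the Hofer--Wysocki--Zehnder framework) would bridge the gap.
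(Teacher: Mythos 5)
Your proposal is correct and is essentially the paper's own proof: the paper reduces (after noting one may assume $Y$ connected) to the case of exactly two simple Reeb orbits via Theorem~\ref{thm:main_intro}, and then cites \cite[Thm.\ 1.2, Cor.\ 1.3, Thm.\ 1.5]{CGHHL} to conclude that both orbits are irrationally elliptic, $Y$ is a lens space, and $\xi$ is (universally) tight, contradicting (a), (b), (c) respectively. Your worry about tightness versus a weaker dynamical condition is resolved exactly as you hoped, since \cite{CGHHL} gives tightness of $\xi$ itself.
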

\begin{proof}
Without loss of generality, $Y$ is connected. By \cite[Thm.\ 1.2, Cor.\ 1.3, Thm.\ 1.5]{CGHHL}, if there are exactly two simple Reeb orbits, then both simple Reeb orbits (and all their iterates) are irrationally elliptic, $Y$ is a lens space, 
and the contact structure is (universally) tight,
contrary to our hypothesis. Hence by the dichotomy in Theorem~\ref{thm:main_intro}, there are infinitely many simple Reeb orbits.
\end{proof}

The above corollary in turn implies the following result, which is slightly stronger than a conjecture of Long~\cite[Conjecture 2.2.2]{BurnsMatveev} about the kinds of geodesics that must appear on a Finsler $S^2$.

\begin{corollary}
\label{cor:long}
A Finsler metric on $S^2$ with a closed geodesic that is not irrationally elliptic has infinitely many prime closed geodesics.
\end{corollary}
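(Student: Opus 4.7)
The plan is to reduce this to Corollary~\ref{cor_one_hyperbolic_implies_infinitely_many}(b) via the same Finsler-to-Reeb translation used in the proof of Corollary~\ref{cor:finsler1}. Let $F$ be the given Finsler metric on $S^2$, let $Y$ be its unit tangent bundle, and let $\lambda_F$ be the associated contact form, so that prime closed geodesics on $S^2$ correspond bijectively to simple Reeb orbits on $Y$ via the projection $Y\to S^2$, and $k$-fold iterates correspond to $k$-fold iterates. Since $H^2(Y;\Z)$ is torsion, the hypothesis on $c_1(\Ker\lambda_F)$ required for Corollary~\ref{cor_one_hyperbolic_implies_infinitely_many} holds automatically.

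The one point to verify is that the correspondence preserves the classification ``irrationally elliptic.'' Along any Reeb orbit $\gamma$, the standard identification of $\Ker\lambda_F$ with the horizontal--vertical decomposition of $TY$ conjugates the linearized Reeb return map on $\Ker\lambda_F$ to the linearized Poincar\'e return map of the corresponding closed geodesic, so their Floquet multipliers agree. Moreover, the notion is compatible with iteration: if a simple Reeb orbit is irrationally elliptic with rotation number $\theta$, then its $k$-fold iterate has rotation number $k\theta$ modulo $1$, which is again irrational; and conversely, if some iterate is irrationally elliptic, taking $k$-th roots of the eigenvalues shows the underlying simple orbit is also irrationally elliptic. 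Hence an iterate is irrationally elliptic if and only if the underlying simple orbit is.

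Given a closed geodesic on $S^2$ that is not irrationally elliptic, it is an iterate (possibly trivially) of a prime closed geodesic, which must therefore also fail to be irrationally elliptic; so the corresponding simple Reeb orbit on $Y$ is not irrationally elliptic. Hypothesis (b) of Corollary~\ref{cor_one_hyperbolic_implies_infinitely_many} is then satisfied, yielding infinitely many simple Reeb orbits on $Y$, which descend to infinitely many prime closed geodesics on $S^2$. The substance of the argument has already been done in Theorem~\ref{thm:main_intro} and Corollary~\ref{cor_one_hyperbolic_implies_infinitely_many}; all that remains for this corollary is the elementary dynamical dictionary above, so there is no genuine obstacle to overcome.
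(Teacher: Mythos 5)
Your proposal is correct and is essentially the paper's proof: the paper likewise applies Corollary~\ref{cor_one_hyperbolic_implies_infinitely_many}(b) to the contact form on the unit tangent bundle determined by the Finsler metric, exactly as in Corollary~\ref{cor:finsler1}. The only remark worth making is that hypothesis (b) there allows any (not necessarily simple) Reeb orbit to fail to be irrationally elliptic, so your passage from the iterated geodesic back to the underlying prime one, while fine, is not even needed.
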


\begin{proof}
Apply Corollary~\ref{cor_one_hyperbolic_implies_infinitely_many}(b) to the contact form on the unit tangent bundle of $S^2$ determined by the Finsler metric, as in Corollary~\ref{cor:finsler1}.
\end{proof}

One could alternatively deduce Corollary~\ref{cor:long} from Corollary~\ref{cor:finsler1} as a consequence of \cite[Thm. 1.2]{lw}.  

\subsection{Some history and related results}
\label{sec:history}

The three-dimensional case of the Weinstein conjecture \cite{weinstein} asserts that every contact form on a closed three-manifold has at least one Reeb orbit. This was proved for star-shaped hypersurfaces in $\R^4$ using variational methods by Rabinowitz \cite{Rabinowitz}, who also proved a higher dimensional analogue in $\R^{2n}$.  The Weinstein Conjecture was proved for all contact forms on~$S^3$, and for all overtwisted contact forms on any closed three-manifold, in work of Hofer \cite{Hofer93} where the foundations of holomorphic curve theory in symplectizations were first introduced and developed. The Weinstein conjecture was established for all closed three-manifolds by Taubes~\cite{Taubes_Weinstein_Conjecture} using Seiberg-Witten theory; see \cite{tw} for a survey. Taubes subsequently generalized this work to prove an isomorphism between embedded contact homology (ECH) and Seiberg-Witten Floer cohomology reviewed in \S\ref{sec:defECH} below. This isomorphism, together with a ``Weyl law'' for the ECH spectrum reviewed in \S\ref{sec:volume} below, was used in \cite{CGH} to show that in fact, every contact form on a closed three-manifold has at least two simple Reeb orbits.  It was proved in \cite{GHHM} that the symplectic field theory of Eliashberg-Givental-Hofer \cite{EGH} can be combined with arguments of Hingston \cite{Hingston} and Ginzburg \cite{Ginzburg} to prove that the Reeb flow has at least two simple closed orbits in the special case of star-shaped hypersurfaces in $\R^4$.  Prior to these works, it was shown by Bangert and Long  \cite{BangertLong} that there are at least two prime closed geodesics for any Finsler metric on $S^2$.  A detailed description of the dynamics when there are exactly two simple Reeb orbits was given in \cite{CGHHL}, using a result from \cite{HT} on the nondegenerate case. Such examples are quite special; among other things, the three-manifold must be diffeomorphic to $S^3$ or a lens space.  In fact it was shown by Irie \cite{irie} that for a $C^\infty$ generic contact form on a closed three-manifold $Y$, not only are there infinitely many simple Reeb orbits, but moreover the union of the images of the Reeb orbits is dense in $Y$.

The two or infinity dichotomy in Corollary~\ref{cor:hwzconj} was proved for the special case of convex hypersurfaces by Hofer-Wysocki-Zehnder \cite{convex}. The main ingredient in the proof is to show that in this case there is always a disk-like global surface of section. This perspective is in fact central to our work, as the notion of a global surface of section, or more precisely the slightly more general notion of a Birkhoff section (see \S\ref{sec:gsscriterion} for the definition), plays a key role in our arguments. As mentioned above, in \cite{fols}, Hofer-Wysocki-Zehnder proved the conclusion of Corollary~\ref{cor:hwzconj} for star-shaped hypersurfaces under the additional hypothesis that the contact form is nondegenerate and the stable and unstable manifolds of all hyperbolic Reeb orbits intersect transversely. This work introduced the theory of finite energy foliations, which generalizes Birkhoff sections.

As mentioned in Remark~\ref{rem:nondegenerate}, 
Colin-Dehornoy-Rechtman \cite{CDR} proved that there are two or infinitely many simple Reeb orbits for any nondegenerate contact form on a closed connected three-manifold. This is a consequence of a general existence result for ``broken book decompositions''. The latter are analogues of finite-energy foliations, and were used in \cite{CDR} to establish deep structural results for Reeb flows of nondegenerate contact forms on closed three-manifolds.

\subsection{Idea of the proof and organization of the paper}

The strategy of the proof of Theorem~\ref{thm:main_intro} is as follows. Suppose that there are only finitely many simple Reeb orbits. Following the strategy in \cite{CGHP}, we would like to use holomorphic curves to find a genus zero Birkhoff section for the Reeb flow. Roughly speaking, this is an open surface $\Sigma\subset Y$ such that the simple Reeb orbits correspond to 
%the boundary circles 
ends
of $\Sigma$, together with simple periodic orbits of an area-preserving return map $\varphi:\Sigma\to\Sigma$. If such a genus zero Birkhoff section can be found, then one can apply a theorem of Franks \cite{Franks} to deduce that there are either two or infinitely many simple Reeb orbits.

The difficulty with this approach is to deal with the case when the contact form $\lambda$ is degenerate. Much of the theory of holomorphic curves that we would want to use, including embedded contact homology (ECH), is only defined for nondegenerate contact forms. To get started, we can find a sequence of nondegenerate contact forms $\lambda_k$ converging to $\lambda$.  The idea is then to find
Birkhoff sections for $\lambda_k$, via projections of holomorphic curves $C_k$, that can be taken to converge to a
%n annular 
Birkhoff section for $\lambda$.  A priori, there are two significant challenges with this: i) in general Birkhoff sections need not converge; ii) the arguments in \cite{CGHP} used the assumption of finitely many orbits, and one can not assume this for the $\lambda_k$. 

To attempt to get around the first point, we develop some criteria in terms of the $C_k$ in order to guarantee convergence;  a crucial role is played by the speed of convergence of the ends of the $C_k$.   The difficulty, then, is finding such $C_k$.  Fortuitously, it turns out if the $C_k$ are cylinders of low enough action, detected by the ECH ``$U$-map", then the desired conditions hold as a consequence of the ECH ``partition conditions".  Thus, we return to point ii), with an additional constraint that the Birkhoff section should be an annulus;  we also need a uniform upper bound on the periods of the boundary Reeb orbits.
% and we also want to satisfy additional technical conditions. 
All of this requires lengthy
arguments involving details of the holomorphic curves arising from ECH, and a new invariant called ``the score" of a curve is introduced.  These arguments appear in Section 3, and the arguments for taking the $C_k$ to the limit appear in Sections 4, 5 and 6; the arguments Section 4 - 6 also require considerable
technical
work to deal with holomorphic curves asymptotic to possibly degenerate Reeb orbits.

In fact, in the end, we do not quite prove that the annular Birkhoff sections for $\lambda_k$ can be taken to converge to an annular Birkhoff section for $\lambda$. This may be true, but we prove a slightly weaker statement which is sufficient for our purposes. Namely, the limits of the boundary Reeb orbits of the Birkhoff sections give a two-component link $\mathcal{L}$ in $Y$ whose components are simple Reeb orbits of $\lambda$. In addition, there is a moduli space $\mathcal{M}$ consisting of holomorphic cylinders with one marked point, and a covering map $\mathcal{M}\to Y\setminus\mathcal{L}$. Finally, the Reeb flow on $Y\setminus\mathcal{L}$ lifts to a flow on $\mathcal{M}$ to which Franks's theorem can be applied to complete the proof of Theorem~\ref{thm:main_intro}.

\subsection{Some open questions and a small step}

It remains an open question whether the 
torsion hypothesis can be removed from Theorem~\ref{thm:main_intro}.  By \cite[Cor. 1.3]{CGHHL}, this is equivalent to the question of whether every contact form on a closed three-manifold that is not a lens space must have infinitely many simple Reeb orbits. 

In the cases where there are countably infinitely many simple Reeb orbits, one can ask for lower bounds on the growth rate of the number of simple Reeb orbits with period $\le T$ as $T\to\infty$. Our methods allow a small step in this direction. Enumerate the simple Reeb orbits as $\gamma_1,\gamma_2,\ldots$ such that the period of $\gamma_i$ is a nondecreasing function of $i$. Then it follows from Proposition~\ref{prop:general} below that for a torsion contact form on a three-manifold other than $S^3$ or a lens space, we obtain an upper bound (possibly very large) on the period of $\gamma_{n+1}$ in terms of data associated to $\gamma_1,\ldots,\gamma_n$.

\subsection*{Acknowledgements}

Key discussions for this project occurred during the conference ``Low-dimensional topology and homeomorphism groups", hosted by the Brin Mathematics Research Center at the University of Maryland.  We thank the Brin Center for their support. 

DCG thanks Dan Pomerleano for numerous very helpful discussions related to the present work; our earlier joint paper \cite{CGHP} also contained many insights which were crucial for this one.  DCG also thanks the NSF for their support under agreements DMS-2227372 and DMS-2238091.

UH thanks Pedro Salom\~ao for various discussions about obtaining global surfaces of section for degenerate Reeb flows, which were crucial to the current paper. UH acknowledges the support by DFG SFB/TRR 191 ‘Symplectic Structures in Geometry, Algebra and Dynamics’, Projektnummer 281071066-TRR~191.

MH was partially supported by NSF grant DMS-2005437.

HL was partially supported by NSFC (Nos. 12371195, 12022111) and the Fundamental Research Funds for the Central Universities (No. 2042023kf0207)

\section{Preliminaries on embedded contact homology}
\label{prelim_sec}

We now review the background on embedded contact homology (ECH) that we will need. More details about the basic definitions may be found for example in \cite{ECHlecture}.

\subsection{Orbit sets and the ECH index}
\label{sec:orbitsets}

Let $Y$ be a closed $3$-manifold, and $\lambda$ be a contact form on $Y$. Let $\xi=\Ker \lambda$ denote the associated contact structure, and let $R$ denote the Reeb vector field on $Y$ determined by $\lambda$. If $\gamma:\R/T\Z\to Y$ is a Reeb orbit, then the derivative of the Reeb flow at time $T$ defines a linearized return map
\[
P_\gamma: \xi_{\gamma(0)} \longrightarrow \xi_{\gamma(0)},
\]
which is symplectic with respect to $d\lambda$. We say that the Reeb orbit $\gamma$ is {\bf nondegenerate\/} if~$1$ is not an eigenvalue of $P_\gamma$. 
We say that the contact form $\lambda$ is {\bf nondegenerate\/} if all Reeb orbits are nondegenerate.
Let $\gamma$ be a Reeb orbit.
If both eigenvalues of $P_\gamma$ lie on the unit circle we say that $\gamma$ is {\bf elliptic\/}, otherwise we say that $\gamma$ is {\bf hyperbolic\/}.
In the hyperbolic case both eigenvalues of $P_\gamma$ are real and lie outside the unit circle, and they are both positive or both negative; in the former case $\gamma$ is said to be {\bf positive hyperbolic\/}, in the latter case~$\gamma$ is said to be {\bf negative hyperbolic\/}.
In the elliptic case, we call $\gamma$ {\bf irrationally elliptic\/} if the eigenvalues of $P_\gamma$ are of the form $e^{\pm i2\pi\alpha}$ for some irrational number $\alpha \in \R \setminus\Q$.
When $\gamma$ and all its iterates are nondegenerate then there are three mutually exclusive cases: positive hyperbolic, negative hyperbolic or irrationally elliptic.

 If $\tau$ is a $d\lambda$-symplectic trivialization of $\gamma^*\xi \to \R/T\Z$, then the linearized Reeb flow along $\gamma$ in the trivialization $\tau$ has a well-defined rotation number $\theta\in\R$. If $\gamma$ is nondegenerate, then the rotation number $\theta$ is an integer when $\gamma$ is positive hyperbolic, $1/2$ plus an integer when $\theta$ is negative hyperbolic, and irrational when $\gamma$ is elliptic and all iterates of~$\gamma$ are nondegenerate. If $\gamma$ is nondegenerate\footnote{If $\gamma$ is degenerate, there are different conventions for the Conley-Zehnder index. It would be possible to use \eqref{eqn:CZtau} to define the Conley-Zehnder index in the degenerate case and we used this convention in \cite{CGHHL}. However in \S\ref{sec_prelim_2} we will use a different convention which in the degenerate case is sometimes one less than \eqref{eqn:CZtau}.}, the {\bf Conley-Zehnder index} of $\gamma$ with respect to $\tau$ is given by
\begin{equation}
\label{eqn:CZtau}
\CZ_\tau(\gamma) = \floor{\theta} + \ceil{\theta}.
\end{equation} 
For the rest of the discussion of ECH we assume unless otherwise stated that $\lambda$ is nondegenerate; this holds for $C^\infty$-generic contact forms.

\begin{definition}
An {\bf orbit set\/} is a finite set of pairs $\alpha=\{(\alpha_i,m_i)\}$ where the $\alpha_i$ are distinct simple Reeb orbits and the $m_i$ are positive integers. The homology class of the orbit set $\alpha$ is defined to be
\[
[\alpha] = \sum_im_i[\alpha_i] \in H_1(Y)
\]
and its {\bf symplectic action\/} of $\alpha$ is defined by
\[
\mathcal{A}(\alpha) = \sum_i m_i\int_{\alpha_i}\lambda.
\]
Note here that $\int_{\alpha_i}\lambda$ is simply the period of $\alpha_i$.
An {\bf ECH generator\/} is an orbit set $\alpha=\{(\alpha_i,m_i)\}$ such that $m_i=1$ whenever $\alpha_i$ is hyperbolic.
\end{definition}

If $\alpha=\{(\alpha_i,m_i)\}$ and $\beta=\{(\beta_j,n_j)\}$ are orbit sets with $[\alpha]=[\beta]\in H_1(Y)$, define $H_2(Y,\alpha,\beta)$ to be the set of relative homology classes of $2$-chains $Z$ in $Y$ with
\[
\partial Z = \sum_i m_i\alpha_i - \sum_j n_j\beta_j,
\]
modulo boundaries of $3$-chains. The set $H_2(Y,\alpha,\beta)$ is an affine space over $H_2(Y)$.

\begin{definition}
Given $Z\in H_2(Y,\alpha,\beta)$ as above, the {\bf ECH index\/} is defined by
\[
I(\alpha,\beta,Z) = c_\tau(Z) + Q_\tau(Z) + \sum_i\sum_{k=1}^{m_i}\CZ_\tau(\alpha_i^k) - \sum_j\sum_{k=1}^{n_j}\CZ_\tau(\beta_j^k) \in \Z.
\]
\end{definition}

Here $\tau$ is a trivialization of the contact structure $\xi$ over the Reeb orbits $\alpha_i$ and $\beta_i$; the ECH index does not depend on the choice of $\tau$, although the individual terms in it do. The term $c_\tau(Z)$ is the relative first Chern class $c_1(f^*\xi,\tau)$, where $f:\Sigma\to Y$ is a smooth map from a compact oriented surface $\Sigma$ with boundary mapping to $\alpha-\beta$, representing the relative homology class $Z$. This relative first Chern class is defined by choosing a generic section of $f^*\xi$ over $\Sigma$ which on each boundary component is nonvanishing and has winding number zero with respect to $\tau$, and then counting its zeroes with signs. The term $Q_\tau(Z)$ is the relative intersection pairing; the detailed definition is not needed here and may be found in \cite[\S3.3]{ECHlecture} or \cite[\S2.7]{ir}.

For orbit sets $\alpha$ and $\beta$ with $[\alpha]=[\beta]=\Gamma\in H_1(Y)$, and for a trivialization $\tau$ over the Reeb orbits in $\alpha$ and $\beta$, the above notions depend on the relative homology class $Z$ as follows: Given another relative homology class $W\in H_2(Y,\alpha,\beta)$, one can form the difference $Z-W\in H_2(Y)$, and we have
\begin{align}
\label{eqn:ctauambiguity}
c_\tau(Z) - c_\tau(W) &= \langle Z-W,c_1(\xi)\rangle,\\
\nonumber
Q_\tau(Z)-Q_\tau(W) &= \langle Z - W, 2\operatorname{PD}(\Gamma)\rangle.
\end{align}
Here $c_1(\xi)\in H^2(Y;\Z)$ denotes the usual first Chern class of $\xi$, and $\operatorname{PD}(\Gamma)\in H^2(Y;\Z)$ is the Poincar\'e dual of $\Gamma$. It follows that we have the ``index ambiguity formula''
\begin{equation}
\label{eqn:iaf}
I(\alpha,\beta,Z) - I(\alpha,\beta,W) = \langle Z-W, c_1(\xi) + 2\operatorname{PD}(\Gamma)\rangle.
\end{equation}

\subsection{Holomorphic currents and the index inequality}
\label{sec:hcie}

\begin{definition}
An almost complex structure $J$ on $\R\times Y$ is {\bf $\lambda$-compatible\/} if:
\begin{itemize}
\item
$J(\partial_a)=R$, where $a$ denotes the $\R$ coordinate.
\item
$J$ is invariant under translation of the $\R$ factor.
\item
$J$ sends the contact structure $\xi$ to itself, rotating positively with respect to $d\lambda$.
 \end{itemize}
\end{definition}

Given a $\lambda$-compatible almost complex structure $J$, we can consider a $J$-holomorphic map
\[
u:(\Sigma,j) \longrightarrow (\R\times Y,J)
\]
where the domain $\Sigma$ is a compact connected Riemann surface without boundary, with finitely many punctures, and for each puncture there is a neighborhood which maps asymptotically to a Reeb orbit with $a\to\pm\infty$; for more precise definitions, see \S\ref{sec_prelim_2}. We declare two such holomorphic maps to be equivalent if they agree up to biholomorphic reparametrization of the domain. Equivalence classes are called $J$-holomorphic curves. If the map $u$ is somewhere injective, then we can identify the associated curve with its image in $\R\times Y$, which we typically denote using the letter $C$.

\begin{example}
If $\gamma$ is any simple Reeb orbit, then the ``trivial cylinder'' $\R \times \gamma \subset \R \times Y$ is $J$-holomorphic, by the definition of ``$\lambda$-compatible''.
\end{example}

We define a {\bf $J$-holomorphic current\/} to be a finite linear combination
\[
\mathcal{C} = \sum_i d_i C_i
\]
where the $C_i$ are distinct somewhere injective $J$-holomorphic curves as above, and the $d_i$ are positive integers. Note that we use the calligraphic letter $\mathcal{C}$ to denote a current, and the ordinary letter $C$ to denote a somewhere injective curve.

For a $J$-holomorphic current $\mathcal{C}$ as above, there are orbit sets $\alpha$ and $\beta$ such that as currents, we have $\lim_{a\to\infty}\mathcal{C}\cap(\{a\}\times Y) = \alpha$ and $\lim_{a\to-\infty}\mathcal{C}\cap(\{a\}\times Y)=\beta$. We call $\alpha$ the {\bf positive orbit set\/} of $\mathcal{C}$, and we call $\beta$ the {\bf negative orbit set\/} of $\mathcal{C}$. The projection of the current $\mathcal{C}$ to $Y$ defines a relative homology class
\[
[\mathcal{C}]\in H_2(Y,\alpha,\beta).
\]
We define the ECH index of $\mathcal{C}$ by
\begin{equation}
\label{eqn:ECHC}
I(\mathcal{C}) = I(\alpha,\beta,[\mathcal{C}])\in\Z.
\end{equation}
We let $\mathcal{M}^J(\alpha,\beta)$ denote the moduli space of $J$-holomorphic currents with positive orbit set $\alpha$ and negative orbit set $\beta$. Given $Z\in H_2(Y,\alpha,\beta)$, we define
\[
\mathcal{M}^J(\alpha,\beta,Z) = \left\{\mathcal{C}\in\mathcal{M}^J(\alpha,\beta) \;\big|\; [\mathcal{C}]=Z \right\}.
\]

The starting point for the foundations of ECH is the following ``index inequality'', originally from \cite{HutJEMS}, see also \cite[\S3.4]{ECHlecture}:

\begin{proposition}
\label{prop:indexinequality}
If $C\in\mathcal{M}^J(\alpha,\beta)$ is somewhere injective, then
\begin{equation}
\label{eqn:indexinequality}
\operatorname{ind}(C) \le I(C),
\end{equation}
with equality only if $C$ is embedded in $\R\times Y$.
\end{proposition}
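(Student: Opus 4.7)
The plan is to compute $I(C) - \ind(C)$ explicitly from the two index formulas and then reorganize the result as a sum of manifestly nonnegative quantities: a singularity term and a ``writhe deficit'' at each puncture.

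First I would write the Fredholm index in the trivialization $\tau$ as
\[
\ind(C) = -\chi(C) + 2c_\tau(C) + \sum_e \epsilon_e \CZ_\tau(\gamma^{q_e}),
\]
where the sum is over the punctures $e$ of $C$, each asymptotic to some iterate $\gamma^{q_e}$ of a simple Reeb orbit $\gamma$, and $\epsilon_e = +1$ at a positive puncture, $-1$ at a negative one. Subtracting this from the ECH index, the $c_\tau$ and $Q_\tau$ contributions combine topologically and the Conley--Zehnder terms combine into a per-orbit discrepancy $\Delta$ between ``summing $\CZ_\tau(\gamma^k)$ over $k=1,\dots,m$'' (the ECH convention) and ``summing $\CZ_\tau(\gamma^{q_j})$ over the partition $(q_1,\dots,q_\ell)$ of ends meeting $\gamma$'' (the Fredholm convention). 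The upshot is
\[
I(C) - \ind(C) = \chi(C) - c_\tau(C) + Q_\tau(C) + \Delta.
\]

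Next I would invoke the relative adjunction formula for somewhere injective holomorphic curves,
\[
c_\tau(C) = \chi(C) + Q_\tau(C) + w_\tau(C) - 2\delta(C),
\]
where $w_\tau(C)$ is the total asymptotic writhe of the braids at the ends of $C$ and $\delta(C)\ge 0$ counts (with multiplicity) the singular points of the projected image, vanishing exactly when $C$ is embedded. Substituting produces
\[
I(C) - \ind(C) = 2\delta(C) - w_\tau(C) + \Delta,
\]
so it suffices to prove $\Delta \ge w_\tau(C)$. The key analytic input is the asymptotic writhe bound of Hofer--Wysocki--Zehnder, refined by Siefring: at a positive puncture at $\gamma$ with multiplicities $(q_1,\dots,q_\ell)$ summing to $m$, the braid obtained by slicing $C$ near $+\infty$ has writhe controlled by winding numbers of eigenfunctions of the asymptotic operator, giving
\[
w_\tau^+\bigl(\text{ends at } \gamma\bigr) \le \sum_{k=1}^m \CZ_\tau(\gamma^k) - \sum_{j=1}^\ell \CZ_\tau(\gamma^{q_j}),
\]
with the analogous inequality of opposite sign at negative punctures. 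Summing over all asymptotic orbits with the appropriate signs gives precisely $w_\tau(C) \le \Delta$, so $I(C) - \ind(C) \ge 2\delta(C) \ge 0$, and equality forces $\delta(C) = 0$, i.e.\ $C$ is embedded.

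The main obstacle is the writhe bound at each puncture, which is the technical heart of the argument and is exactly where somewhere-injectivity is used: one needs the leading-order asymptotic behavior of a simply-covered end to be an eigenfunction of the asymptotic operator, whose winding number is then compared combinatorially with $\CZ_\tau(\gamma^k) = \lfloor k\theta\rfloor + \lceil k\theta\rceil$ and with the linking numbers between strands of the braid. The rest --- decomposing the two indices, applying relative adjunction, and bookkeeping the per-end contributions --- is formal once that estimate is in hand.
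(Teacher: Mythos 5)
Your proposal is correct and follows essentially the same route as the proof the paper relies on: the paper does not reprove this statement but cites \cite{HutJEMS} and \cite[\S 3.4]{ECHlecture}, where the argument is exactly your decomposition $I(C)-\operatorname{ind}(C) = 2\delta(C) - w_\tau(C) + \Delta$ via the relative adjunction formula, followed by the asymptotic writhe bounds at the punctures, with $\delta(C)=0$ characterizing embeddedness. So there is nothing to add beyond noting that the writhe/linking estimates you defer to Hofer--Wysocki--Zehnder and Siefring are indeed the technical heart, carried out in those references.
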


Here $\operatorname{ind}(C)$ denotes the {\bf Fredholm index\/} of $C$, which equals the dimension of the moduli space of $J$-holomorphic curves near $C$ when $J$ is generic; see e.g.\ \cite[\S3.2]{ECHlecture} for details.

Using the index inequality, one can prove the following key result:

\begin{proposition}
\label{prop:lowindexcurves}
\cite[Prop.\ 3.7]{ECHlecture}
Suppose $J$ is a generic $\lambda$-compatible almost complex structure. Let $\alpha$ and $\beta$ be orbit sets and let $\mathcal{C}\in\mathcal{M}^J(\alpha,\beta)$. Then:
\begin{itemize}
\item[(0)] $I(\mathcal{C})\ge 0$, with equality if and only if $\mathcal{C}$ is a sum of trivial cylinders with multiplicities.
\item[(1)]
If $I(\mathcal{C})=1$, then $\mathcal{C} = \mathcal{C}_0+C_1$, where $I(\mathcal{C}_0)=0$, and $C_1$ is embedded and disjoint from $\mathcal{C}_0$ and satisfies $\operatorname{ind}(C_1)=I(C_1)=1$.
\item[(2)]
If $I(\mathcal{C})=2$, and if $\alpha$ and $\beta$ are ECH generators, then $\mathcal{C}=\mathcal{C}_0+C_1$, where $I(\mathcal{C}_0)=0$, and $C_1$ is embedded and disjoint from $\mathcal{C}_0$ and satisfies $\operatorname{ind}(C_1)=I(C_1)=2$.
\end{itemize}
\end{proposition}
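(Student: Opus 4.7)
The plan is to combine Proposition~\ref{prop:indexinequality}, positivity of intersections for $J$-holomorphic curves in the symplectization $\R\times Y$, and the ECH writhe bound with its associated partition analysis, in order to express $I(\mathcal{C})$ as a sum of nonnegative contributions whose smallness forces a very rigid structure on $\mathcal{C}$.

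Decompose $\mathcal{C}=\sum_i d_i C_i$ with the $C_i$ distinct somewhere injective $J$-holomorphic curves and $d_i\ge 1$. For generic $J$, any $C_i$ that is not a trivial cylinder has $\operatorname{ind}(C_i)\ge 1$, since the $\R$-translation already produces a nontrivial one-parameter family in the moduli space; Proposition~\ref{prop:indexinequality} then gives $I(C_i)\ge\operatorname{ind}(C_i)\ge 1$, with embeddedness upon equality. The splitting relations
\[
c_\tau(Z+Z')=c_\tau(Z)+c_\tau(Z'),\qquad Q_\tau(Z+Z')=Q_\tau(Z)+Q_\tau(Z')+2(Z\cdot Z'),
\]
combined with the ECH writhe bound, which controls $\sum_{k=1}^{dm}\CZ_\tau(\alpha^k)$ from below by $d\,\CZ_\tau(\alpha^m)$ plus a nonnegative ``partition defect'', yield an ``ECH index inequality for currents'' of the form
\[
I(\mathcal{C})\ \ge\ \sum_i \Delta_i\ +\ 2\sum_{i<j}d_i d_j\,(C_i\cdot C_j),
\]
where $(C_i\cdot C_j)\ge 0$ by positivity of intersections, each $\Delta_i\ge 0$, and $\Delta_i=0$ only if $C_i$ is a trivial cylinder. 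A refinement gives $\Delta_i\ge d_i\operatorname{ind}(C_i)+2(d_i-1)$ for non-cylinder components, provided no multiply covered hyperbolic orbit appears among the ends.

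Part (0) is then immediate: the right-hand side is a sum of nonnegative terms, so $I(\mathcal{C})\ge 0$, and $I(\mathcal{C})=0$ forces every intersection and every $\Delta_i$ to vanish, whence every $C_i$ is a trivial cylinder. For (1) and (2), if $I(\mathcal{C})=k\in\{1,2\}$ then the refined bound forces $d_i=1$ for every non-cylinder component (since $d_i\ge 2$ yields $\Delta_i\ge 3d_i-2\ge 4$). In case (1), two non-cylinder components would give $\Delta_1+\Delta_2\ge 2>1$, so exactly one non-cylinder component $C_1$ appears, with $\operatorname{ind}(C_1)=I(C_1)=1$ and $C_1$ embedded (equality in Proposition~\ref{prop:indexinequality}); disjointness from the trivial-cylinder part $\mathcal{C}_0$ follows from the vanishing of $C_1\cdot C_j$, and $I(\mathcal{C}_0)=0$ by (0). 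In case (2), a finer partition-combinatorics argument, invoking the ECH-generator hypothesis on $\alpha$ and $\beta$, excludes both a multiply covered hyperbolic end and two separate index-$1$ components, leaving a single embedded $C_1$ with $\operatorname{ind}(C_1)=I(C_1)=2$.

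The main obstacle is the refined bound $\Delta_i\ge d_i\operatorname{ind}(C_i)+2(d_i-1)$ for non-cylinder covers, together with the partition-combinatorics step that narrows the non-cylinder part to a single component in (2). This requires comparing the ECH ``incoming'' and ``outgoing'' partitions (determined by the Conley-Zehnder rotation numbers of the asymptotic orbits) against the actual covering multiplicities of the ends, and invoking the ECH writhe inequality, with the ECH generator condition used to rule out the last residual configurations. This combinatorial core of ECH is carried out in \cite[\S3]{ECHlecture}; given those inputs, the rest of the argument is an accounting exercise.
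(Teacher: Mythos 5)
The paper does not actually prove this statement: it is quoted from \cite[Prop.~3.7]{ECHlecture}, whose proof (going back to \cite{HT}) follows exactly the scheme you describe — decompose the current into somewhere injective components with multiplicities, use genericity plus $\R$-translation to get $\operatorname{ind}\ge 1$ for every nontrivial component, and combine Proposition~\ref{prop:indexinequality}, positivity of intersections, and the writhe/partition bounds into a superadditivity-type lower bound for $I$, with the ECH generator hypothesis entering in case (2) to kill the multiplicity-two index-$1$ configuration, whose index-parity forces an end at a positive hyperbolic orbit. So your outline is the standard route, with the technical core (the lower bound for the contribution of a multiply covered component) correctly identified and deferred to the literature, which is consistent with how the paper itself treats the result.

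Two corrections are in order. First, in case (1) no admissibility is assumed, so the caveat in your refined bound (``no multiply covered hyperbolic orbit among the ends'') is not available there; but you do not need it: the unconditional estimate $\Delta_i\ge d_i\operatorname{ind}(C_i)\ge 2$ for a nontrivial component of multiplicity $d_i\ge 2$ already exceeds $I(\mathcal{C})=1$. Second, and more substantively, in case (2) you claim that the partition analysis ``excludes two separate index-1 components.'' This is neither asserted by the proposition nor true in general: the nontrivial part $C_1$ may perfectly well be a disjoint union of two embedded curves with $\operatorname{ind}=I=1$ (necessarily with ends at distinct positive hyperbolic orbits, which is compatible with $\alpha,\beta$ being ECH generators); the statement only says that this union is embedded, disjoint from $\mathcal{C}_0$, and satisfies $\operatorname{ind}(C_1)=I(C_1)=2$. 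The ECH generator hypothesis rules out a nontrivial component taken with multiplicity two, not disconnectedness, so your sketch should drop that exclusion rather than attempt to justify it.
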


\subsection{The definition of ECH}
\label{sec:defECH}

Let $\Gamma\in H_1(Y)$, and let $\lambda$ be a nondegenerate contact form on~$Y$. The {\bf embedded contact homology\/} $ECH(Y,\lambda,\Gamma)$ is a vector space over $\Z/2$ defined as follows. 
It is also possible to define ECH with integer coefficients, as explained in \cite[\S9]{HTJSG2}, but we will not need this.

\begin{definition}
We define the chain complex $ECC(Y,\lambda,\Gamma)$ to be the $\Z/2$ vector space generated by ECH generators $\alpha$ with $[\alpha]=\Gamma$. Next, choose a generic $\lambda$-compatible almost complex structure $J$. We define a differential
\[
\partial_J:ECC(Y,\lambda,\Gamma) \longrightarrow ECC(Y,\lambda,\Gamma)
\]
as follows. If $\alpha$ is an ECH generator with $[\alpha]=\Gamma$, then
\[
\partial_J\alpha = \sum_\beta\langle\partial_J\alpha,\beta\rangle\beta
\]
where
\[
\langle\partial_J\alpha,\beta\rangle = \#\{\mathcal{C}\in\mathcal{M}^J(\alpha,\beta)/\R \mid I(\mathcal{C})=1\}.
\]
Here the sum is over ECH generators $\beta$ with $[\beta]=\Gamma$, and $\#$ denotes the mod 2 count.
\end{definition}

A compactness argument in \cite[\S5.3]{ECHlecture} shows that $\partial_J$ is well defined. It is shown in \cite{HTJSG1,HTJSG2} that $\partial_J^2=0$. We then define $ECH(Y,\lambda,\Gamma)$ to be the homology of the chain complex $(ECC(Y,\lambda,\Gamma),\partial_J)$.

It follows from the index ambiguity formula \eqref{eqn:iaf} that the ECH index defines a relative $\Z/d$ grading on $ECH(Y,\lambda,\Gamma)$, where $d$ is the divisibility of $c_1(\xi)+2\operatorname{PD}$ in $H^2(Y;\Z)$ mod torsion, which is always an even integer. Here the index difference between two ECH generators $\alpha$ and $\beta$ with $[\alpha]=[\beta]=\Gamma$ is defined to be $I(\alpha,\beta,Z)\mod d$ for any $Z\in H_2(Y,\alpha,\beta)$. We say that a class in $ECH(Y,\lambda,\Gamma)$ is {\bf homogeneous\/} if it can be represented by a sum of ECH generators each of which have the same relative $\Z/d$-grading. In the special case $\Gamma=0$, there is a canonical $\Z/d$ grading, where the grading of an ECH generator $\alpha$ with $[\alpha]=0$ is defined to be
\begin{equation}
\label{eqn:canonicalgrading}
I(\alpha)=I(\alpha,\emptyset,Z)
\end{equation}
for any $Z\in H_2(Y,\alpha,\emptyset)$. By \cite[Prop.\ 1.6(c)]{HutJEMS}, there is also a canonical $\Z/2$ grading on $ECH(Y,\lambda,\Gamma)$ for all $\Gamma$; an ECH generator $\alpha=\{(\alpha_i,m_i)\}$ has even grading if and only if an even number of the Reeb orbits $\alpha_i$ are positive hyperbolic.

Taubes \cite{Taubes-I} proved that if $Y$ is connected, then there is a canonical isomorphism of relatively $\Z/d$-graded $\Z/2$-vector spaces
\begin{equation}
\label{eqn:Taubesisomorphism}
ECH_*(Y,\lambda,\Gamma) \simeq \widehat{HM}^{-*}(Y,\mathfrak{s}_\xi + \operatorname{PD}(\Gamma);\Z/2).
\end{equation}
Here the right hand side is a version of Seiberg-Witten Floer cohomology, as defined by Kronheimer-Mrowka \cite{KMbook}, with $\Z/2$-coefficients. Also $\mathfrak{s}_\xi$ denotes the spin-c structure on $Y$ determined by the oriented $2$-plane field $\xi$; the spin-c structure $\mathfrak{s}_\xi+\operatorname{PD}(\Gamma)$ has first Chern class $c_1(\xi)+2\operatorname{PD}(\Gamma)$. In particular, it follows from the canonical isomorphism \eqref{eqn:Taubesisomorphism} that $ECH(Y,\lambda,\Gamma)$ does not depend on the choice of generic $\lambda$-compatible almost complex structure $J$, and so we can denote it by $ECH(Y,\xi,\Gamma)$.

\subsection{The $U$ map}
\label{sec:Umap}

Suppose now that the three-manifold $Y$ is connected. There is then a ``$U$-map''
\begin{equation}
\label{eqn:Umap}
U: ECH_*(Y,\xi,\Gamma) \longrightarrow ECH_{*-2}(Y,\xi,\Gamma)
\end{equation}
which will play a key role in our arguments.

\begin{definition}
Let $y$ be a point on $Y$ which is not on the image of any Reeb orbit, and let $J$ be a generic $\lambda$-compatible almost complex structure. We then define a map
\[
U_{J,y}: ECC_*(Y,\lambda,\Gamma) \longrightarrow ECC_{*-2}(Y,\lambda,\Gamma)
\]
as follows. If $\alpha$ is an ECH generator with $[\alpha]=\Gamma$, then
\[
U_{J,y}\alpha = \sum_\beta\langle U_{J,y}\alpha,\beta\rangle\beta
\]
where
\begin{equation}
\label{eqn:Uchainmap}
\langle U_{J,y}\alpha,\beta\rangle = \#\left\{\mathcal{C}\in\mathcal{M}^J(\alpha,\beta) \mid I(\mathcal{C})=2, \; (0,y)\in\mathcal{C}\right\}.
\end{equation}
\end{definition}

As explained in \cite[\S2.5]{HT}, $U_{J,y}$ is a chain map:
\[
\partial_J\circ U_{J,y} = U_{J,y}\circ\partial_J.
\]
Consequently $U_{J,y}$ induces a map on $ECH(Y,\xi,\Gamma)$. As shown in \cite[\S2.5]{HT}, this map does not depend on the choice of point $y$ (here one uses the assumption that $Y$ is connected), so we can denote it by $U_J$. Taubes \cite{Taubes-V} further showed that under the isomorphism \eqref{eqn:Taubesisomorphism}, the map $U_J$ agrees with a ``U-map'' on Seiberg-Witten Floer cohomology, so in particular we have a well-defined map as in \eqref{eqn:Umap}.

We will need the following fact about the nontriviality of the $U$-map, which is a consequence of \cite[Lem.\ A.1(a)]{CGHP}.

\begin{definition}
A {\bf $U$-sequence\/} in $ECH(Y,\xi,\Gamma)$ is a sequence of nonzero classes $\{\sigma_k\}_{k\ge k_0}$ in $ECH(Y,\xi,\Gamma)$ such that $U\sigma_{k+1}=\sigma_k$ for each $k\ge k_0$.
\end{definition}

\begin{proposition}
\label{prop:usequence}
If $Y$ is connected and $c_1(\xi)+2\operatorname{PD}(\Gamma)\in H^2(Y;\Z)$ is torsion, then there exists a $U$-sequence $\{\sigma_k\}_{k\ge k_0}$ in $ECH(Y,\xi,\Gamma)$ such that each class $\sigma_k$ is homogeneous and has even grading.
\end{proposition}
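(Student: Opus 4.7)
My plan is to transport the question to Seiberg-Witten Floer cohomology via Taubes's isomorphism \eqref{eqn:Taubesisomorphism}, which intertwines the $U$-maps on the two sides. Under the hypothesis, the spin-c structure $\mathfrak{s} = \mathfrak{s}_\xi + \operatorname{PD}(\Gamma)$ has torsion first Chern class $c_1(\xi) + 2\operatorname{PD}(\Gamma)$. For such spin-c structures, Kronheimer-Mrowka's structure theory for monopole Floer homology implies that $\widehat{HM}^{-*}(Y,\mathfrak{s};\Z/2)$ contains an infinite ``bottom tower'' on which the $U$-map acts as multiplication, coming from the identification with the bar version $\overline{HM}$ in sufficiently low degrees. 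Under Taubes's isomorphism this yields a sequence of nonzero classes $\{\sigma_k\}_{k \ge k_0}$ in $ECH(Y,\xi,\Gamma)$ with $U\sigma_{k+1} = \sigma_k$; this existence statement is essentially the content of \cite[Lem.\ A.1(a)]{CGHP}, which I would invoke directly.

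To obtain homogeneity, note that since $c_1(\xi)+2\operatorname{PD}(\Gamma)$ is torsion, the index ambiguity formula \eqref{eqn:iaf} promotes the relative $\Z/d$-grading on $ECH(Y,\xi,\Gamma)$ to a relative $\Z$-grading (i.e.\ $d = 0$ in the convention of the paper). The bottom tower classes transported from $\widehat{HM}$ are homogeneous by virtue of Kronheimer-Mrowka's absolute $\Q$-grading on $\widehat{HM}$ for torsion spin-c and the compatibility of Taubes's isomorphism with gradings, so one can choose the $\sigma_k$ to be genuinely homogeneous with consecutive gradings differing by $-2$.

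For evenness, observe that $U$ shifts the $\Z$-grading by $-2$, hence preserves the canonical $\Z/2$-grading on $ECH$ from \cite[Prop.\ 1.6(c)]{HutJEMS}; therefore all $\sigma_k$ share a common parity. To identify this common parity as even, I would appeal to the Kronheimer-Mrowka absolute $\Q$-grading: the bottom tower sits in $\Q$-gradings $\{-d_3(\mathfrak{s}) - 2k\}_{k \ge 0}$ of a single parity mod $2$, whose translation under Taubes's isomorphism lands in the even part of the ECH $\Z/2$-grading. A concrete alternative is to verify this by explicit computation for a model contact form (e.g.\ an irrational ellipsoidal tight contact form on $S^3$ whose ECH generators are products of elliptic orbits and hence even), and then propagate by cobordism invariance to reach the given $(Y,\xi,\Gamma)$.

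The main obstacle is this last parity identification: the existence of the $U$-sequence follows immediately from \cite[Lem.\ A.1(a)]{CGHP}, so the substantive work lies in confirming that the bottom Seiberg-Witten tower lands in the even part of the canonical $\Z/2$-grading on ECH rather than the odd part.
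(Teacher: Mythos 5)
Your overall route is the paper's: the paper proves Proposition~\ref{prop:usequence} by simply invoking \cite[Lem.~A.1(a)]{CGHP}, and that lemma is itself obtained exactly along the lines you describe, by passing through Taubes's isomorphism \eqref{eqn:Taubesisomorphism} and Kronheimer--Mrowka's structure theory of $\widehat{HM}$ for a spin-c structure with torsion first Chern class. The one thing to correct is your division of labor: \cite[Lem.~A.1(a)]{CGHP} already asserts the existence of a $U$-sequence of \emph{homogeneous} classes of \emph{even} grading, so the ``main obstacle'' you isolate (the parity identification) is not left open by the citation; the paper needs nothing beyond the reference, and your homogeneity remarks (that torsion $c_1(\xi)+2\operatorname{PD}(\Gamma)$ gives $d=0$, and that $U$ preserves the canonical $\Z/2$-grading) are fine.

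If instead you intend to reprove the parity statement rather than quote it, your sketch does not yet close it. The assertion that the reducible (``bottom'') tower in $\widehat{HM}^{-*}(Y,\mathfrak{s}_\xi+\operatorname{PD}(\Gamma);\Z/2)$ is carried into the even part of the canonical $\Z/2$-grading on ECH is precisely the content to be proved; moreover, when $b_1(Y)>0$ the bar version $\overline{HM}$ contributes classes of both parities, so one must actually exhibit an even $U$-tower rather than argue that ``the'' tower has a single parity. Your fallback---computing the parity for an irrational ellipsoid in $S^3$ and then ``propagating by cobordism invariance'' to a general $(Y,\xi,\Gamma)$---has no clear mechanism: cobordism-induced maps on Seiberg--Witten Floer cohomology shift absolute gradings by amounts depending on the topology of the cobordism and do not identify the $U$-towers of different three-manifolds (or of different classes $\Gamma$), so the $S^3$ model computation does not transfer. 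This grading bookkeeping is exactly what the appendix of \cite{CGHP} carries out, which is why the present paper is content to cite it.
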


\subsection{Spectral invariants and the volume property}
\label{sec:volume}

It follows from the definition of ``$\lambda$-compatible'' almost complex structure that the differential $\partial_J$, and the U-map $U_{J,y}$, decrease the symplectic action: If $\alpha$ and $\beta$ are ECH generators with $\langle\partial_J\alpha,\beta\rangle$ or $\langle U_{J,y}\alpha,\beta\rangle$ nonzero, then $\mathcal{A}(\alpha)>\mathcal{A}(\beta)$. In particular, for each real number $L$, we can define the {\bf filtered embedded contact homology\/}, denoted by $ECH^L(Y,\lambda,\Gamma)$, to be the homology of the subcomplex of $(ECC(Y,\lambda,\Gamma),\partial_J)$ generated by ECH generators with symplectic action less than $L$. There is also an inclusion-induced map
\begin{equation}
\label{eqn:iim}
ECH^L(Y,\lambda,\Gamma) \longrightarrow ECH(Y,\xi,\Gamma).
\end{equation}
It is shown in \cite[Thm.\ 1.3]{cc2} that the filtered ECH and the inclusion-induced map \eqref{eqn:iim} do not depend on the choice of generic $\lambda$-compatible almost complex structure $J$. However the filtered ECH, unlike the regular ECH, does depend on the contact form $\lambda$ and not just on the contact structure $\xi$, as detected for example by the following spectral invariants.

\begin{definition}
Given a nonzero class $\sigma\in ECH(Y,\xi,\Gamma)$, we define the {\bf spectral invariant\/}
\[
c_\sigma(Y,\lambda)\in\R
\]
to be the infimum over $L$ such that $\sigma$ is in the image of the inclusion-induced map \eqref{eqn:iim}.
\end{definition}

Equivalently, if one makes a choice of generic $\lambda$-compatible $J$, then $c_\sigma(Y,\lambda)$ is the minimum $L$ such that the class $\sigma$ can be represented by a cycle in the chain complex $(ECC(Y,\lambda,\Gamma),\partial_J)$ which is a sum of ECH generators each having symplectic action less than or equal to $L$. As shown in \cite{qech}, each spectral invariant $c_\sigma$ is a $C^0$-continuous function of the contact form $\lambda$, and as such extends by continuity to degenerate contact forms.

The following ``Weyl law'' for the ECH spectrum will play a key role in the proof of the main theorem. This is a special case of \cite[Thm.\ 1.3]{ECHasymptotics}.

\begin{proposition}
\label{prop:vol}
Suppose $Y$ is connected and $c_1(\xi)+2\operatorname{PD}(\Gamma)\in H^2(Y;\Z)$ is torsion. Let $\{\sigma_k\}_{k\ge k_0}$ be a $U$-sequence in $ECH(Y,\xi,\Gamma)$. Then
\[
\lim_{k \to \infty} \frac{ c_{\sigma_k}(Y,\lambda)^2}{k} = 2 \int_Y \lambda \wedge d \lambda.\]
\end{proposition}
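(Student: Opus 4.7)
The plan is to deduce the proposition by direct appeal to \cite[Thm.\ 1.3]{ECHasymptotics}, the general Weyl-type asymptotic formula for ECH spectral invariants, together with the $C^0$-continuity statement from \cite{qech} recalled in \S\ref{sec:volume}. First, I would unpack the hypotheses and check that they match. Since $c_1(\xi)+2\operatorname{PD}(\Gamma)$ is torsion, the integer $d$ appearing in the discussion of the relative $\Z/d$-grading on $ECH(Y,\xi,\Gamma)$ is zero, so $ECH(Y,\xi,\Gamma)$ carries an honest relative $\Z$-grading. Consequently the $U$-sequence $\{\sigma_k\}_{k\ge k_0}$ decreases the relative grading by two at each step, with gradings tending to $-\infty$; this is exactly the setting in which \cite{ECHasymptotics} operates.

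In the nondegenerate case, the content of \cite[Thm.\ 1.3]{ECHasymptotics} is precisely the desired limit $c_{\sigma_k}(Y,\lambda)^2/k \to 2\int_Y\lambda\wedge d\lambda$ along any such $U$-sequence. For the present paper I would treat this as a black box. Internally, the proof in \cite{ECHasymptotics} proceeds via matched upper and lower asymptotic bounds on ECH generators organized by symplectic action and grading, and ultimately rests on Taubes's isomorphism \eqref{eqn:Taubesisomorphism} with Seiberg-Witten Floer cohomology together with a parallel asymptotic computation on the Seiberg-Witten side.

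To extend from nondegenerate to possibly degenerate $\lambda$, I would use that both sides of the displayed equation are $C^0$-continuous in the contact form. Each spectral invariant $c_{\sigma_k}$ extends by $C^0$-continuity from nondegenerate to arbitrary contact forms (as recalled in \S\ref{sec:volume}, citing \cite{qech}), while $\int_Y\lambda\wedge d\lambda$ is manifestly continuous in $\lambda$. Choosing a sequence of nondegenerate $\lambda_n \to \lambda$ in $C^0$ and using a diagonal extraction $k_n\to\infty$ to control $|c_{\sigma_{k_n}}(Y,\lambda)^2 - c_{\sigma_{k_n}}(Y,\lambda_n)^2|/k_n$ transfers the nondegenerate Weyl limit to $\lambda$. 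The main obstacle, were one not able to cite \cite{ECHasymptotics}, would be the Weyl asymptotics themselves; given that reference, only hypothesis matching and a routine continuity extension remain.
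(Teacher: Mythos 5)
Your proposal takes essentially the same route as the paper: Proposition~\ref{prop:vol} is justified there simply by observing that it is a special case of \cite[Thm.\ 1.3]{ECHasymptotics}, which is exactly your black-box citation, and your remarks about extending to degenerate $\lambda$ via the $C^0$-continuity of the spectral invariants are consistent with how $c_\sigma$ is defined in \S\ref{sec:volume}. One small correction: since $U$ lowers the grading by two and $U\sigma_{k+1}=\sigma_k$, the gradings of the $\sigma_k$ tend to $+\infty$ (not $-\infty$) as $k\to\infty$, which is precisely what the hypothesis of the cited asymptotics theorem requires.
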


\subsection{Partition conditions}
\label{sec:part}

Let $\alpha=\{(\alpha_i,m_i)\}$ and $\beta=\{(\beta_j,n_j)\}$ be orbit sets, and suppose that $C\in\mathcal{M}^J(\alpha,\beta)$ is a somewhere injective curve. Then $C$ has positive ends at covers of~$\alpha_i$ with total multiplicity~$m_i$; the multiplicities of these covers determine a partition of~$m_i$, which we denote by $p_{\alpha_i}^+(C)$. Likewise, the multiplicities of the covers of $\beta_j$ at which $C$ has negative ends determine a partition of $n_j$, which we denote by $p_{\beta_j}^-(C)$. It turns out that if equality holds in the index inequality \eqref{eqn:indexinequality}, then these partitions are uniquely determined as follows.

\begin{definition}
If $\theta$ is a real number and $m$ is a positive integer, we define two partitions of $m$, the {\bf positive partition\/} $p_\theta^+(m)$ and the {\bf negative partition\/} $p_\theta^-(m)$, as follows. We define the positive partition $p^+_\theta(m)$ to be the set of horizontal displacements of the maximal polygonal path that starts at $(0,0)$, ends at $(m, \lfloor m \theta \rfloor)$, is the graph of a piecewise linear concave function with vertices at integer lattice points, and does not go above the line $y = m \theta$. 
Here ``horizontal displacement'' means the horizontal displacement between two consecutive integer lattice points on the path. Similarly, we define the negative partition $p^-_\theta(m)$ to be the set of horizontal displacements of the edges of the minimal polygonal path that starts at $(0,0)$, ends at $(m,\ceil{m\theta})$, is the graph of a piecewise linear convex function with vertices at integer lattice points, and does not go below the line $y=m\theta$.
\end{definition}

Note that $p_\theta^-(m) = p_{-\theta}^+(m)$. Also note that for a given $m$, the partitions $p_\theta^+(m)$ and $p_\theta^-(m)$ depend only on the equivalence class of $\theta$ in $\R/\Z$. The following is a basic example:

\begin{example}
\label{example:smallangle}
If $0 \le \theta < 1/m$, then $p_\theta^+(m)=(1,\ldots,1)$. This is because the maximal polygonal path described above is the horizontal line from $(0,0)$ to $(m,0)$.
\end{example}

We will also need the following generalization:

\begin{example}
\label{example:nearlyrationalangle}
If $\theta$ is a rational number $a/b$ in lowest terms, and if $m$ is a multiple of $b$, then for $\epsilon\ge 0$ sufficiently small with respect to $m$ and $\theta$, we have $p_{a/b+\varepsilon}^+(m)=(m/b,\ldots,m/b)$. This is because the maximal polygonal path described above is the line from $(0,0)$ to $(m,ma/b)$.
\end{example} 

If $\gamma$ is a simple Reeb orbit, then the linearized Reeb flow has a well-defined rotation number $\theta\in\R/\Z$ independent of a choice of trivialization. Given a positive integer $m$, we define
\[
p_\gamma^+(m) = p_\theta^+(m), \quad\quad\quad p_\gamma^-(m)=p_\theta^-(m).
\]

Now let $J$ be a generic $\lambda$-compatible almost complex structure, suppose that $\mathcal{C}\in \mathcal{M}^J(\alpha,\beta)$ has ECH index $1$ or $2$, and in the latter case suppose that $\alpha$ and $\beta$ are ECH generators. Write $\mathcal{C}=\mathcal{C}_0+C_1$ as in Proposition~\ref{prop:lowindexcurves}. For each $i$, let $m_i'$ denote the total covering multiplicity of the covers of $\alpha_i$ at which $C_1$ has positive ends. Thus $m_i'\le m_i$, and $m_i-m_i'$ is the multiplicity of the trivial cylinder $\R\times\alpha_i$ in $\mathcal{C}_0$. Likewise, let $n_j'$ denote the total covering multiplicity of the covers of $\beta_j$ at which $C_1$ has negative ends.

\begin{proposition}
\label{prop:partitionconditions}
Let $J$ be generic, let $\mathcal{C}\in\mathcal{M}^J(\alpha,\beta)$ be a $J$-holomorphic current with ECH index $1$ or $2$, assume that $\alpha$ and $\beta$ are ECH generators if $I(\mathcal{C})=2$, and write $\mathcal{C}=\mathcal{C}_0+C_1$ as in Proposition~\ref{prop:lowindexcurves}. Then:
\begin{itemize}
\item
For each $i$ with $m_i'>0$, and for each $n$ with $0\le n\le m_i-m_i'$, we have
\begin{equation}
\label{eqn:positivepartitions}
p_{\alpha_i}^+(m_i'+n) = p_{\alpha_i}^+(C_1) \cup p_{\alpha_i}^+(n).
\end{equation}
\item
Similarly, for each $j$ with $n_j'>0$, and for each $n$ with $0\le n \le n_j-n_j'$, we have
\begin{equation}
\label{eqn:negativepartitions}
p_{\beta_j}^-(n_j'+n) = p_{\beta_j}^-(C_1) \cup p_{\beta_j}^-(n).
\end{equation}
\end{itemize}
Here $\cup$ denotes the union of multisets, and we interpret $p_\gamma^\pm(0)=\emptyset$.
\end{proposition}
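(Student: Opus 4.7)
The plan is to exploit the structural decomposition $\mathcal{C} = \mathcal{C}_0 + C_1$ given by Proposition~\ref{prop:lowindexcurves} (which applies in both the $I(\mathcal{C})=1$ and $I(\mathcal{C})=2$ cases, using the ECH-generator hypothesis in the latter): $C_1$ is embedded, disjoint from $\mathcal{C}_0$, and satisfies $\operatorname{ind}(C_1) = I(C_1) \in \{1,2\}$. Since $C_1$ contributes total positive multiplicity $m_i'$ at $\alpha_i$, the remaining positive ends of $\mathcal{C}$ at $\alpha_i$ come from copies of the trivial cylinder $\R\times\alpha_i$ inside $\mathcal{C}_0$, with combined multiplicity $m_i - m_i'$; the analogous statement holds at negative ends.

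The core step is to invoke the partition calculus for embedded somewhere-injective curves that achieve equality in the index inequality, due to Hutchings \cite{HutJEMS} and reviewed in \cite[\S3.9]{ECHlecture}. Applied to $C_1$, this yields $p^+_{\alpha_i}(C_1) = p^+_{\alpha_i}(m_i')$ whenever $m_i'>0$, and $p^-_{\beta_j}(C_1) = p^-_{\beta_j}(n_j')$ whenever $n_j'>0$. The proof rewrites $I(C_1) - \operatorname{ind}(C_1)$, using the relative adjunction formula together with asymptotic writhe/winding bounds at the ends of $C_1$, as a sum of nonnegative integer contributions indexed by these ends. The contribution at a given end vanishes if and only if the partition there equals the appropriate positive or negative partition determined by the rotation number of the asymptotic Reeb orbit; since $I(C_1) = \operatorname{ind}(C_1)$, every contribution must vanish, forcing the claimed identifications.

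With these identifications in hand, the identities \eqref{eqn:positivepartitions} and \eqref{eqn:negativepartitions} reduce to the purely combinatorial claim
\[
p^+_\theta(m_i' + n) = p^+_\theta(m_i') \cup p^+_\theta(n) \quad \text{for } 0 \le n \le m_i - m_i',
\]
with $\theta$ the rotation number of $\alpha_i$, and its negative analogue. One checks this by inspecting the maximal polygonal path defining $p^+_\theta(m_i' + n)$ and verifying, via the concavity and extremality that define the path, that it passes through the lattice point $(m_i', \lfloor m_i'\theta\rfloor)$; the path then splits as the concatenation of the maximal path for $m_i'$ with a translate of the maximal path for $n$, giving the union of partitions.

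The main obstacle is the partition-calculus step: writing $I(C_1) - \operatorname{ind}(C_1)$ as an explicit sum of nonnegative end-by-end contributions demands careful tracking of the winding numbers of asymptotic eigenvectors at each end relative to a fixed trivialization, the Conley--Zehnder indices of the iterates of the asymptotic Reeb orbits, and the self-intersection data of $C_1$ encoded by the relative adjunction inequality. Once the end partitions of $C_1$ are pinned down, the final combinatorial step involving the polygonal paths is essentially bookkeeping.
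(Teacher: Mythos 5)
Your opening steps are fine: Proposition~\ref{prop:lowindexcurves} gives the decomposition $\mathcal{C}=\mathcal{C}_0+C_1$ with $C_1$ embedded and $\operatorname{ind}(C_1)=I(C_1)$, and for generic $J$ the equality case of the index inequality (Hutchings \cite{HutJEMS}, cf.\ \cite{ECHlecture}) does pin down the end partitions of $C_1$ itself, i.e.\ the $n=0$ case $p^+_{\alpha_i}(C_1)=p^+_{\alpha_i}(m_i')$ and $p^-_{\beta_j}(C_1)=p^-_{\beta_j}(n_j')$. Note that the paper itself does not prove this proposition at all: it is quoted as standard ECH background in the preliminaries, with the relevant sources being \cite{ECHlecture} and the Hutchings--Taubes analysis (cf.\ the citation of \cite[Lem.~7.28]{HTJSG1} in Remark~\ref{rem:7.28}).

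The genuine gap is in your last step. The identity $p^+_\theta(m'+n)=p^+_\theta(m')\cup p^+_\theta(n)$ is not a purely combinatorial fact; it is false for general $\theta$. For example, if $\theta$ is irrational with fractional part in $(1/2,1)$ and $m'=n=1$, then $p^+_\theta(1)\cup p^+_\theta(1)=(1,1)$, whereas $p^+_\theta(2)=(2)$: indeed $p^-_\theta(2)=p^+_{1-\theta}(2)=(1,1)$ by Example~\ref{example:smallangle}, and $p^+_\theta(2)$ must differ from it by Lemma~\ref{lem:partitionfacts}(b). In particular the maximal concave lattice path for $m'+n$ need not pass through $(m_i',\lfloor m_i'\theta\rfloor)$, so the claimed concatenation argument breaks down. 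The cases $n>0$ are exactly the additional content of the proposition: they assert that certain configurations of ends of $C_1$ together with trivial cylinders $\R\times\alpha_i$ in $\mathcal{C}_0$ (e.g.\ a simple positive end of $C_1$ at $\alpha_i$ coexisting with a trivial cylinder over $\alpha_i$ when the rotation number lies in $(1/2,1)$) simply cannot occur for generic $J$. Proving this requires input about the whole current, namely the writhe/asymptotic-linking estimates between the ends of $C_1$ and the trivial-cylinder components as in \cite{HutJEMS} and \cite{HTJSG1}, not just equality in the index inequality for $C_1$ alone. Since the paper's later arguments (Remark~\ref{rem:7.28} and the score lemmas of Section~3) use precisely these $n>0$ cases, your proposal as written establishes only the $n=0$ case and misses the substantive part of the statement.
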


In particular, \eqref{eqn:positivepartitions} implies that if $C_1$ has positive ends at covers of $\alpha_i$, then the multiplicities of these covers are determined by $p_{\alpha_i}^+(C_1)=p_{\alpha_i}^+(m_i')$.
%Moreover, if $\mathcal{C}_0$ includes the trivial cylinder $\R\times\alpha_i$, then $1\in p_{\alpha_i}^+(m_i'+1)$.

\begin{remark}
\label{rem:7.28}
In the case when $\mathcal{C}_0$ includes the trivial cylinder $\R\times\alpha_i$ and $C_1$ has a positive end asymptotic to a cover of $\alpha_i$, so that $m_i>m_i'>0$, it follows from \eqref{eqn:positivepartitions} with $n=m_i-m_i'$ that if $1\in p_{\alpha_i}^+(m_i-m_i')$, then $1\in p_{\alpha_i}^+(m_i)$. The converse is also true; this follows from \cite[Lem.\ 7.28]{HTJSG1}. Likewise, if $n_j>n_j'>0$, then $1\in p_{\beta_j}^-(n_j-n_j')$ if and only if $1\in p_{\beta_j}^-(n_j)$.
\end{remark}

Here are some additional facts about the partitions that we will need.

\begin{lemma}
\label{lem:partitionfacts}
Let $\theta\in\R$ be irrational, and let $m$ be a positive integer. Then:
\begin{itemize}
\item[(a)]
$1\in p_\theta^\pm(m)\Longleftrightarrow 1\notin p_\theta^\mp(m)$.
\item[(b)]
If $m>1$, then the partitions $p_\theta^+(m)$ and $p_\theta^-(m)$ are disjoint.
\item[(c)]
If $|p_\theta^+(m)|+|p_\theta^-(m)|\le 3$, then $m\{\theta\}<2$ or $m(1-\{\theta\})<2$, where $\{\theta\}\in[0,1)$ denotes the fractional part of $\theta$.
\end{itemize}
\end{lemma}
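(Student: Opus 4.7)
\textbf{Proof plan for Lemma \ref{lem:partitionfacts}.}

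Since the partitions $p_\theta^\pm(m)$ depend only on $\{\theta\}$ (replacing $\theta$ by $\theta+1$ shears the lattice configuration vertically without changing any horizontal width), I may assume $\theta \in (0,1)$. Under this normalization, $p_\theta^+(m)$ is the multiset of horizontal distances between consecutive integer lattice points lying on the upper concave envelope $H^+$ of $\{(k,\lfloor k\theta\rfloor) : 0 \le k \le m\}$, and $p_\theta^-(m)$ is defined analogously via the lower convex envelope $H^-$ of $\{(k,\lceil k\theta\rceil) : 0 \le k \le m\}$. The key slope observation is that every edge of $H^+$ has slope in $[0,\theta) \subset [0,1)$ (the first slope from $(0,0)$ to any $(a,\lfloor a\theta\rfloor)$ is $\lfloor a\theta\rfloor/a < \theta$, and concavity keeps subsequent slopes at most this), while every edge of $H^-$ has slope in $(\theta,1]$. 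Hence a width-$1$ edge of $H^+$ must have slope $0$ and, by concavity, sits at the right end; symmetrically, width-$1$ edges of $H^-$ have slope $1$ and sit at the right end.

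For \textbf{(a)}, this slope structure gives (for $m \ge 2$) the equivalences $1 \in p_\theta^+(m) \iff \lfloor m\theta\rfloor = \lfloor(m-1)\theta\rfloor \iff \{(m-1)\theta\} < 1-\theta$, and $1 \in p_\theta^-(m) \iff \lceil m\theta\rceil = \lceil(m-1)\theta\rceil+1 \iff \{(m-1)\theta\} > 1-\theta$. Since $\theta$ is irrational, $\{(m-1)\theta\}\ne 1-\theta$, so exactly one of the two conditions holds.

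For \textbf{(b)}, each width $w$ in $p_\theta^+(m)$ arises from a unique true edge of $H^+$ whose slope in lowest terms has denominator $w$: a true edge of slope $s/q$ with $\gcd(s,q)=1$ and horizontal span $\ell$ subdivides into $\ell/q$ congruent sub-edges of width $q$. Thus the widths in $p_\theta^\pm(m)$ are precisely the denominators (in lowest terms) of true-edge slopes of $H^\pm$, which are respectively best lower/upper rational approximations of $\theta$ of denominator $\le m$ (i.e., certain convergents and semi-convergents of $\theta$ on the appropriate side). By the standard theory of continued fractions, for irrational $\theta$ the denominators of best lower and best upper approximations form disjoint sets (one easily sees this from the Stern--Brocot mediant structure: a shared denominator would force Farey neighbours of $\theta$ on opposite sides to have equal denominators, which contradicts $|p_1 q_2 - p_2 q_1| = 1$ unless that denominator is $1$, a case already excluded on the $p^+$ side by slopes $< 1$). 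Disjointness of widths follows.

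For \textbf{(c)}, part (a) (applied to $m \ge 2$) forces at least one of $(m-1,\lfloor(m-1)\theta\rfloor)$ or $(m-1,\lceil(m-1)\theta\rceil)$ to be a middle vertex of its respective envelope, so $(|p^+|,|p^-|) = (1,1)$ is impossible; the hypothesis $|p^+| + |p^-| \le 3$ therefore leaves only $(1,2)$ and $(2,1)$, exchanged by $\theta \leftrightarrow 1-\theta$. Assume $(1,2)$. Then $H^+$ is the single segment $(0,0)\to(m,p)$ with $p := \lfloor m\theta\rfloor$, $\gcd(m,p)=1$, and every intermediate $(k,\lfloor k\theta\rfloor)$ strictly below; by part (a), $H^-$ ends with the slope-$1$ width-$1$ edge $(m-1,p)\to(m,p+1)$ preceded by the edge $(0,0)\to(m-1,p)$ with $\gcd(m-1,p)=1$ and every intermediate $(k,\lceil k\theta\rceil)$ at or above it. The joint coprimality conditions $\gcd(m,p)=\gcd(m-1,p)=1$ together with these ``no intermediate vertex'' geometric inequalities force $p = 1$: for any candidate $p \ge 2$, one checks (from the slopes of the pre-tail segment compared with $(k,\lceil k\theta\rceil)$ for $k$ near $m/2$) that some intermediate lattice point falls strictly below, contradicting $|p^-|=2$. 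Hence $\theta \in [1/m, 1/(m-1))$, which yields $m\{\theta\} = m\theta \in [1, m/(m-1)) \subseteq [1,2)$. The symmetric case gives $m(1-\{\theta\}) < 2$.

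The main obstacle is part (c): extracting $p=1$ from the joint coprimality and ``no intermediate vertex'' constraints requires care, and it is essential to first eliminate the $(1,1)$ configuration via part (a). Parts (a) and (b) rest on the elementary slope structure of $H^\pm$ and on standard continued-fraction facts, respectively.
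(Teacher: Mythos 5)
Your part (a) is correct (for $m\ge 2$, which is the only case in which (a) can hold: for $m=1$ both partitions equal $(1)$, so your implicit restriction is in fact necessary), and the slope dichotomy you set up — edges of $H^+$ have slopes in $[0,\theta)$, edges of $H^-$ in $(\theta,1]$, so a width-$1$ gap forces a terminal slope-$0$ (resp.\ slope-$1$) edge — is a sound foundation. The trouble starts in (b). The assertion that the edge slopes of $H^\pm$ are ``best lower/upper rational approximations of $\theta$ of denominator $\le m$'' is both unproven and, as stated, false: only the first (steepest) edge of $H^+$ is best among denominators $\le m$; later edges are strictly worse. What you need, and what is true, is that a primitive edge vector $(q,p)$ of $H^+$ is best from below among denominators $\le q$: if $p'/q'\in(p/q,\theta)$ with $q'\le q$, then the lattice point $(k+q',\lfloor k\theta\rfloor+p')$ lies strictly below the line $y=\theta x$ and strictly above the path, contradicting that the path is the upper envelope of the points $(j,\lfloor j\theta\rfloor)$ — but no such argument appears in your write-up. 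Moreover your disposal of a shared width $q=1$ is wrong: it is not ``excluded on the $p^+$ side by slopes $<1$'', since a slope-$0$ edge has denominator $1$ and occurs precisely when $1\in p^+_\theta(m)$; the correct way to kill $q=1$ is part (a) itself. With these two repairs your Farey-determinant step ($q|b-a|=1\Rightarrow q=1$) does yield (b), but as written (b) has genuine gaps.

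Part (c) contains the more serious gap, exactly where you write ``one checks''. The reduction to $(|p^+|,|p^-|)\in\{(1,2),(2,1)\}$ and the identification of the configuration ($H^+$ the single primitive edge $(0,0)\to(m,p)$, $H^-$ given by $(0,0)\to(m-1,p)\to(m,p+1)$, $\gcd(m,p)=\gcd(m-1,p)=1$, $\theta\in(p/m,p/(m-1))$) are correct, but the decisive claim that this forces $p=1$ is only asserted, and the hint (intermediate points for $k$ near $m/2$, contradiction with $|p^-|=2$) neither constitutes a proof nor points at the right mechanism: the violating abscissa is determined arithmetically, and depending on $\theta$ the contradiction comes from the $H^+$ condition rather than the $H^-$ one. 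A correct argument: for $p\ge2$ pick $d\in\{1,\dots,m-1\}$ with $pd\equiv-1\pmod m$; then $d\le m-2$ (otherwise $p\equiv1\pmod m$), and $\gcd(p,m-1)=1$ forces $pd\ge 2m-1$, so $c=(pd+1)/m\ge2$ gives a fraction $c/d$ with $d\le m-2$ lying strictly between $p/m$ and $p/(m-1)$; if $\theta>c/d$ then $\lfloor d\theta\rfloor\ge c>dp/m$ contradicts $|p^+_\theta(m)|=1$, while if $\theta<c/d$ then $\lceil d\theta\rceil\le c<dp/(m-1)$ contradicts the shape of $H^-$. Without this (or an equivalent) arithmetic input, (c) is not proved. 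For comparison, the paper does not prove the lemma at all — it cites \cite{HutJEMS} (Rem.\ 4.4) for (a),(b) and the proof of Lem.\ 4.9 in \cite{CGHP} for (c) — so a self-contained proof is a legitimately different route, but yours is currently incomplete at the points above.
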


\begin{proof}
Statements (a) and (b) are proved in \cite[Rem.\ 4.4]{HutJEMS}; see also \cite[Ex.\ 3.13(c)]{ECHlecture} for a stronger version of (b). Statement (c) is shown in the proof of \cite[Lem.\ 4.9]{CGHP}.
\end{proof}

\subsection{The $J_0$ index and topological complexity bounds}
\label{sec:J0index}

Recall from the index inequality \eqref{eqn:indexinequality} that the ECH index bounds the Fredholm index of a somewhere injective curve. There is a variant of the ECH index, called the {\bf $J_0$-index}, which instead bounds the topological complexity of the curve.

\begin{definition}
Let  $\alpha=\{(\alpha_i,m_i)\}$ and $\beta=\{(\beta_j,n_j)\}$ be orbit sets with $[\alpha]=[\beta]$, and let $Z\in H_2(Y,\alpha,\beta)$. We define
\begin{equation}
\label{eqn:defJ0}
J_0(\alpha,\beta,Z) = I(\alpha,\beta,Z) - \left( 2c_\tau(Z) + \CZ_\tau^{\operatorname{top}}(\alpha) - \CZ_\tau^{\operatorname{top}}(\beta)\right).
\end{equation}
Here $\CZ_\tau^{\operatorname{top}}(\alpha)$ adds up the Conley-Zehnder indices of the ``top'' iterates of the orbits $\alpha_i$, namely
\begin{equation}
\label{eqn:CZtop}
\CZ_\tau^{\operatorname{top}}(\alpha) = \sum_i\CZ_\tau(\alpha_i^{m_i}).
\end{equation}
\end{definition}

If $\mathcal{C}\in\mathcal{M}^J(\alpha,\beta)$, then similarly to \eqref{eqn:ECHC}, we write $J_0(\mathcal{C})=J_0(\alpha,\beta,[\mathcal{C}])$.

In the situation relevant to us, $J_0$ bounds topological complexity as follows. Suppose that the almost complex structure $J$ is generic, let $\alpha$ and $\beta$ be ECH generators, suppose $\mathcal{C}\in\mathcal{M}^J(\alpha,\beta)$ has ECH index $2$, and write $\mathcal{C}=\mathcal{C}_0+C_1$ as in Proposition~\ref{prop:lowindexcurves}. For each simple Reeb orbit $\gamma$, define $e_\gamma^+(\mathcal{C})$ to be twice the number of positive ends of $C_1$ at covers of $\gamma$, minus $1$ if $C_1$ has positive ends at covers of $\gamma$ but $\mathcal{C}_0$ does not include the trivial cylinder $\R\times\gamma$. Likewise, define $e_\gamma^-(\mathcal{C})$ to be the twice the number of negative ends of $C_1$ at covers of $\gamma$, minus $1$ if $C_1$ has negative ends at covers of $\gamma$ but $\mathcal{C}_0$ does not include the trivial cylinder $\R\times\gamma$. Write $e_\gamma(\mathcal{C}) = e_\gamma^+(\mathcal{C})+e_\gamma^-(\mathcal{C})$. We then have:

\begin{proposition}
\label{prop:J0}
In the above situation we have
\begin{equation}
\label{eqn:j0c}
J_0(\mathcal{C}) = 2g(C_1) - 2 + \sum_\gamma e_\gamma(\mathcal{C}).
\end{equation}
Here $g(C_1)$ denotes the genus of $C_1$, and the sum is over simple Reeb orbits $\gamma$.
\end{proposition}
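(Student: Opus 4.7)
The plan is to expand the definition of $J_0(\mathcal{C})$, apply the relative adjunction formula to the embedded piece $C_1$, and then invoke the writhe bound at the extremal partitions provided by Proposition~\ref{prop:partitionconditions} to collapse everything to the claimed formula.

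First I would unpack the definition. Using \eqref{eqn:defJ0} together with the expansion of $I$, the cancellation $\sum_{k=1}^{m_i}\CZ_\tau(\alpha_i^k)-\CZ_\tau(\alpha_i^{m_i})=\sum_{k=1}^{m_i-1}\CZ_\tau(\alpha_i^k)$ (and similarly for $\beta$) gives
$$J_0(\mathcal{C})=Q_\tau([\mathcal{C}])-c_\tau([\mathcal{C}])+\sum_i\sum_{k=1}^{m_i-1}\CZ_\tau(\alpha_i^k)-\sum_j\sum_{k=1}^{n_j-1}\CZ_\tau(\beta_j^k).$$
Writing $\mathcal{C}=\mathcal{C}_0+C_1$, trivial cylinders have vanishing relative first Chern class, so $c_\tau([\mathcal{C}])=c_\tau(C_1)$; and the relative intersection pairing splits as $Q_\tau([\mathcal{C}])=Q_\tau(C_1)+Q_\tau(\mathcal{C}_0)+2\ell_\tau(\mathcal{C}_0,C_1)$, where $Q_\tau(\mathcal{C}_0)$ records the self-linking of the iterated trivial cylinders in the trivialization $\tau$, and $\ell_\tau(\mathcal{C}_0,C_1)$ is the asymptotic linking of the ends of $C_1$ with those of $\mathcal{C}_0$.

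Next I would apply relative adjunction to $C_1$. Because $C_1$ is embedded and somewhere injective, the formula (see \cite[\S3.3]{ECHlecture}) gives $c_\tau(C_1)=\chi(C_1)+Q_\tau(C_1)+w_\tau(C_1)$, and since $\chi(C_1)=2-2g(C_1)-h(C_1)$ (where $h(C_1)$ is the number of punctures), we obtain $Q_\tau(C_1)-c_\tau(C_1)=2g(C_1)-2+h(C_1)-w_\tau(C_1)$. Now Proposition~\ref{prop:partitionconditions} says that the partition of multiplicities at the positive ends of $C_1$ at $\alpha_i$ is the extremal partition $p_{\alpha_i}^+(m_i')$, and similarly on the negative side, so the writhe bound of \cite{HutJEMS} is saturated and $w_\tau(C_1)$ is given by an explicit sum of iterated Conley--Zehnder indices at these extremal partitions.

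Substituting the adjunction formula and the saturated writhe bound back into the expression for $J_0(\mathcal{C})$, the contributions of $w_\tau(C_1)$, the self-linking $Q_\tau(\mathcal{C}_0)$, the cross-linking $\ell_\tau(\mathcal{C}_0,C_1)$, and the iterated CZ sums collapse: the CZ sums at multiplicities $m_i-m_i'$ and $n_j-n_j'$ cancel against the self-linking of the trivial cylinders in $\mathcal{C}_0$, and the writhe-CZ combinations at the ends of $C_1$ reduce to purely combinatorial counts, leaving $2g(C_1)-2+\sum_\gamma e_\gamma(\mathcal{C})$. The $\pm 1$ adjustment built into $e_\gamma^\pm$ encodes whether $\mathcal{C}_0$ contains the trivial cylinder $\R\times\gamma$: when it does, the asymptotic linking with that trivial cylinder absorbs a half-integer writhe contribution; when it does not, the unpaired writhe term produces the $-1$ correction. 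The main obstacle is this bookkeeping: the writhe-at-extremal-partition identities, the self-linking of iterated trivial cylinders contributing to $Q_\tau(\mathcal{C}_0)$, and the cross-linking $\ell_\tau(\mathcal{C}_0,C_1)$ must interact with the difference between $\CZ_\tau^{\operatorname{top}}(\alpha)$ and $\sum_{k=1}^{m_i}\CZ_\tau(\alpha_i^k)$ precisely correctly. The analogous formula in the purely embedded case $\mathcal{C}_0=0$ is established in \cite{HutJEMS}; the content here is the extension to currents of the form $\mathcal{C}_0+C_1$, with the $e_\gamma^\pm$ adjustment being exactly what is forced by the asymptotic-linking accounting.
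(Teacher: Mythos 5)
Your overall skeleton (expand $J_0$ via its definition, apply relative adjunction to the embedded component $C_1$, and invoke writhe/linking estimates that become sharp because the partition conditions hold) is indeed the skeleton of the argument in the literature; the paper itself does not redo this computation but simply quotes \cite[Lem.\ 3.5]{HT} for the inequality $J_0(\mathcal{C})\ge 2g(C_1)-2+\sum_\gamma e_\gamma(\mathcal{C})$ (which is all that is actually used later) and the footnote on \cite[p.\ 919]{HT} for the equality. So you are reconstructing the cited proof rather than the paper's proof, which is fine in principle.

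However, as written there is a genuine gap exactly at the decisive step: the ``collapse'' of the bookkeeping is asserted, not carried out, and two of the specific claims you lean on are incorrect. First, the self-intersection term $Q_\tau(\mathcal{C}_0)$ of the trivial-cylinder part vanishes (a current of trivial cylinders has $I=0$, its CZ terms cancel, and $c_\tau=0$, forcing $Q_\tau=0$), so nothing there can ``cancel against the CZ sums at multiplicities $m_i-m_i'$ and $n_j-n_j'$.'' The actual interaction is between those leftover iterated Conley--Zehnder terms, the cross term $2\ell_\tau(\mathcal{C}_0,C_1)$, and the asymptotic winding numbers of the ends of $C_1$ around the orbits carrying trivial cylinders; evaluating $\ell_\tau(\mathcal{C}_0,C_1)$ requires knowing those winding numbers exactly, and pinning them down (they are forced to the extremal values $\lfloor m_i'\theta\rfloor$, resp.\ $\lceil n_j'\theta\rceil$, by the equality $\operatorname{ind}(C_1)=I(C_1)$ together with the asymptotic writhe analysis) is precisely the content of the cited footnote --- the partition conditions of Proposition~\ref{prop:partitionconditions} alone do not give you this for free, and without it you only get an inequality, not the claimed identity. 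Second, the remark that the trivial cylinder ``absorbs a half-integer writhe contribution'' cannot be right: all writhes and linking numbers in this trivialized setting are integers, and the $\pm1$ in $e_\gamma^\pm$ arises from whether the extremal winding bound is attained relative to the trivial cylinder or relative to the orbit itself, not from any half-integer quantity. To make your route complete you would need to write out, orbit by orbit, the identity
\begin{equation*}
\sum_{k=m_i'}^{m_i-1}\CZ_\tau(\alpha_i^k)\;+\;2\,\ell_\tau\bigl((m_i-m_i')\,\R\times\alpha_i,\,C_1\bigr)\;-\;w_\tau^{+}(C_1;\alpha_i)\;=\;e_{\alpha_i}^{+}(\mathcal{C})-\bigl(\text{corresponding terms in }J_0(C_1)\bigr),
\end{equation*}
and its negative-end analogue, using the sharp winding statements above; this is the computation your proposal skips.
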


\begin{proof}
The fact that the left hand side is greater than or equal to the right hand side is a restatement of \cite[Lem.\ 3.5]{HT}. Although this inequality is all that we actually need, one can further obtain equality as explained in the footnote on \cite[page 919]{HT}.
\end{proof}

\subsection{Criterion for a Birkhoff section}
\label{sec:gsscriterion}

We now recall a criterion for when the nontrivial component of a holomorphic curve counted by the $U$ map gives rise to a Birkhoff section for the Reeb flow.

\begin{definition}
\label{def:gss}
Let $(Y,\lambda)$ be a contact three-manifold. A {\bf Birkhoff section\/} for the Reeb flow is an embedded open surface $\Sigma\subset Y$ such that:
\begin{itemize}
\item
The Reeb vector field $R$ is transverse to $\Sigma$.
\item
There is a compact surface with boundary $\overline{\Sigma}$, such that $\operatorname{int}(\overline{\Sigma})=\Sigma$, and the inclusion $\Sigma\to Y$ extends to an immersion $g:\overline{\Sigma}\to Y$ such that each boundary circle of $\overline{\Sigma}$ is mapped to the image of a Reeb orbit, and $\Sigma\cap g\left(\partial\overline{\Sigma}\right)=\emptyset$.
\item
For every $y\in Y\setminus g\left(\partial\overline{\Sigma}\right)$, the Reeb trajectory starting at $y$ intersects $\Sigma$ in both forward and backward time.
\end{itemize}
\end{definition}

\begin{proposition}
\label{prop:gsscriterion}
Let $J$ be a generic $\lambda$-compatible almost complex structure, let $\alpha$ and $\beta$ be ECH generators, let $\mathcal{C}\in\mathcal{M}^J(\alpha,\beta)$, and suppose that $I(\mathcal{C})=2$. Write $\mathcal{C}=\mathcal{C}_0 + C_1$ as in Proposition~\ref{prop:lowindexcurves}. Suppose that:
\begin{itemize}
\item[(a)] $C_1$ has at least two ends.
\item[(b)] $C_1$ has genus zero.
\item[(c)] $C_1$ does not have two positive ends, or two negative ends, at covers of the same simple Reeb orbit.
\item[(d)] $C_1$ has no ends at positive hyperbolic orbits.
\item[(e)] The component of the moduli space $\mathcal{M}^J(\alpha,\beta)/\R$ containing $C_1$ is compact.
\end{itemize}
Then the restriction of the projection $\R\times Y\to Y$ to $C_1$ is an embedding, and its image is a Birkhoff section for the Reeb flow.
\end{proposition}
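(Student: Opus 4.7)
The plan is to establish each defining property of a Birkhoff section in Definition~\ref{def:gss} for $\Sigma := \pi_Y(C_1)$, where $\pi_Y : \R\times Y \to Y$ denotes projection. By Proposition~\ref{prop:lowindexcurves}(2), $C_1$ is embedded in $\R\times Y$ with $\operatorname{ind}(C_1) = I(C_1) = 2$. First I would verify that $\pi_Y|_{C_1}$ is an immersion transverse to $R$: since $J\partial_a = R$, any point $p\in C_1$ where $T_pC_1$ fails to be transverse to $R$ satisfies $T_pC_1 = \operatorname{span}(\partial_a, R)$, which is a critical point of $\pi_Y|_{C_1}$. Such points are isolated and each contributes a strictly positive term to the defect $I(C_1)-\operatorname{ind}(C_1)$ in the refined index inequality, so the vanishing defect rules them out. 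Then I would upgrade to global injectivity: a failure of injectivity means $C_1$ meets a nonzero $\R$-translate $C_1+s$, and by positivity of intersections $\#(C_1\cap(C_1+s))>0$ for all small $s>0$. The Siefring-type asymptotic self-intersection formula expresses this count via $c_\tau$, $Q_\tau$, writhe contributions at each end, and the defect $I(C_1)-\operatorname{ind}(C_1)$. Hypothesis (c) ensures the writhe bound at each end is sharply attained (no braiding of multiple ends at the same simple orbit), while (d) excludes positive-hyperbolic orbits where anomalous even-covered writhe can appear. With these inputs the formula collapses to zero, so $\pi_Y|_{C_1}$ is injective, hence an embedding.

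Next I would verify the Birkhoff section property. Standard asymptotic analysis of ends of punctured holomorphic curves in a symplectization extends $\pi_Y|_{C_1}$ to an immersion of a compact surface $\overline{\Sigma}$ whose boundary maps onto a link $\mathcal{L}\subset Y$ of simple Reeb orbits in $\alpha\cup\beta$; hypothesis (a) yields $\partial\overline{\Sigma}\neq\emptyset$ and (b) yields genus zero. For the global return property, let $\mathcal{N}$ be the component of $\mathcal{M}^J(\alpha,\beta)/\R$ containing $\mathcal{C}$. By (e) together with $\dim\mathcal{N} = I(\mathcal{C})-1 = 1$, $\mathcal{N}$ is a compact $1$-manifold, hence a disjoint union of circles. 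All hypotheses of the proposition are open/closed conditions preserved on $\mathcal{N}$, so the arguments above apply to every $t\in\mathcal{N}$, yielding a continuous family $\Sigma_t := \pi_Y(C_{1,t})$ of embedded surfaces transverse to $R$, all asymptotic to $\mathcal{L}$. Positivity of intersections for $t\neq t'$, combined with the writhe control from (c) and (d), forces $\Sigma_t\cap\Sigma_{t'}=\emptyset$ in $Y\setminus\mathcal{L}$, so $\{\Sigma_t\}$ foliates an open subset $W\subset Y\setminus\mathcal{L}$; a compactness-plus-properness argument at the asymptotic ends identifies $W$ with all of $Y\setminus\mathcal{L}$. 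Transversality of $R$ to each leaf makes the $\mathcal{N}$-parameter move monotonically along every Reeb trajectory, and since $\mathcal{N}\simeq S^1$ is compact this parameter makes periodic returns, so the trajectory meets $\Sigma=\Sigma_1$ in both forward and backward time.

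The main obstacle is the self-intersection bookkeeping in the injectivity step: one must carry out a delicate asymptotic-writhe computation using the partition-condition consequences of (c) and (d) (via Proposition~\ref{prop:partitionconditions} and Lemma~\ref{lem:partitionfacts}), verify that every boundary contribution to the Siefring self-intersection formula cancels, and tie the result precisely to the equality $I(C_1)=\operatorname{ind}(C_1)$. A secondary subtlety is showing that the family $\{\Sigma_t\}_{t\in\mathcal{N}}$ actually foliates all of $Y\setminus\mathcal{L}$ rather than a proper open subset: one must use (c) and (d) to rule out ``limit leaves'' near $\mathcal{L}$ that would truncate the foliation before reaching the link, relying on the uniform asymptotic control furnished by the partition conditions.
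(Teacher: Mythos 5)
The paper's own proof of this proposition is a citation: it follows from \cite[Prop.~3.2]{CGHP}, as explained in \cite[\S5.2]{CGHP}, and your outline does follow the same general strategy as that argument (projection is a transverse immersion via winding-number control, embedding and disjointness of leaves via intersection positivity, the compact one-dimensional moduli component giving a foliation of the complement of the binding, return property from transversality to the leaves). However, two of your steps are not correct as stated. First, the transversality/immersion mechanism: a point $p\in C_1$ with $T_pC_1=\operatorname{span}(\partial_a,R)$ is a zero of $\pi_\xi\circ du$, counted by $\wind_\pi(C_1)$; it is \emph{not} a singular point of $C_1$ in $\R\times Y$ and does not contribute to the defect $I(C_1)-\operatorname{ind}(C_1)$, which is zero here and which, via the refined index inequality, controls double points and singularities of $C_1$ upstairs rather than tangencies of its projection to $R$. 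The correct argument is the Hofer--Wysocki--Zehnder bound $\wind_\pi=\wind_\infty-2+2g+\#\Gamma$ together with the Conley--Zehnder bounds on asymptotic winding: hypothesis (d) forces every puncture to be at an elliptic or negative hyperbolic orbit, hence of odd index, and then $\operatorname{ind}(C_1)=2$, $g=0$ give $\wind_\pi(C_1)=0$; this is exactly the mechanism of Lemma~\ref{lemma_ineq_wind_pi} in the present paper (specialized to the nondegenerate case).

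Second, the return property: from ``the leaf parameter moves monotonically and the leaf space is a circle'' one cannot conclude that every trajectory meets $\Sigma$ in forward and backward time. A monotone function into $\R/\Z$ can converge without completing a loop, and this is precisely the danger when a trajectory accumulates on the binding link $\mathcal{L}$, where the rate of crossing leaves may decay. One needs a uniform positive lower bound on the crossing rate: away from $\mathcal{L}$ this comes from compactness of the moduli component (compare claim (C1) in the paper's final section), while near $\mathcal{L}$ it requires comparing the rotation number of each binding orbit with the asymptotic winding of the ends of the curves around it (compare claim (C2)), and it is here that the extremal asymptotic winding forced by $\operatorname{ind}=I$ and the partition conditions is genuinely used. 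Your ``secondary subtlety'' about the foliation reaching $\mathcal{L}$ is related but does not address this return-time issue, so as written the final step of your argument has a genuine gap.
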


\begin{proof}
This follows from \cite[Prop.\ 3.2]{CGHP}, as explained in \cite[\S5.2]{CGHP}. 
\end{proof}

\section{Birkhoff sections for nondegenerate perturbations}
\label{sec:Birkhoff}

\subsection{Introduction}

The aim of this section is to prove the following proposition.

\begin{prop}
\label{prop:main}
Let $Y$ be a closed connected three-manifold, let $\lambda$ be a contact form on $Y$ with finitely many simple Reeb orbits, and assume that $c_1(\xi) \in H^2(Y;\Z)$ is torsion, where $\xi=\ker \lambda$. Let $\eta>0$ be given. Then there exists a sequence $(\lambda_k,J_k,\alpha_k,\beta_k,C_k)_{k=1,2,\dots}$ where:
\begin{itemize}
\item
$\lambda_k = f_k  \lambda$ is a nondegenerate contact form, $f_k \to 1$ in $C^\infty$, and for every simple Reeb orbit $\gamma$ of $\lambda$, we have $f_k|_\gamma = 1$ and $df_k|_\gamma = 0$.
\item
$J_k$ is a $\lambda_k$-compatible almost complex structure, and $J_k\to J$ in $C^\infty$ as $k\to\infty$, where $J$ is a $\lambda$-compatible almost complex structure,
\item
$\alpha_k$ and $\beta_k$ are (not necessarily simple) Reeb orbits for $\lambda_k$ such that $\alpha_k\to\alpha_\infty$ and $\beta_k\to\beta_\infty$ in $C^\infty$, where $\alpha_\infty$ and $\beta_\infty$ are Reeb orbits for $\lambda$,
\item
$C_k$ is a $J_k$-holomorphic cylinder in $\R\times Y$ with positive end asymptotic to $\alpha_k$ and negative end asymptotic to $\beta_k$ for each $k$,
\end{itemize}
such that for each $k$:
\begin{enumerate}
\item[(a)] $\operatorname{ind}(C_k)=I(C_k)=2$, see \S\ref{sec:hcie}.
\item[(b)]
$\mathcal{A}(\alpha_k)-\mathcal{A}(\beta_k)<\eta$.
\item[(c)] If $\alpha_\infty$ is degenerate, then the rotation number of $\alpha_k$ is in the interval $(-1/2,0)$. In particular this rotation number converges to $0$ from below as $k\to\infty$. Similarly, if $\beta_\infty$ is degenerate, then the rotation number of $\beta_k$ is in the interval $(0,1/2)$.
\item[(d)] $C_k$ projects to a Birkhoff section for the Reeb flow for $\lambda_k$, see \S\ref{sec:gsscriterion}.
\end{enumerate}
\end{prop}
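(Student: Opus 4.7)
The plan is to combine a Morse-type perturbation of $\lambda$ with a chain-level realization of the $U$-map applied to a $U$-sequence of small action gap, and then to use the ECH index and partition conditions—packaged by the ``score'' invariant mentioned in the introduction—to force the non-trivial component of the resulting index-$2$ current to be a cylinder that projects to a Birkhoff section.

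\textbf{Step 1: the perturbation $\lambda_k$ and the almost complex structure $J_k$.} Since $\lambda$ has only finitely many simple Reeb orbits $\mathcal{P}=\{\gamma_1,\dots,\gamma_N\}$, I would construct $f_k$ by combining a $C^\infty$-small generic perturbation away from $\mathcal{P}$ with a Morse-Bott-type perturbation on a tubular neighborhood of each $\gamma_i$. Arranging $f_k\equiv 1$ and $df_k\equiv 0$ on $\bigcup_i\gamma_i$ ensures that each $\gamma_i$ persists as a Reeb orbit of $\lambda_k$ with the same image and period. A careful choice of the Morse function on each tubular neighborhood produces, for every degenerate iterate $\gamma_i^m$, two nondegenerate orbits of $\lambda_k$ converging to $\gamma_i^m$ whose linearized rotation numbers approach the original value from the two sides in $\R/\Z$: the ``minus'' side supplies candidates for $\alpha_k$ (per condition (c)), the ``plus'' side candidates for $\beta_k$. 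A generic $\lambda_k$-compatible $J_k$ is then chosen so that $J_k\to J$ for some $\lambda$-compatible $J$.

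\textbf{Step 2: chain-level $U$-map with small action gap.} Fix a $U$-sequence $\{\sigma_n\}_{n\ge n_0}$ of homogeneous even-graded classes in $ECH(Y,\xi,0)$, which exists by Proposition~\ref{prop:usequence}. By the Weyl law (Proposition~\ref{prop:vol}), the successive differences $c_{\sigma_{n+1}}(Y,\lambda_k)-c_{\sigma_n}(Y,\lambda_k)$ tend to $0$ as $n\to\infty$. For each $k$, pick $n_k$ large enough that this gap is less than $\eta$ and realize the $U$-map $\sigma_{n_k+1}\mapsto\sigma_{n_k}$ at the chain level via a generic point $y_k\in Y$: this yields ECH generators $\alpha'_k,\beta'_k$ and a current $\mathcal{C}_k\in\mathcal{M}^{J_k}(\alpha'_k,\beta'_k)$ passing through $(0,y_k)$ with $I(\mathcal{C}_k)=2$ and $\mathcal{A}(\alpha'_k)-\mathcal{A}(\beta'_k)\le\eta$. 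By Proposition~\ref{prop:lowindexcurves}(2), $\mathcal{C}_k=\mathcal{C}_{k,0}+C_{k,1}$ with $C_{k,1}$ embedded and $I(C_{k,1})=\operatorname{ind}(C_{k,1})=2$.

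\textbf{Step 3: $C_{k,1}$ is a cylinder.} This is the main obstacle. Because $\lambda$ has only finitely many simple Reeb orbits, for any uniform action bound the simple Reeb orbits of $\lambda_k$ below that bound are all close either to an orbit in $\mathcal{P}$ or to one of its Morse-Bott perturbations. In particular $\alpha'_k,\beta'_k$ are supported on covers of orbits whose rotation numbers are close to rationals, so Example~\ref{example:nearlyrationalangle} and the partition conditions (Proposition~\ref{prop:partitionconditions}) heavily constrain the allowed end multiplicities of $C_{k,1}$. Combined with the $J_0$-index topological bound of Proposition~\ref{prop:J0}, these constraints should force $g(C_{k,1})=0$ and $C_{k,1}$ to have exactly one positive and one negative end, i.e., $C_{k,1}$ is a cylinder. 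The ``score'' invariant of Section~3 is, I expect, designed precisely to track the interplay between the partition conditions and the $J_0$-index in a way that descends through the $U$-map and yields this conclusion uniformly.

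\textbf{Step 4: limits, rotation numbers, and Birkhoff section.} Take $\alpha_k$ and $\beta_k$ to be the Reeb orbits at the positive and negative ends of $C_{k,1}$; conditions (a) and (b) of the proposition are then immediate from Step~2. By Arzel\`a--Ascoli applied to the uniformly action-bounded orbits $\alpha_k,\beta_k$, after passing to a subsequence we obtain $C^\infty$ limits $\alpha_k\to\alpha_\infty$, $\beta_k\to\beta_\infty$, which are Reeb orbits of $\lambda$. Condition (c) follows from the one-sided Morse perturbation arranged in Step~1. For (d), I invoke Proposition~\ref{prop:gsscriterion}: hypotheses (a), (b), (c) of that criterion follow from $C_{k,1}$ being a cylinder with two ends; hypothesis (d) (no positive hyperbolic ends) must be established by combining the rotation-number constraints with an analysis of the ECH generators supporting the $U$-map chain, ruling out positive hyperbolic orbits among the ends; and compactness (e) of the relevant moduli component follows from ECH/SFT compactness together with an inductive choice of $n_k$ to exclude breakings that would yield a strictly smaller-action-gap index-$2$ curve, which cannot exist at the minimal $n_k$.
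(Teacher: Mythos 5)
There are genuine gaps at the decisive steps. First, your Step~2 is already shaky: the Weyl law (Proposition~\ref{prop:vol}) gives $c_{\sigma_n}\sim\sqrt{2n\int\lambda\wedge d\lambda}$ but does \emph{not} imply that the successive differences $c_{\sigma_{n+1}}-c_{\sigma_n}$ tend to zero (they are only small on average; sparse large gaps are compatible with the asymptotics), and even where the spectral gap is small you must argue which generator the $U$-curve lands on to control its action drop. More seriously, Step~3 — the heart of the proposition — is not an argument: you assert that the partition conditions plus the $J_0$ bound ``should force'' the nontrivial component to be a cylinder and defer to a score you have not defined. In fact the single-curve statement you want is false: the paper's Lemma~\ref{lem:key} shows that a low-energy, ECH-index-$2$ $U$-curve can perfectly well have a genus-zero, three-ended nontrivial component (these are exactly the score-zero, non-cylinder cases), so no amount of end-multiplicity bookkeeping for one curve will do. The paper instead fixes $k$-independent constants ($M$, $\epsilon'$, $T_q$, the inequalities defining $q$), produces a \emph{chain} of $q-p_0$ $U$-curves (Lemma~\ref{lem:ucurves}), and runs a counting argument (Lemma~\ref{lem:lowenergycylinder}) with three telescoping invariants ($T$, $T'$, $K$) to bound the number of high-energy, three-ended, and exceptional-multiplicity curves, forcing some member of the chain to be a low-energy cylinder. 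This chain-plus-counting structure is exactly what your proposal is missing and cannot be replaced by choosing a single $n_k$.

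Second, your mechanism for condition~(c) would fail. You propose to \emph{arrange} by a one-sided Morse--Bott perturbation that the orbits serving as positive ends have rotation number just below the rational value; but the perturbation cannot dictate which nondegenerate orbits the holomorphic curve selected by the $U$-map is asymptotic to, and a generic nondegenerate perturbation creates orbits on both sides. In the paper, (c) is a \emph{consequence} of the ECH partition conditions: the cylinder's ends have multiplicity $\ge M$ (this uses the energy lower bound of Lemma~\ref{lem:nonloc} together with the action-threshold Lemma~\ref{lem:threshold}), so $p^+_\gamma(m)=(m)$ at the positive end, and since $M>B_0$, Example~\ref{example:nearlyrationalangle} rules out rotation numbers just above $a/b$; the high multiplicity also yields ellipticity, which is how hypothesis~(d) of Proposition~\ref{prop:gsscriterion} is verified — another point you leave to ``an analysis.'' Finally, your treatment of compactness (hypothesis~(e)) is not correct as stated: a breaking produces two ECH-index-$1$ cylinders, not a smaller-gap index-$2$ curve, and there is no meaningful ``minimal $n_k$''; the paper excludes breaking by showing the intermediate orbit would have to be a simple positive hyperbolic orbit (index parity plus partition conditions) and then deriving a contradiction with the low-energy condition via Lemmas~\ref{lem:nonloc} and~\ref{lem:threshold}.
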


\begin{remark}
In  \cite[Prop.\ 4.2]{CGHP}, it was shown that for a single nondegenerate contact form with only finitely many simple Reeb orbits, there is always a genus zero Birkhoff section with at most three punctures. For our purposes it is important to obtain a sequence of cylinders with uniform bounds on the action as $k\to\infty$, for nondegenerate contact forms that do not necessarily have finitely many simple Reeb orbits; new ideas are required for this.  We discuss this further  in the outline of the proof below.
\end{remark}

\begin{remark}
As explained in \cite[\S4.6]{HT}, the existence of a Birkhoff section given by holomorphic cylinders implies that the three-manifold $Y$ is a lens space or $S^3$. Hence Proposition~\ref{prop:main} already implies Theorem~\ref{thm:main_intro} when $Y$ is not $S^3$ or a lens space; in this case $\lambda$ must have infinitely many simple Reeb orbits.
\end{remark}

\subsubsection*{Outline of the proof of Proposition~\ref{prop:main}} For any given $\lambda_k$, one can use the non-triviality of the $U$-map in ECH to produce a ``chain'' of a large number of $J_k$-holomorphic curves with ECH index $2$. The first challenge is to show that some of these are cylinders.  Building on ideas from \cite{HT, CGHP}, we accomplish this by a kind of probabilistic argument, showing that in fact ``most" of the curves must be cylinders.  As in \cite{HT,CGHP}, the starting point behind this is to show that the curves on average have $J_0$ approximately $2$; this is the one place where we use the assumption that $c_1(\xi)$ is torsion.  
A $J_0 = 2$ curve need not be a cylinder, however (see \S\ref{sec:J0index}), and so a deeper analysis is needed; moreover, the arguments in \cite{CGHP} are geared towards finding curves with at most three punctures, under the assumption of finitely many simple Reeb orbits, and so are far from sufficing for this.
The main new ingredient we introduce for this purpose is to define a ``score'' for each curve; roughly speaking, the idea is that cylinders should be characterized by having score $0$, and on the other hand the score is nonnegative, so, one can deduce the desired existence result by arguing that the total score of the chain of curves is not too large. 

Since we must consider a sequence of contact forms $\lambda_k$, we must also keep careful track of the various parameters in these probabilistic arguments. In particular, we will define a $k$-independent constant $T_q$ such that all holomorphic curves under consideration go between orbit sets for $\lambda_k$ with action $\le T_q$. 

As a result, the proof of Proposition~\ref{prop:main} shows the following more general statement, which is also applicable to contact forms with infinitely many simple Reeb orbits; as mentioned previously, it gives, for example, information about the growth rate of the orbits when the manifold is not a lens space and the contact structure is torsion.  (For simplicity we will not write a separate proof of this as it is not needed for the main theorem.)

\begin{proposition}
\label{prop:general}
Let $Y$ be a closed connected three-manifold, let $\lambda$ be a contact form on $Y$, and assume that $c_1(\xi)\in H^2(Y;\Z)$ is torsion, where $\xi=\Ker(\lambda)$. Suppose that $\lambda$ has at least $n$ simple Reeb orbits. Let $\gamma_1,\ldots,\gamma_n$ denote the first $n$ simple Reeb orbits in terms of action\footnote{That is, $\mathcal{A}(\gamma_1)\le \cdots \le \mathcal{A}(\gamma_n)$, and any other simple Reeb orbit has action at least that of $\gamma_n$.}. Let $\eta>0$ be given. Then there is a constant $T$ computable in terms of $\eta$, the ECH spectral invariants of $\lambda$, and the actions, rotation numbers, and homology clases of $\gamma_1,\ldots,\gamma_n$, such that the following holds: If $\lambda$ does not have any simple Reeb orbit of action $\le T$ other than $\gamma_1,\ldots,\gamma_n$, then there exists a sequence $(\lambda_k,J_k,\alpha_k,\beta_k,C_k)_{k=1,2,\dots}$ as in Proposition~\ref{prop:main} satisfying all of the conclusions of Proposition~\ref{prop:main}.
\end{proposition}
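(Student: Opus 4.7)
The proof will follow the same outline as Proposition~\ref{prop:main}, with the understanding that the constant $T$ governs how large the action window has to be before the argument fires. The strategy is to produce, for each nondegenerate perturbation, long ``chains'' of index-$2$ holomorphic currents via the ECH $U$-map whose asymptotic orbit sets have bounded action, and to show that most of the currents in these chains are cylinders of the desired form.

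First I would apply Proposition~\ref{prop:usequence} to choose a $U$-sequence $\{\sigma_k\}_{k\ge k_0}$ of homogeneous even-graded classes in $ECH(Y,\xi,0)$, and by Proposition~\ref{prop:vol} record the Weyl-law asymptotics $c_{\sigma_k}(Y,\lambda)^2/k \to 2\int_Y \lambda\wedge d\lambda$; this is data depending only on $\lambda$. I would then choose a sequence of nondegenerate perturbations $\lambda_j = f_j\lambda$ with $f_j\to 1$ in $C^\infty$ and $f_j\equiv 1$, $df_j\equiv 0$ on each $\gamma_i$, together with generic $\lambda_j$-compatible $J_j$. For each $j$, repeated application of the chain-level $U$-map and Proposition~\ref{prop:lowindexcurves}(2) yields a chain $\mathcal{C}_1^{(j)},\mathcal{C}_2^{(j)},\ldots$ of index-$2$ $J_j$-holomorphic currents whose top and bottom orbit sets have action converging to $c_{\sigma_k}(Y,\lambda_j)$ as $j\to\infty$, hence bounded uniformly in $j$ by $c_{\sigma_k}(Y,\lambda)+1$.

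The next step is the ``$J_0$-average'' estimate. Telescoping the definition \eqref{eqn:defJ0} over the chain gives
\[
\sum_{\ell=1}^{k} J_0(\mathcal{C}_\ell^{(j)}) = \sum_{\ell=1}^{k} I(\mathcal{C}_\ell^{(j)}) - 2c_\tau(Z_{\mathrm{tot}}) - \CZ_\tau^{\mathrm{top}}(\alpha_{\mathrm{top}}) + \CZ_\tau^{\mathrm{top}}(\beta_{\mathrm{bot}}),
\]
with $\sum I = 2k$. Because $c_1(\xi)$ is torsion, the ambiguity \eqref{eqn:ctauambiguity} in $c_\tau$ vanishes, and $c_\tau(Z_{\mathrm{tot}})$ can be controlled by the action via the Weyl law. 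Moreover, the top and bottom orbit sets of the chain are ECH generators whose underlying simple orbits must come from the finite list $\gamma_1,\ldots,\gamma_n$, provided that the action bound $c_{\sigma_k}(Y,\lambda)+1$ does not exceed $T$ (this is the condition defining $T_q$, and hence $T$). Thus $\CZ_\tau^{\mathrm{top}}(\alpha_{\mathrm{top}})-\CZ_\tau^{\mathrm{top}}(\beta_{\mathrm{bot}})$ is bounded in terms of the rotation numbers of $\gamma_1,\ldots,\gamma_n$ times the multiplicities allowed by action $\le T$. Combining, the average $J_0$ over the chain tends to $2$ as $k\to\infty$.

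The main obstacle, and the central new ingredient, is to promote ``$J_0\approx 2$ on average'' into an actual cylinder satisfying all of (a)--(e) of Proposition~\ref{prop:gsscriterion} together with the sign-of-rotation condition (c) of Proposition~\ref{prop:main}. Here I would introduce a \emph{score} $s(\mathcal{C})\ge 0$ for each index-$2$ current $\mathcal{C}=\mathcal{C}_0+C_1$, designed so that $s(\mathcal{C})=0$ forces $C_1$ to be an embedded cylinder with ends at non-positive-hyperbolic orbits whose rotation numbers have the signs demanded by condition (c). A natural definition combines $J_0(\mathcal{C})-2$ (which by Proposition~\ref{prop:J0} controls genus and extra ends), plus nonnegative penalties for positive-hyperbolic asymptotics, for ends at covers of a common simple orbit (precluded by Proposition~\ref{prop:gsscriterion}(c)), and for rotation-number/sign mismatches on the ends. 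The score is a nonnegative integer on each current, and the estimates above give
\[
\sum_{\ell=1}^k s(\mathcal{C}_\ell^{(j)}) \le \text{(bound in } \gamma_1,\ldots,\gamma_n,\ c_{\sigma_\bullet}(Y,\lambda),\ \eta\text{)},
\]
while the number of currents in the chain grows with $k$. This forces at least one $s(\mathcal{C}_\ell^{(j)}) = 0$; moreover, by inserting intermediate ECH classes (using Proposition~\ref{prop:vol} again to find consecutive $\sigma_k$'s with spectral invariants differing by at most $\eta$), one can additionally arrange the action-jump bound (b). The constant $T$ of the proposition is then chosen large enough that the chain length needed to guarantee a zero-score curve fits inside the action window $[0,T]$ using only the orbits $\gamma_1,\ldots,\gamma_n$; this is computable from the given data.

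Finally I would pass to the limit in $j$: conditions (a) and (d) are built in, (b) and (c) come from the score construction, and compactness in Proposition~\ref{prop:gsscriterion}(e) follows by SFT compactness together with the ECH partition conditions of Proposition~\ref{prop:partitionconditions} (the partition conditions rule out sharing of ends with the trivial part $\mathcal{C}_0$, giving properness of the corresponding moduli component). The ends of $C_k$ are iterates of $\gamma_1,\ldots,\gamma_n$ up to $C^\infty$-small perturbation, so after a diagonal subsequence they converge to Reeb orbits $\alpha_\infty,\beta_\infty$ of $\lambda$, yielding the desired sequence $(\lambda_k,J_k,\alpha_k,\beta_k,C_k)$.
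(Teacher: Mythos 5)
Your overall strategy is the one the paper itself follows: Proposition~\ref{prop:general} is obtained by running the proof of Proposition~\ref{prop:main} and observing that every constant appearing there (in particular the action cutoff $T_q$, which plays the role of $T$) is computable from $\eta$, the ECH spectral invariants, and the actions, rotation numbers and homology classes of the finitely many low-action simple orbits, and that the argument only ever involves orbit sets of action $\le T_q$. Your chain of $U$-curves, the telescoping $J_0$ estimate using torsion $c_1(\xi)$, the uniform action window coming from the Weyl law, and the choice of $T$ so that the whole chain lives below that window all match the paper's argument.

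The genuine gap is in the central step, where you posit a score $s(\mathcal{C})\ge 0$ defined on every index-$2$ current whose vanishing forces $C_1$ to be a cylinder with the rotation-number signs of condition (c). That score is asserted, not constructed, and the paper's actual score does not have these properties: the total score $T(\mathcal{C})=3(J_0(\mathcal{C})-2)+S(\mathcal{C})$ can be as negative as $-9$ on a general $U$-curve, is proved nonnegative only for low-energy curves whose nontrivial component is \emph{not} a cylinder (Lemma~\ref{lem:key}), and its vanishing does not force a cylinder: genus-zero three-ended configurations with vanishing score survive, and the paper needs two further counting quantities ($T'$ and $K$, Steps 3--4 of the proof of Lemma~\ref{lem:lowenergycylinder}) to bound the number of such curves before the pigeonhole argument closes. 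Moreover, condition (c) is not encoded in any score penalty; it is deduced afterwards from the partition conditions at a high-multiplicity elliptic end (the positive partition is $(m)$, which by Example~\ref{example:nearlyrationalangle} is incompatible with the rotation number converging to $0$ from above). Likewise, your justification of compactness for criterion (e) via ``partition conditions rule out sharing of ends with $\mathcal{C}_0$'' does not address the actual failure mode: the paper excludes two-level cylinder buildings using ECH index parity (the intermediate orbit would have to be a simple positive hyperbolic orbit) together with the action-gap Lemma~\ref{lem:threshold} and the low-energy property; and the action-jump bound (b) is obtained by counting the at most $T_q/\epsilon'$ high-energy curves in the chain, not by inserting intermediate ECH classes with close spectral invariants, which would not control individual action drops along the chain. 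As written, your sketch assumes exactly the statements whose proofs constitute the bulk of Section 3.
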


%%%%%%%%%%%%%%%%%%%%%%%%%%%%%%%%%%%%%%%%%%%%

\subsection{Setting the stage}
\label{sec:stage}

We first collect various preliminaries results, estimates, and constants that will be used in our arguments.  A main theme is to extract constants that are independent of sufficiently large $k$.

\subsubsection{The perturbation lemma}

The sequence of contact forms in Proposition~\ref{prop:main} will come from the following lemma, the idea of which is due to Bangert \cite{Bangert}:

\begin{lem}
\label{lem:pert}
Let $(Y,\lambda)$ be a closed contact three-manifold with finitely many simple Reeb orbits. Let $N$ be the union of pairwise disjoint open tubular neighborhoods of these orbits.  
Then there is a sequence of nondegenerate contact forms $(\lambda_k = f_k \lambda)_{k=1,2,\ldots}$ converging in $C^{\infty}$ to $\lambda$, such that:
\begin{enumerate}
\item[(a)] For every $k$ and every simple Reeb orbit $\gamma$ of $\lambda$, we have $f_k|_\gamma=1$ and $df_k|_\gamma=0$. In particular, $\gamma$ is also a simple Reeb orbit of $\lambda_k$.
\item[(b)] There exists a compact neighborhood $K\subset N$ of the Reeb orbits of $\lambda$ such that $f_k|_{Y \setminus K}=1$ for every $k$.
\item[(c)] For every~$T>0$ there exists $k_T$ such that if $k \geq k_T$ then each simple Reeb orbit~$\gamma$ for $\lambda_k$ with action $\le T$ is contained in $N$, and for each such $\gamma$, there is a simple Reeb orbit $\gamma'$ for $\lambda$ in the same component of $N$ such that:
\begin{enumerate}
\item[(1)] If $\gamma'$ and all its iterates are nondegenerate, then $\gamma = \gamma'$.
\item[(2)] If $\gamma'$ has rational rotation number, written in lowest terms with respect to some trivialization as $a/b$ with $b > 0$, then either $\gamma = \gamma'$, or $\gamma$ is $C^\infty$-close to the $b$-fold cover of $\gamma'$. 
\end{enumerate}
In particular, for any $\varepsilon>0$, if $k$ is sufficiently large, then the action and the rotation number of $\gamma$ are within $\varepsilon$ of those for $\gamma'$ in case (1), and $(\gamma')^b$ in case (2).
\end{enumerate}
\end{lem}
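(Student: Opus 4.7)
The plan is to construct $\lambda_k = f_k \lambda$ via a Bangert-type perturbation \cite{Bangert} localized near the finitely many simple Reeb orbits $\gamma'_1,\dots,\gamma'_r$ of $\lambda$. First fix disjoint open tubular neighborhoods $N=\bigsqcup_i N_i$ of the $\gamma'_i$ and a smaller compact neighborhood $K = \bigsqcup_i K_i \subset N$. In a tubular normal form near each $\gamma'_i$, select a function $h_i$ supported in $K_i$ that vanishes to second order along $\gamma'_i$, with Hessian transverse to $\gamma'_i$ chosen to unfold the rotation numbers of the iterates $(\gamma'_i)^m$. Then set $f_k = 1 + \varepsilon_k \sum_i h_i + g_k$ with $\varepsilon_k\to 0$, where $g_k$ is an auxiliary $C^\infty$-small perturbation supported in $K$ and vanishing to high order along $\bigcup_i \gamma'_i$, chosen generically (Sard--Smale within the Fr\'echet space of perturbations satisfying the constraints at the $\gamma'_i$) so that $\lambda_k$ is nondegenerate. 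Conclusions (a) and (b) are then immediate.

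For (c), first argue containment in $N$ by compactness: if a simple Reeb orbit $\gamma_k$ of $\lambda_k$ of action at most $T$ met $Y\setminus N$, then Arzel\`a--Ascoli combined with $\lambda_k\to\lambda$ in $C^\infty$ would produce a subsequential $C^\infty$ limit that is a Reeb orbit of $\lambda$ meeting $Y\setminus N$, contradicting the choice of $N$. The same compactness gives $\gamma_k \to (\gamma'_i)^m$ in $C^\infty$ for some simple orbit $\gamma'_i$ in the component containing $\gamma_k$ and some $m\ge 1$.

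In case (1), every iterate of $\gamma'_i$ is nondegenerate, so the implicit function theorem applied to the Reeb equation on loops of period close to $m\mathcal{A}(\gamma'_i)$ shows that the only closed Reeb orbit of $\lambda_k$ in a $C^1$-neighborhood of $(\gamma'_i)^m$ is $(\gamma'_i)^m$ itself, which persists as a Reeb orbit of $\lambda_k$ by (a); simplicity of $\gamma_k$ then forces $m=1$ and $\gamma_k = \gamma'_i$. In case (2), $\gamma'_i$ has rational rotation number $a/b$, so $(\gamma'_i)^m$ is nondegenerate precisely when $b\nmid m$. The implicit function argument at those nondegenerate intermediate iterates excludes convergence with $b\nmid m$ and $m>1$. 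At the degenerate iterates $(\gamma'_i)^{mb}$, a local bifurcation analysis of the return map (Poincar\'e--Birkhoff type, exploiting the generic choice of the transverse Hessian of $h_i$) shows that the only new simple periodic orbits bifurcating within the prescribed action range are simple loops winding exactly $b$ times around $\gamma'_i$ and $C^\infty$-close to $(\gamma'_i)^b$.

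The principal technical obstacle is guaranteeing the genuine nondegeneracy of $\lambda_k$ simultaneously with the constraints $f_k|_{\gamma'_j}=1$ and $df_k|_{\gamma'_j}=0$. The constrained perturbation space is still rich enough, via the freedom in the transverse Hessian along each $\gamma'_j$ together with the high-order auxiliary perturbation $g_k$, to admit a standard Sard--Smale genericity argument yielding $C^\infty$-generic nondegeneracy; this is the essential content of Bangert's perturbation lemma. A diagonal extraction in $k$ then delivers the sequence $\lambda_k$ with all required properties.
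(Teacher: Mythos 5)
Your overall architecture (an explicit Bangert--Katok type perturbation supported in $K$, constrained genericity for nondegeneracy, Arzel\`a--Ascoli localization into $N$, and the implicit-function-theorem step at nondegenerate iterates) is reasonable and parallels what the cited sources do; parts (a), (b) and case (1) of (c) are fine in outline. The genuine gap is in case (2). What is needed there is that a simple Reeb orbit of $\lambda_k$ of action $\le T$ accumulating on the orbit $\gamma'$ can only be $C^\infty$-close to $\gamma'$ itself or to its $b$-fold cover, and never to $(\gamma')^{m}$ for other $m$ --- in particular not to $(\gamma')^{2b}$, $(\gamma')^{3b}$, etc., which are just as degenerate as $(\gamma')^{b}$. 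This exclusion is precisely the nontrivial content of Bangert's result (\cite[Proposition~1]{Bangert}, restated as \cite[Lemma~3.8]{CGHHL}), and the paper proves the present lemma simply by choosing any nondegenerate $f_k$ satisfying (a) and (b) and then quoting that result, so that (c) is independent of the particular construction. You instead assert the exclusion via ``a local bifurcation analysis of the return map (Poincar\'e--Birkhoff type, exploiting the generic choice of the transverse Hessian of $h_i$)''. That is not a proof: Poincar\'e--Birkhoff-type arguments \emph{produce} periodic points, they do not rule them out, and what must be ruled out are prime periodic points of the perturbed return map of bounded period $m\notin\{1,b\}$ shrinking to the fixed point (for $m$ a proper multiple of $b$ the linearized $m$-th return map is again the identity, so no implicit-function argument applies). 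Moreover your auxiliary generic perturbation $g_k$, although it vanishes to high order along the orbits, is unconstrained at positive distance from them, so the bifurcating orbits cannot be read off from the model term $\varepsilon_k h_i$ alone. Either reproduce Bangert's actual argument (bounded action bounds the winding number; an orbit shrinking to the core forces its rotation data to converge to $a/b$, and discreteness of rationals with bounded denominator pins the winding to $1$ or $b$, carried out for a general non-integrable return map), or cite \cite[Proposition~1]{Bangert} or \cite[Lemma~3.8]{CGHHL} as the paper does.

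A smaller inaccuracy: the assertion ``$(\gamma'_i)^m$ is nondegenerate precisely when $b\nmid m$'' holds only when $\gamma'_i$ is elliptic; a hyperbolic orbit has integer or half-integer rotation number while all of its iterates are nondegenerate. This is harmless (such orbits fall under case (1)), but your case division should acknowledge it, since case (2) as stated in the lemma includes these orbits.
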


\begin{proof}
Consider a sequence of nondegenerate contact forms $\lambda_k = f_k \lambda$ such that $f_k \to 1$  in $C^\infty$, and such that on each of the finitely many simple Reeb orbits $\gamma$ of $\lambda$ we have $f_k|_{\gamma}=1$ and $df_k|_{\gamma}=0$. In particular, each $\gamma$ is a simple Reeb orbit for $\lambda_k$ with the same period. We can further assume that $f_k=1$ outside of a small compact neighborhood $K \subset N$ of the Reeb orbits of~$\lambda$. All the desired conclusions now follow from Bangert's lemma as stated in~\cite[Lemma~3.8]{CGHHL} or in~\cite[Proposition~1]{Bangert}.
\end{proof}

\begin{remark}
In part (c), when $\gamma'$ is degenerate, there may be many simple Reeb orbits $\gamma$ of $\lambda_k$ associated to it.
\end{remark}

%%%%%%%%%%%%%%%%%%%%%%%%%%%%%%%%%%%%

\subsubsection{Setting up the proof of Proposition~\ref{prop:main}}
\label{sec:setup}

Let $(Y,\lambda)$ and $\eta>0$ as in Proposition~\ref{prop:main} be given.  Throughout the rest of this section, fix a sequence of contact forms $\lambda_k$ as provided by Lemma~\ref{lem:pert}. Let $J$ be any $\lambda$-compatible almost complex structure on $\R\times Y$. We can find a sequence $J_k$ of $\lambda_k$-compatible almost complex structures on $\R\times Y$ that converges in $C^\infty$ to $J$, and we can arrange that $J_k$ is generic as needed to define the ECH of $\lambda_k$.

Let $\mathcal{P}_k$ denote the set of simple Reeb orbits of $\lambda_k$. An orbit set $\alpha$ for $\lambda_k$ is then equivalent to a finitely supported ``multiplicity'' function $\mathcal{P}_k\to\Z_{\ge 0}$. The orbit set $\alpha=\{(\alpha_i,m_i)\}$ corresponds to the function sending $\alpha_i\mapsto m_i$ and sending all other simple Reeb orbits of $\lambda_k$ to zero. We denote the multiplicity function evaluated on a simple Reeb orbit $\gamma\in\mathcal{P}_k$ by $m(\gamma,\alpha)$. We refer to the numbers $m(\gamma,\alpha)$ as ``components'' of $\alpha$. Define the {\bf complexity} of $\alpha$ to be the maximum of the components $m(\gamma,\alpha)$, and denote the number of simple Reeb orbits that appear in $\alpha$ by $|\alpha| = \left|\{ \gamma\in\mathcal{P}_k \mid m(\gamma,\alpha)>0 \}\right|$. Likewise let $\mathcal{P}$ denote the set of simple Reeb orbits of $\lambda$; then orbit sets for $\lambda$ correspond to finitely supported functions $\mathcal{P}\to\Z_{\ge 0}$, and we define the complexity analogously.

Fix $N$ as in Lemma~\ref{lem:pert}, and fix a symplectic trivialization $\tau$ of $\xi$ over all simple Reeb orbits of $\lambda$. This induces trivializations over all (not necessarily simple) Reeb orbits of $\lambda$. Moreover, once an action bound $T>0$ is fixed, if we asssume that $k\ge k_T$, where $k_T$ is the constant given by Lemma~\ref{lem:pert}, then $\tau$ induces trivializations, again denoted by $\tau$, of all Reeb orbits of $\lambda_k$ of action $\leq T$. This is because the trivialization $\tau$ induces a symplectic trivialization, uniquely up to homotopy, of the contact structure $\xi$ over $N$. In addition, in this situation, if $\alpha$ is an orbit set for $\lambda_k$, then there is a unique orbit set $\underline{\alpha}$ for which is homotopic to $\alpha$ in $N$.

Fix a point $y\in Y\setminus N$ to define the chain map $U_{J_k,y}$, see \S\ref{sec:Umap}. We define a {\bf $U$-curve\/} to be a current counted by this chain map, namely a current $\mathcal{C}\in\mathcal{M}^{J_k}(\alpha,\beta)$ with $(0,y)\in\mathcal{C}$, where $\alpha$ and $\beta$ are ECH generators for $\lambda_k$, and the ECH index $I(\mathcal{C})=2$.

%%%%%%%%%%%%%%%%%%%%%%%%%%%%%%%%%%%%%%%%

\subsubsection{Basic estimates}

The following estimate gives a useful lower bound on the difference of actions between certain orbit sets.

\begin{lem}
\label{lem:thresh}
Let $M > 1$.   Then there is a constant $\epsilon_M>0$ such that for any two orbit sets $\alpha$, $\beta$ for $\lambda$ with distinct actions, if either orbit set has complexity $\le M$, or if both orbit sets have exactly one simple Reeb orbit with multiplicity $> M$ and these simple orbits are the same, then
\begin{equation}
\label{eqn:actionbound}
\left|\mathcal{A}(\alpha) - \mathcal{A}(\beta)\right| \ge \epsilon_M \, .
\end{equation}
\end{lem}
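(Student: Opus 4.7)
The plan is to split into the two cases directly stated in the hypothesis and exploit the fact that $\lambda$ has only finitely many simple Reeb orbits. Let $\mathcal{P}=\{\gamma_1,\ldots,\gamma_N\}$ be the set of simple Reeb orbits of $\lambda$, with actions $T_i=\mathcal{A}(\gamma_i)>0$. An orbit set $\alpha$ of $\lambda$ is just a tuple of multiplicities $(m_1,\ldots,m_N)\in\Z_{\ge 0}^N$, and $\mathcal{A}(\alpha)=\sum_i m_i T_i$. A key basic observation is that for any $L>0$, the set of orbit sets $\alpha$ with $\mathcal{A}(\alpha)\le L$ is finite, since each $m_i\le L/\min_j T_j$.

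For Case 1, assume without loss of generality that $\alpha$ has complexity $\le M$. Then $\mathcal{A}(\alpha) \le M\sum_{i} T_i =: C_M$. If $\mathcal{A}(\beta)>C_M+1$, then $|\mathcal{A}(\alpha)-\mathcal{A}(\beta)|>1$, so we need only worry about $\beta$ with $\mathcal{A}(\beta)\le C_M+1$. By the observation above, the finitely many such $\beta$ together with the finitely many $\alpha$ of complexity $\le M$ form a finite set of orbit sets $\s_M$; so the set of differences $\{\mathcal{A}(\alpha)-\mathcal{A}(\beta): \alpha,\beta \in \s_M\}\cap \R \setminus \{0\}$ is finite, and its infimum in absolute value is a positive number $\epsilon_M^{(1)}$.

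For Case 2, write $\alpha=\{(\gamma,m_\gamma)\}\cup\alpha'$ and $\beta=\{(\gamma,n_\gamma)\}\cup\beta'$, where $\alpha'$ and $\beta'$ are orbit sets of complexity $\le M$ not containing $\gamma$. Then
\[
\mathcal{A}(\alpha)-\mathcal{A}(\beta) = \delta \, T_\gamma + d,
\]
where $\delta=m_\gamma-n_\gamma\in\Z$ and $d = \mathcal{A}(\alpha')-\mathcal{A}(\beta')$. As in Case 1, the set of possible values of $d$ is finite, say $D_{M,\gamma}\subset\R$. Hence the set $S_\gamma=\{\delta T_\gamma + d: \delta\in\Z,\ d\in D_{M,\gamma}\}$ is a finite union of arithmetic progressions with common difference $T_\gamma$, hence is a discrete subset of $\R$. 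In particular $S_\gamma \setminus \{0\}$ has a positive infimum in absolute value. Taking the minimum over the finitely many choices of $\gamma\in\mathcal{P}$ yields a positive lower bound $\epsilon_M^{(2)}$. Setting $\epsilon_M = \min(\epsilon_M^{(1)},\epsilon_M^{(2)},1)$ completes the proof.

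The argument is essentially a pigeonhole/discreteness argument; the only mildly delicate part is the observation that, although $\delta$ can be arbitrarily large in Case 2, one still obtains a uniform bound because $d$ ranges only over the finite set $D_{M,\gamma}$, so the relevant set $S_\gamma$ remains discrete. No step here is a real obstacle, since the lemma reduces purely to combinatorics of actions given that $\mathcal{P}$ is finite.
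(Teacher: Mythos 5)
Your proof is correct and follows essentially the same route as the paper: in the first case you use finiteness of the set of actions of complexity-$\le M$ orbit sets (with an action cutoff playing the role of the paper's discreteness of the full action spectrum), and in the second case you isolate the common high-multiplicity orbit $\gamma$ and observe that the action difference lies in a finite set of values shifted by integer multiples of $\mathcal{A}(\gamma)$, which is exactly the paper's decomposition $\mathcal{A}(\alpha)-\mathcal{A}(\beta)=\mathcal{A}(\alpha_0)-\mathcal{A}(\beta_0)+z\mathcal{A}(\gamma)$. Your explicit minimum over the finitely many choices of $\gamma$ just makes precise a point the paper leaves implicit.
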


\begin{proof}
Let $S_M$ denote the set of actions of orbit sets of complexity $\le M$. Since
the set $\mathcal{P}$ of simple Reeb orbits of $\lambda$ is finite, it follows that the set $S_M$ is finite, and the set $S_\infty$ of all actions of orbit sets is discrete.  Thus there is a positive lower bound $\epsilon'_M$ on the set of distances between two distinct actions of orbit sets with at least one in $S_M$. 

Let $\alpha$ and $\beta$ be as in the statement of the lemma.  
If either orbit set has complexity $\le M$, then their actions will differ by at least $\epsilon'_M>0$.  

If on the other hand both $\alpha$ and $\beta$ have exactly one simple Reeb orbit with multiplicity $>M$, which is the same for both orbit sets, then denote this simple Reeb orbit by $\gamma$. Consider the orbit sets $\alpha_0$ and $\beta_0$ obtained from $\alpha$ and $\beta$ by changing the multiplicity of $\gamma$ to zero and keeping the multiplicities of all other simple Reeb orbits the same. Then $\alpha_0$ and $\beta_0$ are orbit sets with complexity $\le M$. Since the set $S_M$ is finite, it follows that the set $S_M'$ of differences between elements of $S_M$ is also finite. Now we have
\[
\left|\mathcal{A}(\alpha) - \mathcal{A}(\beta)\right| = \left|\mathcal{A}(\alpha_0) - \mathcal{A}(\beta_0) + z\mathcal{A}(\gamma)\right|
\]
for some integer $z$. By finiteness of the set $S_M'$, the right hand side of the above equality, when nonzero, is bounded from below by some $\epsilon_M''>0$.
We then take $\epsilon_M = \min(\epsilon_M',\epsilon_M'')$.
\end{proof}

There is also the following useful bound on the relative Chern class. If $\alpha$ is a null-homologous orbit set (for $\lambda$ or $\lambda_k$), define
\[
c_\tau(\alpha) = c_\tau(\alpha,\emptyset,Z) \in \Z,
\]
where $c_\tau$ denotes the relative first Chern class defined in \S\ref{sec:orbitsets}, $\tau$ is the trivialization we fixed in \S\ref{sec:setup}, and $Z\in H_2(Y,\alpha,\emptyset)$ is arbitrary. It follows from equation \eqref{eqn:ctauambiguity} and the assumption that $c_1(\xi)$ is torsion that $c_\tau(\alpha)$ does not depend on the choice of $Z$.

\begin{lem}
\label{lem:chern}
For each $\ell > 0$ there exists $\delta_1 > 0$ (depending only on $\lambda$ and $\ell$) such that if $\alpha$ is a nullhomologous orbit set for $\lambda$, then
\begin{equation}
\label{eqn:chernbound}
\left|2 c_{\tau}(\alpha) + \CZ^{\rm top}_{\tau}(\alpha))\right| + 2 \frac{\mathcal{A}({\alpha})}{\ell} \le \delta_1 \mathcal{A}(\alpha).
\end{equation}
\end{lem}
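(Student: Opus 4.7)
The plan is to bound each of $|c_\tau(\alpha)|$ and $|\CZ^{\rm top}_\tau(\alpha)|$ linearly in $\mathcal{A}(\alpha)$; once this is done, the lemma follows by taking $\delta_1$ to be the sum of the two linear coefficients plus $2/\ell$. Throughout, the finiteness of the set $\mathcal{P}$ of simple Reeb orbits of $\lambda$ (from the standing hypothesis of Proposition~\ref{prop:main}) is what makes the estimates uniform.

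For the Conley--Zehnder term, each simple Reeb orbit $\alpha_i$ has a rotation number $\theta_i\in\R$ in the trivialization $\tau$, and whichever convention is adopted, $|\CZ_\tau(\alpha_i^{m_i})| \le 2m_i|\theta_i| + 1$; for instance, in the convention \eqref{eqn:CZtau} this follows because $\lfloor x\rfloor + \lceil x\rceil$ lies within $1$ of $2x$. Summing over the support of $\alpha$, and writing $\theta_{\max} = \max_{\gamma\in\mathcal{P}}|\theta(\gamma)|$ and $T_{\min} = \min_{\gamma\in\mathcal{P}}\mathcal{A}(\gamma) > 0$, one gets $|\CZ^{\rm top}_\tau(\alpha)| \le 2\theta_{\max}\sum_i m_i + |\{i : m_i > 0\}| \le (2\theta_{\max}+1)T_{\min}^{-1}\,\mathcal{A}(\alpha) =: C_1\mathcal{A}(\alpha)$, where the last inequality uses $\sum_i m_i T_i = \mathcal{A}(\alpha)$ together with $|\{i : m_i > 0\}| \le \sum_i m_i$.

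For the Chern class term, I would first note that on null-homologous orbit sets, $c_\tau(\alpha)$ is independent of a choice of relative class $Z$: by \eqref{eqn:ctauambiguity}, two choices differ by $\langle Z-W, c_1(\xi)\rangle$, which vanishes in $\Z$ since $c_1(\xi)$ is torsion. Gluing disjoint capping $2$-chains then makes $\alpha \mapsto c_\tau(\alpha)$ an additive function on the monoid $\mathcal{K}$ of null-homologous orbit sets. Identifying the set of all orbit sets with $\Z_{\ge 0}^{\mathcal{P}}$, the monoid $\mathcal{K}$ is the intersection of the kernel of $\Z^{\mathcal{P}} \to H_1(Y;\Z)$ with the positive orthant, hence the intersection of a rational polyhedral cone with the lattice $\Z^{\mathcal{P}}$; by Gordan's lemma it is finitely generated as a monoid. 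Fixing finitely many generators $w_1,\ldots,w_p\in\mathcal{K}$ (each satisfies $\mathcal{A}(w_j)>0$ since simple Reeb orbits have positive action) and setting $C_2 = (\max_j|c_\tau(w_j)|)/(\min_j\mathcal{A}(w_j))$, every $\alpha\in\mathcal{K}$ can be written as $\alpha = \sum_j y_j w_j$ with $y_j\in\Z_{\ge 0}$, giving $|c_\tau(\alpha)| \le \sum_j y_j|c_\tau(w_j)| \le C_2\,\mathcal{A}(\alpha)$ by additivity.

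Putting the two bounds together yields $|2c_\tau(\alpha) + \CZ^{\rm top}_\tau(\alpha)| \le (2C_2+C_1)\mathcal{A}(\alpha)$, so $\delta_1 := 2C_2 + C_1 + 2/\ell$ works and depends only on $\lambda$ and $\ell$. The only mildly non-trivial ingredient is the application of Gordan's lemma to obtain finite generation of $\mathcal{K}$; everything else is elementary bookkeeping, which is appropriate for a basic estimate at the start of Section~\ref{sec:Birkhoff}.
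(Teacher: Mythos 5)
Your proof is correct, but it is not the argument the paper actually writes down: there this lemma is dispatched in two lines by citing \cite[Lemma 4.3(b)]{CGHP} (whose setting---finitely many simple Reeb orbits and torsion $c_1(\xi)$---matches the standing assumptions of Section~\ref{sec:Birkhoff}) and by absorbing the extra $2\mathcal{A}(\alpha)/\ell$ term into $\delta_1$, exactly as you do at the end. Your self-contained argument has the expected overall shape (bound $\CZ^{\rm top}_\tau$ via rotation numbers together with $\sum_i m_i\le \mathcal{A}(\alpha)/T_{\min}$, and bound $c_\tau$ linearly in $\mathcal{A}(\alpha)$ using additivity over the finitely many simple orbits), but your device for the Chern term---finite generation, via Gordan's lemma, of the monoid $\mathcal{K}$ of nullhomologous orbit sets, plus additivity of $c_\tau$ on $\mathcal{K}$ (which is legitimate: the relative Chern class is additive under disjoint union of capping classes, and is $Z$-independent because $c_1(\xi)$ is torsion)---is a genuinely different packaging from simply invoking the CGHP estimate. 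One small precision you should make: since $H_1(Y;\Z)$ may have torsion, $\mathcal{K}$ is the set of points of the kernel lattice $K=\ker\bigl(\Z^{\mathcal{P}}\to H_1(Y;\Z)\bigr)$ lying in the positive orthant, and this can be strictly smaller than the set of points of $\Z^{\mathcal{P}}$ in the rational cone $K_{\R}\cap\R_{\ge 0}^{\mathcal{P}}$; so Gordan's lemma should be applied to that cone viewed inside the lattice $K$ (the coordinate functionals are integer-valued on $K$, so the cone is rational there), which yields the finite generation you need. With that adjustment, your bounds $|c_\tau(\alpha)|\le C_2\,\mathcal{A}(\alpha)$ and $|\CZ_\tau(\alpha_i^{m_i})|\le 2m_i|\theta_i|+1$, and the final choice $\delta_1=2C_2+C_1+2/\ell$, are sound, and the constants depend only on $\lambda$ (through the finitely many simple orbits and the fixed trivialization $\tau$) and on $\ell$, as the lemma requires.
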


Here $\CZ^{\operatorname{top}}$ is defined as in equation \eqref{eqn:CZtop}. In that equation, we use the definition of the Conley-Zehnder index in equation \eqref{eqn:CZtau} also for degenerate Reeb orbits, as in \cite{CGHHL}.

\begin{proof}[Proof of Lemma~\ref{lem:chern}.]
Without the $2 \mathcal{A}({\alpha})/\ell$ term, this follows from the same argument as in \cite[Lemma 4.3(b)]{CGHP}. The $2 \mathcal{A}({\alpha})/\ell$ term can be accommodated by increasing $\delta_1$.
\end{proof}

\subsubsection{Exceptional pairs}

Let $\gamma$ be a simple Reeb orbit for $\lambda_k$ and let $m$ be a positive integer. As in \cite{CGHP}, we call the pair $(\gamma,m)$ {\bf exceptional} if $|p^+_\gamma(m)| + |p^-_\gamma(m)| \leq 3$, where the partitions $p_\gamma^\pm(m)$ are defined in \S\ref{sec:part}. We will need the following variant of \cite[Lem.\ 4.9]{CGHP}, restricting the exceptional pairs in a $k$-independent manner:

\begin{lem}
\label{lem:exceptional}
There exists a constant $M \ge 3$, depending only on $\lambda$, such that the following holds. For every $T>0$ there is a constant $k_T'$ such that if $k \geq k_T'$, if $\gamma$ is a simple Reeb orbit for $\lambda_k$, if $m \ge M$, and if $m\mathcal{A}(\gamma) \le T$, then the pair $(\gamma,m)$ is not exceptional.
\end{lem}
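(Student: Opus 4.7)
The plan is to combine two complementary partition criteria (Lemma~\ref{lem:partitionfacts}(c) and Example~\ref{example:smallangle}) with the orbit classification provided by Lemma~\ref{lem:pert}(c), choosing $M$ based only on the rotation numbers occurring in $\mathcal{P}$ and absorbing the $T$-dependence into $k_T'$.

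\textbf{Step 1 (definition of $M$).} Let $\mathcal{P}$ be the finite set of simple Reeb orbits of $\lambda$. For each $\gamma'\in\mathcal{P}$ whose rotation number $\theta_{\gamma'}$ is not an integer, set $\delta_{\gamma'}:=\min(\{\theta_{\gamma'}\},1-\{\theta_{\gamma'}\})>0$. Note that if $\theta_{\gamma'}=a/b$ is rational in lowest terms with $b>1$, then $\delta_{\gamma'}\ge 1/b$. Define
\[
M := \max\Bigl(4,\ \max_{\gamma'\in\mathcal{P},\ \delta_{\gamma'}>0}\lceil 4/\delta_{\gamma'}\rceil\Bigr),
\]
which depends only on $\lambda$. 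Also set $\mathcal{A}_{\min}:=\min_{\gamma'\in\mathcal{P}}\mathcal{A}(\gamma')>0$.

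\textbf{Step 2 (choice of $k_T'$).} Given $T>0$, I pick $k_T'\ge k_T$ (with $k_T$ from Lemma~\ref{lem:pert}(c) applied at action bound $T$) large enough that for every $k\ge k_T'$ and every simple Reeb orbit $\gamma$ of $\lambda_k$ with $\mathcal{A}(\gamma)\le T/M$, either (first alternative) $\gamma=\gamma'$ for some $\gamma'$ with $\delta_{\gamma'}>0$ and $\min(\{\theta_\gamma\},1-\{\theta_\gamma\})\ge \delta_{\gamma'}/2$, or (second alternative) $\gamma$ is associated by Lemma~\ref{lem:pert}(c) to an orbit of $\lambda$ whose relevant cover has integer rotation, and $\min(\{\theta_\gamma\},1-\{\theta_\gamma\})<\mathcal{A}_{\min}/T$. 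This is achievable because the linearized Reeb flow along $\gamma$ under $\lambda_k$ converges in $C^\infty$ to that along the corresponding orbit of $\lambda$; so $\theta_\gamma\to\theta_{\gamma'}$ in the first alternative, while $\theta_\gamma\to b\theta_{\gamma'}=a\in\Z$ (for $\theta_{\gamma'}=a/b$ in lowest terms) in the second.

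\textbf{Step 3 (case analysis).} Fix $m\ge M$ with $m\mathcal{A}(\gamma)\le T$. In the first alternative, $m\cdot\min(\{\theta_\gamma\},1-\{\theta_\gamma\})\ge M\delta_{\gamma'}/2\ge 2$, so $m\{\theta_\gamma\}\ge 2$ and $m(1-\{\theta_\gamma\})\ge 2$. When $\theta_\gamma$ is irrational (which holds whenever $\gamma$ is elliptic; since $\lambda_k$ is nondegenerate on all iterates this is the generic subcase), Lemma~\ref{lem:partitionfacts}(c) applied contrapositively already rules out $|p^+_\gamma(m)|+|p^-_\gamma(m)|\le 3$. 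The only way $\theta_\gamma$ can be rational in this alternative is that $\gamma$ is negative hyperbolic, which forces $\gamma'$ to be negative hyperbolic with $\{\theta_{\gamma'}\}=\{\theta_\gamma\}=1/2$; a direct computation (or Example~\ref{example:nearlyrationalangle} applied to $\theta=1/2+\varepsilon$ and passing to the limit) then gives $|p^\pm_\gamma(m)|\ge 2$ for $m\ge 4$, so the total is $\ge 4$. In the second alternative, write $\epsilon_k:=\min(\{\theta_\gamma\},1-\{\theta_\gamma\})$. From $\mathcal{A}(\gamma)\ge\mathcal{A}_{\min}$ one gets $m\le T/\mathcal{A}_{\min}$, while by construction $\epsilon_k<\mathcal{A}_{\min}/T$, so $m\epsilon_k<1$, i.e., $\epsilon_k<1/m$. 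Applying Example~\ref{example:smallangle} to $\theta_\gamma$ (when $\{\theta_\gamma\}<1/m$) or to $-\theta_\gamma$ (when $1-\{\theta_\gamma\}<1/m$, using $p^-_\theta=p^+_{-\theta}$) shows that one of $|p^\pm_\gamma(m)|$ equals $m\ge 4$, so again $(\gamma,m)$ is not exceptional.

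\textbf{Main obstacle.} The subtle point is that $M$ must be $\lambda$-dependent but $T$-independent, yet in the near-integer case the fractional part $\epsilon_k$ can be arbitrarily small, so Lemma~\ref{lem:partitionfacts}(c) alone cannot rule out an exceptional pair. The key observation that makes the argument go through is that Example~\ref{example:smallangle} \emph{reverses} direction in this regime: very small $\epsilon_k$ forces $|p^\pm_\gamma(m)|=m$, i.e., \emph{many} small parts rather than few. The action cap $m\le T/\mathcal{A}_{\min}$ then competes with the smallness $\epsilon_k<\mathcal{A}_{\min}/T$ in exactly the right way, so that choosing $k$ large enough (depending on $T$) absorbs the $T$-dependence while leaving $M$ fixed by $\lambda$ alone.
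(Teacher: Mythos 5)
Your argument is correct and is essentially the paper's own proof: both choose $M$ from the nonzero (mod $1$) rotation numbers of the finitely many simple $\lambda$-orbits, invoke Lemma~\ref{lem:pert}(c) to force the $\lambda_k$-rotation numbers of orbits of bounded action to lie near $\Theta\cup\{0\}$, and then exclude exceptional pairs via Lemma~\ref{lem:partitionfacts}(c) in the regime bounded away from the integers and via Example~\ref{example:smallangle} together with the action cap $m\lesssim T/\mathcal{A}_{\min}$ in the near-integer regime. The only differences are cosmetic: your explicit treatment of half-integer rotation numbers (which the paper leaves implicit, since Lemma~\ref{lem:partitionfacts}(c) is stated for irrational $\theta$) is a welcome addition, and you should tighten the bound $\mathcal{A}(\gamma)\ge\mathcal{A}_{\min}$ slightly (e.g.\ require $\epsilon_k<\mathcal{A}_{\min}/(2T)$, as the paper does by fixing $\ell$ strictly below all $\lambda_k$-actions), since the action of a $\lambda_k$-orbit can dip marginally below $\mathcal{A}_{\min}$.
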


\begin{proof}
Given a number $\theta\in\R/\Z$, write
\[
\overline{\theta} = \min\{\{\theta\},1-\{\theta\}\}\in [0,1/2].
\]
By Lemma~\ref{lem:partitionfacts}(c), if $\gamma\in\mathcal{P}_k$ has rotation number $\theta$, and if $m\overline{\theta}>2$, then the pair $(\gamma,m)$ is not exceptional. Also, by Example~\ref{example:smallangle}, if $m\overline{\theta}<1$ and $m\ge 3$ then the pair $(\gamma,m)$ is not exceptional. So to prove the lemma, it is enough to show that there exists a constant $M\ge 3$, depending only on $\lambda$, such that:
\begin{itemize}
\item[(*)] For every $T>0$, there exists a constant $k_T'$ such that if $k\ge k_T'$, if $\gamma\in\mathcal{P}_k$ has rotation number $\theta$, if $m\ge M$, and if $m\mathcal{A}(\gamma)\le T$, then $m\overline{\theta}\notin(1,2)$.
\end{itemize}

Let $\Theta\subset\R/\Z$ denote the finite set consisting of the rotation numbers of the simple Reeb orbits of $\gamma$. We claim that the requirement (*) will be fulfilled as long as $M\ge 3$ and 
\begin{equation}
\label{eqn:Mchoice}
M > \max\left\{2/\overline{\theta} \;\big|\; \theta\in\Theta\setminus\{0\}\right\}.
\end{equation}

To see this, let $T>0$ be given. By Lemma~\ref{lem:pert}, if $k$ is sufficiently large, then the rotation number $\theta$ of a simple Reeb orbit $\gamma$ of $\lambda_k$ with action less than $T$ will be close to $\Theta\cup\{0\}$. If $\theta$ is close to $\Theta\setminus\{0\}$ and $m\ge M$, then we will have $m\overline{\theta}>2$ by \eqref{eqn:Mchoice}. If $\theta$ is close to $0$ and $m\ge M$ and $m\mathcal{A}(\gamma)\le T$, then we will have $m\overline{\theta}<1$, because by Lemma~\ref{lem:pert}, for $k$ sufficiently large we have $m\le T/\ell+1$ where $\ell$ is the minimum action of a Reeb orbit of $\lambda$.
\end{proof}

%%%%%%%%%%%%%%%%%%%%%%%%%%%%%%%%%%%%%%%%%%%%%%%%

\subsubsection{Some constants}
\label{sec:constants}

We now define some constants associated to $\lambda$ that will be used below.  We emphasize that these constants only depend on $\lambda$ and are independent of~$k$.

\begin{itemize}
\item
Let $B_0$ be the maximum of the denominators $b \geq 1$ of the rational numbers $a/b$ (in lowest terms) that are rotation numbers of simple orbits for $\lambda$, and let $B=4B_0$; if there are no degenerate orbits set $B_0 = 1$.
\item
Let $M$ be the constant provided by Lemma~\ref{lem:exceptional}, governing the exceptional pairs $(\gamma,m)$. By increasing $M$ if necessary, we also assume that $M>B_0$.
\item
Fix $\ell>0$ which is less than the smallest action of a Reeb orbit for $\lambda$, and also less than the action of all Reeb orbits for $\lambda_k$ for every $k$.
\item
Let $\epsilon_{BM}$ be the constant provided by Lemma~\ref{lem:thresh} using $M$ as above, giving a lower bound on the action difference between orbit sets with low complexity.  By shrinking $\epsilon_{BM}$ if necessary, we can further assume that $\epsilon_{BM}<\ell$.
\item
Let $\epsilon' = \min\{\eta,\epsilon_{BM}/2\} > 0$, where $\eta>0$ was given as an input to Proposition~\ref{prop:main}, see \S\ref{sec:setup}.
\item
Let $\delta_1 > 0$ be the constant provided by Lemma~\ref{lem:chern} using $\ell$ as above, controlling the first Chern class of nullhomologous orbit sets.
\end{itemize}

To define the remaining constants, note that by Proposition~\ref{prop:usequence} and our hypothesis that $c_1(\xi)$ is torsion, we can choose a $U$-sequence $\{\sigma_p\}_{p\ge p_0}$ in $ECH(Y,\xi,0)$ with $p_0>0$ such that each $\sigma_p$ has grading $2p$, as defined in equation \eqref{eqn:canonicalgrading}. Denote the corresponding spectral invariant by $c_p=c_{\sigma_p}(Y,\lambda)\in\R$.

\begin{itemize}
\item
By the ``Weyl law'' of Proposition~\ref{prop:vol}, we can fix a constant $\delta_2>0$ such that 
\begin{equation}
\label{eqn:cqbound}
c_p + 1 \le \delta_2 p^{1/2}
\end{equation}
holds for all $p> p_0$.
\item
Fix $q > p_0$ to be any integer large enough so that
\begin{align}
\label{eqn:defnq1}
q^{2/3} &> 4 \delta_2 q^{1/2}/\ell + 6 \delta_1 \delta_2 q^{1/2} + 9 \delta_2 q^{1/2} / \epsilon', \\
\label{eqn:defnq2}
q^{4/5} &> 4 \delta_2 q^{1/2}/\ell + 4 \delta_1 \delta_2 q^{1/2} + 7 \left(q^{2/3} + \delta_2 q^{1/2}/\epsilon'\right), \\
\label{eqn:defnq3}
q^{5/6} & >
2\delta_2 q^{1/2}/\ell + 4 \delta_1 \delta_2 q^{1/2} +8\left(\delta_2q^{1/2}/\epsilon' + q^{4/5} + q^{2/3}\right),\\
\label{eqn:defnq4}
q - p_0 &>  q^{5/6} + q^{4/5} + q^{2/3} + \delta_2 q^{1/2}/\epsilon' . 
\end{align}
We will use these inequalities at the end of the proof in \S\ref{sec:puttogether}. For now we note that they can all be satisfied for sufficiently large $q$, where the demand of sufficiency depends only on $\ell$, $\delta_1$, $\delta_2$, $\epsilon'$, and $p_0$, all of which are $k$-independent.
\item
Finally, define 
\begin{equation}
\label{eqn:Tq}
T_q = \delta_2 q^{1/2}.
\end{equation}
\end{itemize}

%%%%%%%%%%%%%%%%%%%%%%%%%%%%%%%%%%%%%%%%%%%%%%%

\subsubsection{More basic estimates}

We will also need two more estimates; these involve the constants from the previous section.  

We have the following lower bound on the energy of holomorphic currents counted by the $U$-map:

\begin{lemma}
\label{lem:nonloc}
There is a constant $\bar{h} > 0$ such that the following holds when $k$ is sufficiently large: If $\mathcal{C}\in\mathcal{M}^{J_k}(\alpha,\beta)$ is a $U$-curve and $\mathcal{A}(\alpha)<T_q$, then
\begin{equation}
\label{eqn:nonloc}
\mathcal{A}(\alpha) - \mathcal{A}(\beta) > \bar{h}.
\end{equation}     
\end{lemma}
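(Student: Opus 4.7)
The plan is to combine the structure result for $U$-curves given by Proposition~\ref{prop:lowindexcurves}(2) with a uniform monotonicity estimate for $J_k$-holomorphic curves at the point $(0,y)$. First I would apply Proposition~\ref{prop:lowindexcurves}(2) to write $\mathcal{C} = \mathcal{C}_0 + C_1$, where $\mathcal{C}_0$ is a disjoint union of trivial cylinders with multiplicities and $C_1$ is embedded, somewhere injective, and satisfies $I(C_1)=\operatorname{ind}(C_1)=2$. Since $d\lambda_k$ pulls back to zero on any trivial cylinder $\mathbb{R}\times\gamma$, we have
\[
\mathcal{A}(\alpha)-\mathcal{A}(\beta) \;=\; \int_{\mathcal{C}} d\lambda_k \;=\; \int_{C_1} d\lambda_k,
\]
so the task reduces to bounding $\int_{C_1} d\lambda_k$ from below by a $k$-independent positive constant.

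Next I would locate the marked point $(0,y)$ on $C_1$ rather than on $\mathcal{C}_0$. The point $y$ was fixed in \S\ref{sec:setup} to lie in $Y\setminus N$. Every simple Reeb orbit of $\lambda_k$ that appears in $\alpha$ or $\beta$ has action at most $\mathcal{A}(\alpha) < T_q$, so by Lemma~\ref{lem:pert}(c), for all sufficiently large $k$ (depending only on $T_q$, and hence only on $\lambda$) every such simple orbit is contained in $N$. Consequently no trivial cylinder appearing in $\mathcal{C}_0$ meets the slice $\{0\}\times\{y\}$, and the condition $(0,y)\in\mathcal{C}$ coming from the definition of a $U$-curve forces $(0,y)\in C_1$.

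The key remaining step is a uniform local area lower bound. Choose any auxiliary Riemannian metric $g$ on $\mathbb{R}\times Y$ tamed by $J$ (for instance the product of the cylindrical $\mathbb{R}$-metric with the standard contact metric built from $\lambda$ and $J$ on $\xi$), and fix a small $g$-ball $B_{r_0}(0,y)$ disjoint from the lifts of all Reeb orbits of $\lambda$ through a neighborhood of $y$. Since $J_k\to J$ in $C^\infty$, the almost complex structures $J_k$ are uniformly tame with respect to $g$ for all large $k$, so the standard monotonicity lemma for pseudoholomorphic curves (see e.g.\ Hummel or Sikorav) applies uniformly: there is a constant $A_0>0$, independent of $k$, such that any non-constant $J_k$-holomorphic curve passing through $(0,y)$ has $g$-area at least $A_0$ inside $B_{r_0}(0,y)$. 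Since $C_1$ is non-constant (it is somewhere injective with $I(C_1)=2>0$) and passes through $(0,y)$, we conclude that its $g$-area, and hence its $d\lambda_k$-energy (since $g$-area and the two-form $ds\wedge\lambda_k+d\lambda_k$ are uniformly comparable on $J_k$-complex tangent planes for $k$ large), is bounded below by some $\bar h>0$ depending only on $\lambda$, $J$, $y$, and $r_0$. Combined with the identity displayed above, this yields $\mathcal{A}(\alpha)-\mathcal{A}(\beta)\ge\bar h$.

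The main technical point is the uniformity of the monotonicity constant in $k$; this is routine given the smooth convergence $J_k\to J$ on the compact set $\overline{B_{r_0}(0,y)}$ and the fact that the upper bound $T_q$ on action is $k$-independent, so that only finitely many configurations of nearby Reeb orbits need to be controlled simultaneously.
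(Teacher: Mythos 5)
Your reduction steps are fine (the trivial cylinders carry no $d\lambda_k$-area, and for large $k$ all simple orbits of action $<T_q$ lie in $N$ while $y\notin N$, so $(0,y)\in C_1$), but the core step fails: the monotonicity lemma bounds from below the area of $C_1$ in the ball $B_{r_0}(0,y)$ with respect to a metric or taming form, i.e.\ essentially $\int_{C_1}\left(ds\wedge\lambda_k+d\lambda_k\right)$ over that ball, and this is \emph{not} comparable to $\int_{C_1}d\lambda_k=\mathcal{A}(\alpha)-\mathcal{A}(\beta)$. On a $J_k$-complex tangent plane close to ${\rm span}\{\partial_a,R\}$ the form $d\lambda_k$ is nearly zero while the taming area is of order one: a trivial cylinder has infinite taming area and exactly zero $d\lambda_k$-area, and nothing in your local argument prevents $C_1$ from hugging $\R\times\ell$ for a (not necessarily closed) Reeb trajectory $\ell$ through $y$ near $(0,y)$, contributing as much taming area as monotonicity demands while its $d\lambda_k$-area there is arbitrarily small. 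So the chain ``$g$-area bounded below $\Rightarrow$ $d\lambda_k$-energy bounded below'' is exactly the gap; no purely local estimate at $(0,y)$ can yield the lemma, because the obstruction to small action difference is global.

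What the paper does instead is a compactness argument: assuming the conclusion fails, one takes a sequence of $U$-curves $\mathcal{C}_i\in\mathcal{M}^{J_i}(\alpha_i,\beta_i)$ with $\mathcal{A}(\alpha_i)<T_q$ and $\mathcal{A}(\alpha_i)-\mathcal{A}(\beta_i)\to 0$, all passing through $(0,y)$, and applies Taubes's Gromov compactness for currents. The limit is a finite-energy $J$-holomorphic current through $(0,y)$ with zero $d\lambda$-area, hence a union of trivial cylinders over \emph{closed} Reeb orbits (of period $\le T_q$, hence contained in $N$); this forces $y$ to lie on such an orbit, contradicting the choice of $y\in Y\setminus N$ away from all Reeb orbits. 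It is precisely the global finite-energy structure, via the compactness theorem, that upgrades ``small $d\lambda$-area'' to ``close to trivial cylinders over closed orbits,'' which is the input your local monotonicity argument cannot supply. If you want to keep a monotonicity flavor, you would still need this global step to rule out the near-vertical local behavior, at which point you have reproduced the compactness proof.
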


\begin{proof}
This follows the proof of \cite[Lemma~3.1(b)]{CGH}. The idea is to suppose that there exists a sequence of $U$-curves $\mathcal{C}_i\in\mathcal{M}^{J_i}(\alpha_i,\beta_i)$ with $\mathcal{A}(\alpha_i)<T_q$ and $\lim_{i\to\infty}(\mathcal{A}(\alpha_i)-\mathcal{A}(\beta_i))=0$. Once can then use Gromov compactness for currents as proved by Taubes \cite[Prop.\ 3.3]{taubes_currents} (see \cite[Prop.\ 1.9]{dw} for a more general version) to obtain a contradiction.
\end{proof}

\begin{lem}
\label{lem:threshold}
If $k$ is sufficiently large, then for any two orbit sets $\alpha$ and $\beta$ for $\lambda_k$ with action $\le T_q$ and with $|\alpha| + |\beta| \le 4$, if at least one of them has complexity $\le M$, or if both have exactly one simple Reeb orbit with multiplicity $> M$ and these simple orbits are the same, we have either
\begin{equation}
\label{eqn:toavoid}
|\mathcal{A}(\alpha) - \mathcal{A}(\beta)| \le \bar{h}/2
\end{equation}
or
\begin{equation}
\label{eqn:keyeq}
|\mathcal{A}(\alpha) - \mathcal{A}(\beta)| \ge \epsilon'.
\end{equation}
\end{lem}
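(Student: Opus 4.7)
The plan is to reduce Lemma~\ref{lem:threshold} to Lemma~\ref{lem:thresh} by passing to the limit $\lambda$. For any orbit set $\alpha = \{(\alpha_i, m_i)\}$ for $\lambda_k$ with action $\le T_q$, \S\ref{sec:setup} provides a unique orbit set $\underline{\alpha}$ for $\lambda$ homotopic to $\alpha$ in $N$: concretely, by Lemma~\ref{lem:pert}(c), each simple orbit $\alpha_i$ of $\lambda_k$ is either a simple orbit $\alpha_i'$ of $\lambda$ or is $C^\infty$-close to a $b_i$-fold cover $(\alpha_i')^{b_i}$ with $b_i \le B_0$, and $\underline{\alpha}$ assigns to each simple orbit $\alpha'$ of $\lambda$ the multiplicity $\sum_{i : \alpha_i' = \alpha'} m_i b_i$. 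First I would check that, since $\sum_i m_i \le \mathcal{A}(\alpha)/\ell \le T_q/\ell$ is uniformly bounded, Lemma~\ref{lem:pert}(c) gives uniform convergence: for every $\delta > 0$, if $k$ is sufficiently large then $|\mathcal{A}(\alpha) - \mathcal{A}(\underline{\alpha})| < \delta$ for all orbit sets $\alpha$ for $\lambda_k$ with action $\le T_q$.

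Next I would verify that the hypothesis on $(\alpha,\beta)$ implies that $(\underline{\alpha}, \underline{\beta})$ satisfies the hypothesis of Lemma~\ref{lem:thresh} with $M$ replaced by $MB = 4 M B_0$. If $\alpha$ has complexity $\le M$, then using $|\alpha| \le 4$ and $b_i \le B_0$, each simple orbit of $\lambda$ appears in $\underline{\alpha}$ with multiplicity at most $4 M B_0 = MB$, so $\underline{\alpha}$ has complexity $\le MB$. If instead both $\alpha$ and $\beta$ contain a common simple orbit $\gamma$ of $\lambda_k$ with multiplicity $> M$ and no other simple orbit of multiplicity $> M$, then denoting by $\gamma'$ the simple orbit of $\lambda$ associated to $\gamma$, in $\underline{\alpha}$ (resp.\ $\underline{\beta}$) every simple orbit other than $\gamma'$ has multiplicity at most $3 M B_0 < MB$; hence either $m(\gamma',\underline{\alpha}) > MB$ or $\underline{\alpha}$ has complexity $\le MB$, and likewise for $\underline{\beta}$. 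In any combination, $(\underline{\alpha}, \underline{\beta})$ meets the hypothesis of Lemma~\ref{lem:thresh} with constant $MB$.

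Applying Lemma~\ref{lem:thresh} then yields either $\mathcal{A}(\underline{\alpha}) = \mathcal{A}(\underline{\beta})$ or $|\mathcal{A}(\underline{\alpha}) - \mathcal{A}(\underline{\beta})| \ge \epsilon_{BM}$. I would choose $\delta = \min\{\bar{h}/4,\; (\epsilon_{BM} - \epsilon')/2\}$, which is positive by the choice $\epsilon' \le \epsilon_{BM}/2$ in \S\ref{sec:constants}, and take $k$ large enough that the uniform convergence above gives $|\mathcal{A}(\alpha) - \mathcal{A}(\underline{\alpha})|, |\mathcal{A}(\beta) - \mathcal{A}(\underline{\beta})| < \delta$. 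In the first case of the dichotomy one has $|\mathcal{A}(\alpha) - \mathcal{A}(\beta)| < 2\delta \le \bar{h}/2$, giving \eqref{eqn:toavoid}; in the second one has $|\mathcal{A}(\alpha) - \mathcal{A}(\beta)| \ge \epsilon_{BM} - 2\delta \ge \epsilon'$, giving \eqref{eqn:keyeq}.

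The main obstacle is the verification in the second paragraph, that the ``exactly one simple orbit of multiplicity exceeding the threshold'' condition survives passage from $\lambda_k$ to $\lambda$ after relaxing the threshold from $M$ to $MB$. The subtle point is that several distinct simple orbits of $\lambda_k$ may converge to the same simple orbit of $\lambda$ in the degenerate case; the hypothesis $|\alpha| + |\beta| \le 4$ is precisely what controls this combining, since only a bounded number of simple orbits can collapse together and each contributes its multiplicity multiplied by at most $B_0$, so that multiplicities increase by at most the factor $B = 4 B_0$ defined in \S\ref{sec:constants}.
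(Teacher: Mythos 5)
Your proposal is correct and follows essentially the same route as the paper's proof: pass to the limiting orbit sets $\underline{\alpha},\underline{\beta}$ for $\lambda$, use the bound $m(\gamma',\underline{\alpha})\le B_0\sum_{\gamma\in\mathcal{P}_k(\gamma')}m(\gamma,\alpha)$ together with $|\alpha|+|\beta|\le 4$ to verify the hypotheses of Lemma~\ref{lem:thresh} at complexity threshold $BM$, and then transfer the resulting dichotomy back to $\lambda_k$ via uniform convergence of actions. Your explicit case analysis in the ``one high-multiplicity orbit'' case and the explicit choice of $\delta$ only spell out what the paper leaves implicit.
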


Here the constant $\bar{h}$ is provided by Lemma~\ref{lem:nonloc}. Also recall that $\epsilon'\le \epsilon_{BM}/2$ where the constant $\epsilon_{BM}$ is provided by Lemma~\ref{lem:thresh}.

\begin{proof}
Assume to start that $k\ge k_{T_q}$ where the constant $k_{T_q}$ is given by Lemma~\ref{lem:pert}(c). Recall from \S\ref{sec:setup} that $\alpha$ and $\beta$ determine orbit sets $\underline{\alpha}$ and $\underline{\beta}$ for $\lambda$. By Lemma~\ref{lem:pert}(c), if $k$ is sufficiently large, then
\[
|\mathcal{A}(\alpha)-\mathcal{A}(\underline{\alpha})| \le \bar{h}/4,
\]
and likewise for $\beta$. Thus if $\mathcal{A}(\underline{\alpha})=\mathcal{A}(\underline{\beta})$, and if $k$ is sufficiently large, then the inequality \eqref{eqn:toavoid} holds. So we can assume without loss of generality that $\mathcal{A}(\underline{\alpha}) \neq \mathcal{A}(\underline{\beta})$, and we want to show that the inequality \eqref{eqn:keyeq} holds if $k$ is sufficiently large.

If $\gamma'$ is a simple Reeb orbit of $\lambda$, let $\mathcal{P}_k(\gamma')$ denote the set of simple Reeb orbits of $\lambda$ that are contained in the same component of $N$ as $\gamma'$. Then by Lemma~\ref{lem:pert}(c), we have
\begin{equation}
\label{estimate_on_multiplicity}
m(\gamma',\underline{\alpha}) \leq B_0 \sum_{\gamma\in P_{k}(\gamma')} m(\gamma,\alpha).
\end{equation}
Since $|\alpha|\le 4$ by hypothesis, it follows that if $\alpha$ has complexity $\le M$, then $\underline{\alpha}$ has complexity $\le BM$. The same holds for $\beta$. Thus if $\alpha$ or $\beta$ has complexity $\le M$, then Lemma~\ref{lem:thresh} applies to give
\begin{equation}
\label{eqn:applythresh}
|\mathcal{A}(\underline{\alpha})-\mathcal{A}(\underline{\beta})|\ge \epsilon_{BM},
\end{equation}
so that the inequality \eqref{eqn:keyeq} holds if $k$ is sufficiently large.

The remaining possibility is that $\alpha$ and $\beta$ have exactly one simple Reeb orbit with multiplicity $>M$, and this simple orbit is the same for each, call it $\gamma$. There is then a simple Reeb orbit $\gamma'$ for $\lambda$ such that $\gamma\in\mathcal{P}_k(\gamma')$. If $\gamma''$ is a simple Reeb orbit for $\lambda$ with $\gamma''\neq\gamma'$, then it follows from \eqref{estimate_on_multiplicity} that $m(\gamma'',\underline{\alpha})\le BM$, and likewise for $\underline{\beta}$. Thus Lemma~\ref{lem:thresh} applies again to give \eqref{eqn:applythresh}, so that the inequality \eqref{eqn:keyeq} holds if $k$ is sufficiently large.
\end{proof}

%%%%%%%%%%%%%%%%%%%%%%%%%%%%%%%%%%%%%%%%%%%%%%%

\subsubsection{A chain of $U$-curves }
\label{sec:uchain}

We now use the $U$-map on ECH to obtain a ``chain'' of $U$-curves. The following is loosely related to~\cite[Lemma~4.4]{CGHP}:

\begin{lem}
\label{lem:ucurves}
If $k$ is sufficiently large, then there exist:
\begin{itemize}
\item ECH generators $\alpha(p_0), \ldots, \alpha(q)$ for $\lambda_k$, with $[\alpha(i)] = 0 \in H_1(Y)$ and $\mathcal{A}(\alpha(q)) \le T_q$,
\item
$U$-curves $\mathcal{C}(i)\in\mathcal{M}^{J_k}(\alpha(i),\alpha(i-1))$ for $p_0 < i \le q$,
\end{itemize}
such that:
\begin{itemize}
\item[(a)] There are at most $T_q/\epsilon'$ indices $i\in\{p_0+1,\ldots,q\}$ such that
\[
\mathcal{A}(\alpha(i))-\mathcal{A}(\alpha(i-1)) \ge \epsilon'.
\]
\item[(b)] We have
\begin{equation}
\label{eqn:j0equation}
\sum^q_{i={p_0+1}} J_0(\mathcal{C}(i)) \le 2(q-p_0) + 2 \delta_1T_q.
\end{equation}
\end{itemize}
\end{lem}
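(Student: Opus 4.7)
The plan is to construct the chain by iterating the chain-level $U$-map downward from a minimal-action cycle representing $\sigma_q$, then tracing a path of nonzero chain coefficients back up the tower. More concretely: since $\lambda_k$ is nondegenerate and the ECH spectral invariants are $C^0$-continuous in the contact form, for $k$ large enough we have $c_{\sigma_q}(Y,\lambda_k) < c_q + 1 \le T_q$ by~\eqref{eqn:cqbound}, so I may fix a cycle $Z_q \in ECC(Y,\lambda_k,0)$ representing $\sigma_q$ in which every ECH generator has action $\le c_{\sigma_q}(Y,\lambda_k) < T_q$. Iterating, set $Z_i := U_{J_k,y}^{q-i}(Z_q)$ for $p_0 \le i \le q$; since $\sigma_i \ne 0$, each $Z_i$ is a nonzero chain, and so admits at least one ECH generator appearing with nonzero $\Z/2$-coefficient. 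Beginning from any such generator $\alpha(p_0)$ in $Z_{p_0}$ and proceeding inductively upward, I would use the chain-level identity $Z_i = \sum_{\gamma \in Z_{i+1}} U_{J_k,y}(\gamma)$ to select $\alpha(i+1) \in Z_{i+1}$ with $\alpha(i)$ appearing in its $U$-image; the definition~\eqref{eqn:Uchainmap} then supplies a $J_k$-holomorphic current $\mathcal{C}(i+1) \in \mathcal{M}^{J_k}(\alpha(i+1),\alpha(i))$ with $I(\mathcal{C}(i+1)) = 2$ passing through $(0,y)$. Each $\alpha(i)$ is null-homologous because the $U$-sequence lies in $ECH(Y,\xi,0)$, and $\mathcal{A}(\alpha(q)) \le T_q$ by construction.

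Conclusion~(a) is then immediate from the fact that $U$-curves strictly decrease symplectic action (\S\ref{sec:volume}): the $q-p_0$ positive drops $\mathcal{A}(\alpha(i)) - \mathcal{A}(\alpha(i-1))$ telescope to $\mathcal{A}(\alpha(q)) - \mathcal{A}(\alpha(p_0)) \le T_q$, so at most $T_q/\epsilon'$ of them can be $\ge \epsilon'$. For conclusion~(b), I would plug $I(\mathcal{C}(i)) = 2$ into~\eqref{eqn:defJ0} and sum: the $\CZ_\tau^{\operatorname{top}}$ terms telescope, and so do the relative Chern class terms, by additivity of $c_\tau$ under concatenation together with its unambiguous definition on null-homologous orbit sets---this is where the torsion hypothesis on $c_1(\xi)$ enters, via~\eqref{eqn:ctauambiguity}. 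The result is
\begin{equation*}
\sum_{i=p_0+1}^q J_0(\mathcal{C}(i)) = 2(q-p_0) - \bigl(2c_\tau(\alpha(q)) + \CZ_\tau^{\operatorname{top}}(\alpha(q))\bigr) + \bigl(2c_\tau(\alpha(p_0)) + \CZ_\tau^{\operatorname{top}}(\alpha(p_0))\bigr),
\end{equation*}
and Lemma~\ref{lem:chern} bounds each boundary term in absolute value by $\delta_1 T_q$, proving~(b).

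The only delicate point, and thus the main obstacle I would need to address, is that Lemma~\ref{lem:chern} is stated for orbit sets of $\lambda$, whereas $\alpha(q)$ and $\alpha(p_0)$ are orbit sets for $\lambda_k$. The plan is to handle this via Lemma~\ref{lem:pert}(c): for $k$ sufficiently large, every ECH generator for $\lambda_k$ of action $\le T_q$ is identified with a null-homologous orbit set $\underline{\alpha}$ for $\lambda$ of close action, having the same $c_\tau$ (computed with respect to the trivialization $\tau$ induced over the tubular neighborhoods $N$) and close $\CZ_\tau^{\operatorname{top}}$. Consequently the bound in Lemma~\ref{lem:chern} transfers to $\lambda_k$ at the cost of enlarging $\delta_1$ by a $k$-independent constant, which we absorb into the choice of $\delta_1$ made in~\S\ref{sec:constants}.
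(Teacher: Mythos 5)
Your proposal is correct and follows essentially the same route as the paper: extract a low-action cycle representing $\sigma_q$ using $C^0$-continuity of spectral invariants, trace a path of nonzero $U_{J_k,y}$ matrix elements to get the chain of $U$-curves, prove (a) by telescoping the positive action drops against $\mathcal{A}(\alpha(q))\le T_q$, and prove (b) by the telescoping $J_0-I$ sum together with Lemma~\ref{lem:chern} transferred from $\underline{\alpha}$ to $\alpha$ via $c_\tau(\alpha)=c_\tau(\underline{\alpha})$ and the Conley--Zehnder comparison from Lemma~\ref{lem:pert}(c). The only cosmetic differences are that you trace the chain bottom-up from a generator of $U^{q-p_0}_{J_k,y}(Z_q)$ rather than top-down from a summand $\alpha(q)$ with $U^{q-p_0}_{J_k,y}\alpha(q)\neq 0$, and that the $k$-independent slack you propose to absorb by enlarging $\delta_1$ is exactly the $2\mathcal{A}(\alpha)/\ell$ term already built into the statement of Lemma~\ref{lem:chern}.
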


\begin{proof}
Since the ECH spectral invariants are $C^0$-continuous functions of the contact form, see \S\ref{sec:volume}, it follows from the bound \eqref{eqn:cqbound} and equation \eqref{eqn:Tq} that if $k$ is sufficiently large then
\begin{equation}
\label{eqn:cqkbound}
c_{\sigma_q}(Y,\lambda_k) \le T_q.
\end{equation}
By the definition of the spectral invariant $c_{\sigma_q}$, we can find a cycle $x$ in the ECH chain complex $(ECC(Y,\lambda,0),\partial_{J_k})$ representing the class $\sigma_q$, such that each ECH generator that is a summand in $x$ has action $\le T_q$. By the definition of $U$-sequence, we have $U_{J_k,y}^{q-p_0}x\neq 0$. There is then some summand of $x$, which we can call $\alpha(q)$, such that $U_{J_k,y}^{q-p_0}x\neq 0$. This means that there are ECH generators $\alpha(p_0),\ldots,\alpha(q-1)$ such that
\[
\langle U_{J_k,y}\alpha(i),\alpha(i-1)\rangle \neq 0
\]
for $p_0<i\le q$. By the definition \eqref{eqn:Uchainmap} of the chain map $U_{J_k,y}$, it follows that for each $i$ with $p_0<i\le q$ there exists a $U$-curve $\mathcal{C}(i)\in\mathcal{M}^{J_k}(\alpha(i),\alpha(i-1))$.

Assertion (a) follows immediately from the fact that $\mathcal{A}(\alpha(q))\le T_q$ and the nonnegativity of $\mathcal{A}(\alpha(i))$ and $\mathcal{A}(\alpha(i))-\mathcal{A}(\alpha(i-1))$ for each $i$. It remains to prove assertion (b).

By equation \eqref{eqn:defJ0} and the fact that $I(\mathcal{C}(i))=2$, we have a telescoping sum
\[
\begin{split}
\sum_{i=p_0+1}^q J_0(\mathcal{C}(i)) - 2(q - p_0) &= \sum_{i=p_0+1}^q\left(J_0(\mathcal{C}(i)) - I(\mathcal{C}(i))\right)\\
&= \sum_{i=p_0+1}^q\left(\left[2c_\tau(\alpha(i-1))+\CZ_\tau^{\operatorname{top}}(\alpha(i-1))\right] - \left[2c_\tau(\alpha(i)) + \CZ_\tau^{\operatorname{top}}(\alpha(i))\right]\right)\\
&= \left[2c_\tau(\alpha(p_0)) + \CZ_\tau^{\operatorname{top}}(\alpha(p_0))\right] - \left[2c_\tau(\alpha(q)) + \CZ_\tau^{\operatorname{top}}(\alpha(q)\right].
\end{split}
\]
So to prove part (b), it is enough to show that if $k$ is sufficiently large, then
\begin{equation}
\label{eqn:J0Ibound}
\left|2c_\tau(\alpha) + \CZ_\tau^{\operatorname{top}}(\alpha)\right| \le \delta_1T_q
\end{equation}
for every ECH generator $\alpha$ for $\lambda_k$ with $\mathcal{A}(\alpha) \le T_q$.

Suppose to start that $k \geq k_{T_q}$, where the constant $k_{T_q}$ is provided by Lemma~\ref{lem:pert}(c). Recall from \S\ref{sec:setup} that for each orbit set $\alpha$ for $\lambda_k$ with $\mathcal{A}(\alpha)\le T_q$, there is a unique orbit set $\underline{\alpha}$ for $\lambda$ such that $\alpha$ is homologous to $\underline{\alpha}$ in $N$. Since the trivialization $\tau$ of $\xi$ over the Reeb orbits in $\alpha$ and $\underline{\alpha}$ extends to a trivialization of $\xi$ over all of $N$, it follows from the definition of the relative first Chern class in \S\ref{sec:orbitsets} that
\[
c_\tau(\alpha) = c_\tau(\underline{\alpha}).
\]
By this equation and Lemma~\ref{lem:chern}, to prove \eqref{eqn:J0Ibound}, it is enough to show that
\[
\left|\CZ_\tau^{\operatorname{top}}(\alpha) - \CZ_\tau^{\operatorname{top}}\left(\underline{\alpha}\right)\right| \le \frac{\mathcal{A}(\alpha)}{\ell}.
\]

By our choice of $\ell$ in \S\ref{sec:constants}, there are at most $\mathcal{A}(\alpha)/\ell$ Reeb orbits in $\alpha$. So to complete the proof, it is enough to show that if $k\ge k_{T_q}$ is sufficiently large, then for every (not necessarily simple) Reeb orbit $\gamma$ of $\lambda_k$ with $\mathcal{A}(\gamma)\le T_q$, if $\underline{\gamma}$ is the corresponding Reeb orbit of $\lambda$ (in the notation of Lemma~\ref{lem:pert}(c), $\underline{\gamma}$ is either $\gamma'$ or $(\gamma')^b$), then
\begin{equation}
\label{eqn:CZchange}
\left|\CZ_\tau(\gamma) - \CZ_\tau\left(\underline{\gamma}\right)\right| \le 1.
\end{equation}
By Lemma~\ref{lem:pert}(c) and equation \eqref{eqn:CZtau}, the bound \eqref{eqn:CZchange} holds when $k$ is sufficiently large.
\end{proof}

%%%%%%%%%%%%%%%%%%%%%%%%%%%%%%%%%%%%%%%%%%%%%%%

\subsection{The score}

To analyze the chain of $U$-curves produced in \S\ref{sec:uchain}, we now define a ``score'' associated to each $U$-curve and analyze it in detail.

\subsubsection{Definition of the score}

\begin{definition}
Let $\alpha$ be an orbit set for $\lambda_k$, and let $\gamma$ be a simple Reeb orbit of $\lambda_k$.
\begin{itemize}
\item
We say that $\gamma$ is a {\bf special component\/} of $\alpha$ if $m(\gamma,\alpha)>1$ and $1\notin p_\gamma^+(m(\gamma,\alpha))$. Define $s(\gamma,\alpha)$ to be $1$ if $\gamma$ is a special component of $\alpha$, and $0$ otherwise. Denote the number of special components of $\alpha$ by
\[
s(\alpha) = \sum_{\gamma\in\mathcal{P}_k}s(\gamma,\alpha).
\]
\item
We say that $\gamma$ is a {\bf $p^+$-component\/} of $\alpha$ if $p_\gamma^+(m(\gamma,\alpha))=(m(\gamma,\alpha))$ and $m(\gamma,\alpha)\ge M$, where $M$ is the constant provided by Lemma~\ref{lem:exceptional}. Define $p^+(\gamma,\alpha)$ to be $1$ if $\gamma$ is a $p^+$-component of $\alpha$, and $0$ otherwise. Denote the number of $p^+$-components of $\alpha$ by
\[
p^+(\alpha) = \sum_{\gamma\in\mathcal{P}_k}p^+(\gamma,\alpha).
\]
\item
Similarly, we say that $\gamma$ is a {\bf $p^-$-component\/} of $\alpha$ if $p_\gamma^-(m(\gamma,\alpha))=(m(\gamma,\alpha))$ and $m(\gamma,\alpha)\ge M$. Define $p^-(\gamma,\alpha)$ to be $1$ if $\gamma$ is a $p^-$-component of $\alpha$, and $0$ otherwise. Denote the number of $p^-$-components of $\alpha$ by
\[
p^-(\alpha) = \sum_{\gamma\in\mathcal{P}_k}p^-(\gamma,\alpha).
\]
\end{itemize}
\end{definition}

\begin{definition}
If $\alpha$ is an orbit set for $\lambda_k$, its {\bf orbit score} is the quantity
\[
S(\alpha) := p^+(\alpha) + s(\alpha) - p^-(\alpha).
\]
\end{definition}

\begin{definition}
Let $\mathcal{C}\in\mathcal{M}^{J_k}(\alpha,\beta)$ be a $U$-curve. The {\bf orbit score\/} of $\mathcal{C}$ is
\[
S(\mathcal{C}) = S(\alpha) - S(\beta).
\]
The {\bf total score\/} of $\mathcal{C}$ (also called the ``score'' when there is no danger of confusion) is
\[
T(\mathcal{C}) = 3y(\mathcal{C}) + S(\mathcal{C})
\]
where we define
\[
y(\mathcal{C}) = J_0(\mathcal{C}) - 2.
\]
\end{definition}

\begin{definition}
If $\mathcal{C}$ is a $U$-curve as above, and if $\gamma$ is a simple Reeb orbit of $\lambda_k$, we define the contribution to the orbit score of $\mathcal{C}$ associated to $\gamma$ by
\begin{equation}
\label{eqn:Sgamma}
S_\gamma(\mathcal{C}) = \left(p^+(\gamma,\alpha) + s(\gamma,\alpha) + p^-(\gamma,\beta)\right) - \left(p^+(\gamma,\beta) + s(\gamma,\beta) + p^-(\gamma,\alpha) \right).
\end{equation}
We define the contribution to the total score of $\mathcal{C}$ associated to $\gamma$ by
\begin{equation}
\label{eqn:Tgamma}
T_\gamma(\mathcal{C}) = S_\gamma(\mathcal{C}) + 3e_\gamma(\mathcal{C}),
\end{equation}
where $e_\gamma(\mathcal{C})$ was defined in \S\ref{sec:J0index}.
\end{definition}

\begin{remark}
\label{rem:Tsum}
It follows from equation \eqref{eqn:j0c} and the above definitions that if we write $\mathcal{C}=\mathcal{C}_0+C_1$ as in Proposition~\ref{prop:lowindexcurves}, then
\begin{equation}
\label{eqn:Tsum}
T(\mathcal{C}) = 6g(C_1) - 12 + \sum_{\gamma\in\mathcal{P}_k} T_\gamma(\mathcal{C}) .
\end{equation}
Moreover, in the sum on the right, the contribution $T_\gamma(\mathcal{C})$ is nonzero only if $C_1$ has an end asymptotic to a cover of $\gamma$.
\end{remark}

%%%%%%%%%%%%%%%%%%%%%%%%%%%%%%%%%%%%%%

\subsubsection{How orbits contribute to the score}
\label{sec:orbitcontributions}

We now collect some lemmas about how orbits contribute to the score of a $U$-curve.

\begin{lem}
\label{lem:emp}
If $k$ is sufficiently large then the following holds. Let $\mathcal{C} = \mathcal{C}_0 + C_1 \in \mathcal{M}^{J_k}(\alpha,\beta)$ be a $U$-curve such that $\mathcal{A}(\alpha)\le T_q$. Let $\gamma\in\mathcal{P}_k$ and suppose that $C_1$ has at least one end asymptotic to a cover of $\gamma$. Let $m_0$ denote the multiplicity of the trivial cylinder $\R\times\gamma$ in $\mathcal{C}_0$. Then:
\begin{itemize}
\item[(a)] $T_\gamma(\mathcal{C}) \ge 3$.
\item[(b)]
If $T_\gamma(\mathcal{C}) = 3$, then $m_0=0$, the curve $C_1$ has a unique end at a cover of $\gamma$, and if this end is positive then the covering multiplicity must be one.
\end{itemize}
\end{lem}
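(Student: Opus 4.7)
The plan is to analyze $T_\gamma(\mathcal{C}) = S_\gamma(\mathcal{C}) + 3e_\gamma(\mathcal{C})$ by case analysis on the end structure of $C_1$ at covers of $\gamma$. Write $k^+, k^-$ for the numbers of positive and negative ends of $C_1$ at covers of $\gamma$, and $m', n'$ for their total multiplicities, so that $m := m(\gamma,\alpha) = m_0 + m'$ and $n := m(\gamma,\beta) = m_0 + n'$. A direct computation from the definition of $e_\gamma$ in \S\ref{sec:J0index} shows $e_\gamma \ge 1$, with $e_\gamma = 1$ iff $m_0 = 0$ and exactly one of $k^\pm$ equals $1$ while the other is $0$; and $e_\gamma = 2$ occurring only in the configurations (II.a) $m_0 \ge 1$, $k^+ + k^- = 1$, or (II.b) $m_0 = 0$, $k^+ = k^- = 1$. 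In all remaining cases $e_\gamma \ge 3$, hence $T_\gamma \ge 9 + S_\gamma \ge 6$ since $S_\gamma \ge -3$ trivially, and both (a) and (b) are vacuous.

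For $e_\gamma = 2$, the plan is to rule out $S_\gamma = -3$, which would demand $p^+(\gamma,\beta) = s(\gamma,\beta) = p^-(\gamma,\alpha) = 1$ with the other three terms vanishing. In case (II.b), Proposition~\ref{prop:partitionconditions} applied to the unique negative end yields $p^-_\gamma(n) = p^-_\gamma(n') = (n)$; combined with $p^+_\gamma(n) = (n)$ forced by $p^+(\gamma,\beta) = 1$ and $n \ge M$, this contradicts Lemma~\ref{lem:exceptional}, whose hypothesis $n\mathcal{A}(\gamma) \le \mathcal{A}(\beta) \le T_q$ is satisfied. In case (II.a) with $k^+ = 1$, Proposition~\ref{prop:partitionconditions} gives $p^+_\gamma(m) = (m') \cup p^+_\gamma(m_0) = (m', m_0)$, using $p^+_\gamma(m_0) = p^+_\gamma(n) = (n)$ from $p^+(\gamma,\beta) = 1$; combining with $p^-_\gamma(m) = (m)$ from $p^-(\gamma,\alpha) = 1$ gives $|p^+_\gamma(m)| + |p^-_\gamma(m)| = 3$, again contradicting Lemma~\ref{lem:exceptional}. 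The case $k^- = 1$ is symmetric (applying the exceptional pair lemma to $(\gamma,n)$). Thus $S_\gamma \ge -2$ when $e_\gamma = 2$, so $T_\gamma \ge 4$.

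For $e_\gamma = 1$, the plan is to split into positive-end and negative-end cases. In the positive case the partition condition forces $p^+_\gamma(m) = (m)$: if $m = 1$ all three scores for $\gamma$ in $\alpha$ vanish and $S_\gamma = 0$; if $1 < m < M$ then $s(\gamma,\alpha) = 1$ but $p^\pm(\gamma,\alpha) = 0$, giving $S_\gamma = 1$; if $m \ge M$ then $p^+(\gamma,\alpha) = s(\gamma,\alpha) = 1$ and Lemma~\ref{lem:exceptional} forces $|p^-_\gamma(m)| \ge 3$ so that $p^-(\gamma,\alpha) = 0$, giving $S_\gamma = 2$. Hence $T_\gamma = 3$ in the positive case iff $m = 1$. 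In the negative case symmetrically $p^-_\gamma(n) = (n)$, and one needs $s(\gamma,\beta) = 0$ when $n > 1$; this follows from Lemma~\ref{lem:partitionfacts}(a), which requires irrational rotation number for $\gamma$. That hypothesis is guaranteed because a hyperbolic orbit (rotation number $\in \{0, 1/2\} \bmod 1$) has multiplicity $\le 1$ in the ECH generator $\beta$, so $n > 1$ forces $\gamma$ elliptic and thus irrational. The same casework then gives $S_\gamma \ge 0$, matching the statement in (b) with no additional restriction on the negative-end multiplicity.

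The main obstacle is organizing the $e_\gamma = 2$ case analysis so that the partition conditions and the exceptional pair lemma interact correctly: each instance of $p^\pm(\gamma,\cdot) = 1$ produces a single-block partition on one side, and the partition condition then generates an incompatibly small partition on the opposite side, which Lemma~\ref{lem:exceptional} forbids once $m \ge M$ and $m\mathcal{A}(\gamma) \le T_q$. A subtle point is the appeal to the ECH-generator hypothesis in the negative-end half of the $e_\gamma = 1$ case, which rules out the degenerate rotation numbers that would otherwise break the application of Lemma~\ref{lem:partitionfacts}(a).
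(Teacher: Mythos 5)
Your proposal is correct and follows essentially the same route as the paper's proof: a case analysis on $e_\gamma(\mathcal{C})$, with the partition conditions of Proposition~\ref{prop:partitionconditions}, Lemma~\ref{lem:partitionfacts}, and the exceptional-pair Lemma~\ref{lem:exceptional} ruling out the problematic score configurations, and the equality analysis in the $e_\gamma=1$ case yielding (b). The only differences are cosmetic (e.g.\ in the one-positive-one-negative-end subcase and the high-multiplicity positive-end subcase you invoke Lemma~\ref{lem:exceptional} where the paper uses Lemma~\ref{lem:partitionfacts}(a)--(b)), and these substitutions are valid.
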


\begin{proof}
By equation \eqref{eqn:Sgamma} we have $S_\gamma(\mathcal{C})\ge -3$. If $e_\gamma(\mathcal{C})\ge 3$ then it follows from equation \eqref{eqn:Tgamma} that $T_\gamma(\mathcal{C}) \ge 6$. It therefore remains to consider the cases where $e_\gamma(\mathcal{C}) \in \{1,2\}$.

{\em Case 1:} Suppose that $e_\gamma(\mathcal{C})=1$. In this case, it follows from the definition of $e_\gamma$ in \S\ref{sec:J0index} that $m_0=0$, and the curve $C_1$ has just one end at a cover of $\gamma$.

{\em Subcase 1a:} Suppose that the end of $C_1$ at a cover of $\gamma$ is a positive end.
Then $p^+(\gamma,\beta) = s(\gamma,\beta) = 0$.

If $m(\gamma,\alpha)=1$, then $p^-(\gamma,\alpha)=0$ (since $M>1$), so $S_\gamma(\mathcal{C})\ge 0$, so $T_\gamma(\mathcal{C})\ge 3$, and the conditions for equality in (b) hold.

If $m(\gamma,\alpha)>1$, then it follows from Proposition~\ref{prop:partitionconditions} that $p_{\gamma}^+(m(\gamma,\alpha)) = (m(\gamma,\alpha))$, so $s(\gamma,\alpha)=1$. Since $\alpha$ is an ECH generator, $\gamma$ has irrational rotational number, so it follows from Lemma~\ref{lem:partitionfacts}(b) that $p^-_{\gamma}(m(\gamma,\alpha))\neq(m(\gamma,\alpha))$. This implies that $p^-(\gamma,\alpha)=0$. We conclude that $S_\gamma(\mathcal{C}) > 0$, so $T_\gamma(\mathcal{C})>3$.

{\em Subcase 1b:} Suppose that the end of $C_1$ at a cover of $\gamma$ is a negative end.

Then $p^-(\gamma,\alpha)=0$. Similarly to the above paragraph, it follows from Proposition~\ref{prop:partitionconditions} that $p_\gamma^-(m(\gamma,\beta))=(m(\gamma,\beta))$ and then from Lemma~\ref{lem:partitionfacts}(b) that $p^+(\gamma,\beta)=0$. By Lemma~\ref{lem:partitionfacts}(a), we have $1\in p_\gamma^+(m(\gamma,\beta))$, so $s(\gamma,\beta)=0$. Thus $S_\gamma(\mathcal{C})\ge 0$, so $T_\gamma(\mathcal{C})\ge 3$, and the conditions for equality in (b) hold.

{\em Case 2:} Suppose that $e_\gamma(\mathcal{C})=2$. It will suffice to show that $S_\gamma(\mathcal{C})\ge -2$, so that $T_\gamma(C)>3$.

{\em Subcase 2a:} Suppose that $m_0=0$. Then by the definition of $e_\gamma$, the curve $C_1$ has a positive end and a negative end asymptotic to covers of $\gamma$. Then $s(\gamma,\beta)=0$ by the same argument as in Subcase 1b, so $S_\gamma(\mathcal{C})\ge -2$.

{\em Subcase 2b:} Suppose that $m_0>0$. Then $C_1$ has a unique end asymptotic to a cover of $\gamma$. To prove that $S_\gamma(\mathcal{C})\ge -2$, suppose to get a contradiction that $S_\gamma(\mathcal{C})=-3$. Then $\gamma$ is both a $p^-$-component of $\alpha$ and a $p^+$-component of $\beta$.

Consider the case where the end of $C_1$ asymptotic to a cover of $\gamma$ is a positive end, and let $d$ denote the covering multiplicity. Then $m(\gamma,\beta)=m_0$, and $m(\gamma,\alpha) = m(\gamma,\beta) + d$. By Proposition~\ref{prop:partitionconditions}, 
\[
\begin{split}
p_{\gamma}^+(m(\gamma,\alpha)) &= p_{\gamma}^+(m(\gamma,\beta)) \cup (d)\\
& = (m(\gamma,\beta),d)
\end{split}
\]
since $\gamma$ is a $p^+$-component of $\beta$. In particular $|p_{\gamma}^+(m(\gamma,\alpha))|=2$.

Since $\gamma$ is a $p^-$-component of $\alpha$, we also have $|p_{\gamma}^-(m(\gamma,\alpha))|=1$. We conclude that the pair $(\gamma,m(\gamma,\alpha))$ is exceptional. Since $m(\gamma,\alpha)\ge M$ by the definition of ``$p^+$-component'', this contradicts 
 Lemma~\ref{lem:exceptional} if $k\ge k_{T_q}'$.

The case when the end of $C_1$ asymptotic to a cover of $\gamma$ is a negative end is handled by a symmetric argument.
\end{proof}

The next lemma gives a refinement of the previous one when we assume in addition that $C_1$ has a ``high multiplicity'' positive end.

\begin{definition}
A positive or negative end of a $J_k$-holomorphic curve asymptotic to the $d$-fold cover of an orbit $\gamma\in\mathcal{P}_k$ is {\bf high multiplicity\/} if $d\ge M$.
\end{definition}

\begin{lem}
\label{lem:high}
If $k$ is sufficiently large then the following holds. Let $\mathcal{C} = \mathcal{C}_0 + C_1 \in \mathcal{M}^{J_k}(\alpha,\beta)$ be a $U$-curve such that $\mathcal{A}(\alpha)\le T_q$. Let $\gamma\in\mathcal{P}_k$ and suppose that $C_1$ has at least one high multiplicity positive end asymptotic to a cover of $\gamma$. Let $m_0$ denote the multiplicity of the trivial cylinder $\R\times\gamma$ in $\mathcal{C}_0$. Then:
\begin{itemize}
\item[(a)]
If $C_1$ has a unique end asymptotic to a cover of $\gamma$, then
$T_\gamma(\mathcal{C}) \ge 5$, with equality only if $m_0\neq 1$.
\item[(b)]
If $C_1$ has at least two ends asymptotic to covers of $\gamma$, then $T_\gamma(\mathcal{C}) \ge 8$, with equality only if $C_1$ has exactly one positive and one negative end asymptotic to covers of $\gamma$ and $m_0=0$.
\end{itemize}
\end{lem}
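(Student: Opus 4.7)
The plan is a case analysis parallel to, and refining, the proof of Lemma~\ref{lem:emp}, organized by the numbers $p$ and $q$ of positive and negative ends of $C_1$ at covers of $\gamma$, and by the multiplicity $m_0$. Let $d_1,\ldots,d_p$ (with $d_1\ge M$ the given high-multiplicity end) and $e_1,\ldots,e_q$ denote the covering multiplicities of the positive and negative ends. Proposition~\ref{prop:partitionconditions} gives
\[
p_\gamma^+(m(\gamma,\alpha)) = (d_1,\ldots,d_p)\cup p_\gamma^+(m_0), \qquad p_\gamma^-(m(\gamma,\beta)) = (e_1,\ldots,e_q)\cup p_\gamma^-(m_0).
\]
The new ingredient over Lemma~\ref{lem:emp} is that $m(\gamma,\alpha)\ge d_1\ge M$, so for $k$ large, Lemma~\ref{lem:exceptional} applies to $(\gamma,m(\gamma,\alpha))$: one has $|p_\gamma^+(m(\gamma,\alpha))|+|p_\gamma^-(m(\gamma,\alpha))|\ge 4$, and the same bound holds for $(\gamma,m(\gamma,\beta))$ whenever $m(\gamma,\beta)\ge M$.

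For part (a), $p=1$ and $q=0$. If $m_0=0$, then $e_\gamma(\mathcal{C})=1$, $p^+(\gamma,\alpha)=s(\gamma,\alpha)=1$ by direct computation, and non-exceptionality forces $|p_\gamma^-(m(\gamma,\alpha))|\ge 3$, hence $p^-(\gamma,\alpha)=0$; the $\beta$-contributions vanish, so $S_\gamma=2$ and $T_\gamma=5$. If $m_0=1$, then $e_\gamma=2$ and $p_\gamma^+(d_1+1)=(d_1,1)$ contains $1$ (killing $s(\gamma,\alpha)$) and has size two, which combined with non-exceptionality kills $p^-(\gamma,\alpha)$; the $\beta$-contributions vanish, so $S_\gamma=0$ and $T_\gamma=6$. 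If $m_0\ge 2$, again $e_\gamma=2$ and the task is to show $S_\gamma\ge -1$: non-exceptionality excludes the simultaneous occurrence of $p^+(\gamma,\beta)=1$ and $p^-(\gamma,\alpha)=1$ (since $|p_\gamma^+(m(\gamma,\alpha))|=2$ in that situation), and Lemma~\ref{lem:partitionfacts}(a) links $s(\gamma,\alpha)$ and $s(\gamma,\beta)$ through the membership of $1$ in $p_\gamma^+(m_0)$, yielding the required cancellations in every surviving subcase.

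For part (b), the equality case $T_\gamma=8$ can arise only when $p=q=1$ and $m_0=0$: then $e_\gamma=2$; both partitions $p_\gamma^+(m(\gamma,\alpha))=(d_1)$ and $p_\gamma^-(m(\gamma,\beta))=(e_1)$ are singletons; non-exceptionality forces $p^-(\gamma,\alpha)=0$ and (when $e_1\ge M$) $p^+(\gamma,\beta)=0$; and Lemma~\ref{lem:partitionfacts}(a) gives $s(\gamma,\beta)=0$. Therefore $S_\gamma=2+p^-(\gamma,\beta)\ge 2$ and $T_\gamma\ge 8$, with equality precisely when $e_1<M$. In every remaining configuration either $p\ge 2$, $q\ge 2$, or $m_0\ge 1$; bookkeeping shows $e_\gamma\ge 3$. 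The only subcase with $e_\gamma=3$ is $p=2$, $q=0$, $m_0=0$, and there non-exceptionality (applied to $m(\gamma,\alpha)$, whose positive partition has size two) forces $p^-(\gamma,\alpha)=0$ and hence $S_\gamma\ge 0$, so $T_\gamma\ge 9$. Every other subcase has $e_\gamma\ge 4$, whence $T_\gamma\ge -3+12=9$ trivially from $S_\gamma\ge -3$.

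The principal difficulty is the combinatorial bookkeeping across subcases: Lemma~\ref{lem:exceptional} supplies a single quantitative gain at the high-multiplicity orbit, and the job is to deploy this gain in every subcase—in concert with Proposition~\ref{prop:partitionconditions} and Lemma~\ref{lem:partitionfacts}—to upgrade the $T_\gamma\ge 3$ bound of Lemma~\ref{lem:emp} to the sharper bounds stated in (a) and (b), while carefully identifying the sharp equality configurations.
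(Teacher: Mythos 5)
Your proposal is correct and follows essentially the same route as the paper's proof: a case analysis over the number of ends of $C_1$ at covers of $\gamma$ and the multiplicity $m_0$, combining the partition conditions of Proposition~\ref{prop:partitionconditions} with Lemma~\ref{lem:exceptional} and Lemma~\ref{lem:partitionfacts} to compute or bound the score contributions (where the paper occasionally uses Lemma~\ref{lem:partitionfacts}(b) or Remark~\ref{rem:7.28}, you use non-exceptionality or your displayed union identity, which are interchangeable here). One small correction: in the $m_0\ge 2$ subcase of (a), the identity $s(\gamma,\alpha)=s(\gamma,\beta)$ follows from your union identity $p_\gamma^+(m(\gamma,\alpha))=(d_1)\cup p_\gamma^+(m_0)$ together with $d_1\ge M>1$ (equivalently Remark~\ref{rem:7.28}), not from Lemma~\ref{lem:partitionfacts}(a), which compares $p^+$ and $p^-$ of the same multiplicity rather than $p^+$ at two different multiplicities.
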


\begin{proof}
(a) Suppose that $C_1$ has a unique end asymptotic to a cover of $\gamma$.

{\em Case 0:} Suppose that $m_0=0$. We need to show that $T_\gamma(\mathcal{C})\ge 5$.

In this case $e_\gamma(\mathcal{C})=1$, so we need to show that $S_\gamma(\mathcal{C})\ge 2$. From Proposition~\ref{prop:partitionconditions} and Lemma~\ref{lem:partitionfacts}(b), we learn that $\gamma$ is a $p^+$-component and a special component but not a $p^-$-component of $\alpha$. Hence $S_\gamma(\mathcal{C})=2$. 

{\em Case 1:}
Suppose that $m_0=1$. We need to show that $T_\gamma(\mathcal{C})\ge 6$.

In this case $e_\gamma(\mathcal{C})=2$, so we need to show that $S_\gamma(\mathcal{C})\ge 0$. Since $C_1$ has no negative ends asymptotic to covers of $\gamma$, it follows that $p^+(\gamma,\beta)=s(\gamma,\beta)=0$. In addition, if $k\ge k_{T_q}''$, then $p^-(\gamma,\alpha)=0$, because otherwise the pair $(\gamma,m(\gamma,\alpha))$ would be exceptional by Proposition~\ref{prop:partitionconditions}, and this would contradict Lemma~\ref{lem:exceptional} because $m(\gamma,\alpha)\ge M+1$. It then follows from equation \eqref{eqn:Sgamma} that $S_\gamma(\mathcal{C})\ge 0$.

{\em Case 2:}
Suppose that $m_0 \ge 2$. We need to show that $T_\gamma(\mathcal{C})\ge 5$.

In this case $e_\gamma(\mathcal{C})=2$ as above, so we need to show that $S_\gamma(\mathcal{C})\ge -1$. By Proposition~\ref{prop:partitionconditions} and Remark~\ref{rem:7.28}, we have $1\in p_\gamma^+(m(\gamma,\alpha))$ if and only if $1\in p_\gamma^+(m_0)$. Since $m_0=m(\gamma,\beta)$, it follows that $s(\gamma,\alpha) - s(\gamma,\beta)=0$. 

To finish showing that $S_\gamma(\mathcal{C})\ge -1$, we observe that if $k\ge k_{T_q}'$ then $\gamma$ cannot be both a $p^-$-component of $\alpha$ and a $p^+$-component of $\beta$. Otherwise the pair $(\gamma,m(\gamma,\alpha))$ would be exceptional by Proposition~\ref{prop:partitionconditions}, contradicting Lemma~\ref{lem:exceptional} since $m(\gamma,\alpha) \ge M+2$.

(b) Suppose that $C_1$ has at least two ends asymptotic to covers of $\gamma$.

Note first that if $m_0>0$, or if $C_1$ has at least three ends asymptotic to covers of $\gamma$, then $e_\gamma(\mathcal{C}) \ge 4$, and so $T_\gamma(\mathcal{C}) \ge 9$. Thus we can assume that $m_0=0$ and that $C_1$ has exactly two ends asymptotic to covers of $\gamma$.

{\em Case 1:} Suppose that  $C_1$ has two positive ends asymptotic to covers of $\gamma$. We need to show that $T_\gamma(\mathcal{C})\ge 9$.

In this case $e_\gamma(\mathcal{C}) = 3$, so we need to show that $S_\gamma(\mathcal{C}) \ge 0$. Since $m(\gamma,\beta)=0$, it is enough to show that $\gamma$ is not a $p^-$-component of $\alpha$. But if $\gamma$ were a $p^-$-component of $\alpha$, then the pair $(\gamma,m(\gamma,\alpha))$ would be exceptional by Proposition~\ref{prop:partitionconditions}, contradicting Lemma~\ref{lem:exceptional} if $k\ge k_{T_q}'$. 

{\em Case 2:} Suppose that $C_1$ has a positive end and a negative end asymptotic to covers of $\gamma$. We need to show that $T_\gamma(\mathcal{C})\ge 8$.

In this case $e_\gamma(\mathcal{C})=2$, so we need to show that $S_\gamma(\mathcal{C})\ge 2$. By Proposition~\ref{prop:partitionconditions}, $p_\gamma^+(m(\gamma,\alpha))=(m(\gamma,\alpha))$, so $\gamma$ is a $p^+$-component and special component of $\alpha$. By Lemma~\ref{lem:partitionfacts}(b), $\gamma$ cannot be a $p^-$-component of $\alpha$. By Proposition~\ref{prop:partitionconditions} again, $p_\gamma^-(m(\gamma,\beta))=(m(\gamma,\beta))$. So by Lemma~\ref{lem:partitionfacts}(b), $\gamma$ cannot be a $p^+$-component of $\beta$, and by Lemma~\ref{lem:partitionfacts}(a), $\gamma$ cannot be a special component of $\beta$. Thus $S_\gamma(\mathcal{C})\ge 2$ by equation \eqref{eqn:Sgamma}.
\end{proof}

The next lemma gives a similar refinement of Lemma~\ref{lem:emp} when we assume in addition that $C_1$ has a high multiplicity negative end.

\begin{lem}
\label{lem:low}
If $k$ is sufficiently large then the following holds. Let $\mathcal{C} = \mathcal{C}_0 + C_1 \in \mathcal{M}^{J_k}(\alpha,\beta)$ be a $U$-curve such that $\mathcal{A}(\alpha)\le T_q$. Let $\gamma\in\mathcal{P}_k$ and suppose that $C_1$ has at least one high multiplicity negative end asymptotic to a cover of $\gamma$. Let $m_0$ denote the multiplicity of the trivial cylinder $\R\times\gamma$ in $\mathcal{C}_0$. Then:
\begin{itemize}
\item[(a)]
If $C_1$ has a unique end asymptotic to a cover of $\gamma$, then
$T_\gamma(\mathcal{C}) \ge 4$, with equality only if $m_0=0$.
\item[(b)]
If $C_1$ has at least two ends asymptotic to covers of $\gamma$, then $T_\gamma(\mathcal{C}) \ge 7$, with equality only if $C_1$ has exactly one positive and one negative end asymptotic to covers of $\gamma$, the covering multiplicity for the positive end is one, and $m_0=0$.
\end{itemize}
\end{lem}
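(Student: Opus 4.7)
The plan is to mirror the structure of the proof of Lemma~\ref{lem:high}, interchanging the roles of positive and negative ends, while accounting for the sign asymmetry in the definition of the orbit score. Specifically, $p^+(\gamma,\alpha)$ and $s(\gamma,\alpha)$ enter $S_\gamma$ with positive sign while $p^-(\gamma,\alpha)$ enters negatively, and the signs are reversed on the $\beta$-side. Under this asymmetry, a high multiplicity positive end at $\gamma$ forces $\gamma$ to be both a $p^+$- and special component of $\alpha$ (worth $+2$ to $S_\gamma$), whereas a high multiplicity negative end only forces $\gamma$ to be a $p^-$-component of $\beta$ (worth $+1$). This accounts for the drop from the bounds $5, 8$ in Lemma~\ref{lem:high} to the bounds $4, 7$ here. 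Throughout, the main tools are Proposition~\ref{prop:partitionconditions} applied to the negative ends of $C_1$, Remark~\ref{rem:7.28} on the negative side, Lemma~\ref{lem:partitionfacts} to transfer information between $p^+$ and $p^-$, and Lemma~\ref{lem:exceptional} to rule out exceptional configurations at high total multiplicity.

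For part (a), I would case on the value of $m_0$. When $m_0 = 0$, all $\alpha$-side terms of $S_\gamma$ vanish since $m(\gamma,\alpha) = 0$; Proposition~\ref{prop:partitionconditions} yields $p^-_\gamma(d) = (d)$ where $d \ge M$ is the negative-end multiplicity, and Lemma~\ref{lem:partitionfacts} then gives $S_\gamma = 1$ and hence $T_\gamma = 4$. When $m_0 \ge 1$, we have $e_\gamma = 2$ and I need $S_\gamma \ge -1$. Proposition~\ref{prop:partitionconditions} gives $p^-_\gamma(m_0+d) = (d) \cup p^-_\gamma(m_0)$, which has at least two parts, so $p^-(\gamma,\beta) = 0$. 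Remark~\ref{rem:7.28} combined with Lemma~\ref{lem:partitionfacts}(a) then yields $s(\gamma,\alpha) = s(\gamma,\beta)$ when $m_0 \ge 2$ and $s(\gamma,\alpha) - s(\gamma,\beta) = -1$ when $m_0 = 1$. To prevent $S_\gamma = -2$, I invoke Lemma~\ref{lem:exceptional} on the total multiplicity $m_0+d \ge M$: when $m_0 \ge 2$ it excludes $\gamma$ from being simultaneously a $p^+$-component of $\beta$ and a $p^-$-component of $\alpha$, and when $m_0 = 1$ it excludes $\gamma$ from being a $p^+$-component of $\beta$ at all (since then $|p^+_\gamma(1+d)| = 1$ combined with the already known $|p^-_\gamma(1+d)| = 2$ would give an exceptional pair).

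For part (b), I would first observe that if $m_0 \ge 1$ or $C_1$ has three or more ends at covers of $\gamma$, then the formula for $e_\gamma$ gives $e_\gamma \ge 4$, and the trivial bound $S_\gamma \ge -3$ yields $T_\gamma \ge 9 > 7$. The remaining cases have $m_0 = 0$ and exactly two ends at covers of $\gamma$; since at least one must be a high multiplicity negative end, these are either one positive and one negative, or two negatives. In the first of these, Proposition~\ref{prop:partitionconditions} gives both $p^+_\gamma(d_+) = (d_+)$ and $p^-_\gamma(d_-) = (d_-)$, and a direct calculation of the six terms of $S_\gamma$ (using Lemma~\ref{lem:partitionfacts} to zero out the $\beta$-side contributions beyond $p^-$) yields $S_\gamma = 1 + p^+(\gamma,\alpha) + s(\gamma,\alpha)$, so $T_\gamma = 7$ iff $d_+ = 1$. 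In the second, Proposition~\ref{prop:partitionconditions} makes $p^-_\gamma(m(\gamma,\beta))$ a two-part partition, Lemma~\ref{lem:exceptional} then forces $|p^+_\gamma(m(\gamma,\beta))| \ge 2$ as well, and with all $\alpha$-side terms zero the worst case is $s(\gamma,\beta) = 1$, giving $S_\gamma \ge -1$ and $T_\gamma \ge 8$.

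The main obstacle I expect is the $m_0 = 1$ subcase of part (a). The negative partition condition and Remark~\ref{rem:7.28} give information about $p^-$ of $\beta$ but not directly about $p^+$ of $\beta$, and we cannot compare specialness across $\alpha$ and $\beta$ as cleanly as in Lemma~\ref{lem:high} because $m(\gamma,\alpha) = 1$. The correct argument requires using Proposition~\ref{prop:partitionconditions} to bound $|p^-_\gamma(m_0+d)| \ge 2$, and then applying Lemma~\ref{lem:exceptional} at the total multiplicity $m_0+d \ge M$ to exclude the exceptional configuration that would drive $S_\gamma$ below $-1$.
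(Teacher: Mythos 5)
Your proposal is correct and follows essentially the same route as the paper's proof: the same case split on $m_0$ and on the number of ends of $C_1$ at covers of $\gamma$, with $e_\gamma$ computed case by case and $S_\gamma$ bounded via Proposition~\ref{prop:partitionconditions}, Remark~\ref{rem:7.28}, Lemma~\ref{lem:partitionfacts}, and the exceptional-pair exclusion of Lemma~\ref{lem:exceptional} at the total multiplicity (the paper handles your $m_0\ge 2$ subcase by citing the symmetric Case 2 of Lemma~\ref{lem:high}(a), which is exactly the argument you spell out). The identification of the $m_0=1$ subcase as the delicate one, and its resolution via $|p^-_\gamma(1+d)|=2$ plus Lemma~\ref{lem:exceptional}, matches the paper's treatment.
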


\begin{proof}
(a) Suppose that $C_1$ has exactly one end asymptotic to a cover of $\gamma$.

{\em Case 0:} Suppose that $m_0=0$. We need to show that $T_\gamma(\mathcal{C})\ge 4$.

In this case $e_\gamma(\mathcal{C})=1$, so we need to show that $S_\gamma(\mathcal{C})\ge 1$. By Proposition~\ref{prop:partitionconditions}, we have $p_\gamma^-(m(\gamma,\beta))=(m(\gamma,\beta))$, so $p^-(\gamma,\beta)=1$. By Lemma~\ref{lem:partitionfacts}(b), we have $p^+(\gamma,\beta)=0$. By Lemma~\ref{lem:partitionfacts}(a), we have $s(\gamma,\beta)=0$. Hence $S_\gamma(\mathcal{C})=1$.

{\em Case 1:} Suppose that $m_0=1$. We need to show that $T_\gamma(\mathcal{C})\ge 5$.

In this case $e_\gamma(\mathcal{C})=2$, so we need to show that $S_\gamma(\mathcal{C}) \ge -1$. We have $p^-(\gamma,\alpha)=0$ by definition, because $m(\gamma,\alpha)=1$. By Proposition~\ref{prop:partitionconditions}, we have $p_\gamma^-(m(\gamma,\beta)) = (m(\gamma,\beta)-1,1)$. If $k\ge k_{T_q}''$ then it follows that $p^+(\gamma,\beta)=0$, or else the pair $(\beta,m(\gamma,\beta))$ would be exceptional, contradicting Lemma~\ref{lem:exceptional}. Hence there is only one possible negative term in equation \eqref{eqn:Sgamma}.

{\em Case 2:} Suppose that $m_0>1$. Again we need to show that $T_\gamma(\mathcal{C})\ge 5$. This is proved by a symmetric argument to Case 2 of the proof of Lemma~\ref{lem:high}(a).

(b) Suppose that $C_1$ has at least two ends asymptotic to covers of $\gamma$. As in the proof of Lemma~\ref{lem:high}(b), we can assume that $m_0=0$ and that $C_1$ has exactly two ends asymptotic to covers of $\gamma$.

{\em Case 1:} Suppose that $C_1$ has two negative ends asymptotic to covers of $\gamma$. We need to show that $T_\gamma(\mathcal{C})\ge 8$.

In this case $e_\gamma(\mathcal{C})=3$, so we need to show that $S_\gamma(\mathcal{C})\ge -1$. This holds if $k\ge k_{T_q}''$, because $p^+(\gamma,\beta)=0$, because otherwise the pair $(\gamma,m(\gamma,\beta))$ would be exceptional, contradicting Lemma~\ref{lem:exceptional}.

{\em Case 2:} Suppose that $C_1$ has exactly one positive and one negative end asymptotic to covers of $\gamma$. We need to show that $T_\gamma(\mathcal{C})\ge 7$, with equality only if $m(\gamma,\alpha)=1$.

In this case $e_\gamma(\mathcal{C})=2$, so we need to show that $S_\gamma(\mathcal{C})\ge 1$, with equality only if $m(\gamma,\alpha)=1$. By Proposition~\ref{prop:partitionconditions}, $p_\gamma^+(m(\gamma,\alpha))=(m(\gamma,\alpha))$ and $p_\gamma^-(m(\gamma,\beta))=(m(\gamma,\beta))$. Thus $\gamma$ is a $p^-$-component of $\beta$, and $\gamma$ is a special component of $\alpha$ if $m(\gamma,\alpha)>1$. By Lemma~\ref{lem:partitionfacts}(b), $\gamma$ is not a $p^+$-component of $\beta$ or a special component of $\beta$ or a $p^-$-component of $\alpha$. We are now done by equation \eqref{eqn:Sgamma}.
\end{proof}

%%%%%%%%%%%%%%%%%%%%%%%%%%%%%%%%%%%%%%%%%%%%%

\subsubsection{Nonnegativity of the score}

The following proposition is our main interest in the score: it asserts that low energy curves in our $U$-sequence have nonnegative score, and in fact positive score except in a few cases that can be classified.

\begin{lemma}
\label{lem:key}
If $k$ is sufficiently large then the following holds. Let $\mathcal{C} = \mathcal{C}_0 + C_1 \in \mathcal{M}^{J_k}(\alpha,\beta)$ be a $U$-curve such that $\mathcal{A}(\alpha)\le T_q$. Assume that:
\begin{itemize}
\item $\mathcal{A}(\alpha)-\mathcal{A}(\beta)<\epsilon'$, where the constant $\epsilon'$ was specified in \S\ref{sec:constants}.
\item
$C_1$ is not a cylinder.
\end{itemize}
Then:
\begin{itemize}
\item[(a)]
The total score satisfies $T(\mathcal{C}) \geq 0$.
\item[(b)]
If $T(\mathcal{C}) = 0$, then:
\begin{itemize}
\item[(1)]
$C_1$ has genus zero and exactly three ends.
\item[(2)]
There are unique, distinct simple Reeb orbits $\gamma_1,\gamma_2\in\mathcal{P}_k$ such that $C_1$ has a high multiplicity positive end asymptotic to a cover of $\gamma_1$ and a high multiplicity negative end asymptotic to a cover of $\gamma_2$.
\item[(3)]
Either $C_1$ has an additional positive end asymptotic to a simple Reeb orbit distinct from $\gamma_1$, or $C_1$ has an additional negative end asymptotic to a cover of a simple Reeb orbit distinct from $\gamma_2$.
\item[(4)]
If a trivial cylinder $\R\times\gamma$ appears in $\mathcal{C}_0$, and if $C_1$ has an end asymptotic to a cover of $\gamma$, then $\gamma=\gamma_1$, and the multiplicity of $\R\times\gamma_1$ in $\mathcal{C}_0$ is at least $2$.
\item[(5)]
$J_0(\mathcal{C})=2$ if $\R\times\gamma_1$ appears in $\mathcal{C}_0$, and $J_0(\mathcal{C})=1$ otherwise.
\end{itemize}
\end{itemize}
\end{lemma}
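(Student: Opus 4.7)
The plan is to combine the identity
\[
T(\mathcal{C}) = 6g(C_1) - 12 + \sum_{\gamma \in \mathcal{P}_k} T_\gamma(\mathcal{C})
\]
from Remark~\ref{rem:Tsum} with the per-orbit lower bounds in Lemmas~\ref{lem:emp}, \ref{lem:high}, and~\ref{lem:low}, and to use the action inequalities from Lemmas~\ref{lem:nonloc} and~\ref{lem:threshold} to exclude configurations in which $\sum_\gamma T_\gamma(\mathcal{C})$ would be too small. As a preliminary reduction, if $\bar{h} \ge \epsilon'$ the hypothesis $\mathcal{A}(\alpha)-\mathcal{A}(\beta)<\epsilon'$ directly contradicts Lemma~\ref{lem:nonloc}, so the statement is vacuous; hence one may assume $\bar{h} < \mathcal{A}(\alpha)-\mathcal{A}(\beta)<\epsilon'$. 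Moreover, $C_1$ cannot be a plane: a single positive puncture would force $\mathcal{A}(\alpha)-\mathcal{A}(\beta)\ge \ell > \epsilon'$, while a single negative puncture would force $\mathcal{A}(\alpha)<\mathcal{A}(\beta)$, contradicting the $U$-curve action-decreasing property.

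For part~(a), the case $g(C_1)\ge 2$ is immediate from the identity since each $T_\gamma(\mathcal{C})$ is non-negative by Lemma~\ref{lem:emp}. For $g(C_1)=1$, the same planar argument rules out a single puncture, and two punctures at the same simple orbit are ruled out because the resulting action gap is either zero or at least $\ell$; hence $C_1$ has ends at two or more distinct simple Reeb orbits and Lemma~\ref{lem:emp} gives $\sum T_\gamma \ge 6$. The substantial case is $g(C_1)=0$, where one needs $\sum T_\gamma \ge 12$. Let $D$ denote the number of distinct simple Reeb orbits at which $C_1$ has an end. If $D\ge 4$, Lemma~\ref{lem:emp} yields $\sum T_\gamma \ge 3D \ge 12$ immediately, so assume $D\le 3$. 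The crucial claim is that if $C_1$ has no high-multiplicity end, then a case analysis on the patterns of end multiplicities of $C_1$ and of trivial cylinder multiplicities in $\mathcal{C}_0$ places $(\alpha,\beta)$ within the scope of Lemma~\ref{lem:threshold}, whose dichotomy $|\mathcal{A}(\alpha)-\mathcal{A}(\beta)|\notin(\bar{h}/2,\epsilon')$ then contradicts the reduced hypothesis. Consequently $C_1$ must carry at least one high-multiplicity end, and a finite enumeration of the remaining configurations with $D\le 3$, using the stronger per-orbit bounds in Lemmas~\ref{lem:high} and~\ref{lem:low}, shows $\sum T_\gamma \ge 12$ with equality attained only in the $5+4+3$ pattern described in part~(b).

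For part~(b), one tracks equalities through the saturated inequalities just used. The summand $5$ comes from Lemma~\ref{lem:high}(a) and forces $C_1$ to have a unique high-multiplicity positive end at some $\gamma_1\in\mathcal{P}_k$ with the multiplicity $m_0$ of $\R\times\gamma_1$ in $\mathcal{C}_0$ not equal to $1$, yielding items~(2) and~(4); the summand $4$ comes from Lemma~\ref{lem:low}(a) and forces a unique high-multiplicity negative end at some $\gamma_2$ with $\R\times\gamma_2$ absent from $\mathcal{C}_0$, and $\gamma_2\ne\gamma_1$ necessarily, since otherwise Lemma~\ref{lem:high}(b) would give $T_{\gamma_1}(\mathcal{C})\ge 8$, violating equality at $5$; the summand $3$ from Lemma~\ref{lem:emp}(b) then forces item~(3) for the third end. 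Item~(5) is obtained by reading off $S(\mathcal{C})$ from the explicit end data in the two subcases of~(4) and solving $T(\mathcal{C})=3(J_0(\mathcal{C})-2)+S(\mathcal{C})=0$.

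The main obstacle is the finite case analysis for $D\le 3$ in part~(a): one must carefully book-keep how trivial-cylinder multiplicities in $\mathcal{C}_0$ combine with the $C_1$ end data to determine the complexity of $\alpha$ and $\beta$, so as to verify that every low-multiplicity configuration with $D\le 3$ really does fall within the scope of Lemma~\ref{lem:threshold}, and then to check each of the finitely many remaining high-multiplicity configurations against the refined bounds of Lemmas~\ref{lem:high} and~\ref{lem:low} to confirm $\sum T_\gamma \ge 12$ in every case.
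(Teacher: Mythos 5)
Your overall architecture (the identity of Remark~\ref{rem:Tsum}, the per-orbit bounds of Lemmas~\ref{lem:emp}, \ref{lem:high}, \ref{lem:low}, and an exclusion argument via the action window $(\bar h,\epsilon')$ and Lemma~\ref{lem:threshold}) is the same as the paper's, but the pivotal step is set up in a way that does not work. You propose to apply Lemma~\ref{lem:threshold} to the full orbit sets $(\alpha,\beta)$ of $\mathcal{C}$, bookkeeping trivial-cylinder multiplicities of $\mathcal{C}_0$ to verify its hypotheses. That lemma requires $|\alpha|+|\beta|\le 4$, and the full orbit sets can contain arbitrarily many trivial cylinders over orbits at which $C_1$ has no end, so this hypothesis simply need not hold; moreover their complexities can be inflated (or not) by $\mathcal{C}_0$ in a way that has nothing to do with whether $C_1$ has high-multiplicity ends. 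Your ``crucial claim'' -- that absence of a high-multiplicity end of $C_1$ forces $(\alpha,\beta)$ into the scope of Lemma~\ref{lem:threshold} -- is false as stated: for instance $C_1$ could have many multiplicity-one positive ends at one orbit and many multiplicity-one negative ends at another, making both orbit sets have complexity $>M$ at distinct orbits and hence outside the lemma's scope, with no high-multiplicity end anywhere. The correct move (the paper's) is to observe that $C_1$ is itself a $U$-curve between its own positive and negative orbit sets, which automatically satisfy the $|\cdot|+|\cdot|\le 4$ bound once $C_1$ has ends at at most four distinct simple orbits and have the same action difference as $(\alpha,\beta)$; applying Lemmas~\ref{lem:nonloc} and~\ref{lem:threshold} to those orbit sets, the failure of the threshold dichotomy yields distinct $\gamma_1,\gamma_2$ carrying a high-multiplicity positive end and a high-multiplicity negative end, respectively.

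This stronger output is not a luxury: the weaker conclusion you extract, ``$C_1$ has at least one high-multiplicity end,'' is not enough for your enumeration to reach $\sum_\gamma T_\gamma(\mathcal{C})\ge 12$ when $g(C_1)=0$ and $D=3$. A configuration with one high-multiplicity positive end at $\gamma_1$ and single multiplicity-one ends at two further orbits only gives the lower bound $5+3+3=11$ from Lemmas~\ref{lem:high}(a) and~\ref{lem:emp}, so part (a) does not close along your route; that configuration is in fact excluded, but only by again applying Lemma~\ref{lem:threshold} to the orbit sets of $C_1$ (one of which would then have complexity $\le M$), i.e.\ exactly the step you are missing. With the paper's dichotomy in hand the case analysis collapses to two short cases ($C_1$ has, or does not have, ends at a third orbit), with contributions $5+4+3$ and $8+4$ or $5+7$, and the equality analysis then gives (b); note also that your treatment of $g(C_1)=1$ only yields $T(\mathcal{C})\ge 0$, so for part (b) you would additionally need to rule out equality at genus one, which again follows from the $5+4$ bound but not from Lemma~\ref{lem:emp} alone.
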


\begin{proof}
Let $\alpha(1)$ denote the positive orbit set of $C_1$, and let $\beta(1)$ denote the negative orbit set of $C_1$. Note that $C_1\in\mathcal{M}^{J_k}(\alpha(1),\beta(1))$ is also a $U$-curve.

Suppose first that $C_1$ has ends asymptotic to covers of at least $5$ distinct simple Reeb orbits of $\lambda_k$. Then it follows from Remark~\ref{rem:Tsum} and Lemma~\ref{lem:emp}(a) that $T(\mathcal{C})\ge 3$, so the proposition is true in this case.

It remains to consider the case where $C_1$ has ends asymptotic to covers of at most $4$ distinct simple Reeb orbits, i.e.\ $|\alpha(1)|+|\beta(1)|\le 4$. By Lemma~\ref{lem:nonloc}, if $k$ is sufficiently large then the inequality \eqref{eqn:nonloc} holds.  Assume that $k$ is sufficiently large in this sense and also sufficiently large so that Lemma~\ref{lem:threshold} applies.
Then by Lemma~\ref{lem:threshold}, there are distinct simple Reeb orbits $\gamma_1,\gamma_2\in\mathcal{P}_k$ such that $C_1$ has a high multiplicity positive end asymptotic to a cover of $\gamma_1$ and a high multiplicity negative end asymptotic to a cover of $\gamma_2$. We now consider two cases, depending on whether or not $C_1$ has ends asymptotic to covers of any additional simple Reeb orbits.

{\em Case 1:} Suppose that $C_1$ has an end asymptotic to a cover of a simple Reeb orbit $\gamma_3$ distinct from $\gamma_1,\gamma_2$. Let $m_i$ denote the multiplicity of the trivial cylinder $\R\times\gamma_i$ in $\mathcal{C}_0$ for $i=1,2,3$. Then by Remark~\ref{rem:Tsum} and Lemmas~\ref{lem:emp}(a), \ref{lem:high}, and \ref{lem:low}, we have
\begin{equation}
\label{eqn:TCbound}
\begin{split}
T(\mathcal{C})
&\ge 6g(C_1) - 12 + T_{\gamma_1}(\mathcal{C}) + T_{\gamma_2}(\mathcal{C}) + T_{\gamma_3}(\mathcal{C})\\
&\ge 6g(C_1) - 12  + 5 + 4 + 3\\
& \ge 0.
\end{split}
\end{equation}
This proves (a). To prove (b), suppose that equality holds in \eqref{eqn:TCbound}. Then $g(C_1)=0$, the curve $C_1$ does not have ends asymptotic to covers of any orbits other than $\gamma_1$, $\gamma_2$, or $\gamma_3$, and we have $T_{\gamma_1}(\mathcal{C})=5$, $T_{\gamma_2}(\mathcal{C})=4$, and $T_{\gamma_3}(\mathcal{C})=3$. In particular, the end asymptotic to a cover of $\gamma_3$ is not high multiplicity. By Lemma~\ref{lem:high}, $C_1$ has no other ends asymptotic to covers of $\gamma_1$, and $m_1\neq 1$. By Lemma~\ref{lem:low}, $C_1$ has no other ends asymptotic to covers of $\gamma_2$, and $m_2=0$. By Lemma~\ref{lem:emp}, $m_3=0$, and $C_1$ has only one end asymptotic to a cover of $\gamma_3$. Moreover, if this end is a positive end, then the covering multiplicity is $1$. Thus assertions (1)--(4) in the proposition hold. Assertion (5) follows from equation \eqref{eqn:j0c}.

{\em Case 2:} Suppose that $C_1$ does not have ends asymptotic to covers of any simple orbits other than $\gamma_1$ and $\gamma_2$. In this case, similarly to \eqref{eqn:TCbound}, we have
\[
T(\mathcal{C}) \ge 6g(C_1) - 3.
\]
We can assume that $C_1$ has genus $0$, or else $T(\mathcal{C})>0$ and we are done. Thus by Remark~\ref{rem:Tsum} we have
\begin{equation}
\label{eqn:TCbound2}
T(\mathcal{C}) = -12 + T_{\gamma_1}(\mathcal{C}) + T_{\gamma_2}(\mathcal{C}).
\end{equation}
Since we assumed that $C_1$ is not a cylinder, it must have at least two ends asymptotic to covers of $\gamma_1$ or at least two ends asymptotic to covers of $\gamma_2$.

Suppose first that $C_1$ has at least two ends asymptotic to covers of $\gamma_1$. Then by equation  \eqref{eqn:TCbound} and Lemmas~\ref{lem:high} and \ref{lem:low}, we have $T(\mathcal{C})\ge 0$, with equality only if $T_{\gamma_1}(\mathcal{C})=8$ and $T_{\gamma_2}(\mathcal{C})=4$. If the latter equalities hold, then $C_1$ has exactly one positive end and one negative end asymptotic to covers of $\gamma_1$, the curve $C_1$ has only one end asymptotic to a cover of $\gamma_2$, and $\mathcal{C}_0$ does not include the trivial cylinders $\R\times\gamma_1$ or $\R\times\gamma_2$. Thus $J_0(\mathcal{C})=1$ by equation \eqref{eqn:j0c}.

Finally suppose that $C_1$ has at least two ends asymptotic to covers of $\gamma_2$. Then by equation \eqref{eqn:TCbound} and Lemmas~\ref{lem:high} and \ref{lem:low}, we have $T(\mathcal{C})\ge 0$, with equality only if $T_{\gamma_1}(\mathcal{C})=5$ and $T_{\gamma_2}(\mathcal{C})=7$. If the latter equalities hold, then $C_1$ has only one end asymptotic to a cover of $\gamma_1$, and $m_1\neq 1$; moreover $C_1$ has exactly one positive and one negative end asymptotic to covers of $\gamma_2$, the covering multiplicity for the positive end is $1$, and $m_2=0$. Thus assertions (1)--(4) hold, and assertion (5) follows from \eqref{eqn:j0c} as before.
\end{proof}

%%%%%%%%%%%%%%%%%%%%%%%%%%%%%%%%%%%%%%%%%%%%%%%%%%%%%%%%%%%
 
 \subsection{Putting it all together}
 \label{sec:totality}
 
 We can now complete the proof of Proposition~\ref{prop:main}.
 
\subsubsection{Finding a low energy cylinder}
\label{sec:puttogether}

We first prove a lemma which provides the cylinders claimed by Proposition~\ref{prop:main}. Assume that $k$ is sufficiently large so that Lemma~\ref{lem:ucurves} is applicable, and let $\mathcal{C}(i)\in\mathcal{M}^{J_k}(\alpha(i),\alpha(i-1))$ for $p_0<i\le q$ be a chain of $U$-curves provided by Lemma~\ref{lem:ucurves}. Below we say that $\mathcal{C}(i)$ is {\bf low energy\/} if
\[
\mathcal{A}(\alpha(i)) - \mathcal{A}(\alpha(i-1)) < \epsilon';
\]
otherwise we say that $\mathcal{C}(i)$ is {\bf high energy\/}. Also, if we write a $U$-curve $\mathcal{C}$ in the usual manner as $\mathcal{C}_0+C_1$, then we refer to $C_1$ as the {\bf nontrivial component\/} of $\mathcal{C}$.

\begin{lemma}
\label{lem:lowenergycylinder}
If $k$ is sufficiently large, then there exists $i\in\{p_0+1,\ldots,q\}$ such that the $U$-curve $\mathcal{C}(i)$ is low energy, and the nontrivial component of $\mathcal{C}(i)$ is a cylinder.
\end{lemma}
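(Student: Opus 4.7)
The plan is to argue by contradiction. Suppose that no $\mathcal{C}(i)$ with $i\in\{p_0+1,\ldots,q\}$ is simultaneously low-energy and has cylindrical nontrivial component. Then Lemma~\ref{lem:key}(a) forces $T(\mathcal{C}(i))\ge 0$ for every low-energy $i$, and Lemma~\ref{lem:key}(b)(5) classifies the equality case into two sub-types: \emph{type (a)} with $J_0(\mathcal{C}(i))=2$ and hence $S(\mathcal{C}(i))=0$ (trivial cylinder over a distinguished orbit $\gamma_1$ present in $\mathcal{C}_0$), and \emph{type (b)} with $J_0(\mathcal{C}(i))=1$ and hence $S(\mathcal{C}(i))=3$. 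I partition the $q-p_0$ indices into four disjoint classes $\mathcal{H}$ (high-energy), $\mathcal{L}_+$ (low-energy with $T\ge 1$), $\mathcal{L}_0^a,\mathcal{L}_0^b$ (low-energy with $T=0$ of the respective types) and aim to establish
\[ |\mathcal{H}|\le T_q/\epsilon',\qquad |\mathcal{L}_+|<q^{2/3},\qquad |\mathcal{L}_0^b|<q^{4/5},\qquad |\mathcal{L}_0^a|<q^{5/6}, \]
matching exactly the scales in~\eqref{eqn:defnq1}--\eqref{eqn:defnq3}. Summing these four estimates then contradicts~\eqref{eqn:defnq4}.

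\textbf{The easy bounds.} The first estimate is Lemma~\ref{lem:ucurves}(a). For $|\mathcal{L}_+|$, I would write $T(\mathcal{C})=3(J_0(\mathcal{C})-2)+S(\alpha)-S(\beta)$ and sum over the chain, so that the orbit-score part telescopes:
\[ \sum_{i=p_0+1}^q T(\mathcal{C}(i))=3\sum_{i=p_0+1}^q(J_0(\mathcal{C}(i))-2)+S(\alpha(q))-S(\alpha(p_0))\le 6\delta_1 T_q+4T_q/\ell, \]
where Lemma~\ref{lem:ucurves}(b) controls the first term and $|S(\alpha)|\le 2|\alpha|\le 2T_q/\ell$ (using Lemma~\ref{lem:partitionfacts}(b) to get $p^+(\alpha)+p^-(\alpha)\le|\alpha|$) controls the boundary terms. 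On the other hand, the identity~\eqref{eqn:Tsum} combined with the per-orbit bound $T_\gamma(\mathcal{C})\ge 3$ from Lemma~\ref{lem:emp}(a) gives a uniform lower bound $T(\mathcal{C})\ge -9$ for every $U$-curve in the chain (the extreme case is a cylinder whose two ends are asymptotic to covers of the same simple orbit). Since $T\ge 0$ on every low-energy $\mathcal{C}(i)$ by our contradiction hypothesis and $T\ge 1$ on $\mathcal{L}_+$, rearranging yields $|\mathcal{L}_+|\le 6\delta_1 T_q+4T_q/\ell+9T_q/\epsilon'<q^{2/3}$ by~\eqref{eqn:defnq1}.

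\textbf{The hard bounds, and the main obstacle.} A parallel telescoping identity for $\sum_i S(\mathcal{C}(i))=S(\alpha(q))-S(\alpha(p_0))$ is expected to bound $|\mathcal{L}_0^b|$ via~\eqref{eqn:defnq2}: each type (b) curve contributes $S=3$, each type (a) curve contributes $S=0$, and contributions from $\mathcal{L}_+\cup\mathcal{H}$ can be rewritten as $S=T-3(J_0-2)$ and controlled via the previously established bounds on $|\mathcal{L}_+|$, on $\sum_i(J_0(\mathcal{C}(i))-2)$, and on $|\mathcal{H}|$. The most delicate step is bounding $|\mathcal{L}_0^a|$: because type (a) curves contribute zero to both $\sum T$ and $\sum S$, they are invisible to both telescoping arguments and a separate mechanism is needed. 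Here one has to exploit the structural information of Lemma~\ref{lem:key}(b)---each type (a) curve involves a distinguished simple orbit $\gamma_1$ carrying the trivial cylinder $\R\times\gamma_1$ in $\mathcal{C}_0$ with multiplicity $\ge 2$ together with a high-multiplicity positive end of $C_1$ at $\gamma_1$, and a high-multiplicity negative end of $C_1$ at a different orbit $\gamma_2$---and argue by tracking how the multiplicities $m(\gamma,\alpha(i))$ of the distinguished orbits evolve along the chain, bounded globally by $\mathcal{A}(\alpha(i))\le T_q$, with the pool of candidate orbits constrained via Lemma~\ref{lem:pert}(c) and the admissible action jumps constrained via Lemmas~\ref{lem:nonloc} and~\ref{lem:threshold}. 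I expect this multiplicity-tracking argument to be the main obstacle, and the bound $|\mathcal{L}_0^a|<q^{5/6}$ should emerge from~\eqref{eqn:defnq3}. Combining the four bounds yields $q-p_0<T_q/\epsilon'+q^{2/3}+q^{4/5}+q^{5/6}$, contradicting~\eqref{eqn:defnq4} and completing the proof.
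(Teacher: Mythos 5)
Your overall architecture — argue by contradiction, split the indices into high-energy, low-energy with $T>0$, and the two $T=0$ types distinguished by $J_0\in\{1,2\}$, then bound each class by the scales appearing in \eqref{eqn:defnq1}--\eqref{eqn:defnq3} — is exactly the paper's, and your bounds on $|\mathcal{H}|$ and $|\mathcal{L}_+|$ (including the coarse bound $T\geq -9$ and the telescoping of $T$ against \eqref{eqn:j0equation}) reproduce Steps 1--2 of the paper's proof. The gaps are in the two remaining classes. For $|\mathcal{L}_0^b|$ your proposed mechanism fails as stated: telescoping $\sum_i S(\mathcal{C}(i))$ and rewriting the contribution of $\mathcal{H}\sqcup\mathcal{L}_+$ as $S=T-3y$ forces you to bound $\sum_{\mathcal{H}\sqcup\mathcal{L}_+}y$ from above by the global bound on $\sum_i y$ minus the contribution of the $T=0$ classes, and that contribution is exactly $-|\mathcal{L}_0^b|$ (since $y=-1$ there and $y=0$ on $\mathcal{L}_0^a$); the resulting $+3|\mathcal{L}_0^b|$ on the right cancels the $3|\mathcal{L}_0^b|$ on the left and the inequality is vacuous. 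There is no way around this with $S$ alone, because $S(\mathcal{C})$ admits no $q$-independent per-curve lower bound (only $S\geq -3\cdot\#\{\text{simple orbits at ends of }C_1\}$, which is not uniformly bounded). The paper's fix is to introduce the intermediate score $T'=2y+S$, prove the per-orbit bound $T'_\gamma(\mathcal{C})\geq 1$ (Lemma~\ref{lem:emptp}), hence the per-curve bound $T'\geq -7$, note $T'=1$ on the $J_0=1$ class and $T'=0$ on the $J_0=2$ class, and then telescope $T'$; the factor $7$ in \eqref{eqn:defnq2} is precisely this constant, so the constants were chosen for that argument and not for yours.

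For $|\mathcal{L}_0^a|$ you correctly identify the difficulty (these curves are invisible to both $T$ and $S$) but only gesture at "multiplicity tracking." The paper's actual device is the quantity $K(\alpha)=-\#\{\gamma: m(\gamma,\alpha)>1\}$ and $K(\mathcal{C})=K(\alpha)-K(\beta)+2y(\mathcal{C})$: the structural conclusions of Lemma~\ref{lem:key}(b) — the trivial cylinder $\R\times\gamma_1$ with multiplicity $\geq 2$, high-multiplicity ends at $\gamma_1\neq\gamma_2$, and a third end at a distinct orbit that is simple if positive — force $K(\mathcal{C}(i))\geq 1$ on this class (Lemma~\ref{lem:kA}), while $K$ telescopes up to the controlled $2y$ term and satisfies the per-curve bound $K\geq -8$; combining with the already-established bounds on the other three classes gives $|\mathcal{L}_0^a|<q^{5/6}$ via \eqref{eqn:defnq3} (whence the factor $8$ there). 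Your sketch of tracking $m(\gamma,\alpha(i))$ via Lemmas~\ref{lem:pert}, \ref{lem:nonloc} and \ref{lem:threshold} contains no counting quantity that type (a) curves are forced to increase, so as written it is a placeholder rather than a proof; this step and the $T'$ step above are the genuinely new content of the lemma that your proposal is missing.
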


\begin{proof}
Suppose to get a contradiction that there is no $i$ with $p_0<i\le q$ such that $\mathcal{C}(i)$ is low energy and the nontrivial component of $\mathcal{C}(i)$ is a cylinder. By this hypothesis and Lemma~\ref{lem:key}, we can decompose
\begin{equation}
\label{eqn:G0123}
\{p_0+1,\ldots,q\} = G_0 \sqcup G_1 \sqcup G_2 \sqcup G_3
\end{equation}
where
\[
\begin{split}
G_0 &= \{i \mid \text{$\mathcal{C}(i)$ is high energy}\},\\
G_1 &= \{i \mid \text{$\mathcal{C}(i)$ is low energy, $T(\mathcal{C}(i))=0$, $J_0(\mathcal{C}(i))=1$}\},\\
G_2 &= \{i \mid \text{$\mathcal{C}(i)$ is low energy, $T(\mathcal{C}(i))=0$, $J_0(\mathcal{C}(i))=2$}\},\\
G_3 &= \{i\mid \text{$\mathcal{C}(i)$ is low energy, $T(\mathcal{C}(i))>0$}\}.
\end{split}
\]
The strategy will be to put bounds on the cardinalities of the above four subsets, and to contradict \eqref{eqn:G0123} by showing that the total cardinality is too small. We proceed in five steps.

{\em Step 1.\/} We first put bounds on $|G_0|$.

Note that since $\mathcal{A}(\alpha(q))\le \delta_2q^{1/2}$ by \eqref{eqn:cqkbound}, we have
\begin{equation}
\label{number_of_bad_curves}
|G_0| \leq \delta_2 q^{1/2}/\epsilon' \, .
\end{equation}

Note also that for each $i$ we have the coarse bound 
\begin{equation}
\label{eqn:coarse}
T(\mathcal{C}(i)) \ge -9,
\end{equation}
by Remark~\ref{rem:Tsum} and Lemma~\ref{lem:emp}.
It follows from \eqref{eqn:coarse} and Lemma~\ref{lem:ucurves}(a) that
\begin{equation}
\label{contribution_bad}
\sum_{i \in G_0} T(\mathcal{C}(i)) \geq - 9 \delta_2 q^{1/2}/\epsilon' \, .
\end{equation}

{\em Step 2.\/} We now show that $|G_3| < q^{2/3}$.

To start, observe that
\begin{align}
\nonumber
\sum^{q}_{i=p_0+1} T(\mathcal{C}(i)) &= S(\alpha(q)) - S(\alpha(p_0))+ 3\sum^{q}_{i=p_0+1} y(\mathcal{C}(i)) \\
\nonumber
&= [S(\alpha(q)) - S(\alpha(p_0))] + 3 \left( -2 (q-p_0) + \sum^{q}_{i=p_0+1} J_0(\mathcal{C}(i)) \right)\\
\nonumber
&\le [S(\alpha(q)) - S(\alpha(p_0))] + 6 \delta_1 \delta_2 q^{1/2}
\\
\label{eqn:totalscorebound}
&\le 4 \delta_2 q^{1/2}/\ell +  6 \delta_1 \delta_2 q^{1/2}.
\end{align}
Here the first line uses a telescoping sum, the second line uses the fact that $y = -2 + J_0$, and the the third line uses the inequality \eqref{eqn:j0equation}. The fourth line holds because if $\alpha=\alpha(q)$ or $\alpha=\alpha(p_0)$ then we have $|S(\alpha))| \le 2 |\alpha|$ by definition, and $\mathcal{A}(\alpha) \ge | \alpha | \ell$; on the other hand, $\mathcal{A}(\alpha) \le \delta_2 q^{1/2}$ by \eqref{eqn:cqkbound}.

Now by definition, if $i\in G_3$ then $T(\mathcal{C}(i))\ge 1$, and if $i\in G_1\sqcup G_2$ then $T(\mathcal{C}(i))= 0$. Thus
\[
\begin{split}
|G_3| & \leq \sum_{i\in G_1\sqcup G_2 \sqcup G_3} T(\mathcal{C}(i)) \\
& = \sum_{i = p_0+1}^q T(\mathcal{C}(i)) - \sum_{i\in G_0} T(\mathcal{C}(i)) \\
&\leq 4 \delta_2 q^{1/2}/\ell +  6 \delta_1 \delta_2 q^{1/2} + 9 \delta_2 q^{1/2}/\epsilon' \\
& < q^{2/3}
\end{split}
\]
Here the second to last line uses \eqref{eqn:totalscorebound} and \eqref{contribution_bad}, while the last line uses \eqref{eqn:defnq1}.

{\em Step 3.\/} We now show that $|G_1| < q^{4/5}$.

To do so, we will use a slight variant of the total score. If $\mathcal{C}$ is a $U$-curve, define
\[
T'(\mathcal{C}) = 2 y(\mathcal{C}) + S(\mathcal{C}) = T(\mathcal{C}) - y(\mathcal{C}).
\]
If $i\in G_1$, then $T'(\mathcal{C}(i))= 1$. The reason is that by the definition of $G_1$, we have $T(\mathcal{C}(i))=0$ and $y(\mathcal{C}(i))=J_0(\mathcal{C}(i))-2=-1$.

To bound $T'$ of other $U$-curves, if $\gamma$ is a simple Reeb orbit of $\lambda_k$, and if $\mathcal{C}$ is a $U$-curve, then analogously to \eqref{eqn:Tgamma}, define
\[
T'_\gamma(\mathcal{C}) = S_\gamma(\mathcal{C}) + 2e_\gamma(\mathcal{C}).
\]
As in Remark~\ref{rem:Tsum}, we have
\begin{equation}
\label{eqn:T'sum}
T'(\mathcal{C}) = 4g(C_1) - 8 + \sum_{\gamma\in\mathcal{P}_k} T'_\gamma(\mathcal{C}).
\end{equation}
Moreover in the sum on the right, the contribution $T'_\gamma(\mathcal{C})$ is nonzero only if $C_1$ has an end asymptotic to a cover of $\gamma$. We now have the following variant of Lemma~\ref{lem:emp}(a):

\begin{lem}
\label{lem:emptp}
Let $\mathcal{C}=\mathcal{C}_0+C_1\in\mathcal{M}^{J_k}(\alpha,\beta)$ be a $U$-curve, and let $\gamma\in\mathcal{P}_k$. Suppose that $C_1$ has at least one end asymptotic to a cover of $\gamma$. Then $T_\gamma'(\mathcal{C}) \ge 1$.
\end{lem}

\begin{proof}
By hypothesis, $e_\gamma(\mathcal{C})>0$. If $e_\gamma(\mathcal{C}) \ge 2$, then the lemma is true automatically since $S_\gamma(\mathcal{C}) \geq -3$ by equation \eqref{eqn:Sgamma}.   It therefore remains to consider the case where $e_\gamma(\mathcal{C})=1$. In this case $\mathcal{C}_0$ does not include the trivial cylinder $\R\times\gamma$, and $C_1$ has exactly one end asymptotic to a cover of $\gamma$.

If this end is a positive end, then $\gamma$ is not a $p^-$-component of $\alpha$. If $m(\gamma,\alpha)=1$, this is true by definition since $M>1$. If $m(\gamma,\alpha)>1$, then by Proposition~\ref{prop:partitionconditions}, $p^+_{\gamma}(m(\gamma,\alpha))=(m(\gamma,\alpha))$, and also $\gamma$ has irrational rotation number since $\alpha$ is an ECH generator. Then by Lemma~\ref{lem:partitionfacts}(b), $p_\gamma^-(m(\gamma,\alpha))\neq (m(\gamma,\alpha))$, so $\gamma$ is not a $p^-$-component of $\alpha$. Since $\gamma$ is not a $p^-$-component of $\alpha$, it follows from equation \eqref{eqn:Sgamma} that $S_\gamma(\mathcal{C})\ge 0$, so $T'_\gamma(\mathcal{C})=S_\gamma(\mathcal{C})+2 \ge 2$.

Suppose now that the end of $C_1$ asymptotic to a cover of $\gamma_i$ is negative. As in the previous paragraph, $\gamma$ is not a $p^+$-component of $\beta$. It then follows from equation \eqref{eqn:Sgamma} that $S_\gamma(\mathcal{C})\ge -1$, so $T'_\gamma(\mathcal{C})\ge 1$.
\end{proof}

It follows from equation \eqref{eqn:T'sum} and the above lemma that every $U$-curve $\mathcal{C}$ satisfies $T'(\mathcal{C}) \ge -7$.

We can now finish the proof that $|G_1| < q^{4/5}$. Note that similarly to \eqref{eqn:totalscorebound}, we have
\begin{equation}
\label{eqn:totalT'bound}
\sum_{i=p_0+1}^q T'(\mathcal{C}(i)) \le 4\delta_2q^{1/2}/\ell + 4\delta_1\delta_2q^{1/2}.
\end{equation}
Now for $i\in G_2$, we have $T(\mathcal{C}(i))=y(\mathcal{C}(i))=0$ by definition, so $T'(\mathcal{C}(i))=0$. Thus
\[
\begin{split}
|G_1|
&\leq \sum_{i \in G_1\sqcup G_2} T'(\mathcal{C}(i))\\
&= \sum^{q}_{i = p_0+1} T'(\mathcal{C}(i)) - \sum_{i\in G_0} T'(\mathcal{C}(i)) - \sum_{i\in G_3} T'(\mathcal{C}(i)) \\
&\leq 4 \delta_2 q^{1/2}/\ell +  4 \delta_1 \delta_2 q^{1/2} + 7 \left(\delta_2q^{1/2}/\epsilon' + q^{2/3}\right)\\
&< q^{4/5}.
\end{split}
\]
Here the second to last line uses \eqref{number_of_bad_curves} and Step 2, while the last line uses the inequality \eqref{eqn:defnq2}.

{\em Step 4.\/} We now show that $|G_2| < q^{5/6}$. 

If $\alpha$ is an orbit set, define $K(\alpha)\le 0$ to be minus the number of simple Reeb orbits $\gamma$ such that $m(\gamma,\alpha)>1$. If $\mathcal{C}\in\mathcal{M}^{J_k}(\alpha,\beta)$ is a $U$-curve, define
\[
K(\mathcal{C}) = K(\alpha) - K(\beta) + 2y(\mathcal{C}).
\]

\begin{lem}
\label{lem:kA}
If $i\in G_2$, then $K(\mathcal{C}(i))\ge 1$.
\end{lem}

\begin{proof}
Write $\mathcal{C}=\mathcal{C}(i)\in\mathcal{M}^{J_k}(\alpha,\beta)$ where $\alpha=\alpha(i)$ and $\beta=\alpha(i-1)$. Since $y(\mathcal{C})=0$, we need to show that $K(\alpha)>K(\beta)$. We will adopt the notation from Lemma~\ref{lem:key}. By Lemma~\ref{lem:key}(b), the trivial cylinder $\R\times\gamma_1$ appears with multiplicity at least $2$ in $\mathcal{C}_0$, while there are no other trivial cylinders $\R\times\gamma$ in $\mathcal{C}_0$ such that $C_1$ has an end asymptotic to a cover of $\gamma$. Thus we can remove any trivial cylinders other than $\R\times\gamma_1$ from $\mathcal{C}_0$ without changing $K(\mathcal{C})$, and we will assume that this has been done. There are now two cases to consider.

The first case is where $C_1$ has two positive ends and one negative end. Then $m(\gamma_1,\alpha)>1$, while the other positive end is asymptotic to a simple Reeb orbit by Lemma~\ref{lem:key}(b)(3), so $K(\alpha)=-1$. On the other hand, since the trivial cylinder $\R\times\gamma_1$ appears with multiplicity at least $2$ in $\mathcal{C}_0$, and the negative end of $C_1$ asymptotic to a cover of $\gamma_2\neq \gamma_1$ is high multiplicity, we have $K(\beta)=-2$.

The second case is where $C_1$ has one positive end and two negative ends. Then $\gamma_1$ is the only simple Reeb orbit that appears in $\alpha$, so $K(\alpha)=-1$. On the other hand the distinct simple Reeb orbits $\gamma_1,\gamma_2$ both appear in $\beta$ with multiplicity greater than one, so $K(\beta)\in\{-2,-3\}$.
\end{proof}

We can now complete the proof that $|G_2|\le q^{5/6}$. To start, similarly to \eqref{eqn:totalscorebound}, we have
\begin{equation}
\label{eqn:totalKbound}
\sum_{i = p_0+1}^q K(\mathcal{C}(i)) \le 2\delta_2 q^{1/2}/\ell + 4 \delta_1 \delta_2 q^{1/2}.
\end{equation}
Also observe that for every $i$, we have $K(\mathcal{C}(i)) \ge -8$ by equation \eqref{eqn:j0c}. Thus we have
\[
\begin{split}
|G_2| &\le \sum_{i\in G_2}K(\mathcal{C}(i)) \\
&= \sum_{i=p_0+1}^q K(\mathcal{C}(i)) - \sum_{i\in G_0\sqcup G_1\sqcup G_3}K(\mathcal{C}(i))\\
&\le 2\delta_2 q^{1/2}/\ell + 4 \delta_1 \delta_2 q^{1/2} +8\left(\delta_2q^{1/2}/\epsilon' + q^{4/5} + q^{2/3}\right)\\
& < q^{5/6}.
\end{split}
\]
Here the second to last line uses Steps 1, 2, and 3, while the last line uses the inequality \eqref{eqn:defnq3}. 

{\em Step 5.\/} We now complete the proof of Lemma~\ref{lem:lowenergycylinder}. By the decomposition \eqref{eqn:G0123} and Steps 1--4, we have
\[
q - p_0 < \delta_2q^{1/2}/\epsilon' + q^{4/5} + q^{5/6} + q^{2/3}.
\]
This contradicts the inequality \eqref{eqn:defnq4}.
\end{proof}

\subsubsection{Conclusion}

\begin{proof}[Proof of Proposition~\ref{prop:main}.]
Continue with the setup from \S\ref{sec:setup}.

By Lemma~\ref{lem:lowenergycylinder}, if $k$ is sufficiently large (we pass to a subsequence so that this is true for all $k$), there exists a $U$-curve for $(\lambda_k,J_k)$ such that the positive orbit set has action $\le T_q$, and the nontrivial component is a low energy cylinder. Choose such a $U$-curve for each $k$, and denote the nontrivial component by $C_k$. Let $\alpha_k$ and $\beta_k$ denote the Reeb orbits for $\lambda_k$ to which the positive and negative ends of $C_k$ respectively are asymptotic.

Similarly to \S\ref{sec:setup}, it follows from Lemma~\ref{lem:pert} that for each $k$, there are unique Reeb orbits $\underline{\alpha_k}$ and $\underline{\beta_k}$ for $\lambda$ that are homotopic to $\alpha_k$ and $\beta_k$ respectively in $N$. Since there is a uniform upper bound on the action of $\alpha_k$ and $\beta_k$, there are only finitely many options for $\alpha_k$ and $\beta_k$. We can then pass to a subsequence so that there are Reeb orbits $\alpha_\infty$ and $\beta_\infty$ for $\lambda$ such that $\underline{\alpha_k}=\alpha_\infty$ and $\underline{\beta_k}=\beta_\infty$ for each $k$. Then the sequence $((\lambda_k,J_k,\alpha_k,\beta_k,C_k))$ satisfies all of the bullet points in Proposition~\ref{prop:main}, as well as assertions (a) and (b). To complete the proof of Proposition~\ref{prop:main}, we now prove assertions (c) and (d).

To prove assertion (c) in Proposition~\ref{prop:main}, note that by Lemma~\ref{lem:nonloc}, if $k$ is sufficiently large (we pass to a subsequence so that this is true for all $k$), then
\begin{equation}
\label{eqn:hbar}
\mathcal{A}(\alpha_k)-\mathcal{A}(\beta_k)> \overline{h}.
\end{equation}
It then follows from Lemma~\ref{lem:threshold} that the Reeb orbits $\alpha_k$ and $\beta_k$ have multiplicity $\ge M\ge 3$. In particular, since $C_k$ together with some trivial cylinders goes between ECH generators, the orbits $\alpha_k$ and $\beta_k$ are elliptic and so their rotation numbers are irrational.

Let $m$ denote the covering multiplicity of the Reeb orbit $\alpha_k$, let $\gamma$ denote the underlying simple Reeb orbit, and let $\theta\in\R/\Z$ denote the rotation number of $\gamma$, so that the rotation number of $\alpha_k$ is $m\theta$. By Proposition~\ref{prop:partitionconditions}, we have $p_\gamma^+(m)=(m)$. Now suppose that $\alpha_\infty$ is degenerate; then its rotation number must be $0\in\R/\Z$. Thus the rotation number of the simple orbit underlying $\alpha_\infty$ is a rational number, which we write in lowest terms as $a/b$, and $m$ is an integer multiple of $b$. Choose $\varepsilon>0$ sufficiently small that Example~\ref{example:nearlyrationalangle} is applicable, and also sufficiently small so that $m\varepsilon < 1/2$. By Lemma~\ref{lem:pert}, if $k$ is sufficiently large (we pass to a subsequence so that this is true for all $k$), then $|\theta-a/b|<\varepsilon$. By Example~\ref{example:nearlyrationalangle}, if $\theta-a/b\in(0,\varepsilon)$, then $p_\gamma^+(m)=(m/b,\ldots,m/b)$. Since $m\ge M$ and we assumed in \S\ref{sec:constants} that $M>B_0$, we have $m>b$, so the partition $p_\gamma^+(m)$ has at least two elements. This contradicts the fact that $p_\gamma^+(m)=(m)$. Thus we must have $\theta-a/b\in(-\varepsilon,0)$, and it follows that the rotation number of $\alpha_k$ is $m\theta\in (-1/2,0)$.

A symmetric argument shows that if $\beta_\infty$ is degenerate, then if $k$ is sufficiently large (we pass to a subsequence so that this is true for all $k$), then the rotation number of $\beta_k$ is in the interval $(0,1/2)$.

Finally, to prove assertion (d) in Proposition~\ref{prop:main}, we will invoke Proposition~\ref{prop:gsscriterion}. We already know that $C_1$ satisfies all of the criteria in Proposition~\ref{prop:gsscriterion} except possibly for (e), which we still need to show. That is, we need to show that the component of the moduli space $\M^{J_k}(\alpha_k,\beta_k)/\R$ containing $C_1$ is compact. To do so, we use a variant of the proof of \cite[Lem.\ 4.10(c)]{CGHP}.

Suppose to get a contradiction that this moduli space is not compact. Then there must exist a sequence of elements of the moduli space that converges to a $J_k$-holomorphic building from $\alpha_k$ to $\beta_k$ with at least two levels. No level of the building can include a holomorphic plane, because the minimum action of a Reeb orbit is greater than $\epsilon'$ which is greater than $\mathcal{A}(\alpha)-\mathcal{A}(\beta)$. Hence every level of the building is a cylinder. By Proposition~\ref{prop:lowindexcurves}, there are exactly two levels, and each level has ECH index $1$. Let $\gamma$ denote the intermediate Reeb orbit. Since $\alpha_k$ and $\beta_k$ are elliptic, it follows from the ECH index parity conditions discussed in \S\ref{sec:defECH} that $\gamma$ is a cover of a positive hyperbolic orbit. By the partition conditions in Proposition~\ref{prop:partitionconditions} and Example~\ref{example:smallangle}, the Reeb orbit $\gamma$ must be simple. By the inequality \eqref{eqn:hbar}, at least one of the two cylinders in the building must have the property that the action difference between the Reeb orbits at the positive and negative ends is greater than $\overline{h}/2$. It then follows from Lemma~\ref{lem:threshold} that $\mathcal{A}(\alpha)-\mathcal{A}(\beta)\ge \epsilon'$, which contradicts the fact that the cylinder $C_k$ is low energy.
\end{proof}

\section{Preliminaries about holomorphic curves in symplectizations}
\label{sec_prelim_2}

Equipped with our Birkhoff sections for the nondegenerate perturbations $\lambda_n$, we now turn to the rest of the proof.
Before proceeding, we first explain the general idea behind our argument.

\subsubsection*{Outline of the argument}  We have a sequence of holomorphic cylinders $C_n$ that give a Birkhoff section for $\lambda_n$ and we would like to prove a two or infinity result in the limit.  Our basic idea for this is to obtain a compact moduli space of holomorphic cylinders in the limit.  To prove this, we first prove a general automatic transversality result that holds even for degenerate contact forms as long as the curves under consideration are asymptotic to 
Reeb orbits at their punctures and converge exponentially fast to them; we leverage a weighted Fredholm theory for this.  Next, we need to show that the $C_n$ give rise to curves with the required exponential convergence; this is where condition (c) in Proposition~\ref{prop:main} is used.
We also obtain a kind of weak version of the usual SFT compactness theorem that suffices for our purposes.  Thus, we are able to obtain a well-defined nonempty transverse moduli space of cylinders in the limit.  To show that it is compact, the basic idea is as follows: a nonconvergent sequence of cylinders would give rise to an orbit as an asymptotic limit with positive intersection number with the relative homology class of the projection of the $C_n$; thus, close to the limit we would obtain a loop in the projection of the image of $C_n$ with positive intersection number, and this contradicts the fact that each $C_n$ has an embedded projection to the 3-manifold.  Equipped with the necessary moduli space of cylinders in the limit, we then lift the Reeb flow to the moduli space via an evaluation map; we show that the lifted flow has an annular global cross-section, and so we are justified in applying a theorem of Franks to conclude that if the original Reeb flow had only finitely many orbits, this lifted flow would have none, hence the original flow has precisely two. 

\vspace{2 mm}

Executing the above sketch will take up the remainder of our paper.
To proceed, let $\lambda$ be a contact form on a closed $3$-manifold $Y$, with Reeb vector field $R$ and contact structure $\xi = \ker \lambda$.
We do not make nondegeneracy assumptions on $\lambda$.
Denote by $$ \pi_\lambda : TY \to \xi $$ the projection along $R$.

\subsection{Generalities}

We begin by setting notation and collecting some known results.
In what follows, it will be convenient 
to write 
Reeb orbits as equivalence classes of pairs $(x,T)$, where $x: \mathbb{R} \to Y$ is a periodic trajectory of $R$, and $T > 0$ is a period of $x$.  It will also be convenient to 
make a choice of marked point on the images of all simple Reeb orbits; 
this 
uniquely determines $x$ by requiring that $x(0)$ is the marked point.
We will also want to define two different Reeb orbits as being {\bf geometrically distinct} if their images do not intersect.

Given a Reeb orbit $\gamma=(x,T)$, the bundle $x(T\cdot)^*\xi \to \R/\Z$ will be denoted by $\xi_\gamma$ from now on.   
Let $J:\xi \to \xi$ be a $d\lambda$-compatible complex structure; from this point onwards, we will want to distinguish between the complex structure on $\xi$ and the corresponding $\lambda$-compatible almost complex structure on the symplectization, so we will denote the latter by $\tilde{J}$.
The associated {\bf asymptotic operator} of $\gamma=(x,T)$, acting on sections of $\xi_\gamma$, is defined~by $$ A_\gamma : \eta \mapsto -J(\nabla_t\eta - \nabla_\eta R) $$ where $\nabla$ is any choice of symmetric connection on $Y$, and $\nabla_t$ denotes the induced covariant derivative of vector fields along the curve $t\mapsto x(Tt)$.
Note that $A_\gamma$ is independent of choice of connection, and that $\gamma$ is nondegenerate precisely when $0$ is not an eigenvalue of $A_\gamma$.
Note also that $A_\gamma$ depends on~$J$ but we do not make this dependence explicit in the notation, for the sake of simplicity.
The operator $A_\gamma$ defines a self-adjoint unbounded operator that acts on $W^{1,2}$-sections of~$\xi_\gamma$ whose spectrum $\sigma(A_\gamma) \subset \R$ is discrete, consists only of eigenvalues, and accumulates only at $\pm\infty$. 
If $\nu \in \sigma(A_\gamma)$ then every eigenvector for $\nu$ is nowhere vanishing and has the same winding number in a given symplectic trivialization $\tau$ of $\xi_\gamma$.
We denote this winding number by $$ \wind_\tau(\nu) \in \Z \, . $$
It is useful to note the following.  First of all, there are two eigenvalues with a given winding number, counted with multiplicities.
Next,
\[\nu_1\leq \nu_2 \Rightarrow \wind_\tau(\nu_1) \leq \wind_\tau(\nu_2),\] 
and, moreover, any integer is the winding number of some eigenvalue.

Given $\delta \in \R$, we denote by $\nu^{<\delta}(\gamma)$ the largest eigenvalue $\nu$ satisfying $\nu < \delta$, and by $\nu^{\geq\delta}(\gamma)$ the smallest eigenvalue~$\nu$ satisfying $\nu \geq \delta$.
Set 
\begin{align*}
p^\delta(\gamma) = \wind_\tau(\nu^{\geq\delta}(\gamma)) -  \wind_\tau(\nu^{<\delta}(\gamma)) \in \{0,1\} \, .
\end{align*}
The {\bf $\delta$-weighted} Conley-Zehnder index of $\gamma$ in the trivialization $\tau$ is
\begin{equation}
\CZ^\delta_\tau(\gamma) = 2 \wind_\tau(\nu^{<\delta}(\gamma)) + p^\delta(\gamma) \, .
\end{equation}
When $\delta=0$ we simply write $\CZ_\tau(\gamma)$.
See~\cite[Section~3]{props2} for more details.
When $\delta=0$ and~$\gamma$ is nondegenerate, this agrees with~\eqref{eqn:CZtau}.

\begin{remark}[Martinet tubes]
\label{rmk_Martinet_tubes}
Consider a periodic orbit $\gamma=(x,T)$ with primitive period~$T_0$.
A {\bf Martinet tube} around $\gamma$ is a smooth diffeomorphism $\Psi : U \to \R/\Z \times B$, where $U$ is an open neighborhood of $x(\R)$ and $B \subset \C$ is an open ball centered at the origin, such that $\Psi(x(T_0t)) = (t,0)$ and $\Psi_*\lambda = f(\theta,x_1+ix_2)(d\theta + x_1dx_2)$, for some function $f$ satisfying $f|_{\R/\Z\times\{0\}} \equiv T_0$ and $df|_{\R/\Z\times\{0\}} \equiv 0$.
\end{remark}

From now on we fix a closed connected orientable surface $S$, and $$ \Gamma_+ = \{z_1^+,\dots,z_{m_+}^+\} \qquad \Gamma_- = \{z_1^-,\dots,z_{m_-}^-\} $$ disjoint ordered subsets of $S$.
Denote $\Gamma = \Gamma_+ \cup \Gamma_-$, $\dot S = S \setminus \Gamma$.
Once a conformal structure $j$ on $S$ is chosen, for any $z_* \in \Gamma$ and any choice of holomorphic embedding $\psi : (\D,i,0) \to (S,j,z_*)$ we call the map $$ (s,t) \in [0,+\infty) \times \R/\Z \mapsto \psi(e^{-2\pi(s+it)}) \in \psi(\D\setminus\{0\}) $$ {\bf positive holomorphic polar coordinates} centered at $z_*$.
Similarly, we call $$ (s,t) \in (-\infty,0] \times \R/\Z \mapsto \psi(e^{2\pi(s+it)}) \in \psi(\D\setminus\{0\}) $$ {\bf negative holomorphic polar coordinates} centered at $z_*$.

We 
sometimes call a $\jtil$-holomorphic map a
{\bf pseudo-holomorphic} map when there is no need to refer to~$\jtil$. 
Consider a conformal structure $j$ on $S$, and a nonconstant pseudo-holomorphic map 
\begin{equation}
\label{curve_reference}
\util = (a,u) : (\dot S,j) \to (\R \times Y,\jtil)
\end{equation}
with finite Hofer energy $$ 0 < E(\util) = \sup_{\phi} \int_{\dot S} \util^*d(\phi\lambda) < \infty \, . $$ 
The supremum is taken over all smooth $\phi:\R\to[0,1]$ satisfying $\phi'\geq0$.
It is proved in~\cite{Hofer93} that for every $z_*\in\Gamma$ exactly one of the following holds:
\begin{itemize}
\item $z_*$ is a positive puncture: $a(z) \to +\infty$ as $z\to z_*$.
\item $z_*$ is a negative puncture: $a(z) \to -\infty$ as $z\to z_*$.
\item $z_*$ is a removable puncture: $\util$ can be smoothly continued across $z_*$.
\end{itemize}
We proceed assuming that $\Gamma_+$ consists of positive punctures and $\Gamma_-$ consists of negative punctures.
In what follows we will refer to the sign $\epsilon = \pm1$ of a puncture according to the above classification, i.e. punctures in $\Gamma_+$ have sign $\epsilon=+1$ and punctures in $\Gamma_-$ have sign $\epsilon=-1$.
We also consider positive holomorphic polar coordinates centered at each positive puncture, and negative holomorphic polar coordinates centered at each negative puncture.
These sets of coordinates will be all denoted by $(s,t)$, with no fear of ambiguity.
All definitions and statements below are independent of these choices.
We write $\util(s,t)$ to denote the map $\util$ near a puncture written in holomorphic polar coordinates.

\begin{theorem}[Hofer~\cite{Hofer93}]
\label{thm_Hofer93}
Consider a puncture of sign $\epsilon$, and any sequence $s_n \to +\infty$.
There exist a subsequence $s_{n_k}$, a Reeb orbit $\gamma=(x,T)$ and $t_0\in\R/\Z$ such that $u(\epsilon s_{n_k},t+t_0) \to x(Tt)$ in $C^\infty(\R/\Z,Y)$ as $k\to\infty$.
\end{theorem}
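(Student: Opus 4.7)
\smallskip

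\textbf{Proof plan.} I will work at a positive puncture $z_*\in\Gamma_+$ and use positive holomorphic polar coordinates $(s,t)\in[0,\infty)\times\R/\Z$; the case $\epsilon=-1$ is identical up to sign. The plan has four steps.

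\emph{Step 1: Action asymptotics.} Since $\jtil$ is $\lambda$-compatible, $\util^*d\lambda \ge 0$ pointwise on $\dot S$. Define, for $s\ge 0$,
\[
T(s) \;=\; \int_{\{s\}\times \R/\Z} u(s,\cdot)^*\lambda.
\]
Taking $\phi\equiv 1/2$ near the puncture and using Stokes together with $\util^*d\lambda \ge 0$, together with the finiteness of $E(\util)$, one sees that $T(s)$ is monotone and converges to some finite limit $T\in\R$ as $s\to+\infty$; moreover the $d\lambda$-energy $\int_{[s,\infty)\times\R/\Z} \util^*d\lambda$ tends to $0$. A standard removable-singularity argument (using the monotonicity lemma or equivalently the behavior of the $a$-coordinate) rules out $T=0$, since otherwise the puncture would be removable, contradicting that it is a positive puncture.

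\emph{Step 2: Uniform gradient bound near the puncture.} I will show that $|\nabla\util|$ (with respect to the cylindrical metric $ds^2+dt^2$ on the domain and a fixed translation-invariant metric on $\R\times Y$) is uniformly bounded on $[s_0,\infty)\times \R/\Z$ for some $s_0$. The argument is by contradiction: if not, choose $z_n=(s_n,t_n)$ with $s_n\to\infty$ and $|\nabla\util(z_n)|=:R_n\to\infty$. By Hofer's lemma, after slightly perturbing the sequence, one may assume $|\nabla\util|\le 2R_n$ on a ball of radius $\varepsilon_n$ around $z_n$ with $\varepsilon_n R_n\to\infty$. Rescale by setting $\vtil_n(z)=\util(z_n+z/R_n)-(a(z_n),0)$; then $\vtil_n$ is $\jtil$-holomorphic on a disc of radius $\varepsilon_n R_n\to\infty$ with $|\nabla\vtil_n(0)|=1$ and $|\nabla\vtil_n|\le 2$. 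By elliptic regularity a subsequence converges in $C^\infty_{\mathrm{loc}}$ to a non-constant $\jtil$-holomorphic plane $\vtil_\infty:\C\to\R\times Y$ with $0<E(\vtil_\infty)\le E(\util)$. But the image of this plane arises from vanishingly small domains near the puncture, so its Hofer energy is bounded above by $\lim_{s\to\infty}\int_{[s,\infty)\times\R/\Z}\util^*d\lambda+\text{(boundary term)}$, which tends to $0$; this contradicts $E(\vtil_\infty)>0$.

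\emph{Step 3: Extracting a convergent subsequence of loops.} Now consider the loops $\gamma_n(t) = u(s_n,t)$. By the gradient bound, the family $\{\gamma_n\}$ is equicontinuous; by compactness of $Y$, it is pointwise bounded. Arzelà--Ascoli furnishes a subsequence $s_{n_k}$ and a continuous loop $\bar\gamma:\R/\Z\to Y$ with $\gamma_{n_k}\to\bar\gamma$ in $C^0$. The Cauchy--Riemann equations in the splitting $T(\R\times Y)=\R\partial_a\oplus \R R\oplus \xi$ read
\[
\partial_s a = \lambda(\partial_t u),\qquad \partial_t a = -\lambda(\partial_s u),\qquad \pi_\lambda\partial_s u + J\,\pi_\lambda\partial_t u = 0.
\]
The vanishing of $\int_{[s_{n_k},s_{n_k}+1]\times\R/\Z}\util^*d\lambda$ combined with the $J$-compatibility of $d\lambda|_\xi$ forces $\pi_\lambda\partial_t u\to 0$ uniformly along a neighborhood of $\{s_{n_k}\}\times\R/\Z$, so on the limit loop $\bar\gamma$ we have $\dot{\bar\gamma}=T\cdot R(\bar\gamma)$, i.e.\ $\bar\gamma(t)=x(Tt+t_0T)$ for some Reeb orbit $(x,T)$ and phase $t_0\in\R/\Z$. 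After reparametrizing the loops in $t$ by $-t_0$, one obtains $u(s_{n_k},t+t_0)\to x(Tt)$ in $C^0(\R/\Z,Y)$.

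\emph{Step 4: Upgrading to $C^\infty$.} With the $C^0$ limit identified and $|\nabla\util|$ uniformly bounded, standard elliptic bootstrapping applied to the Cauchy--Riemann equation on expanding cylinders $[s_{n_k}-L,s_{n_k}+L]\times\R/\Z$ yields $C^\infty$ convergence of $u(s_{n_k},\cdot+t_0)$ to $x(T\cdot)$. The main obstacle is Step 2 (the gradient bound via bubbling); the rest is a routine application of Arzelà--Ascoli and elliptic regularity.
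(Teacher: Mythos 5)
The paper does not prove this statement at all: it is quoted directly from Hofer's 1993 paper, so the only question is whether your sketch correctly reconstructs that standard argument. The skeleton (monotone action $T(s)$, gradient bound via bubbling, Arzel\`a--Ascoli plus bootstrapping) is the right one, but the contradiction you reach in Step 2 is not valid. The Hofer energy of the rescaled bubble $\vtil_\infty$ is \emph{not} bounded by the residual $d\lambda$-area of $\util$ near the puncture: Hofer energy is built from the forms $d(\phi\lambda)$, whose local contributions are controlled via Stokes by the $\lambda$-action of boundary loops, and these stay of size $T>0$ (a trivial cylinder has zero $d\lambda$-area but Hofer energy equal to its period); nor does your ``boundary term'' tend to zero, since the boundary circles of the discs $D_{\varepsilon_n}(z_n)$ carry action of order $\varepsilon_n R_n\to\infty$ in the worst case. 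What the rescaling argument actually yields is $\int_\C v_\infty^*d\lambda=0$ (the $d\lambda$-area, i.e.\ the $\omega$-energy, is the quantity that localizes and is killed by Step 1), together with only $E(\vtil_\infty)\le E(\util)<\infty$. To contradict $|\nabla\vtil_\infty(0)|=1$ you must invoke Hofer's separate lemma that a finite-energy plane with vanishing $d\lambda$-area is constant (equivalently, that a nonconstant finite-energy plane has $d\lambda$-area at least $T_{\min}(\lambda)$), proved by writing the plane along a single Reeb trajectory as $x\circ f$ and analyzing the holomorphic function $a+if$. That lemma is the missing idea; without it Step 2 fails as written.

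The same mechanism is also needed, and missing, in Step 3. Smallness of $\int_{[s_{n_k},s_{n_k}+1]\times\R/\Z}\util^*d\lambda$ gives only $L^2$-smallness of $\pi_\lambda\partial_t u$, not uniform smallness, and, more importantly, you never justify that $\lambda(\partial_t u(s_{n_k},\cdot))=\partial_s a(s_{n_k},\cdot)$ converges uniformly to the \emph{constant} $T$, which is what you need for $\dot{\bar\gamma}=T\,R(\bar\gamma)$ and for $T$ to be the period of a genuine Reeb orbit. The standard repair is to pass to the translates $\util(\cdot+s_{n_k},\cdot)$, use your Step 2 gradient bound plus elliptic bootstrapping to extract a $C^\infty_\loc$-limit cylinder with zero $d\lambda$-area and constant action $T>0$, and then apply the same vanishing-$d\lambda$-area analysis (the $a+if$ trick again) to conclude that this limit is a trivial cylinder over a $T$-periodic orbit; the $C^\infty$ convergence of the loops in your Step 4 then follows immediately. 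So your architecture matches Hofer's, but the energy bookkeeping in Step 2 is wrong as stated, and the characterization of zero-$d\lambda$-area planes and cylinders --- the real engine of the proof --- has to be supplied.
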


\begin{definition}
\label{def_asymptotics}
Consider a puncture $z_*$ of sign $\epsilon$.
\begin{itemize}
\item We say that $\gamma=(x,T)$ is an {\bf asymptotic limit} of $\util$ at $z_*$ if we can find $s_n \to +\infty$ and $t_0 \in \R/\Z$ such that $u(\epsilon s_n,t+t_0) \to x(Tt)$ in $C^0(\R/\Z,Y)$ as $n\to\infty$.
\item We say that {\bf $\util$ is asymptotic to $\gamma=(x,T)$ at $z_*$} if we can find $t_0 \in \R/\Z$ such that $u(s,t+t_0) \to x(Tt)$ in $C^0(\R/\Z,Y)$ as $\epsilon s \to +\infty$.
\end{itemize}
\end{definition}

\begin{remark}
Asymptotic limits need not be unique, see~\cite{SiefringMathAnn}.
To say that every asymptotic limit of $\util$ at $z_*$ is equal to $\gamma$ is not the same as saying that $\util$ is asymptotic to $\gamma$ at $z_*$.
\end{remark}

For the next definition, assume that $\util$ is asymptotic to $\gamma=(x,T)$ at a puncture $z_*$.
Let $m \in \N$ be the covering multiplicity of $\gamma$, and $\epsilon$ be the sign of the puncture at~$z_*$.
Choose $\Psi:U \to \R/\Z\times B$ a Martinet tube around $\gamma$.
If~$\epsilon s$ is large enough then $u(s,t) \in U$ and we can write in components $$ \Psi \circ u(s,t) = (\theta(s,t),z(s,t)) \, . $$

\begin{definition}
\label{def_asymp_formula}
We say that {\bf$\util$ has an asymptotic formula at $z_*$} if either $z(s,t)$ vanishes identically, or there exist $b>0$, $\mu \in \sigma(A_\gamma)$ such that $\epsilon\mu < 0$ and the following holds:
\begin{itemize}
\item For some $a_0,t_0 \in \R$ and some lift $\tilde\theta:\R\times\R \to \R$ of $\theta(s,t)$ $$ \lim_{\epsilon s \to +\infty} \sup_{t\in\R/\Z} \ e^{b\epsilon s} \left( |D^\beta[a(s,t)-Ts-a_0]| + |D^\beta[\tilde\theta(s,t+t_0)-mt]| \right) = 0 $$ for every $\beta=(\beta_1,\beta_2) \in \N_0 \times \N_0$.
\item There is an eigenvector of $\mu$, represented as a function $v:\R/\Z \to \C$ in the frame $\{ \partial_{x_1},\partial_{x_2} \}$ of $\xi$ along $\gamma$, such that 
\begin{equation}
\label{asymptotic_formula}
z(s,t+t_0) = e^{\mu s} (v(t)+\Delta(s,t))
\end{equation}
for some $\Delta(s,t)$ satisfying $\sup_t|D^\beta \Delta(s,t)|\to 0$ as $\epsilon s \to +\infty$, for every multi-index $\beta=(\beta_1,\beta_2) \in \N_0 \times \N_0$.
\end{itemize}
The eigenvalue $\mu$ is called the {\bf asymptotic eigenvalue} of $\util$ at $z_*$, and the eigenvector in~\eqref{asymptotic_formula} is called the asymptotic eigenvector of $\util$ at $z_*$.
We say that $\util$ has a {\bf nontrivial asymptotic formula} at $z_*$ if it has an asymptotic formula at $z_*$ and $z(s,t)$ does not vanish identically.
\end{definition}

The above definition does not depend on the choices of holomorphic polar coordinates and Martinet tube.

\begin{theorem}[Hofer, Wysocki and Zehnder~\cite{props1}]
\label{thm_HWZ_asymptotics_nondeg_case}
If an asymptotic limit $\gamma$ of $\util$ at the puncture $z_*$ is a nondegenerate periodic orbit then $\util$ is asymptotic to $\gamma$ at $z_*$, and has an asymptotic formula at~$z_*$.
\end{theorem}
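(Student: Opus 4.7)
My plan is to prove the theorem in three stages: first upgrade the existence of an asymptotic limit $\gamma$ to $C^0$-convergence of $\util$ to $\gamma$; then establish exponential decay of the distance from $u$ to $\gamma$ in a Martinet tube; and finally extract the leading order asymptotic behavior via a spectral decomposition along the asymptotic operator $A_\gamma$. Throughout I assume without loss of generality that $z_*$ is a positive puncture, I work in holomorphic polar coordinates $(s,t) \in [0,\infty)\times\R/\Z$, write $\util = (a,u)$, and set $\gamma = (x,T)$.

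For the first stage, finite Hofer energy yields, via Hofer's bubbling-off analysis, a uniform gradient bound on $\util$ in cylindrical coordinates together with boundedness of $a(s,t) - Ts$ as $s \to \infty$. I would argue by contradiction: suppose $u(s,\cdot)$ does not converge in $C^0(\R/\Z,Y)$ to a translate of $x(T\cdot)$; then there would exist $s_n \to \infty$ and $\delta > 0$ with $\inf_{t_0}\|u(s_n,\cdot) - x(T(\cdot - t_0))\|_{C^0} \geq \delta$. Applying Theorem~\ref{thm_Hofer93} along $\{s_n\}$ produces an asymptotic limit $\gamma' = (x',T')$, and the standard action identity forces $T' = T$. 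Nondegeneracy of $\gamma$ implies that $T$ is isolated in the set of periods of Reeb orbits whose image lies in a sufficiently small tubular neighborhood of $x(\R)$; combined with the uniform gradient bound this forces $\gamma' = \gamma$, contradicting the choice of $s_n$.

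For the second stage, uniform convergence places $u(s,\cdot)$ inside a Martinet tube $\Psi$ around $\gamma$ for large $s$, and writing $\Psi\circ u(s,t) = (\theta(s,t),z(s,t))$ and $a(s,t) = Ts + \alpha(s,t)$, the CR equation reduces, for the contact component $z$, to a perturbation of the linear system $\partial_s z + J_0(t)\partial_t z + S(t) z = 0$ whose generator $-J_0\partial_t - S$ represents $A_\gamma$ on $L^2(\R/\Z,\C)$. Setting $\phi(s) = \tfrac{1}{2}\int_0^1 |z(s,t)|^2\,dt$, integration by parts in $t$ combined with the CR equation yields, for $s$ large,
\[
\phi''(s) \geq c^2 \phi(s) - \varepsilon(s),
\]
where $c>0$ is any constant strictly less than $\dist(0,\sigma(A_\gamma)) > 0$ (positive precisely by nondegeneracy) and $\varepsilon(s)\to 0$. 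A Gronwall-type comparison gives $\phi(s) \le C_0 e^{-cs}$, and elliptic bootstrapping from the CR equation upgrades this to exponential decay of $z$ and all its derivatives at a rate arbitrarily close to $\dist(0,\sigma(A_\gamma))$. Integrating the CR equations for $\alpha$ and $\theta$, this exponential decay of $z$ fixes constants $a_0 \in \R$ and $t_0 \in \R/\Z$ such that $\alpha - a_0$ and $\tilde\theta(\cdot,\cdot+t_0) - mt$ decay exponentially together with all derivatives.

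The third stage, extracting the precise asymptotic formula, is where I expect the main difficulty. Expanding $z(s,\cdot) = \sum_k z_k(s) e_k$ in an $L^2$-orthonormal basis of eigenvectors of $A_\gamma$ with eigenvalues $\nu_k$, the coefficients satisfy $\dot z_k(s) = -\nu_k z_k(s) + r_k(s)$ with remainder $r_k$ controlled by the nonautonomous part $S(s,t) - S(t)$ and the exponential decay from Stage 2. Assuming $z\not\equiv 0$, a Carleman-type unique continuation principle will rule out faster-than-exponential decay and isolate a largest $\mu \in \sigma(A_\gamma)$ with $\mu < 0$ whose associated spectral projection is nontrivial; a careful ODE analysis on the projected equations, comparing decay rates mode by mode, then produces an eigenvector $v(t)$ of $\mu$ such that
\[
z(s,t+t_0) = e^{\mu s}(v(t) + \Delta(s,t)), \qquad \sup_t|D^\beta\Delta(s,t)| \to 0 \text{ as } s\to\infty.
\]
The delicate issues are ruling out mixed logarithmic or cancellation phenomena between spectral modes in the presence of the nonautonomous perturbation, proving that the leading decay rate is an honest eigenvalue of $A_\gamma$, and preserving exponential rates through the elliptic bootstrap converting $L^2$ estimates to $C^\infty$ estimates.
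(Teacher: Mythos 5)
The paper does not prove this statement itself: it is quoted verbatim from Hofer--Wysocki--Zehnder~\cite{props1}, and your three-stage outline (upgrade the asymptotic limit to genuine $C^0$-convergence using nondegeneracy, prove exponential decay of the transversal component via a convexity/differential-inequality argument in a Martinet tube, then extract the leading eigenvalue and eigenvector of $A_\gamma$) is exactly the architecture of that reference, so there is no difference of approach to report --- only issues of execution, two of which are genuine as written.

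First, in Stage 1 your contradiction argument does not close: applying Theorem~\ref{thm_Hofer93} along the bad sequence $s_n$ produces an orbit $\gamma'$ of the same period $T$, but nothing forces the image of $\gamma'$ to lie in a small tubular neighborhood of $x(\R)$, so the isolation statement you invoke (isolation of $T$ among periods of orbits \emph{near} $x(\R)$) simply does not apply to $\gamma'$, and ``the uniform gradient bound forces $\gamma'=\gamma$'' is not a valid inference. The standard repair is an intermediate-value/connectedness argument: choose $\delta$ so small that, by nondegeneracy, the only loops within $C^0$-distance $\delta$ of the translates of $x(T\cdot)$ that are period-$T$ Reeb orbits are those translates; since $u(s,\cdot)$ enters every small neighborhood of the translates along some sequence (because $\gamma$ is an asymptotic limit) and exits the $\delta$-neighborhood along $s_n$, there are crossing times whose Hofer limits would be period-$T$ orbits at distance between $\delta/2$ and $\delta$ from the translates, a contradiction. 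Second, in Stage 2 the inequality $\phi''(s)\ge c^2\phi(s)-\varepsilon(s)$ with only $\varepsilon(s)\to0$ does \emph{not} yield $\phi(s)\le C_0e^{-cs}$ ($\phi$ could track $\varepsilon/c^2$ for slowly decaying $\varepsilon$); you need the error to be absorbable into the quadratic term, which is available here because in Martinet coordinates the transversal component satisfies an equation of the form $\partial_s z+J(s,t)\partial_t z+S(s,t)z=0$ with coefficients converging to the data of $A_\gamma$, so the perturbation is bounded by $o(1)\,|z|$ and one obtains the clean inequality $\phi''\ge (c')^2\phi$ for large $s$. Finally, Stage 3 is where the actual content of the theorem lives (the formula of Definition~\ref{def_asymp_formula}: that the decay rate is an honest eigenvalue $\mu\in\sigma(A_\gamma)$ and the rescaled slice converges to an eigenvector), and your text only names the difficulties --- the nonautonomous perturbation, ruling out faster-than-exponential decay, identifying the rate, preserving rates through the bootstrap --- without resolving them; as it stands this is a plan rather than a proof, whereas the cited argument establishes it by showing the logarithmic derivative of $\|z(s,\cdot)\|_{L^2}$ converges to an eigenvalue and the normalized slices converge to a corresponding eigenvector, using the spectral gap together with a careful analysis of the nonautonomous linear equation.
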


Choose any Riemannian metric on $Y$, and denote the associated exponential map by $\exp$.

\begin{definition}[\cite{fast}]
\label{def_nondeg_puncture}
Let $z_*$ be a puncture, and let $\epsilon$ be its sign.
We say that $z_*$ is a {\bf nondegenerate puncture} if $\util$ is asymptotic to a Reeb orbit $\gamma=(x,T)$ at $z_*$, i.e. $u(s,\cdot) \to x(T(\cdot+t_0))$ in $C^0$ as $\epsilon s \to +\infty$ for some $t_0$, and:
\begin{itemize}
\item There exists $a_0 \in \R$ such that $\sup_t|a(s,t) - Ts - a_0|$ as $\epsilon s\to+\infty$. 
\item If $\pi_\lambda \circ du$ does not vanish identically, then $\pi_\lambda \circ du(s,t) \neq 0$ for $\epsilon s \gg 1$.
\item Let $\zeta(s,t) \in T_{x(T(t+t_0))}Y$ be defined by $\exp(\zeta(s,t)) = u(s,t)$, $\epsilon s\gg1$. 
There exists $b>0$ such that $\sup_t e^{b\epsilon s}|\zeta(s,t)| \to 0$ as $\epsilon s\to+\infty$.
\end{itemize}
\end{definition}

We emphasize that the Reeb orbit associated to a nondegenerate puncture need not be nondegenerate.
Obviously, if $\util$ has an asymptotic formula at $z_*$ then $z_*$ is a nondegenerate puncture of~$\util$.
The converse is the content of the following statement.

\begin{theorem}[\cite{fast}, Corollary~6.6]
\label{thm_nondeg_implies_asymp_formula}
If $z_*$ is a nondegenerate puncture of $\util$, then $\util$ has an asymptotic formula at $z_*$.
\end{theorem}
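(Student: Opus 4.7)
The plan is to localize the analysis in a Martinet tube $\Psi: U \to \R/\Z \times B$ around the asymptotic Reeb orbit $\gamma = (x,T)$ of $\util$ at $z_*$. For $\epsilon s$ sufficiently large, $u$ takes values in $U$ and can be written as $\Psi \circ u(s,t) = (\theta(s,t), z(s,t))$. The nondegenerate-puncture hypothesis gives an exponential decay rate $b>0$ with $\sup_t e^{b\epsilon s}|\zeta(s,t)| \to 0$, and in combination with the $\jtil$-holomorphic equation and elliptic bootstrapping this upgrades to exponential decay of $z$ and all of its derivatives, as well as the bounded-ness of $a(s,t)-Ts-a_0$ required for the first bullet of Definition~\ref{def_asymp_formula}.

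First, I would dispose of the trivial case $\pi_\lambda \circ du \equiv 0$. In that situation $u$ maps a neighbourhood of $z_*$ into the image $x(\R) \subset Y$, so in Martinet coordinates $z(s,t) \equiv 0$ and $\util$ parametrises (a cover of) the trivial cylinder $\R \times \gamma$, which is exactly the trivial branch of the asymptotic formula.

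Assume now $\pi_\lambda \circ du \not\equiv 0$, so by Definition~\ref{def_nondeg_puncture} the differential $\pi_\lambda \circ du(s,t)$ is nowhere zero for $\epsilon s \gg 1$. Using the explicit form $\Psi_*\lambda = f(\theta, x_1+ix_2)(d\theta + x_1 dx_2)$ with $f|_{\R/\Z\times\{0\}}\equiv T_0$ and $df|_{\R/\Z\times\{0\}}\equiv 0$, the $\jtil$-holomorphic equation for $\util$ can be rewritten, after a smooth trivialisation of $\xi$ over $\R/\Z\times\{0\}$, as a nonlinear Cauchy--Riemann equation of the schematic form
\[
\partial_s z + J_0(t)\partial_t z + S(s,t)\,z = N(s,t,z,\partial z),
\]
where the self-adjoint symbol $S(s,t)$ tends exponentially to an operator-symbol $S_\infty(t)$ with $\eta \mapsto -J_0(t)\partial_t\eta - S_\infty(t)\eta$ being (conjugate to) the asymptotic operator $A_\gamma$, while $N = \OO(|z|^2) + \OO(|z||\partial z|)$ is a superquadratic remainder. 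Since $z$ and its derivatives decay exponentially, both the perturbation $S(s,t)-S_\infty(t)$ and the nonlinearity $N$ are integrable on the half-cylinder and much smaller than the linear terms, so the asymptotic behaviour of $z$ is governed by the linear model.

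The key step is then the asymptotic spectral analysis in the spirit of Hofer--Wysocki--Zehnder~\cite{props1} and its refinement for the exponentially decaying setting in~\cite{fast}. Expanding $t\mapsto z(s,t)$ in the $L^2$-orthonormal eigenbasis of $A_\gamma$ produces ODEs for the Fourier coefficients, and Gronwall-type arguments, using that $z$ vanishes exponentially and uniformly, rule out contributions from eigenvalues with $\epsilon\mu \ge 0$ and isolate a single dominant eigenvalue $\mu$ with $\epsilon\mu < 0$, along with an eigenvector $v$, so that $z(s,t+t_0) = e^{\mu s}(v(t)+\Delta(s,t))$ with $\Delta(s,t)\to 0$; passing to derivatives of $\Delta$ uses elliptic regularity applied to the equation on successively shifted half-cylinders. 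The required control on $a(s,t)-Ts-a_0$ and on the lift $\tilde\theta(s,t+t_0)-mt$ follows by integrating the remaining components of the $\jtil$-holomorphic equation, now that $z$ is known to decay like $e^{\mu s}$. The main obstacle will be the spectral separation step itself: since $\gamma$ need not be nondegenerate, $0$ may lie in $\sigma(A_\gamma)$, and the usual arguments that single out one eigenvalue must be supplied externally with a decay rate $b$. This is exactly the role of the third bullet of Definition~\ref{def_nondeg_puncture}, and is what allows the classical eigenvalue-selection mechanism to run even when $\gamma$ is degenerate.
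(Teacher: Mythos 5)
The paper does not actually prove this statement: it is imported as \cite{fast}, Corollary~6.6, with the accompanying remark that it follows from the asymptotic-analysis arguments of Hofer--Wysocki--Zehnder \cite{props1} (in the form of the formula first appearing in \cite{SiefringCPAM}). Your sketch reconstructs exactly that route --- localization in a Martinet tube, disposal of the $\pi_\lambda\circ du\equiv 0$ case, reduction to a Cauchy--Riemann-type equation for the transverse component, and the HWZ/Siefring eigenvalue-selection argument in which the exponential rate $b$ from Definition~\ref{def_nondeg_puncture} substitutes for nondegeneracy of the orbit in isolating an asymptotic eigenvalue with $\epsilon\mu<0$ --- so it is consistent with the cited proof rather than an alternative to it.
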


Theeorem~\ref{thm_nondeg_implies_asymp_formula} is a consequence of arguments originally given by Hofer, Wysocki and Zehnder~\cite{props1} to prove Theorem~\ref{thm_HWZ_asymptotics_nondeg_case}.
The form of the asymptotic formula~\eqref{asymptotic_formula} presented above first appeared in~\cite{SiefringCPAM}.
It should be remarked that the above discussion has a version in higher dimensions.

\subsection{Weighted Fredholm theory and automatic transversality}
\label{sec_moduli_spaces}

In this section, we prove what we will need to know about the Fredholm theory for a pseudo-holomorphic curve with exponential convergence to Reeb orbits at its punctures; we do not assume that the Reeb orbits at these punctures are nondegenerate.    

We assume in this section that $\util$ is a pseudo-holomorphic map as in~\eqref{curve_reference} with an asymptotic formula at every puncture.
Assume also that $\pi_\lambda \circ du$ does not vanish identically on $\dot S$.
By connectedness of~$S$, the latter is equivalent to saying that $\util$ is not a (possibly branched) cover of a trivial cylinder. 
It follows from Carleman's similarity principle, and the fact that $\pi_\lambda \circ du$ satisfies a Cauchy-Riemann type equation in suitable local representations, that every zero of $\pi_\lambda \circ du$ is isolated and contributes positively to the total algebraic count.
By Theorem~\ref{thm_nondeg_implies_asymp_formula} and Definition~\ref{def_asymp_formula}, it follows that $\util$ has a nontrivial asymptotic formula at every puncture, and that there are only finitely many zeros of $\pi_\lambda \circ du$.
Denote by $\gamma^\pm_i$ the asymptotic limit of $\util$ at $z^\pm_i$, and by $\mu^\pm_i \in \sigma(A_{\gamma^\pm_i})$ the asymptotic eigenvalue of $\util$ at $z^\pm_i$.
Note that $\mu^+_i < 0$ and $\mu^-_i > 0$.

The invariants $\wind_\pi$ and $\wind_\infty$ were introduced in~\cite{props2}.
The invariant $\wind_\pi(\util)$ is defined as the total algebraic count of zeros of $\pi_\lambda \circ du$. 
In particular
\begin{equation}
\label{wind_pi_zero_transverse_immersion}
\wind_\pi(\util) = 0 \qquad \Rightarrow \qquad \text{$u$ is an immersion transverse to $R$.}
\end{equation}
A global symplectic trivialization $\tau$ of $u^*\xi$ induces symplectic trivializations $\tau^\pm_i$ on $\xi_{\gamma^\pm_i}$, up to homotopy. 
The invariant 
\begin{equation}
\wind_\infty(\util) = \sum_{i=1}^{m_+} \wind_{\tau^+_i}(\mu^+_i) - \sum_{i=1}^{m_-} \wind_{\tau^-_i}(\mu^-_i)
\end{equation}
is independent of the choice of $\tau$.
Standard degree theory, see~\cite{props2}, shows that 
\begin{equation}
\wind_\pi(\util) = \wind_\infty(\util) - 2 + 2g + \#\Gamma
\end{equation}
where $g$ is the genus of $S$.

Choose $\delta>0$ small enough such that
\begin{equation}
\label{delta_props}
[-\delta,0) \cap \sigma(A_{\gamma^+_i}) = \emptyset \ \forall i\in\{1,\dots,m^+\}, \qquad (0,\delta] \cap \sigma(A_{\gamma^-_i}) = \emptyset \ \forall i\in\{1,\dots,m^-\} \, .
\end{equation}
Hence $\mu^+_i < -\delta$ and $\mu^-_i > \delta$.
We consider a $\delta$-weighted Fredholm index defined as
\begin{equation}
\label{def_Fredholm_index}
\ind_\delta(\util) = - 2 + 2g + m_+ + m_- + \sum_{i=1}^{m_+} \CZ_{\tau^+_i}^{-\delta}(\gamma^+_i) - \sum_{i=1}^{m_-} \CZ_{\tau^-_i}^{\delta}(\gamma^-_i) \, . 
\end{equation}
Denote $\Gamma_+^{\rm odd} = \{ z^+_i \mid \CZ^{-\delta}_{\tau^+_i}(\gamma^+_i) \ \text{is odd} \}$, and $\Gamma_-^{\rm odd} = \{ z^-_i \mid \CZ^{\delta}_{\tau^-_i}(\gamma^-_i) \ \text{is odd} \}$. 
These sets are independent of the choice of $\delta$ with the properties above, and also independent of $\tau$.
A puncture in $\Gamma^{\rm odd} = \Gamma_+^{\rm odd} \cup \Gamma_-^{\rm odd}$ will be called an odd puncture.

\begin{lemma}
\label{lemma_ineq_wind_pi}
The inequality $$ 2\wind_\pi(\util) \leq \ind_\delta(\util) - 2 + 2g + \#\Gamma - \#\Gamma^{\rm odd} $$ holds. 
In particular, if $g=0$, $\ind_\delta(\util) = 2$ and $\#\Gamma \leq \#\Gamma^{\rm odd}+1$ then $\wind_\pi(\util) = 0$ and $u$ is an immersion transverse to the Reeb vector field.
Moreover, if $g=0$, $\ind_\delta(\util) = 2$ and $\#\Gamma = \#\Gamma^{\rm odd}$ then
$$
\begin{aligned}
& \sum_{i=1}^{m_+} \wind_{\tau_i^+}(\mu_i^+) = \sum_{i=1}^{m_+} \wind_{\tau_i^+}(\nu^{<-\delta}(\gamma^+_i)) \\
& \sum_{i=1}^{m_-} \wind_{\tau_i^-}(\mu_i^-) = \sum_{i=1}^{m_-} \wind_{\tau_i^-}(\nu^{\geq\delta}(\gamma^-_i))
\end{aligned}
$$
\end{lemma}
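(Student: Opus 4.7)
The plan is to start from the two formulas already established, namely $\wind_\pi(\util) = \wind_\infty(\util) - 2 + 2g + \#\Gamma$ and the definition of $\wind_\infty$ as a signed sum of winding numbers of asymptotic eigenvalues, and bound each $\wind_{\tau^\pm_i}(\mu^\pm_i)$ in terms of $\CZ^{\mp\delta}_{\tau^\pm_i}(\gamma^\pm_i)$. The key input is the monotonicity of winding numbers with respect to the ordering of eigenvalues of the asymptotic operator. Since the choice of $\delta$ in \eqref{delta_props} rules out eigenvalues of $A_{\gamma^+_i}$ in $[-\delta,0)$ and of $A_{\gamma^-_i}$ in $(0,\delta]$, and since $\mu^+_i<0$ and $\mu^-_i>0$, we in fact have $\mu^+_i\le \nu^{<-\delta}(\gamma^+_i)$ and $\mu^-_i\ge \nu^{\geq\delta}(\gamma^-_i)$. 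Monotonicity then gives
\[
\wind_{\tau^+_i}(\mu^+_i)\le \wind_{\tau^+_i}(\nu^{<-\delta}(\gamma^+_i)), \qquad \wind_{\tau^-_i}(\mu^-_i)\ge \wind_{\tau^-_i}(\nu^{\geq\delta}(\gamma^-_i)).
\]

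Next, I would rewrite the right-hand sides in terms of Conley-Zehnder indices. From the definition $\CZ^\delta_\tau(\gamma)=2\wind_\tau(\nu^{<\delta}(\gamma))+p^\delta(\gamma)$ one gets $\wind_{\tau^+_i}(\nu^{<-\delta}(\gamma^+_i))=\tfrac12(\CZ^{-\delta}_{\tau^+_i}(\gamma^+_i)-p^{-\delta}(\gamma^+_i))$, and similarly $\wind_{\tau^-_i}(\nu^{\geq\delta}(\gamma^-_i))=\tfrac12(\CZ^{\delta}_{\tau^-_i}(\gamma^-_i)+p^{\delta}(\gamma^-_i))$, using that $\wind$ of $\nu^{\geq\delta}$ differs from that of $\nu^{<\delta}$ by $p^\delta$. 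The crucial observation is that $p^{-\delta}(\gamma^+_i)\in\{0,1\}$ equals $1$ precisely when $\CZ^{-\delta}_{\tau^+_i}(\gamma^+_i)$ is odd, and analogously for negative punctures; consequently $\sum_i p^{-\delta}(\gamma^+_i)+\sum_i p^\delta(\gamma^-_i)=\#\Gamma^{\rm odd}$.

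Summing these estimates and substituting the definition \eqref{def_Fredholm_index} of $\ind_\delta(\util)$, one obtains
\[
\wind_\infty(\util)\le \tfrac12\bigl(\ind_\delta(\util)+2-2g-\#\Gamma-\#\Gamma^{\rm odd}\bigr),
\]
which combined with $\wind_\pi(\util)=\wind_\infty(\util)-2+2g+\#\Gamma$ yields the desired inequality $2\wind_\pi(\util)\le \ind_\delta(\util)-2+2g+\#\Gamma-\#\Gamma^{\rm odd}$.

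For the two consequences, I would use that $\wind_\pi(\util)\ge 0$ since $\pi_\lambda\circ du$ has only isolated zeros counted positively. When $g=0$, $\ind_\delta(\util)=2$ and $\#\Gamma\le\#\Gamma^{\rm odd}+1$, the right-hand side of the main inequality is at most $1$, forcing $\wind_\pi(\util)=0$, and then the implication \eqref{wind_pi_zero_transverse_immersion} gives that $u$ is an immersion transverse to $R$. When moreover $\#\Gamma=\#\Gamma^{\rm odd}$ the right-hand side is $0$, so equality must hold throughout the chain of inequalities; since the individual inequalities on winding numbers go in opposite directions at positive and negative punctures, equality of the sums forces equality at every puncture, which is exactly the last pair of identities claimed. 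No single step is a serious obstacle; the main care is bookkeeping the parities through the $p^{\pm\delta}$ correction terms and keeping track of signs between positive and negative punctures.
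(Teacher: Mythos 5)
Your proposal is correct and follows essentially the same route as the paper: the degree identity $\wind_\pi=\wind_\infty-2+2g+\#\Gamma$, the monotonicity bounds $\wind_{\tau^+_i}(\mu^+_i)\le\wind_{\tau^+_i}(\nu^{<-\delta}(\gamma^+_i))$ and $\wind_{\tau^-_i}(\mu^-_i)\ge\wind_{\tau^-_i}(\nu^{\geq\delta}(\gamma^-_i))$ rewritten via $\CZ^{\mp\delta}$ and the $p^{\mp\delta}$ terms (whose sum is $\#\Gamma^{\rm odd}$), nonnegativity of $\wind_\pi$ for the two corollaries, and the sign argument (nonpositive sum equals nonnegative sum) for the final equalities. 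No gaps; this matches the paper's proof.
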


\begin{proof}
Note that the identities $$ \begin{aligned} & 2\wind_{\tau_i^-}(\mu^-_i) \geq 2 \wind_{\tau_i^-}(\nu^{\geq\delta}(\gamma_i^-)) = \CZ^\delta_{\tau_i^-}(\gamma_i^-) + p^\delta(\gamma_i^-) \\ & 2\wind_{\tau_i^+}(\mu^+_i) \leq 2 \wind_{\tau_i^-}(\nu^{<-\delta}(\gamma_i^+)) = \CZ^{-\delta}_{\tau_i^+}(\gamma_i^+) - p^{-\delta}(\gamma_i^+) \end{aligned} $$ follow from the definitions of the quantities involved, and from the basic properties of spectra of asymptotic operators described before.
It follows that
$$
\begin{aligned}
\sum_{i=1}^{m_+} 2\wind_{\tau_i^+}(\mu^+_i) & \leq - \#\Gamma^{\rm odd}_+ + \sum_{i=1}^{m_+} \CZ^{-\delta}_{\tau_i^+}(\gamma_i^+) \, , \\
- \sum_{i=1}^{m_-} 2\wind_{\tau_i^-}(\mu_i^-) & \leq -\#\Gamma^{\rm odd}_- - \sum_{i=1}^{m_-} \CZ^{\delta}_{\tau_i^-}(\gamma_i^-) \, .
\end{aligned}
$$
Using these identities we estimate:
\begin{equation}
\label{sandwich_wind_pi}
\begin{aligned}
& 2 \wind_\pi(\util) = 2\wind_\infty(\util) - 4 + 4g + 2\#\Gamma \\
& = \left[ \sum_{i=1}^{m_+} 2\wind_{\tau_i^+}(\mu^+_i) - \sum_{i=1}^{m_-} 2\wind_{\tau_i^-}(\mu^-_i) \right] - 4 - 4g + 2\#\Gamma \\
& \leq \left[ \sum_{i=1}^{m_+} \CZ^{-\delta}_{\tau_i^+}(\gamma_i^+) - \sum_{i=1}^{m_-} \CZ^{\delta}_{\tau_i^-}(\gamma_i^-) \right] - \#\Gamma^{\rm odd} - 4 + 4g + 2\#\Gamma \\
& = \ind_\delta(\util) + 2 - 2g - \#\Gamma - \#\Gamma^{\rm odd} - 4 + 4g + 2\#\Gamma \\
&= \ind_\delta(\util) - 2 + 2g + \#\Gamma - \#\Gamma^{\rm odd}
\end{aligned}
\end{equation}
as desired.
If conditions $g=0$, $\ind_\delta(\util)=2$ and $\#\Gamma \leq \#\Gamma^{\rm odd}+1$ hold then $\ind_\delta(\util) - 2 + 2g + \#\Gamma - \#\Gamma^{\rm odd} \leq 1$, hence $\wind_\pi(\util)=0$. 
The fact that $u$ is an immersion transverse to the Reeb vector field follows from~\eqref{wind_pi_zero_transverse_immersion}.
Finally, assume that $g=0$, $\ind_\delta(\util)=2$ and $\#\Gamma = \#\Gamma^{\rm odd}$.
The right-hand side in~\eqref{sandwich_wind_pi} vanishes and we get 
$$
\begin{aligned}
& \sum_{i=1}^{m_+} 2\wind_{\tau_i^+}(\mu^+_i) - \sum_{i=1}^{m_-} 2\wind_{\tau_i^-}(\mu^-_i) \\
& \qquad = \sum_{i=1}^{m_+} 2\wind_{\tau_i^+}(\nu^{<-\delta}(\gamma^+_i)) - \sum_{i=1}^{m_-} 2\wind_{\tau_i^-}(\nu^{\geq\delta}(\gamma^-_i))
\end{aligned}
$$
Dividing by two we arrive at
$$
\begin{aligned}
& \sum_{i=1}^{m_+} \wind_{\tau_i^+}(\mu^+_i) - \wind_{\tau_i^+}(\nu^{<-\delta}(\gamma^+_i)) \\
& \qquad = 
\sum_{i=1}^{m_-} \wind_{\tau_i^-}(\mu^-_i) - \wind_{\tau_i^-}(\nu^{\geq\delta}(\gamma^-_i))
\end{aligned}
$$
The left-hand side is nonpositive and the right-hand side is nonnegative.
Hence both sides are equal to zero.
\end{proof}

Next we discuss Fredholm theory.
Let $g$ be the genus of $S$.
Consider the moduli space
\begin{equation}
\M_{J,g}(\gamma_1^+,\dots,\gamma_{m_+}^+;\gamma_1^-,\dots,\gamma_{m_-}^-)
\end{equation}
consisting of equivalence classes of pairs $(\util,j)$, where $j$ is a conformal structure on $S$, and $\util:(\dot S,j)\to(\R\times Y,\jtil)$ is nonconstant pseudo-holomorphic map with finite Hofer energy, as in~\eqref{curve_reference}, and such that:
\begin{itemize}
\item For every $i \in \{1,\dots,m_+\}$, $z_i^+$ is a positive nondegenerate puncture where $\util$ is asymptotic to $\gamma^+_i$.
\item For every $i \in \{1,\dots,m_-\}$, $z_i^-$ is a negative nondegenerate puncture where $\util$ is asymptotic to $\gamma^-_i$.
\end{itemize}
Two such pairs $(\util=(a,u),j)$ and $(\util'=(a',u'),j')$ are declared equivalent if there exists a biholomorphism $\phi:(S,j)\to (S,j')$ that fixes each point of~$\Gamma$, and a constant $c\in\R$ such that $(c+a'\circ\phi,u'\circ\phi) = (a,u)$.
Once $g$, $\delta$, $\Gamma_\pm$ and $\gamma_i^\pm$ are all fixed, we write $\M_J$ to denote the above moduli space, for simplicity.
A curve $C \in \M_J$ is said to be embedded, immersed, or somewhere injective if it can be represented by a pair $(\util,j)$ where $\util$ is an embedding, an immersion, or a somewhere injective map, respectively.
The equivalence class of a pair $(\util,j)$ will be denoted by $[\util,j]$.

\begin{remark}
If $g=0$ and $m_++m_-\leq 3$ then we can disregard the conformal structure~$j$ since in this case, up to a diffeomorphism, there is only one conformal structure.
This becomes relevant in later sections where we will be dealing with cylinders. 
\end{remark}

If $C = [\util,j] \in \M_J$ is immersed then one can follow~\cite{props3} and build the functional analytic set-up for a Fredholm theory based on a certain space of sections of the normal bundle~$N_\util$ of~$\util$.
This Fredholm theory can then be used to model a neighborhood of $C$ in $\M_J$.
Let us describe the details we need.

Fix choices of positive/negative holomorphic polar coordinates $(s,t)$ centered at the positive/negative punctures.
Up to homotopy, we can assume that $N_\util$ is a $\jtil$-invariant bundle, and that 
\begin{equation}
\label{normal_bundle_ctctstr_near_puncts}
\text{$N_\util$ agrees with $u^*\xi$ near the punctures.} 
\end{equation}
Let $\tau_\xi$ be a global symplectic trivialization of $u^*\xi$, and $\tau_N$ be a global complex trivialization of $N_\util$.
Up to homotopy we can assume that at a given puncture $z_*$, of sign $\epsilon$, the trivializations~$\tau_\xi$ and~$\tau_N$ converge as $\epsilon s \to \infty$ to trivializations $\tau_\xi(\gamma^\pm_i)$ and $\tau_N(\gamma^\pm_i)$ of $\xi_{\gamma^\pm_i}$, respectively. 
We write $$ \wind(\tau_\xi(\gamma^\pm_i),\tau_N(\gamma^\pm_i)) \in \Z $$ to denote the winding number of a constant vector in $\tau_\xi(\gamma^\pm_i)$ represented in~$\tau_N(\gamma^\pm_i)$.
To compute this winding number the periodic orbit is traversed along the Reeb flow.
In other words, this is the winding number of $\tau_\xi(\gamma^\pm_i)$ relative to $\tau_N(\gamma^\pm_i)$.
The next lemma is proved in~\cite{props3}.

\begin{lemma}
\label{lemma_winding_ctctstr_normal}
Under the above assumptions, the identity
$$ \sum_{i=1}^{m_+} \wind(\tau_\xi(\gamma^+_i),\tau_N(\gamma^+_i)) - \sum_{i=1}^{m_-} \wind(\tau_\xi(\gamma^-_i),\tau_N(\gamma^-_i)) = - 2 + 2g + \#\Gamma $$
holds.
\end{lemma}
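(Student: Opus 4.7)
The plan is to realize $u^*\xi$ as a tensor product of line bundles in two compatible ways and then compare relative first Chern numbers. Because $\util$ is immersed, the short exact sequence $0 \to T\dot S \xrightarrow{d\util} \util^*T(\R\times Y) \to N_\util \to 0$ of complex vector bundles splits, giving $\util^*T(\R\times Y) \cong T\dot S \oplus N_\util$. On the other hand, the Reeb splitting $T(\R\times Y) = (\R\partial_a \oplus \R R)\oplus \xi$ identifies $\util^*T(\R\times Y) \cong \underline{\C} \oplus u^*\xi$, where $\underline{\C}$ is trivialized by $\partial_a\mapsto 1$ so that $R = \jtil\partial_a\mapsto i$. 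Combining these and taking determinants produces a complex line bundle isomorphism
\[
\Phi : T\dot S \otimes N_\util \xrightarrow{\;\sim\;} u^*\xi.
\]

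Near each puncture, the asymptotic formula of Definition~\ref{def_asymp_formula} gives $d\util(\partial_s) \to T\partial_a$ and $d\util(\partial_t) \to TR$ as $\epsilon s\to +\infty$; combined with the standing hypothesis~\eqref{normal_bundle_ctctstr_near_puncts} that $N_\util = u^*\xi$ on puncture neighborhoods, this shows that the block matrix of $T\dot S\oplus N_\util \to \underline{\C}\oplus u^*\xi$ is asymptotically $\operatorname{diag}(T,1)$. In particular $\Phi(\partial_s\otimes\tau_N)$ is asymptotically a positive multiple of $\tau_N$ on $u^*\xi$ at every puncture, so it has the same winding as $\tau_N$ in any reference frame, including $\tau_\xi$.

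Next I would compute the relative first Chern numbers. Because tensoring line bundles adds first Chern classes,
\[
c_1^{\mathrm{rel}}(T\dot S,\partial_s) + c_1^{\mathrm{rel}}(N_\util,\tau_N) = c_1^{\mathrm{rel}}\bigl(u^*\xi,\Phi(\partial_s\otimes\tau_N)\bigr).
\]
The vector field $\partial_s$ is transverse to every small puncture circle, so by Poincar\'e-Hopf any extension to $\dot S$ has total algebraic zero count $\chi(\dot S) = 2-2g-\#\Gamma$, yielding $c_1^{\mathrm{rel}}(T\dot S,\partial_s) = 2-2g-\#\Gamma$. Meanwhile $c_1^{\mathrm{rel}}(N_\util,\tau_N) = 0$ since $\tau_N$ is a globally nonvanishing section, and likewise $c_1^{\mathrm{rel}}(u^*\xi,\tau_\xi) = 0$.

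The final step is the change-of-trivialization formula: for a complex line bundle $L\to\dot S$ and two boundary trivializations $\tau,\tau'$, the difference $c_1^{\mathrm{rel}}(L,\tau')-c_1^{\mathrm{rel}}(L,\tau)$ equals the sum over puncture circles of the winding of $\tau'/\tau$ measured in the boundary orientation of $\dot S$. The crucial sign point is that the boundary orientation of $\dot S$ near a puncture agrees with the Reeb flow direction on the asymptotic orbit at a positive puncture but is opposite to it at a negative puncture; combined with the fact that $\wind(\tau_\xi,\tau_N)$ is by definition the Reeb-oriented winding of $\tau_\xi$ in the frame $\tau_N$, which is the negative of the Reeb-oriented winding of $\tau_N/\tau_\xi$, this yields
\[
c_1^{\mathrm{rel}}(u^*\xi,\tau_N) - c_1^{\mathrm{rel}}(u^*\xi,\tau_\xi) = -\biggl(\sum_{i=1}^{m_+}\wind(\tau_\xi(\gamma^+_i),\tau_N(\gamma^+_i)) - \sum_{i=1}^{m_-}\wind(\tau_\xi(\gamma^-_i),\tau_N(\gamma^-_i))\biggr).
\]
Equating the left-hand side with $2-2g-\#\Gamma$ gives the claimed identity. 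The main obstacle I expect is exactly this sign bookkeeping --- reconciling the Reeb-oriented winding convention used by the paper with the boundary-oriented windings that naturally arise from the change-of-trivialization formula, with particular care at negative punctures where the two orientations disagree.
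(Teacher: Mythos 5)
Your proof is correct and follows essentially the same route as the paper: both arguments hinge on comparing the two splittings $\util^*T(\R\times Y)=T\util\oplus N_\util=(\R\partial_a\oplus\R R)\oplus u^*\xi$, whose summands coincide near the punctures, together with the fact that the asymptotically radial behavior of $\partial_s\util$ makes the $T\dot S$-contribution equal to $\chi(\dot S)=2-2g-\#\Gamma$. The only difference is bookkeeping: you phrase the comparison through relative first Chern numbers of determinant lines (Poincar\'e--Hopf plus a change-of-trivialization formula, with the correct sign flip at negative punctures), whereas the paper interpolates a subbundle $E_\util$ between $\operatorname{span}\{\partial_a,R\}$ and $T\util$ and directly compares windings of the rank-two trivializations $\tau_E\oplus\tau_\xi$ and $\tau_T\oplus\tau_N$ along the circles $\partial_i^\pm K_r$, using the same orientation convention $\partial K_r=\sum_i\partial_i^+K_r-\sum_i\partial_i^-K_r$ that underlies your sign analysis.
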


\begin{proof}
Near $z_i^+$ we consider positive holomorphic polar coordinates, and near~$z_i^-$ we consider negative holomorphic polar coordinates.
As usual, there is a sign $\epsilon = \pm1$ associated to each puncture $z^\pm_i$.
All these sets of coordinates are denoted by $(s,t)$ with no fear of ambiguity.
For $r\gg1$ large enough we denote by $K_r \subset \dot S$ the complement of the union of the sets described by $\{ (s,t) \mid \epsilon s > r\}$ near each puncture. 
Then $K_r$ is compact, and its boundary consists of circles.
The component $\partial_i^\pm K_r$ of $\partial K_r$ near $z_i^\pm$ is oriented by the parametrization $t \mapsto (\epsilon r,t)$.
Hence, if $\partial K_r$ is oriented as the boundary of $K_r$ we have $\partial K_r = \sum_{i=1}^{m_+} \partial_i^+ K_r - \sum_{i=1}^{m_-} \partial_i^- K_r$.

We see $\xi$ and $R$ as $\R$-invariant objects in $\R\times Y$.
By Definition~\ref{def_asymp_formula}, near each puncture the vector $\partial_s\util$ becomes arbitrarily close to $T\partial_a|_\util$, and the vector $\partial_t\util$ becomes arbitrarily close to $TR|_\util$, respectively, as $\epsilon s \to +\infty$.
Hence, if $r\gg1$ is large enough, there is a vector subbundle $E_\util \subset \util^*T(\R\times Y)$ that complements $u^*\xi$, agrees with ${\rm span}\{{\partial_a}|_\util,R|_\util\}$ on $K_{r-1}$, and agrees with $T\util$ on $\dot S \setminus K_r$.
Moreover, $E_\util$ has a global frame $\tau_E$ that agrees with $\{{\partial_a}|_\util,R|_\util\}$ on $K_{r-1}$, and with $\{ \partial_s\util , \partial_t\util \}$ on $\dot S \setminus K_{r}$.
By~\eqref{normal_bundle_ctctstr_near_puncts}, there is no loss of generality to assume that $N_\util$ agrees with $u^*\xi$ on $\dot S \setminus K_r$.
Hence, we have splittings $$ \util^*T(\R\times Y) = E_\util \oplus u^*\xi = T\util \oplus N_\util $$ whose corresponding summands coincide near the punctures.
Consider a global trivialization $\tau_T$ of $T\util$. 
Note that $$ \sum_{i=1}^{m_+} \wind_{\partial^+_iK_r}(\tau_E,\tau_T) - \sum_{i=1}^{m_-} \wind_{\partial^-_iK_r}(\tau_E,\tau_T) = 2 - 2g - \#\Gamma \, , $$ where $\wind_{\partial^\pm_iK_r}$ denotes the winding number along the curve $\partial^\pm_iK_r$ as oriented by $t$.
This is true because near each puncture the vector $\epsilon\partial_s\util$ points radially towards the puncture.

Recall the global trivializations $\tau_\xi$ and $\tau_N$ of $u^*\xi$ and of $N_\util$, respectively. 
Hence, we have two competing global trivializations $\tau_E \oplus \tau_\xi$ and $\tau_T \oplus \tau_N$ of $\util^*T(\R\times Y)$.
It follows that 
$$
\begin{aligned}
0 & = \left( \sum_{i=1}^{m_+} \wind_{\partial^+_iK_r}(\tau_E,\tau_T) + \sum_{i=1}^{m_+} \wind_{\partial^+_iK_r}(\tau_\xi,\tau_N) \right) \\
& \qquad - \left( \sum_{i=1}^{m_-} \wind_{\partial^-_iK_r}(\tau_E,\tau_T) + \sum_{i=1}^{m_-} \wind_{\partial^-_iK_r}(\tau_\xi,\tau_N) \right) \\
& = 2 - 2g - \#\Gamma + \left( \sum_{i=1}^{m_+} \wind_{\partial^+_iK_r}(\tau_\xi,\tau_N) - \sum_{i=1}^{m_-} \wind_{\partial^-_iK_r}(\tau_\xi,\tau_N) \right) \, .
\end{aligned}
$$
The desired conclusion follows from taking the limit as $r \to +\infty$. 
\end{proof}

To construct a weighted Fredholm theory, Hofer, Wysocki and Zehnder consider in~\cite{props3} a certain Banach space of sections of $N_\util$.
A section in this space can be represented near a puncture of sign~$\epsilon$, with respect to holomorphic polar coordinates and trivialization~$\tau_N$ as above, as a $\C$-valued function $(s,t) \mapsto w(s,t)$ that decays, together with its first derivatives, like $e^{-\epsilon\delta s}$ as $\epsilon s \to +\infty$.
A smooth Fredholm map is then defined on a neighborhood of zero of this Banach space, in such a way that exponentiating sections in its zero locus gives rise to images of curves in $\M_J$ near $C$ in a suitable topology.
The linearization at zero is called the linearized Cauchy-Riemann operator at $\util$, and will be denoted by $D_\util$.
Its Fredholm index is equal to $\ind_\delta(\util)$.
For precise definitions we refer to~\cite[Section~6]{props3}.

\begin{lemma}[Automatic transversality]
\label{lemma_automatic_transversality}
Assume that $g=0$, $\ind_\delta(\util)=2$ and $p^{\mp\delta}(\gamma^\pm_i) = 1$ for all $\gamma_i^\pm$.
Then $C$ is immersed and $D_\util$ is surjective.
\end{lemma}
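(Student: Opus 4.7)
The hypothesis $p^{\mp\delta}(\gamma_i^\pm)=1$ at every puncture makes every shifted Conley-Zehnder index appearing in~\eqref{def_Fredholm_index} odd, so $\#\Gamma^{\rm odd}=\#\Gamma$. Together with $g=0$ and $\ind_\delta(\util)=2$, Lemma~\ref{lemma_ineq_wind_pi} then reads $2\wind_\pi(\util)\le 0$, and since $\wind_\pi$ is a nonnegative count of zeros we conclude $\wind_\pi(\util)=0$. By~\eqref{wind_pi_zero_transverse_immersion}, $u$ is an immersion transverse to the Reeb vector field, so $\util=(a,u)$ is an immersion as well; in particular the normal bundle $N_\util$ is a well-defined complex line bundle, agreeing by~\eqref{normal_bundle_ctctstr_near_puncts} with $u^*\xi$ near the punctures.

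\textbf{Reduction to the formal adjoint.} To prove surjectivity of $D_\util$ I would follow the automatic-transversality strategy of Hofer-Wysocki-Zehnder and Wendl, adapted to the $\delta$-weighted Sobolev framework of~\cite{props3}. Assume for contradiction that $D_\util$ is not surjective, and choose any nonzero $\eta$ in its cokernel. Integration by parts represents $\eta$ as a nonzero section of $\overline{N_\util}\otimes_\C\Lambda^{0,1}T^*\dot S$, lying in the $(-\delta)$-weighted Sobolev space and satisfying the formal $L^2$-adjoint Cauchy-Riemann equation $D_\util^*\eta=0$; the reversed weight means that $\eta$ is permitted modest exponential growth, at most like $e^{\delta|s|}$, at each puncture. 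Applying the Hofer-Wysocki-Zehnder asymptotic analysis (the analogue of Theorem~\ref{thm_nondeg_implies_asymp_formula} for $D_\util^*$) then shows that either $\eta$ vanishes near some puncture --- excluded by Carleman's similarity principle and connectedness of $\dot S$ --- or $\eta$ has a nontrivial asymptotic formula at every puncture, with asymptotic eigenvalue in $\sigma(A_{\gamma_i^+})\cap[-\delta,+\infty)$ at each positive puncture and in $\sigma(A_{\gamma_i^-})\cap(-\infty,\delta)$ at each negative puncture. Carleman's similarity principle also shows that the zeros of $\eta$ are isolated and contribute positively to an algebraic count $Z(\eta)\ge 0$.

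\textbf{Degree contradiction.} A degree-theoretic computation in direct parallel with the derivation of Lemma~\ref{lemma_ineq_wind_pi}, but applied to $D_\util^*$, expresses $2Z(\eta)$ in terms of the asymptotic windings of $\eta$, the Euler characteristic of $\dot S$, and the Chern number of $\overline{N_\util}\otimes\Lambda^{0,1}T^*\dot S$, the latter two being tied to the trivializations of $u^*\xi$ by Lemma~\ref{lemma_winding_ctctstr_normal}. The hypothesis $p^{\mp\delta}(\gamma_i^\pm)=1$ is preserved under passage to the adjoint and again yields $\#\Gamma^{\rm odd}=\#\Gamma$ for $D_\util^*$, while the adjoint Fredholm index in these weights equals $-\ind_\delta(\util)=-2$. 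Assembling the terms one obtains
\[
2Z(\eta)\;\le\;-2-2+2g+\#\Gamma-\#\Gamma^{\rm odd}\;=\;-4,
\]
contradicting $Z(\eta)\ge 0$, so $D_\util$ must be surjective. The main technical obstacle is precisely this last step: one must verify the asymptotic formula for $\eta$ in the $(-\delta)$-weighted adjoint setting, correctly translate winding numbers between $\tau_\xi$ and $\tau_N$ via Lemma~\ref{lemma_winding_ctctstr_normal}, and confirm that the hypothesis $p=1$ at \emph{every} puncture is exactly what is needed to push the right-hand side of the estimate strictly below zero.
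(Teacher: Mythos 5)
Your immersion step is exactly the paper's: with $p^{\mp\delta}(\gamma_i^\pm)=1$ every puncture is odd, so $\#\Gamma=\#\Gamma^{\rm odd}$ and Lemma~\ref{lemma_ineq_wind_pi} gives $\wind_\pi(\util)=0$, hence an immersion transverse to the Reeb field. For surjectivity, however, you take a genuinely different route. You argue on the cokernel: represent a nonzero cokernel element $\eta$ as a solution of the formal adjoint equation in the growth-weighted space, establish an asymptotic formula for it, and run a winding/degree count to get $2Z(\eta)\le -4<0$. The numerology is consistent with Wendl-type automatic transversality ($g=0$, no even punctures, index $2$). The paper instead stays entirely on the kernel side: the same winding estimate, applied to a nontrivial $\zeta\in\ker D_\util$ (whose asymptotic formula is already available from~\cite{SiefringCPAM} or~\cite{fast}, Theorem~6.1, since $\zeta$ decays below the spectral gap at $\mp\delta$), shows $2\#\zeta^{-1}(0)\le \ind_\delta(\util)-2=0$, so kernel elements are nowhere vanishing unless identically zero; then a nonzero cokernel together with index $2$ would force $\dim\ker D_\util\ge 3$, and three independent sections of a rank-two bundle must have a nontrivial linear combination vanishing at a point -- contradiction. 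What the paper's route buys is that no adjoint theory is needed at all; what your route would buy, if completed, is a direct bound on the cokernel in the style of Wendl, which is more standard in the nondegenerate literature.

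The genuine gap is precisely the step you flag but do not carry out: the ``analogue of Theorem~\ref{thm_nondeg_implies_asymp_formula} for $D_\util^*$'' in the $(-\delta)$-weighted (growth) setting over possibly \emph{degenerate} orbits. The asymptotic results quoted in the paper concern solutions that decay exponentially below a spectral gap; a cokernel element is only known to lie in the dual weight, so its asymptotic eigenvalue at a puncture can lie anywhere in $(-\delta,\delta)$ -- in particular it can be $0$ when $\gamma_i^\pm$ is degenerate -- and then $\eta$ need not decay at all, so those theorems do not apply off the shelf and the winding bound you need (eigenvalue winding at least $\wind(\nu^{\ge-\delta})$ at positive punctures, at most $\wind(\nu^{<\delta})$ at negative ones) is not justified as stated. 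Likewise the assertions that ``$p^{\mp\delta}=1$ is preserved under passage to the adjoint'' and the computation of the relative Chern number of $\overline{N_\util}\otimes\Lambda^{0,1}T^*\dot S$ via Lemma~\ref{lemma_winding_ctctstr_normal} are claimed rather than proved. These can very likely be supplied (this is essentially Wendl's argument transplanted to the weighted degenerate framework), but as written the proof is incomplete exactly where the degenerate setting makes the analysis delicate -- which is the difficulty the paper's kernel-plus-linear-algebra argument is designed to avoid.
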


\begin{proof}
We know from Lemma~\ref{lemma_ineq_wind_pi} that $C$ is immersed.
Hence, we can consider the functional analytic set-up from~\cite{props3} as outlined above.
Let $\zeta \in \ker D_\util$ be a section that does not vanish identically.
In~\cite{props3} it is shown that $D_\util$ is a Cauchy-Riemann type operator.
Moreover, it can be represented near a puncture $z^\pm_i$ using holomorphic polar coordinates $(s,t)$ as above, and the trivialization $\tau_N$, by $\partial_s + J(s,t)\partial_t+A(s,t)$ where $-J(s,t)\partial_t-A(s,t)$ converges as $\epsilon s\to+\infty$ to a representation of the asymptotic operator $A_{\gamma^\pm_i}$ in the frame $\tau_N(\gamma^\pm_i)$.
Hence, if we represent $\zeta$ as a $\C$-valued function $w(s,t)$ near $z^\pm_i$ using $(s,t)$ and $\tau_N$, then we can apply~\cite[Theorem~A.1]{SiefringCPAM} or~\cite[Theorem~6.1]{fast} to obtain an asymptotic formula $$ w(s,t) = e^{\kappa s}(v(t)+R(s,t)) \qquad (\epsilon s \sim +\infty) \, . $$
Here $\kappa \in \sigma(A_{\gamma^\pm_i})$ satisfies $\epsilon \kappa < 0$, and $v(t)$ is the representation in $\tau_N(\gamma^\pm_i)$ of a corresponding eigensection.
A crucial, but simple, remark at this point is that the inequalities below follow from the definition of $\delta$
$$
\begin{aligned}
& \epsilon = +1 \quad \Rightarrow \quad \wind_{\tau_N(\gamma^+_i)}(\kappa) \leq \wind_{\tau_N(\gamma^+_i)}(\nu^{<-\delta}(\gamma_i^+)) \\
& \epsilon = -1 \quad \Rightarrow \quad \wind_{\tau_N(\gamma^-_i)}(\kappa) \geq \wind_{\tau_N(\gamma^-_i)}(\nu^{\geq\delta}(\gamma_i^-)) \, .
\end{aligned}
$$
Standard degree theory and Lemma~\ref{lemma_winding_ctctstr_normal} together imply that the total algebraic count $\#\zeta^{-1}(0)$ of zeros of $\zeta$ satisfies 
$$ 
\begin{aligned}
& 2 \#\zeta^{-1}(0) \\
& \leq \sum_{i=1}^{m_+} 2\wind_{\tau_N(\gamma^+_i)}(\nu^{<-\delta}(\gamma_i^+)) - \sum_{i=1}^{m_+} 2\wind_{\tau_N(\gamma^-_i)}(\nu^{\geq\delta}(\gamma_i^-)) \\
& = - 4 + 2\#\Gamma + \sum_{i=1}^{m_+} 2\wind_{\tau_\xi(\gamma^+_i)}(\nu^{<-\delta}(\gamma_i^+)) - \sum_{i=1}^{m_+} 2\wind_{\tau_\xi(\gamma^-_i)}(\nu^{\geq\delta}(\gamma_i^-)) \\
&= - 4 + 2\#\Gamma - \#\Gamma^{\rm odd} + \sum_{i=1}^{m_+} \CZ_{\tau_\xi(\gamma^+_i)}^{-\delta} (\gamma_i^+) - \sum_{i=1}^{m_+} \CZ_{\tau_\xi(\gamma^-_i)}^{\delta}(\gamma_i^-) \\
&= -2 -2 + \#\Gamma + \sum_{i=1}^{m_+} \CZ_{\tau_\xi(\gamma^+_i)}^{-\delta}(\gamma_i^+) - \sum_{i=1}^{m_+} \CZ_{\tau_\xi(\gamma^-_i)}^{\delta}(\gamma_i^-) \\
&= -2 + \ind_\delta(\util) = 0 \, .
\end{aligned}
$$
Hence, sections in $\ker D_\util$ never vanish, or vanish identically.
Assume, by contradiction, that the cokernel of $D_\util$ is not zero.
Then there are at least three linearly independent sections in $\ker D_\util$ because the index of $D_\util$ is two.
Since $N_\util$ has rank two, this implies that a linear combination of them would vanish at some point.
But above we proved that the only way in which a section can vanish at some point is by vanishing identically.
This contradiction shows that $D_\util$ has a trivial cokernel or, in other words, that $D_\util$ is surjective.
\end{proof}

\section{Asymptotic analysis and compactness}
\label{sec_asymptotic_analysis}
 
In this section, we collect the information we glean from the particular kind of exponential convergence of each $C_n$.  More precisely, we extract a limiting cylinder with nondegenerate punctures, and apply some basic asymptotic analysis to establish some very useful facts about the image of its projection in the three-manifold; see Proposition~\ref{prop_main_asymptotic_analysis}.   We also prove a corresponding compactness statement.  Our arguments here are valid for any sequence of curves satisfying certain axioms, see (*) and (A)-(G) below, and we structure our proofs in this level of generality.

We draw freely from the notation and definitions established in Section~\ref{prelim_sec}.
Consider a sequence $g_n \in C^\infty(Y)$ such that $$ \text{$g_n \to 1$ in $C^\infty$}. $$ 
The Reeb vector fields of $\lambda_n = g_n\lambda$ and $\lambda$ are denoted by $R_n$ and $R$, respectively.
It should be stressed that we make no genericity assumptions on $\lambda_n$ or on $\lambda$.
Consider $d\lambda_n$-compatible, equivalently $d\lambda$-compatible, complex structures $J_n,J$ on $\xi$ satisfying $$ J_n\to J \ \text{in} \ C^\infty \, . $$
The induced $\R$-invariant almost complex structures on $\R\times Y$ induced by $\lambda_n,J_n$ and $\lambda,J$ are denoted by $\jtil_n$ and $\jtil$, respectively.

The cylinder $\R \times \R/\Z$ is equipped with coordinates $s\in\R$, $t\in\R/\Z$, and with its standard conformal structure obtained from the identification $\R\times\R/\Z \simeq \C \setminus \{0\}$, $(s,t) \simeq e^{2\pi(s+it)}$.
Assume that we are given nonconstant finite-energy pseudo-holomorphic maps
\begin{equation}
\label{seq_u_n}
\util_n : \R \times \R/\Z \to (\R\times Y,\jtil_n)
\end{equation}
and 
\begin{equation}
\label{limit_map}
\util : \R \times \R/\Z \to (\R\times Y,\jtil)
\end{equation}
with a positive puncture at $s=+\infty$ and a negative puncture at $s=-\infty$.
Assume that 
\begin{itemize}
\item[($*$)] $\util_n$ has nondegenerate punctures.
\end{itemize}
Let $\util_n$ be asymptotic to $\gamma^+_{n} = (x^+_{n},T^+_{n})$ at $s=+\infty$, and to $\gamma^-_{n} = (x^-_{n},T^-_{n})$ at $s=-\infty$, where $\gamma^\pm_n$ are Reeb orbits of $\lambda_n$. 
Assume further that that
\begin{itemize}
\item[(A)] $\util_n \to \util$ in $C^\infty_\loc(\R\times\R/\Z)$.

\item[(B)] There are Reeb orbits $\gamma^\pm = (x^\pm,T^\pm)$ of $\lambda$ such that $\gamma^\pm_{n} \to \gamma^\pm$ as $n\to\infty$.
More precisely, $x^\pm_{n}(T^\pm_{n}(\cdot+t^\pm_n)) \to x^\pm(T^\pm\cdot)$ in $C^\infty(\R/\Z,Y)$ as $n\to\infty$, for suitable choices of~$t^\pm_n \in \R/\Z$.

\item[(C)] If $c:\R\to Y$ is a periodic trajectory of $R$ with primitive period $\leq T^+$ then $g_n$ satisfies $g_n|_{c(\R)} \equiv 1$ and $dg_n|_{c(\R)} \equiv 0$.
In particular, $R_n$ coincides with $R$ on $c(\R)$, and~$c$ is also a periodic Reeb trajectory of~$\lambda_n$, for all $n$.

\item[(D)] The following hold for every $n$: 
\begin{itemize}
\item $u_n (\R \times \R/\Z) \subset Y \setminus (x^+_n(\R) \cup x^-_n(\R))$.
\item If $\gamma=(x,T)$ is a periodic $\lambda_n$-Reeb orbit in $Y \setminus (x^+_n(\R) \cup x^-_n(\R))$ with $T \leq T_+$ then the intersection number of the loop $x(T\cdot)$ with the map $u_n$ is strictly positive.
\item If~$\beta$ is a loop in $\R \times \R/\Z$ then $u_n \circ \beta$ has intersection number zero with~$u_n$.
\end{itemize}

\item[(E)] Let $T_{\min}(\lambda)>0$ be the minimal period of a $\lambda$-Reeb orbit. 
There exists $r>0$ such that $$ r \leq \int_{\R\times\R/\Z} u_n^*d\lambda_n \leq \frac{1}{2}T_{\min}(\lambda) \qquad \forall n \, . $$

\item[(F)] The point $u(0,0)$ does not belong to a $\lambda$-Reeb orbit.

\item[(G)] Denote asymptotic operators by $A^\pm_n = A_{\gamma^\pm_n}$, $A^\pm=A_{\gamma^\pm}$, and asymptotic eigenvalues of $\util_n$ at $s=\pm\infty$ by $\mu^\pm_n \in \sigma(A^\pm_n)$. We assume that there exists $\delta>0$ such that: $[-\delta,0) \cap \sigma(A^+) = \emptyset$, $(0,\delta] \cap \sigma(A^-) = \emptyset$, $\ind_\delta(\util_n)=2$, $\mu^+_n < -\delta$, $\mu^-_n > \delta$, $p^{-\delta}(\gamma_n^+) = p^\delta(\gamma_n^-) = 1$ $\forall n$.

\end{itemize}

We now state the asymptotic analysis result that is a main goal of this section.

\begin{proposition}
\label{prop_main_asymptotic_analysis}

Let $g_n,\lambda_n,\lambda,J_n,J,\util_n,\util$ be as above. Assume that ($*$) and (A)-(G) hold. Then:
\begin{itemize}
\item[(a)] $\util$ has a nontrivial asymptotic formula at both punctures, $\util$ is asymptotic to $\gamma^+$ at $s=+\infty$, and $\util$ is asymptotic to $\gamma^-$ at $s=-\infty$.
\item[(b)] $\ind_\delta(\util)=2$ and $p^{-\delta}(\gamma^+)=p^\delta(\gamma^-)=1$.
\item[(c)] The asymptotic eigenvalues $\mu^+$ and $\mu^-$ of $\util$ at $s=+\infty$ and at $s = -\infty$, respectively, satisfy $\mu^+<-\delta$, $\mu^->\delta$ and  
\begin{equation*}
\begin{aligned}
\wind(\mu^+) = \wind(\nu^{<-\delta}(\gamma^+)) \qquad \wind(\mu^-) = \wind(\nu^{\geq\delta}(\gamma^-))
\end{aligned}
\end{equation*}
in any trivialization of $\gamma^+$ and $\gamma^-$.
\item[(d)] The following holds: 
\begin{itemize}
\item $u (\R \times \R/\Z) \subset Y \setminus (x^+(\R) \cup x^-(\R))$.
\item If $\gamma=(x,T)$ is a $\lambda$-Reeb orbit in $Y \setminus (x^+(\R) \cup x^-(\R))$ such that $T \leq T^+$, then the intersection number of the loop $x(T\cdot)$ with the map $u$ is strictly positive.
\item If~$\beta$ is a loop in $\R \times \R/\Z$ then $u \circ \beta$ has intersection number zero with~$u$.
\end{itemize}
\end{itemize}
\end{proposition}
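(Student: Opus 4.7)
The plan is to address (a)--(c) by analyzing the limiting asymptotic behavior of $\util$, and then to obtain (d) from positivity of intersections together with the $C^\infty_\loc$-convergence in (A).

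For (a): By ($*$) and (G), each $\util_n$ has nondegenerate punctures with asymptotic eigenvalues satisfying $\mu_n^+<-\delta$ and $\mu_n^->\delta$; Theorem~\ref{thm_nondeg_implies_asymp_formula} then yields asymptotic formulas in Martinet tubes around $\gamma_n^\pm$ with exponential decay at rate at least $\delta$. The key step is to make these exponential estimates uniform in $n$: the smooth convergence $\gamma_n^\pm\to\gamma^\pm$ in (B) gives $A_n^\pm\to A^\pm$ in the norm-resolvent sense, and (G) supplies a uniform spectral gap at $\pm\delta$, so the decay constants and the scale at which they take hold can be chosen $n$-independent. Combined with the $C^\infty_\loc$-convergence $\util_n\to\util$ from (A), this transfers the uniform exponential convergence to $\util$, and (B) identifies the limiting orbits as $\gamma^\pm$. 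Theorem~\ref{thm_nondeg_implies_asymp_formula} then provides asymptotic formulas for $\util$; nontriviality holds because $\pi_\lambda\circ du\equiv 0$ would make $\util$ a branched cover of a trivial cylinder and force $\int u^*d\lambda=0$, contradicting the lower bound $r>0$ in (E).

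For (b) and (c): The continuity $A_n^\pm\to A^\pm$ together with the spectral gap at $\pm\delta$ from (G) makes $p^{-\delta}(\gamma_n^+)$, $p^\delta(\gamma_n^-)$, $\CZ_\tau^{-\delta}(\gamma_n^+)$ and $\CZ_\tau^\delta(\gamma_n^-)$ eventually constant in $n$, so $p^{-\delta}(\gamma^+)=p^\delta(\gamma^-)=1$ and $\ind_\delta(\util)=\ind_\delta(\util_n)=2$, giving (b). Since $p^{-\delta}(\gamma^+)=p^\delta(\gamma^-)=1$ forces both $\CZ_\tau^{-\delta}(\gamma^+)$ and $\CZ_\tau^\delta(\gamma^-)$ to be odd, we have $\#\Gamma=\#\Gamma^{\rm odd}=2$; combined with $g=0$ and $\ind_\delta(\util)=2$, the final equality statement of Lemma~\ref{lemma_ineq_wind_pi} applied to $\util$ yields exactly the winding identities claimed in (c).

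For (d): The main tool is positivity of intersections for $\jtil$-holomorphic curves in $\R\times Y$. The trivial cylinders $\R\times x^\pm(\R)$ are $\jtil$-holomorphic and distinct from $\util$ as images, since otherwise $\int u^*d\lambda=0$, contradicting (E); hence any intersection of $\util$ with $\R\times x^\pm(\R)$ is isolated and counts positively. By the nontrivial asymptotic formula from (a), whose asymptotic eigenvector is nowhere vanishing, $\util$ is disjoint from $\R\times x^\pm(\R)$ near the punctures, confining intersections to a compact region of the domain. Such isolated intersections are stable under $C^\infty$-small perturbations, so their presence would force $\util_n$ to meet $\R\times x_n^\pm(\R)$ for large $n$ (using (B)), contradicting the first item of (D); this gives the first bullet. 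For the second bullet, any $\lambda$-Reeb orbit $\gamma=(x,T)$ with $T\leq T^+$ geometrically disjoint from $x^\pm(\R)$ is also, by (C) and (B), a $\lambda_n$-Reeb orbit geometrically disjoint from $x_n^\pm(\R)$ for large $n$, so (D) yields intersection number $\geq 1$ with $u_n$; the asymptotic formula from (a) localizes all intersections in a compact region, and $C^\infty_\loc$-continuity of intersection numbers then gives intersection number $\geq 1$ with $u$. The third bullet is analogous, using compactness of $\beta$ and $C^\infty_\loc$-convergence to preserve the vanishing intersection count in the limit.

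The hard part will be the uniform exponential decay estimate needed to pass the nondegenerate-puncture property of $\util_n$ to $\util$ in (a): although each $\util_n$ individually has an asymptotic formula, the decay rate and the scale at which the formula becomes valid a priori depend on $n$, so one must exploit the full strength of the uniform spectral gap in (G) together with smoothness of $A_n^\pm\to A^\pm$ to extract $n$-independent estimates. Once these are in place, the remaining parts follow from standard techniques in the asymptotic theory of pseudo-holomorphic curves in symplectizations and positivity of intersections.
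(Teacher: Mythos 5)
There is a genuine gap, and it sits exactly where you placed your bet. You propose to obtain the uniform exponential estimates from the uniform spectral gap in (G) together with convergence of the asymptotic operators $A^\pm_n\to A^\pm$, but spectral information alone cannot produce a uniform \emph{starting scale}: the asymptotic formula for $\util_n$ is only valid for $\epsilon s\geq s_n$ with $s_n$ a priori unbounded in $n$, and before any weighted elliptic estimate can be run one must know that the ends of all the $u_n$ enter a \emph{fixed} tubular neighborhood of $x^\pm(\R)$ at a \emph{uniform} value of $|s|$. This is the content of Lemma~\ref{lemma_localizing_the_ends} and Corollary~\ref{cor_localizing_the_ends} in the paper, and its proof is not spectral at all: one translates the cylinders, extracts limits via bubbling analysis (using the action window in (E)), and must then rule out that the translated limits are trivial cylinders over \emph{other} Reeb orbits --- this is done using the intersection-theoretic hypotheses (C) and (D) (every short orbit away from the binding has positive intersection number with $u_n$, while loops in the image of $u_n$ have zero self-intersection number) together with (F). Without this localization step, three things in your outline are unsupported: the claimed transfer of uniform decay to $\util$; the identification of $\gamma^\pm$ as the orbits to which $\util$ is asymptotic (in the degenerate case asymptotic limits need not be unique, so (B) by itself does not identify the limit of $\util$'s ends); and, in the second bullet of (d), the passage from ${\rm int}(x(T\cdot),u_n)>0$ to ${\rm int}(x(T\cdot),u)>0$, since the intersection points could a priori escape along the ends of $u_n$ --- the asymptotic formula of the \emph{limit} $u$, which is all you invoke, does not confine the intersections of the $u_n$.

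Two further points, secondary but real. First, when $\gamma^+$ is degenerate the orbits $\gamma^+_n$ need not be covers of the same simple orbit: by Bangert's lemma the underlying simple orbits $\ubar\gamma^+_n$ converge to a $j_+$-fold cover of $\ubar\gamma^+$, so ``Martinet tubes around $\gamma^\pm_n$'' do not provide a common frame in which your operator-convergence and eventual-constancy claims for $p^{\mp\delta}$, $\CZ^{\mp\delta}$ and the windings make sense; the paper builds the covering coordinates of Lemma~\ref{lemma_special_coordinates_covering} precisely to fix this, and only then runs the sequential weighted compactness of Proposition~\ref{prop_asymp_control} (which also needs the uniform gradient bounds coming from a no-bubbling argument based on (E), absent from your plan). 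Second, your derivation of (b)--(c) from Lemma~\ref{lemma_ineq_wind_pi} and your positivity-of-intersections argument for the first bullet of (d) are fine in spirit (the paper reaches the same conclusions, using transversality of $u$ to $R$ plus homotopy invariance instead of intersections with trivial cylinders in $\R\times Y$), but they all presuppose part (a) in full strength, so they inherit the gap above rather than repair it.
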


Proving the above proposition will take the bulk of this section, and we split the proof into several steps.  We start with a simple lemma.

\begin{lemma}
\label{lemma_zeros_pi_du}
$\pi_\lambda \circ du$ does not vanish.
\end{lemma}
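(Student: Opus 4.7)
The plan is to first show that $\pi_\lambda \circ du_n$ is nowhere vanishing for every $n$, and then use $C^\infty_{\loc}$ convergence together with positivity of zeros of Cauchy-Riemann type sections to transfer this property to the limit.

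For the approximating curves, hypothesis $(*)$ together with Theorem~\ref{thm_nondeg_implies_asymp_formula} gives that each $\util_n$ has an asymptotic formula at both punctures. Hypothesis (E) provides $\int u_n^*d\lambda_n \geq r > 0$, and since $d\lambda_n$ vanishes on the Reeb direction, this forces $\pi_\lambda \circ du_n$ to not be identically zero on $\R\times\R/\Z$. Thus I can apply Lemma~\ref{lemma_ineq_wind_pi} to $\util_n$. With $g=0$ and $\#\Gamma = 2$ for a cylinder, and with hypothesis (G) giving $\ind_\delta(\util_n) = 2$ together with $p^{-\delta}(\gamma^+_n) = p^{\delta}(\gamma^-_n) = 1$, both punctures of $\util_n$ are odd (since the formula $\CZ^{\mp\delta}_\tau = 2\wind_\tau + p^{\mp\delta}$ shows that $\CZ^{\mp\delta}_\tau$ is odd precisely when the corresponding $p$-invariant equals $1$). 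Thus $\#\Gamma^{\rm odd} = 2$, and Lemma~\ref{lemma_ineq_wind_pi} yields
\[
2\,\wind_\pi(\util_n) \;\leq\; \ind_\delta(\util_n) - 2 + 2g + \#\Gamma - \#\Gamma^{\rm odd} \;=\; 2 - 2 + 0 + 2 - 2 \;=\; 0.
\]
Since zeros of $\pi_\lambda \circ du_n$ contribute positively by Carleman's similarity principle, this forces $\wind_\pi(\util_n) = 0$, so $\pi_\lambda \circ du_n$ is nowhere vanishing.

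For the limit curve $\util$, I first check that $\pi_\lambda \circ du$ is not identically zero. If it were, then $\partial_s u$ and $\partial_t u$ would both be proportional to $R$ everywhere, so by the Cauchy-Riemann equations $u$ would parametrize a single Reeb trajectory. In particular $u(0,0)$ would lie on the image of a Reeb orbit, contradicting hypothesis (F). Now suppose for contradiction that $\pi_\lambda \circ du(z_0) = 0$ for some $z_0 \in \R\times\R/\Z$. Since $\pi_\lambda \circ du$ satisfies a Cauchy-Riemann type equation and is not identically zero, the zero at $z_0$ is isolated with strictly positive local winding contribution. Choose $\epsilon > 0$ so small that $\pi_\lambda \circ du$ is nonvanishing on $\overline{D_\epsilon(z_0)} \setminus \{z_0\}$; then the winding number of $\pi_\lambda \circ du$ along $\partial D_\epsilon(z_0)$, computed in any local trivialization of $u^*\xi$, is at least $1$.

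By the $C^\infty_{\loc}$ convergence in hypothesis (A), for all sufficiently large $n$ the section $\pi_\lambda \circ du_n$ is $C^0$-close to $\pi_\lambda \circ du$ on the compact set $\partial D_\epsilon(z_0)$, hence nonvanishing there with the same positive winding number around $\partial D_\epsilon(z_0)$. Positivity of the local algebraic count then forces $\pi_\lambda \circ du_n$ to have at least one zero inside $D_\epsilon(z_0)$, contradicting the conclusion of the previous step. Hence $\pi_\lambda \circ du$ has no zeros. The main conceptual point of the argument is the upgrade from ``not identically zero'' (ruled out by (F)) to ``nowhere zero,'' which is achieved by importing the information $\wind_\pi(\util_n) = 0$ from the approximating sequence; the only genuine technical input is the standard fact that positive winding numbers of Cauchy-Riemann type sections are stable under $C^\infty_{\loc}$ convergence.
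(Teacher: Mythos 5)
Your proof is correct and follows essentially the same route as the paper: Lemma~\ref{lemma_ineq_wind_pi} together with hypothesis (G) forces $\wind_\pi(\util_n)=0$, hence $\pi_{\lambda_n}\circ du_n$ is nowhere vanishing, and stability of the positive local degree of an isolated zero under $C^\infty_{\loc}$-convergence rules out a zero of $\pi_\lambda\circ du$. The only cosmetic difference is that you rule out $\pi_\lambda\circ du\equiv 0$ via (F) (covers of trivial cylinders pass through periodic orbits) where the paper cites (E); both are fine.
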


\begin{proof}
From (E) we know that $\pi_\lambda \circ du$ does not vanish identically.
Hence zeros of $\pi_\lambda \circ du$ are isolated.
If there is a zero then standard degree theory allows one to find zeros of $\pi_{\lambda_n} \circ du_n$ for $n$ large enough.
But~(G) and Lemma~\ref{lemma_ineq_wind_pi} together imply that $\pi_{\lambda_n} \circ du_n$ does not vanish.
This contradiction shows that $\pi_\lambda \circ du$ does not vanish.
\end{proof}

\subsection{Localizing the ends}

Next, we prove the following lemma, which allows us to localize the argument, and we collect some consequences; this is the topic of this subsection.

\begin{lemma}
\label{lemma_localizing_the_ends}
For every $(\R/\Z)$-invariant open $C^\infty$-neighborhood $\mathcal{U}^+$ of $x^+(T^+\cdot)$ there exist $n_+,s_+$ such that for every $n \geq n_+$ and $s\geq s_+$ we have $u_n(s,\cdot) \in \mathcal{U}^+$.
Similarly, for every $(\R/\Z)$-invariant open $C^\infty$-neighborhood $\mathcal{U}^-$ of $x^-(T^-\cdot)$ there exist $n_-,s_-$ such that for every $n\geq n_-$ and $s\leq s_-$ we have $u_n(s,\cdot) \in \mathcal{U}^-$.
\end{lemma}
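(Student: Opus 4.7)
My plan is to argue by contradiction via a shift-and-extract argument that exploits the sub-critical energy hypothesis~(E). Suppose the conclusion fails for some $(\R/\Z)$-invariant open $C^\infty$-neighborhood $\mathcal{U}^+$ of the loop $x^+(T^+\cdot)$. By ($*$) and Theorem~\ref{thm_nondeg_implies_asymp_formula}, each individual $\util_n$ has exponential decay to $\gamma_n^+$ at its positive puncture, so for each~$n$ the ``last exit time''
$$
\tau_n := \sup\{\,s : u_n(s,\cdot)\notin\mathcal{U}^+\,\}
$$
is finite, and the failure of the lemma produces a subsequence $n_k\to\infty$ with $\tau_{n_k}\to+\infty$. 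Passing to a further subsequence so that $u_{n_k}(\tau_{n_k},t_k)\in\partial\mathcal{U}^+$ with $t_k\to t^*$, I recenter in both $s$ and the $\R$-factor by setting
$$
\vtil_k(s,t)=\bigl(a_{n_k}(s+\tau_{n_k},t)-a_{n_k}(\tau_{n_k},t_k),\ u_{n_k}(s+\tau_{n_k},t)\bigr),
$$
so that each $\vtil_k$ is $\jtil_{n_k}$-holomorphic with $v_k(0,\cdot)\in\partial\mathcal{U}^+$ and $v_k(s,\cdot)\in\overline{\mathcal{U}^+}$ for every $s>0$.

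By hypothesis~(E), the $d\lambda_{n_k}$-energy of each $\vtil_k$ is uniformly bounded by $\tfrac{1}{2}T_{\min}(\lambda)$, which is strictly less than $T_{\min}(\lambda)$. This sub-$T_{\min}$ energy budget rules out bubbling, since any nonconstant $\jtil$-holomorphic plane or sphere has area at least $T_{\min}(\lambda)$. Combined with $\jtil_{n_k}\to\jtil$ in $C^\infty$, standard elliptic regularity and Arzel\`a--Ascoli yield a $C^\infty_\loc$-convergent subsequence whose limit $\vtil_\infty:\R\times\R/\Z\to(\R\times Y,\jtil)$ is a finite-energy $\jtil$-holomorphic cylinder satisfying $v_\infty(0,\cdot)\in\partial\mathcal{U}^+$ and $v_\infty(s,\cdot)\in\overline{\mathcal{U}^+}$ for all $s>0$.

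My goal is then to show that $\vtil_\infty$ must be the trivial cylinder over $\gamma^+$, which gives the desired contradiction: its loop-slice at $s=0$ would be a reparametrization of $x^+(T^+\cdot)$, hence lie in $\mathcal{U}^+$ rather than on its boundary. I plan to accomplish this in three steps. (i) Monotonicity of $s\mapsto\int_{\{s\}\times\R/\Z}v_k^*\lambda_{n_k}$, together with hypothesis~(E), places the action $\int_{\{s\}\times\R/\Z}v_\infty^*\lambda$ in the interval $[T^+-\tfrac{1}{2}T_{\min}(\lambda),T^+]$ for every $s$, so the asymptotic period $\tilde T^+$ of $\vtil_\infty$ at $s=+\infty$ also lies in this interval. (ii) If $\mathcal{U}^+$ is chosen small enough that $\overline{\mathcal{U}^+}$ is contained in a Martinet tube around $\gamma^+$ (Remark~\ref{rmk_Martinet_tubes}), then $v_\infty(s,\cdot)\subset\overline{\mathcal{U}^+}$ for $s>0$ confines the positive asymptotic $\tilde\gamma^+$ to this tube, forcing it to be a cover of $\gamma^+$; the action bound from step~(i) then excludes multiple covers and yields $\tilde\gamma^+=\gamma^+$. (iii) In the Martinet tube representation $\Psi\circ v_\infty(s,t)=(\theta(s,t),z(s,t))$, the spectral-gap hypothesis~(G) combined with the asymptotic formula of Theorem~\ref{thm_nondeg_implies_asymp_formula} (whose hypotheses are arranged by (G) and the prior steps) forces $z$ to decay at rate at least $e^{-\delta s}$ from any initial amplitude; together with the confinement of $\vtil_\infty$ in the tube for all $s\ge 0$ and the intersection constraints~(D) passed to the limit, this rules out a nontrivial leading coefficient in the asymptotic expansion and forces $z\equiv 0$ on $[0,+\infty)\times\R/\Z$, hence $\vtil_\infty$ is the trivial cylinder over $\gamma^+$. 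A symmetric argument treats the negative puncture.

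The hard part will be step~(iii), the rigidity that forces $\vtil_\infty$ to be trivial when $\gamma^+$ is degenerate. This is exactly where the uniform spectral gap~(G) is essential: it provides a $\delta$-independent decay rate whose stability under the $C^\infty_\loc$ limit relies on the fast-decay technology of~\cite{fast}, and the intersection constraints~(D) are needed to rule out the possibility of a nontrivial leading eigenvector in the asymptotic expansion of~$\vtil_\infty$.
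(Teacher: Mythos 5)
Your shift-and-extract skeleton (recenter at escape times $s_j\to\infty$, use the sub-critical energy bound (E) to exclude bubbling, extract a $C^\infty_\loc$-limit cylinder, and aim to show it is the trivial cylinder over $\gamma^+$) is exactly the paper's strategy, but the step where you identify the limit as that trivial cylinder has genuine gaps. First, in your step (ii) the claim that confinement of the slices in a small tube around $x^+(\R)$ forces the positive asymptotic limit to be a cover of $\gamma^+$ fails in the setting of this section: no finiteness or isolation of Reeb orbits is assumed here, and $\gamma^+$ may be degenerate and non-isolated, so arbitrarily small tubes can contain Reeb orbits that are not covers of $\gamma^+$. Moreover, even if the positive asymptotic were $\gamma^+$, that does not make the limit trivial: the slices of the recentered limit are only known to have action in $[T^-,T^+]$, so the cylinder could still carry up to $\tfrac12 T_{\min}(\lambda)$ of $d\lambda$-area; what must be pinned down is the \emph{negative} asymptotic (whose end is not confined by your last-exit construction at all). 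Second, your step (iii) has no mechanism that forces the leading asymptotic coefficient to vanish: hypothesis (G) and Theorem~\ref{thm_nondeg_implies_asymp_formula} concern the curves $\util_n$ and the orbits $\gamma^\pm_n,\gamma^\pm$, not the auxiliary limit $v_\infty$, whose punctures are not known to be nondegenerate in the sense of Definition~\ref{def_nondeg_puncture} (establishing such uniform decay for limits is precisely the hard analytic work done later in the section), and ``intersection constraints (D) passed to the limit'' is not an argument as stated.

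What actually closes the argument in the paper is a combination of hypotheses your proposal never uses concretely. Hypothesis (F), via the base limit $\util$ from (A), rules out the scenario in which the negative asymptotic of the recentered limit has period $T^-$: if it did, all the $d\lambda$-area would escape into the shifted region, forcing $\int u^*d\lambda=0$ and making $\util$ a trivial cylinder through the point $u(0,0)$, contradicting (F). Then hypotheses (C) and (D), together with homotopy invariance of intersection numbers, force the image of the negative asymptotic orbit to lie in $x^+(\R)$ (a loop close to an orbit disjoint from the link would have positive intersection number with $u_n$ by (C)--(D), yet slices of $u_n$ have zero self-intersection number), whence its period must equal $T^+$; the monotone slice actions are then squeezed to $T^+$, the $d\lambda$-area vanishes, and the limit is the trivial cylinder over $\gamma^+$. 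Without these inputs, your recentered limit could perfectly well be a nontrivial cylinder whose negative end converges to $\gamma^-$ or to a nearby orbit in a degenerate family inside the tube, and then no contradiction arises at the slice $s=0$.
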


\begin{proof}
We will prove the statement for the positive end, the argument for the negative end is analogous and will be left to the reader.
By Stokes theorem, (B) and (E)
\begin{equation}
\label{ctct_area_u_estimate}
T^+ - T^- = \lim_{n\to\infty} T^+_n - T^-_n = \lim_{n\to\infty} \int_{\R\times\R/\Z} u_n^*d\lambda_n \leq \frac{1}{2} T_{\min}(\lambda)
\end{equation}
and by Stokes theorem together with $(u_n^*d\lambda_n)/(ds\wedge dt) \geq 0$
\begin{equation}
\label{action_s_constant}
T^-_n  \leq \int_{\R/\Z} u_n(s,\cdot)^*\lambda_n \leq T^+_n \qquad \forall s\in\R \, .
\end{equation}
We now claim that for every pair of sequences $n_j,s_j \to\infty$ the sequence $\vtil_j :\R\times\R/\Z \to \R\times Y$ defined by $$ \vtil_j (s,t) = (a_{n_j}(s+s_j,t) - a_{n_j}(s_j,0),u_{n_j}(s+s_j,t)) $$ has a subsequence that $C^\infty_\loc$-converges to a trivial cylinder over $\gamma^+$, i.e. to a map of the form $(s,t) \mapsto (T^+s+a_0,x^+(T^+(t+t_0))$.
With this claim at hand one can finish the proof of the lemma as follows.
Let $\mathcal{U}^+$ be an open $(\R/\Z)$-invariant $C^\infty$-neighborhood of $x^+(T^+\cdot)$.
Assume, by contradiction, that there are sequences $n_j,s_j\to\infty$ such that $u_{n_j}(s_j,\cdot) \not \in \mathcal{U}^+$ for all $j$.
Then $\vtil_j (s,t) = (a_{n_j}(s+s_j,t) - a_{n_j}(s_j,0),u_{n_j}(s+s_j,t))$ has a $C^\infty_\loc$-convergent subsequence to a trivial cylinder $\vtil=(b,v)$ over $\gamma^+$. 
Hence, $\lim_{j} u_{n_j}(s_j,\cdot) = v(0,\cdot) \in \mathcal{U}^+$, and $j\gg1 \Rightarrow u_{n_j}(s_j,\cdot) \in \mathcal{U}^+$ since $\mathcal{U}^+$ is open.
This contradiction concludes the argument.

We now argue to establish the above claim.
Note that $d\vtil_j$ is $C^0_\loc$-bounded since, otherwise, we would find a bubbling-off point and, after suitably rescaling and taking a limit, one would get a nonconstant $\jtil$-holomorphic plane with finite Hofer energy and total $d\lambda$-area at most $(1/2) T_{\min}(\lambda)$, but this is impossible.
Since $\vtil_j(0,0)$ belongs to $\{0\} \times Y$, we get $C^1_\loc$-bounds for~$\vtil_j$. 
Elliptic bootstrapping implies $C^\infty_\loc$-bounds. 
Hence, we obtain the desired subsequence convergent to some $\jtil$-holomorphic map $\vtil$ satisfying $E(\vtil) \leq \sup_nE(\util_n)<\infty$.
We still denote the convergent subsequence by $\vtil_j$.
The map $\vtil$ is not constant since by~\eqref{action_s_constant}
\begin{equation}
\label{action_estimate_loops_v}
\int_{\R/\Z} v(s,\cdot)^*\lambda = \lim_{j\to\infty} \int_{\R/\Z} u_{n_j}(s_{j}+s,\cdot)^*\lambda_{n_j} \in [T^-,T^+] \qquad \forall s\in\R \, .
\end{equation}
This also implies that $s=+\infty$ is a positive puncture and $s=-\infty$ is a negative puncture of~$\vtil$; otherwise $\int_{\R/\Z} v(s_*,\cdot)^*\lambda<T^-$ would hold for some~$s_*$.

Consider $\gamma=(x,T)$ an asymptotic limit of $\vtil$ at the negative puncture. 
By the above estimate we know that $T \in [T^-,T^+]$.
If $T = T^-$ then for every $\alpha>0$ we can find some $\underline s \in \R$ such that $\int_{\R/\Z} v(\underline s,\cdot)^*\lambda < T^-+\alpha$. 
With~$s$ fixed arbitrarily we could take $j\gg1$ such that $\underline s + s_{j}>s$ and estimate $$ T^-_{n_j} \leq \int_{\R/\Z} u_{n_j}(s,\cdot)^*\lambda_{n_j} \leq \int_{\R/\Z} u_{n_j}(\underline s+s_{j},\cdot)^*\lambda_{n_j} \to \int_{\R/\Z} v(\underline s,\cdot)^*\lambda < T^-+\alpha \, . $$ 
Passing to the limit as $j\to\infty$ $$ T^- \leq \int_{\R/\Z} u(s,\cdot)^*\lambda \leq T^- + \alpha \quad \forall s $$ where (B) was used.
Using Stokes, and passing to the limit as $\alpha \to 0$ we get $$ \int_{\R\times\R/\Z} u^*d\lambda = 0 $$ which means that $\util$ is a trivial cylinder over a periodic orbit, in contradiction to (F). 
We proved that $T>T^-$.
It follows from~\eqref{ctct_area_u_estimate} and~\eqref{action_estimate_loops_v} that $\gamma$ is geometrically distinct from $\gamma^-$.

We now argue by contradiction to show that $x(\R) \subset x^+(\R)$.
If we have $x(\R) \subset Y \setminus x^+(\R)$ then $x(\R) \subset Y \setminus (x^+(\R)\cup x^-(\R))$.
If $n$ is large enough then $S = \{x(T\cdot+t_0)\mid t_0\in\R/\Z\}$ has an open neighborhood $\mathcal{W} \subset C^\infty(\R/\Z,Y)$ such that every loop in $\mathcal{W}$ does not intersect the link $x^+_n(\R) \cup x^-_n(\R)$.
Otherwise, by the Arzel\`a-Ascoli theorem, we would find a sequence of loops $c_{n_j}$ converging to a loop in $S$, such that each $c_{n_j}$ intersects $x^+_{n_j}(\R) \cup x^-_{n_j}(\R)$.
Passing to the limit as $j\to\infty$, and using (B), we would find a loop in $S$ that intersects $x^+(\R) \cup x^-(\R)$, but we work under the assumption that this is not the case.
After shrinking $\mathcal{W}$, we can assume in addition that $\mathcal{W}$ is path-connected.
By assumptions (C) and (D) every loop in $S$ has positive intersection number with $u_n$. Since loops in $\mathcal{W}$ do not intersect the asymptotic limits of $\util_n$, we can use the path-connectedness of $\mathcal{W}$, and the homotopy invariance of intersection numbers, to conclude that every loop in $\mathcal{W}$ has positive intersection number with $u_n$.
We can find $\underline s \sim -\infty$ such that  $v(\underline s,\cdot)$ is close to $S$, hence belongs to $\mathcal{W}$.
Hence, for $j$ large enough $u_{n_j}(s_{j}+\underline s,\cdot)$ also belongs to $\mathcal{W}$ and, consequently, has positive intersection number with $u_{n_j}$.
But, by (D), this intersection number is zero.
This contradiction shows that $x(\R) \subset x^+(\R)$.
It follows from this,~\eqref{ctct_area_u_estimate} and~\eqref{action_estimate_loops_v} that $\gamma=\gamma^+$.
By~\eqref{action_estimate_loops_v} we get $\int_{\R/\Z} v(s,\cdot)^*\lambda = T^+$ for every $s$, hence $v^*d\lambda$ vanishes identically, and $\vtil$ is the desired trivial cylinder over $\gamma^+$.
\end{proof}

The above proof made only use of assumptions (A)-(F).

\begin{corollary}
\label{cor_localizing_the_ends}
If $U_+,U_- \subset Y$ are open neighborhoods of $x^+(\R),x^-(\R)$, respectively, then there exist $N \in \N$ and $\underline s>0$ such that if $n\geq N$ then 
$$
\begin{aligned}
u([\underline s,+\infty) \times \R/\Z) & \cup \bigcup_n u_n([\underline s,+\infty) \times \R/\Z) \subset U_+ \, , \\
u((-\infty,-\underline s] \times \R/\Z) & \cup \bigcup_n u_n((-\infty,-\underline s] \times \R/\Z) \subset U_- \, .
\end{aligned}
$$
\end{corollary}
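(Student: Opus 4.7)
The plan is to reduce everything to Lemma~\ref{lemma_localizing_the_ends} by constructing a suitable $(\R/\Z)$-invariant $C^\infty$-open neighborhood of the asymptotic loop inside $U_+$, shrunk just enough to absorb the $C^\infty_\loc$-limit. The key observation is that Lemma~\ref{lemma_localizing_the_ends} handles the sequence $u_n$ directly, while $u$ can be controlled by taking closures and using hypothesis~(A).

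\textbf{Step 1: shrink $U_+$.} Since $x^+(\R)$ is compact and $U_+ \subset Y$ is open, I would cover $x^+(\R)$ by finitely many open balls whose closures lie in $U_+$, and let $V_+$ be their union. Then $V_+$ is an open neighborhood of $x^+(\R)$ with $\overline{V_+} \subset U_+$. Define
\[
\mathcal{V}^+ = \{ c \in C^\infty(\R/\Z,Y) \mid c(\R/\Z) \subset V_+ \}.
\]
This set is $(\R/\Z)$-invariant (reparametrization of a loop preserves its image), is $C^0$-open (so in particular $C^\infty$-open), and contains the loop $t \mapsto x^+(T^+ t)$.

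\textbf{Step 2: apply Lemma~\ref{lemma_localizing_the_ends}.} The lemma applied to $\mathcal{V}^+$ produces integers $n_+$ and a real $s_+>0$ such that $u_n(s,\cdot) \in \mathcal{V}^+$ for every $n \geq n_+$ and $s \geq s_+$, equivalently $u_n([s_+,+\infty)\times\R/\Z) \subset V_+ \subset U_+$ for all $n \geq n_+$.

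\textbf{Step 3: propagate to the limit curve $u$.} For any fixed $s \geq s_+$ and $t \in \R/\Z$, hypothesis~(A) gives $u_n(s,t) \to u(s,t)$ as $n \to \infty$. Since $u_n(s,t) \in V_+$ once $n \geq n_+$, we conclude $u(s,t) \in \overline{V_+} \subset U_+$. Hence $u([s_+,+\infty)\times\R/\Z) \subset U_+$. This is the one point where shrinking $U_+$ to $V_+$ is essential: we need open containment for $u$, which the closure $\overline{V_+} \subset U_+$ provides.

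\textbf{Step 4: combine the two ends.} A symmetric application of Lemma~\ref{lemma_localizing_the_ends} (second statement) with a shrunk neighborhood $V_-$ of $x^-(\R)$ inside $U_-$ yields constants $n_-$ and $s_-$ with the analogous containment on $(-\infty,-s_-]\times\R/\Z$. Taking $N = \max(n_+,n_-)$ and $\underline s = \max(s_+,s_-)$ gives the conclusion. There is no serious obstacle here: the corollary is a routine consequence of Lemma~\ref{lemma_localizing_the_ends} combined with $C^\infty_\loc$-convergence, the only minor point being the passage from $V_+$ to $U_+$ via closure.
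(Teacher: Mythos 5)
Your proof is correct and follows exactly the route the paper intends: the corollary is an immediate consequence of Lemma~\ref{lemma_localizing_the_ends}, applied to the $(\R/\Z)$-invariant ($C^0$-open, hence $C^\infty$-open) neighborhoods of loops with image in shrunken neighborhoods $V_\pm$ with $\overline{V_\pm}\subset U_\pm$. Your closure argument via hypothesis~(A) is the right way to transfer the containment from the $u_n$ to the limit curve $u$, which the lemma itself does not address.
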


\begin{corollary}
\label{cor_asymptotics_nondeg_case}
If $\gamma^+$ is nondegenerate then $\util$ is asymptotic to $\gamma^+$ at $s=+\infty$, and has an asymptotic formula at $s=+\infty$. Similarly, if $\gamma^-$ is nondegenerate then $\util$ is asymptotic to $\gamma^-$ at $s=-\infty$, and has an asymptotic formula at $s=-\infty$.
\end{corollary}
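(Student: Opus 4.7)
The plan is to reduce directly to Theorem~\ref{thm_HWZ_asymptotics_nondeg_case}: that theorem says that whenever some asymptotic limit of $\util$ at a puncture is a nondegenerate Reeb orbit, $\util$ is automatically asymptotic to it and has an asymptotic formula there. Under the hypothesis that $\gamma^+$ is nondegenerate, it therefore suffices to verify that $\gamma^+$ is actually an asymptotic limit of $\util$ at $s=+\infty$; the negative puncture case is entirely symmetric.

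To do this I will start with an arbitrary sequence $s_k \to +\infty$ and invoke Theorem~\ref{thm_Hofer93} to extract a subsequence along which $u(s_k,\cdot+t_0)\to x(T\cdot)$ in $C^\infty(\R/\Z,Y)$ for some parametrized $\lambda$-Reeb orbit $\gamma=(x,T)$ and some $t_0\in\R/\Z$. The goal is then to identify $\gamma$ with $\gamma^+$. Given any $(\R/\Z)$-invariant $C^\infty$-neighborhood $\mathcal{V}$ of the parametrized loop $x^+(T^+\cdot)$, Lemma~\ref{lemma_localizing_the_ends} furnishes thresholds $n_+,s_+$ such that $u_n(s,\cdot)\in\mathcal{V}$ whenever $n\geq n_+$ and $s\geq s_+$. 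Combining this with the $C^\infty_\loc$-convergence $u_n\to u$ from~(A) and then passing to $n\to\infty$ with $s$ fixed yields $u(s,\cdot)\in\overline{\mathcal{V}}$ for all $s\geq s_+$. Specializing to $s=s_k$ for $k$ large, and using the $(\R/\Z)$-invariance of $\mathcal{V}$, the parametrized loop $x(T\cdot)$ itself lies in $\overline{\mathcal{V}}$. Since $\mathcal{V}$ can be chosen arbitrarily small, this shows that $\gamma$ is $C^\infty$-close to $\gamma^+$ to any desired precision as parametrized loops.

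The final step is to use nondegeneracy of $\gamma^+$ to upgrade closeness to equality. Applying the implicit function theorem to the equation $\phi^\lambda_T(y)=y$ near $(x^+(0),T^+)$, where $\phi^\lambda_t$ denotes the $\lambda$-Reeb flow, and noting that nondegeneracy of $\gamma^+$ means $1$ is not an eigenvalue of the linearized return map (which in particular forces the same property for the underlying primitive orbit), I get that the linearization has one-dimensional kernel spanned by the Reeb direction, and the local zero set is the one-parameter family of time-translations of $x^+$. Hence, in a sufficiently small $C^\infty$-neighborhood of $x^+(T^+\cdot)$, every $\lambda$-Reeb orbit is a reparametrization of $\gamma^+$, so combining with the previous step forces $\gamma=\gamma^+$. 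Because the initial sequence $s_k$ was arbitrary, $\gamma^+$ is an asymptotic limit of $\util$ at $s=+\infty$, and Theorem~\ref{thm_HWZ_asymptotics_nondeg_case} then delivers both the asymptotic convergence and the asymptotic formula.

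The main (but mild) subtlety will be the isolation step when $\gamma^+$ has covering multiplicity $d>1$: one must confirm that parametrized loops close to the $d$-fold iterate $x^+(T^+\cdot)$ can only arise from Reeb orbits of primitive period near $T^+/d$, and not from primitive orbits of some competing period. This is handled by running the implicit function theorem at the primitive level and observing that a loop $C^\infty$-close to the $d$-fold iterate inherits the same winding count around the image of $\gamma_0^+$; everything else in the argument is essentially an unpacking of Lemma~\ref{lemma_localizing_the_ends} together with hypothesis~(A).
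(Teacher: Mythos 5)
Your proposal is correct and follows essentially the same route as the paper: the paper's proof also combines the localization of the ends (Lemma~\ref{lemma_localizing_the_ends}/Corollary~\ref{cor_localizing_the_ends}, together with hypothesis (A)) to conclude that $\gamma^\pm$ is the only possible asymptotic limit of $\util$, and then applies Theorem~\ref{thm_HWZ_asymptotics_nondeg_case}. The only difference is your final implicit-function-theorem step, which is harmless but redundant: since the fixed limit loop produced by Theorem~\ref{thm_Hofer93} lies in the closure of every $(\R/\Z)$-invariant $C^\infty$-neighborhood of $x^+(T^+\cdot)$, it already must coincide with a rotation of $x^+(T^+\cdot)$, so nondegeneracy is not needed to identify the asymptotic limit, only to invoke Theorem~\ref{thm_HWZ_asymptotics_nondeg_case}.
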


\begin{proof}
Assume that $\gamma^+$ is nondegenerate.
By Corollary~\ref{cor_localizing_the_ends} the only asymptotic limit of $\util$ at $s=+\infty$ is $\gamma^+$.
The conclusion at $s=+\infty$ follows from a direct application of Theorem~\ref{thm_HWZ_asymptotics_nondeg_case}.
The same argument applies to $s=-\infty$ if $\gamma^-$ is nondegenerate.
\end{proof}

\subsection{Special coordinates}

To proceed, it will be helpful to construct special coordinates to analyze the ends of the cylinders $\util_n$; this is the goal of this subsection.
We only analyze positive punctures, the construction for the negative punctures is analogous.
Let $\ubar{\gamma}^+ = (x^+,\ubar{T}^+)$ and $\ubar{\gamma}^+_n = (x^+_n,\ubar{T}^+_n)$ be the primitive Reeb orbits underlying $\gamma^+$ and $\gamma^+_n$, respectively.
Let $k_+ \in \N$ denote the covering multiplicity of $\gamma^+$.
Assume that ${\gamma}^+$ is degenerate.
A direct application of~\cite[Proposition~1]{Bangert} implies that for $n$ large enough there is a divisor $j_+$ of $k_+ = l_+j_+$ such that 
\begin{equation}
\gamma^+_n = (\ubar{\gamma}^+_n)^{l_+} \ \ \forall n \, , \qquad\ubar{\gamma}^+_n \to (\ubar{\gamma}^+)^{j_+} \, .
\end{equation}
Moreover, by~\cite[Proposition~1]{Bangert}, $j_+$ is equal to one or to the multiplicity of the first degenerate iterate of~$\ubar{\gamma}^+$.
In particular, we have $\ubar{T}^+_n \to j_+\ubar{T}^+$.

\begin{lemma}
\label{lemma_special_coordinates_covering}
There is an open tubular neighborhood $N_+$ of $x^+(\R)$ in $Y$, a smooth covering map $P_+ : \R/j_+\Z \times B \to N_+$ where $B \subset \C$ is an open ball centered at the origin, and local diffeomorphisms $P_n : \R/j_+\Z \times B \to N_+$ with the following properties.
\begin{itemize}
\item[(a)] The action of the group of deck transformations of $P_+$ is the action by $\Z/j_+\Z$ on the first component, and $P_n \to P_+$ in $C^\infty_\loc$ as $n\to+\infty$.
\item[(b)] There are lifts $\tilde x^+_n : \R \to \R/j_+\Z \times B$ of $x^+_n : \R \to Y$ by $P_n$, and $\tilde x^+:\R \to \R/j_+\Z \times B$ of $x^+:\R \to Y$ by $P_+$ such that
$$
\tilde x^+_n(t) = (j_+t/\ubar{T}^+_n,0) \qquad \text{and} \qquad \tilde x^+(t) = (t/\ubar{T}^+,0) \, .
$$
\item[(c)] There are smooth functions $\tilde f_n,\tilde f:\R/j_+\Z \times B \to \R$ such that $\tilde\lambda_n = P_n^*\lambda_n$ and $\tilde\lambda = P_+^*\lambda$ 
can be written as
$$
\tilde \lambda_n = \tilde f_n(\theta,z)(d\theta + x_1dx_2) \, , \qquad \tilde \lambda = \tilde f(\theta,z)(d\theta + x_1dx_2) \, .
$$
Moreover, $\tilde f_n,\tilde f$ satisfy
$$
\begin{aligned}
& \tilde f_n(\theta,0) = \ubar{T}^+_n/j_+, \ \tilde f(\theta,0) = \ubar{T}^+ \ \forall \theta \\
& d\tilde f_n(\theta,0) = d\tilde f(\theta,0) = 0 \ \forall \theta \\
& \text{$\tilde f_n \to \tilde f$ in $C^\infty_\loc(\R/j_+\Z \times B)$.}
\end{aligned}
$$
\end{itemize}
\end{lemma}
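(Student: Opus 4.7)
The strategy is to build $P_+$ from a standard Martinet tube around $\ubar{\gamma}^+$ via a $j_+$-fold cover, and then to obtain each $P_n$ by composing $P_+$ with a diffeomorphism (close to the identity, coming from a Moser-type argument) that restores Martinet normal form for $\lambda_n$ along the lift of $\ubar{\gamma}^+_n$.

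To construct $P_+$, I apply Remark~\ref{rmk_Martinet_tubes} to $\ubar{\gamma}^+$ to get a diffeomorphism $\Psi : N_+ \to \R/\Z \times B_0$ with $\Psi(x^+(\ubar T^+ t)) = (t,0)$ and $\Psi_*\lambda = \ubar f(d\theta + x_1 dx_2)$, $\ubar f(\theta,0) \equiv \ubar T^+$, $d\ubar f(\theta,0) \equiv 0$. Letting $\pi(\theta,z) := (\theta \bmod 1, z)$ be the $j_+$-fold cover $\R/j_+\Z \times B_0 \to \R/\Z \times B_0$, I set $P_+ := \Psi^{-1} \circ \pi$. The deck transformation group is then the $\Z/j_+\Z$-action on the first factor; $\tilde x^+(t) := (t/\ubar T^+,0)$ satisfies $P_+(\tilde x^+(t)) = x^+(t)$ by the periodicity of $x^+$; and $P_+^*\lambda = \tilde f(d\theta + x_1 dx_2)$ with $\tilde f(\theta,z) := \ubar f(\theta \bmod 1, z)$, which is smooth on $\R/j_+\Z \times B_0$ as a pullback by a smooth covering and inherits the required values and vanishing derivatives on the core circle.

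For $P_n$, fix $n$ large enough that $x^+_n(\R) \subset N_+$ (Corollary~\ref{cor_localizing_the_ends}). Since $\ubar\gamma^+_n \to (\ubar\gamma^+)^{j_+}$, the loop $x^+_n$ has $\pi_1$-winding $j_+$ around the core circle and therefore lifts via $P_+$ to an embedded primitive Reeb trajectory $\tilde y_n$ of $P_+^*\lambda_n$ in $\R/j_+\Z \times B_0$ of primitive period $\ubar T^+_n$, traversing $\R/j_+\Z$ exactly once, with $\tilde y_n \to \tilde x^+$ in $C^\infty$. A version of Martinet's tube lemma with target $\R/c\Z \times B$, obtained from the standard version by the rescaling $(\theta,x_1,x_2) \mapsto (c\theta,x_1,c x_2)$ (which transforms the normal-form function to $f^c(\theta,x_1,x_2) = c^{-1} \ubar f(\theta/c,x_1,x_2/c)$), applied with $c = j_+$ to $\tilde y_n$ in $(\R/j_+\Z \times B_0, P_+^*\lambda_n)$, gives, for $n$ large and a uniform ball $B \subset B_0$, diffeomorphisms $\tilde\Psi_n : V_n \to \R/j_+\Z \times B$ from open neighborhoods $V_n$ of $\tilde y_n(\R)$ satisfying
\begin{equation*}
\tilde\Psi_n(\tilde y_n(t)) = (j_+ t/\ubar T^+_n, 0), \quad (\tilde\Psi_n)_*(P_+^*\lambda_n) = \tilde f_n(d\theta + x_1 dx_2),
\end{equation*}
with $\tilde f_n(\theta,0) = \ubar T^+_n/j_+$, $d\tilde f_n(\theta,0) = 0$, and $\tilde\Psi_n \to \id$ in $C^\infty_\loc$. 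Setting $P_n := P_+ \circ \tilde\Psi_n^{-1} : \R/j_+\Z \times B \to N_+$ then gives local diffeomorphisms with $P_n \to P_+$, $P_n^*\lambda_n = \tilde f_n(d\theta + x_1 dx_2)$, and $P_n(\tilde x^+_n(t)) = P_+(\tilde y_n(t)) = x^+_n(t)$ for $\tilde x^+_n(t) := (j_+ t/\ubar T^+_n, 0)$, verifying all of (a)--(c).

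The main technical obstacle is ensuring that the normalizing diffeomorphisms $\tilde\Psi_n$ can be chosen with a uniform target $\R/j_+\Z \times B$ and so that $\tilde\Psi_n \to \id$ in $C^\infty_\loc$. I would handle this by tracing through the Moser-isotopy proof of Martinet's lemma, in which the normalizing diffeomorphism is integrated from a time-dependent vector field built canonically from the contact form and the embedded Reeb loop; continuous dependence on the $C^\infty$-data of $(P_+^*\lambda_n, \tilde y_n) \to (P_+^*\lambda, \tilde x^+)$ then yields both the uniform target (via the standard flow-box and implicit-function estimates) and the convergence $\tilde\Psi_n \to \id$.
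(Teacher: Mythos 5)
Your construction is correct in outline, but it takes a genuinely different and technically heavier route than the paper at the key step. You build $P_+$ exactly as the paper does (Martinet tube for $\lambda$ around $\ubar{\gamma}^+$ composed with the $j_+$-fold cover), but for $P_n$ you re-run a full parametric Martinet/Moser normal-form construction for each $P_+^*\lambda_n$ along the lifted orbit $\tilde y_n$, and all of the content is then concentrated in the deferred claim that this construction can be made with a uniform target ball and with $\tilde\Psi_n \to \id$ in $C^\infty_\loc$; that parametric statement is plausible (a Moser isotopy built canonically from the data vanishes when the input is already normalized), but it is exactly the delicate part and you only sketch it. The paper avoids needing any such parametric normal form by exploiting two things your argument never uses: (i) $\lambda_n = g_n\lambda$ has the \emph{same} kernel as $\lambda$, and by hypothesis (C)/(e) $g_n\equiv 1$, $dg_n\equiv 0$ along $x^+(\R)$, so the single Martinet tube $\Psi$ for $\lambda$ already gives $\Psi_*\lambda_n = h_n(\theta,z)(d\theta + x_1dx_2)$ with $h_n(\theta,0)=\ubar{T}^+$, $dh_n(\theta,0)=0$ and $h_n\to h$ — i.e.\ proportionality to $d\theta+x_1dx_2$ on the whole tube comes for free for every $n$; and (ii) it then suffices to straighten the lifted nearby simple orbit $\tilde c_n$ onto the core circle by contactomorphisms $\phi_n$ of the standard structure $\xi_0$ with $\phi_n\to\id$ and support shrinking to the core, after which the conditions $\tilde f_n(\theta,0)=\ubar{T}^+_n/j_+$ and $d\tilde f_n(\theta,0)=0$ are automatic from the core being a constant-speed Reeb orbit of a form proportional to $d\theta+x_1dx_2$ (this is the short computation at the end of the paper's proof, and it would also let you weaken what you demand of your Martinet variant). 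So the trade-off is: your route is self-contained in the sense that it does not use $\ker\lambda_n=\ker\lambda$, but it must normalize the contact \emph{form} off the core via a parametric Moser argument whose uniformity and convergence-to-identity you would still need to write out carefully; the paper's route only needs a parametric straightening of transverse circles in the standard model, which is a much softer statement. Minor bookkeeping aside (shrink $B_0$ to a common ball $B$ and take $N_+=\Psi^{-1}(\R/\Z\times B)$ so that $P_+$ is a covering onto $N_+$ and the $V_n$ lie in $\R/j_+\Z\times B$; cite hypothesis (B) rather than the localization corollary for $x^+_n(\R)\subset N_+$), your lifting argument for $\tilde y_n$, the rescaling to the $\R/j_+\Z$ target, and the identities $P_n^*\lambda_n=\tilde f_n(d\theta+x_1dx_2)$, $P_n\circ\tilde x^+_n=x^+_n$ are all fine.
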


\begin{proof}
Consider a Martinet tube $\Psi : V \to \R/\Z \times B$ for $\lambda$ and $\gamma^+$.
This means that $\Psi$ is a diffeomorphism defined on an open neighborhood $V$ of $x^+(\R)$ such that 
$$
\begin{aligned}
& \Psi(x^+(\ubar{T}^+t))=(t,0) \\
& \Psi_*\lambda = h(\theta,z)(d\theta + x_1dx_2) \\
& h(\theta,0) = \ubar{T}^+, \ dh(\theta,0) = 0 \ \forall \theta \, .
\end{aligned}
$$ 
Here $B \subset \C$ is an open ball centered at the origin.
The contact structure $\xi = \ker\lambda$ gets represented as $\xi_0 = \Psi_*\xi = \ker (d\theta + x_1dx_2)$.
It follows from~(C) that 
$$
\Psi_*\lambda_n = h_n(\theta,z)(d\theta + x_1dx_2)
$$ 
where $h_n(\theta,0) = \ubar{T}^+$, $dh_n(\theta,0) = 0$ $\forall \theta$, and $h_n \to h$ in $C^\infty_\loc$.
Consider the $(\Z/j_+\Z)$-invariant covering map
$$
P : \R/j_+\Z \times B \to \R/\Z \times B \, , \quad (\theta+j_+\Z,z) \mapsto (\theta+\Z,z) \, .
$$
Use this map to lift $\Psi_*\lambda,\Psi_*\lambda_n$ to contact forms $\tilde\lambda = P^*(\Psi_*\lambda)$, $\tilde\lambda_n = P^*(\Psi_*\lambda_n)$ on $\R/j_+\Z \times B$, respectively, written as
$$
\tilde\lambda_n = \tilde h_n(\theta,z)(d\theta + x_1dx_2) \qquad \tilde\lambda = \tilde f(\theta,z)(d\theta + x_1dx_2)
$$
where $d\tilde f,d\tilde h_n$ vanish on $\R/j_+\Z \times \{0\}$, $\tilde f,\tilde h_n$ are equal to $\ubar{T}^+$ on $\R/j_+\Z \times \{0\}$, and $\tilde h_n \to \tilde f$ in $C^\infty_\loc$.
The loops $$ c_n : t \in \R/j_+\Z \mapsto \Psi \circ x^+_n(\ubar{T}^+_n t/j_+) 
$$ 
are constant-speed reparametrizations of simply covered Reeb orbits of $\Psi_*\lambda_n$.
Perhaps replacing~$t$ by~$t+t_n$ for suitable $t_n$, there is no loss of generality to assume that $c_n$ converges to $t \in \R/j_+\Z \mapsto (t,0) \in \R/\Z \times B$ in~$C^\infty$.
Here we used assumption~(B).
If we denote the Reeb vector field of $\Psi_*\lambda_n$ by $R_{\Psi_*\lambda_n}$ then
$$
\frac{d}{dt} \ c_n(t) = (\ubar{T}^+_n/j_+) R_{\Psi_*\lambda_n}(c_n(t))
$$
A sequence of lifts of $c_n$ by $P$ $$ \tilde c_n : \R/j_+\Z \to \R/j_+\Z \times B $$ can be chosen in such a way that it $C^\infty$-converges to the map $t \mapsto (t,0)$.
If we denote the Reeb vector field of $\tilde\lambda_n = P^*(\Psi_*\lambda_n)$ by $R_{\tilde\lambda_n}$ then
$$
\frac{d}{dt} \ \tilde c_n(t) = (\ubar{T}^+_n/j_+) R_{\tilde\lambda_n}(\tilde c_n(t))
$$
One can now find co-orientation preserving contactomorphisms 
$$
\phi_n : (\R/j_+\Z \times B,\xi_0) \to (\R/j_+\Z \times B,\xi_0)
$$ 
such that $$ \phi_n \to id \ \ \text{in $C^\infty$,} \quad \supp(\phi_n) \to \R/j_+\Z \times \{0\}, \quad \phi_n \circ \tilde c_n(t) = (t,0) \ \forall n,t \, . $$
The contact forms 
\begin{equation}
(\phi_n)_* \tilde\lambda_n = (\phi_n)_*(\tilde h_n(\theta,z)(d\theta + x_1dx_2)) = \tilde f_n(\theta,z)(d\theta + x_1dx_2)
\end{equation}
have $t \in \R/j_+\Z \mapsto \phi_n \circ \tilde c_n(t) = (t,0)$ as a Reeb orbit of primitive period $\ubar{T}^+_n$.
Moreover, if we denote the Reeb vector field of $(\phi_n)_*\tilde\lambda_n$ by $R_{(\phi_n)_*\tilde\lambda_n}$ then 
$$
\begin{aligned}
(1,0) & = \frac{d}{dt} \ \phi_n\circ\tilde c_n(t) = (\ubar{T}^+_n/j_+) R_{(\phi_n)_*\tilde\lambda_n}(\phi_n \circ\tilde c_n(t)) \\
& = (\ubar{T}^+_n/j_+) R_{(\phi_n)_*\tilde\lambda_n}(t,0)
\end{aligned}
$$
Thus $R_{(\phi_n)_*\tilde\lambda_n}(t,0)$ is, for each $n$, a constant multiple of $(1,0)$ and 
\begin{equation}
\label{hat_h_derivatives_at_core_circle}
d\tilde f_n(\theta,0) = 0 \, , \ \tilde f_n(\theta,0) = \ubar{T}^+_n/j_+ \ \forall \theta \, , \qquad \tilde f_n \to \tilde f \ \text{in} \ C^\infty_\loc
\end{equation}
Set $P_+ = \Psi^{-1} \circ P$, $P_n = \Psi^{-1} \circ P \circ (\phi_n)^{-1}$ defined on $\R/j_+\Z \times B$, and~$N_+ = \Psi^{-1}(\R/\Z \times B)$.
\end{proof}

\subsection{Uniform exponential decay}

We now begin the process of putting all of this together to prove Proposition~\ref{prop_main_asymptotic_analysis};  the goal of this section is to prove item (a).

Consider the neighborhood $N_+$ of $x^+(\R)$ and the smooth maps $P_+,P_n$ given by Lemma~\ref{lemma_special_coordinates_covering}.
The lift of the contact structure by $P_n$ or by $P_+$ is equal to $\ker (d\theta + x_1dx_2)$.
The lifts of $J_n$ and of $J$ by $P_n$ and by $P_+$, respectively, can be represented as smooth functions
$$ 
\hat J_n : \R/j_+\Z \times B \to \R^{2\times 2} \qquad \hat J : \R/j_+\Z \times B \to \R^{2\times 2} 
$$ 
with respect to the frame $\{\partial_{x_1},-x_1\partial_\theta+\partial_{x_2}\}$.
Note that $\hat J$ is invariant by the action of $\Z/j_+\Z$ on the first coordinate.
We get $\hat J_n \to \hat J$ in $C^\infty_\loc$ from $J_n \to J$ in $C^\infty_\loc$.
Similarly, write 
$$
\tilde R_n : \R/j_+\Z \times B \to \R^3 \qquad \tilde R : \R/j_+\Z \times B \to \R^3
$$
for the lifts of the Reeb vector fields of $\lambda_n$ and $\lambda$ by $P_n$ and by $P_+$, respectively.
The function $\tilde R$ is $(\Z/j_+\Z)$-invariant and $\tilde R_n \to \tilde R$ in $C^\infty_\loc$.
We write $D_2\tilde R_n,D_2\tilde R$ for the partial derivative in the second component.

Let $\ubar{s} > 0$ be given by an application of Corollary~\ref{cor_localizing_the_ends} to the neighborhood~$N_+$ of $x^+(\R)$.
It follows from Lemma~\ref{lemma_special_coordinates_covering} that a loop in $N_+$ can be lifted by~$P_n$ or by $P$ to $\R/j_+\Z \times B$ as a loop if, and only if, it is homotopic to $t \in \R/\Z \mapsto x^+(mj_+\ubar{T}^+t)$, for some $m \in \Z$.
For every $s \geq \ubar{s}$ and $n$ the loop $t \mapsto \R/\Z \mapsto u_n(s,t)$ is freely homotopic to $t \in \R/\Z \mapsto x^+(k_+\ubar{T}^+t)$.
Hence, since $k_+ = j_+l_+$, $(s,t) \in [\ubar{s},+\infty) \times \R/\Z \mapsto u_n(s,t) \in N_+$ can be lifted by $P_n$ as smooth maps 
$$
(s,t) \in [\ubar{s},+\infty) \times \R/\Z \mapsto (\theta_n(s,t),z_n(s,t)) \in \R/j_+\Z \times B .
$$
Similarly, $(s,t) \in [\ubar{s},+\infty) \times \R/\Z \mapsto u(s,t) \in N_+$ can be lifted by $P_+$ as a smooth map
$$
(s,t) \in [\ubar{s},+\infty) \times \R/\Z \mapsto (\theta(s,t),z(s,t)) \in \R/j_+\Z \times B.
$$
Moreover, since $\util_n \to \util$ in $C^\infty_\loc$, these lifts can be arranged to satisfy
$$
(\theta_n(s,t),z_n(s,t)) \to (\theta(s,t),z(s,t)) \qquad C^\infty_\loc([\ubar{s},+\infty) \times \R/\Z) \, .
$$
Write in components $$ z_n(s,t) = x_{1,n}(s,t) + ix_{2,n}(s,t) \qquad z(s,t) = x_{1}(s,t) + ix_{2}(s,t) \, . $$
Abbreviate 
$$
j_n(s,t) = \hat J_n(\theta_n(s,t),z_n(s,t)) \quad j(s,t) = \hat J(\theta(s,t),z(s,t))
$$
for simplicity.
Denote 
$$
\begin{aligned}
D_n(s,t) &= \int_0^1 D_2\tilde R_n(\theta_n(s,t),vz_n(s,t))dv \\
D(s,t) &= \int_0^1 D_2\tilde R(\theta(s,t),vz(s,t))dv
\end{aligned}
$$
and
\begin{equation}
\label{formula_S_n}
\begin{aligned}
S(s,t) &= [\partial_ta \ I - \partial_sa \ j]D(s,t) \\
S_n(s,t) &= [\partial_ta_n \ I - \partial_sa_n \ j_n]D_n(s,t)
\end{aligned}
\end{equation}
where $a_n(s,t),a(s,t)$ are the $\R$-components of $\util_n(s,t),\util(s,t)$.
The Cauchy-Riemann equations for the lifts of $\util_n,\util$ by $P_n,P_+$ get written as 
\begin{equation}
\label{CR_local_coords_u_n}
\begin{aligned}
& \partial_sa_n - \tilde f_n(\theta_n,z_n)(\partial_t\theta_n+x_{1,n}\partial_tx_{2,n}) = 0 \, , \\
& \partial_ta_n + \tilde f_n(\theta_n,z_n)(\partial_s\theta_n+x_{1,n}\partial_sx_{2,n}) = 0 \, , \\
& \partial_sz_n + j_n\partial_tz_n +S_nz_n = 0 \, ,
\end{aligned}
\end{equation}
and, similarly,
\begin{equation}
\label{CR_local_coords_u}
\begin{aligned}
& \partial_sa - \tilde f(\theta,z)(\partial_t\theta+x_{1}\partial_tx_{2}) = 0 \, , \\
& \partial_ta + \tilde f(\theta,z)(\partial_s\theta+x_{1}\partial_sx_{2}) = 0 \, , \\
& \partial_sz + j\partial_tz +Sz = 0 \, .
\end{aligned}
\end{equation}
Above, and also in what follows, we see derivatives of $\theta_n,\theta$ as $\R$-valued functions.

\begin{lemma}
\label{lemma_uniform_bounds_ends}
We have
\begin{align}
& \lim_{\tau\to+\infty} \sup_{n} \ \|D^\beta z_n(\tau,\cdot)\|_{L^\infty(\R/\Z)} = 0 \quad \forall \beta \label{unif_estimates_z_n_comp} \\
& \lim_{\tau\to+\infty} \sup_{n} \ \left( \begin{aligned} & \|D^\beta [a_n-T^+_ns](\tau,\cdot)\|_{L^\infty(\R/\Z)} \\ & \quad + \|D^\beta [\theta_n-k_+t](\tau,\cdot)\|_{L^\infty(\R/\Z)} \end{aligned} \right) = 0 \quad \text{$\forall \beta \neq (0,0)$} \label{unif_estimates_derivatives_symp_comp}
\end{align}
where $\beta = (\beta_1,\beta_2) \in \N_0 \times \N_0$ and $D^\beta = \partial^{\beta_1}_s\partial^{\beta_2}_t$.
\end{lemma}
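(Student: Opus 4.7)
The plan is to argue by contradiction for each of the two estimates, exploiting the fact that the proof of Lemma~\ref{lemma_localizing_the_ends} already shows that arbitrary translates of $\util_n$ centered at large positive $s$-values subconverge in $C^\infty_\loc$ to a trivial cylinder over $\gamma^+$. Combined with the nondegeneracy of the positive punctures from hypothesis~$(*)$, which via Theorem~\ref{thm_nondeg_implies_asymp_formula} yields an exponentially decaying asymptotic formula for each fixed~$n$, this mechanism will supply the required uniformity in~$n$.

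For~\eqref{unif_estimates_z_n_comp}, suppose towards a contradiction that there exist $\epsilon>0$, a multi-index $\beta$, and sequences $\tau_k\to+\infty$ and indices $n_k$ such that $\|D^\beta z_{n_k}(\tau_k,\cdot)\|_{L^\infty(\R/\Z)}\geq\epsilon$. If $\{n_k\}$ is bounded, extract a constant subsequence $n_k=n_0$ and apply Theorem~\ref{thm_nondeg_implies_asymp_formula} to $\util_{n_0}$; the resulting asymptotic formula at $s=+\infty$ forces $\|D^\beta z_{n_0}(\tau,\cdot)\|_{L^\infty}\to 0$ as $\tau\to+\infty$, a contradiction. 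Otherwise pass to a subsequence with $n_k\to\infty$ and set
$$
\vtil_k(s,t) = (a_{n_k}(s+\tau_k,t) - a_{n_k}(\tau_k,0),\, u_{n_k}(s+\tau_k,t)).
$$
Arguing exactly as in the proof of Lemma~\ref{lemma_localizing_the_ends}, the energy bound~(E) rules out bubbling, so $\vtil_k$ admits $C^\infty_\loc$-bounds and a subsequence converges in $C^\infty_\loc$ to a trivial cylinder over $\gamma^+$. Lifting via $P_{n_k}$ and using Corollary~\ref{cor_localizing_the_ends} together with the convergence $P_n\to P_+$ from Lemma~\ref{lemma_special_coordinates_covering}(a), we see that the $z$-components $z_{n_k}(\cdot+\tau_k,\cdot)$ converge to zero in $C^\infty_\loc$; evaluating derivatives at $s=0$ contradicts the assumed lower bound.

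The second estimate~\eqref{unif_estimates_derivatives_symp_comp} is handled by the same template. By Lemma~\ref{lemma_special_coordinates_covering}(b), any lift of the loop $t\mapsto x^+(T^+(t+t_0))$ has the form $t\mapsto(k_+(t+t_0),0)$, so the lift of any trivial cylinder over $\gamma^+$ is $(s,t)\mapsto(T^+s+a_0,\, k_+(t+t_0),\, 0)$ for some constants $a_0,t_0$. For every $\beta\neq(0,0)$ the $\beta$-th derivatives of its $a$-component and $\theta$-component are constant, with $\partial_s$-derivative of the $a$-component equal to $T^+$ and $\partial_t$-derivative of the $\theta$-component equal to $k_+$. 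Combining this with $T_{n_k}^+\to T^+$ from~(B), the same translation/contradiction argument used above shows that $D^\beta[a_{n_k}-T_{n_k}^+s]$ and $D^\beta[\theta_{n_k}-k_+t]$ converge to zero in $C^\infty_\loc$ along the translated sequence, yielding the required contradiction; the bounded-$n_k$ case is again dispatched using Theorem~\ref{thm_nondeg_implies_asymp_formula}.

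The principal obstacle is the bookkeeping around the covering map $P_+$: we must ensure compatibility between the lifts of $\vtil_k$ by $P_{n_k}$ and the lift of the limiting trivial cylinder by $P_+$. This is handled by noting that for large $k$ the image of $\vtil_k$ on any fixed compact set lies in $N_+$ (Corollary~\ref{cor_localizing_the_ends}), so the lifts are uniquely determined up to a fixed deck transformation of $P_{n_k}$, and the resulting ambiguity is absorbed into the shift $t_0$ appearing in the limiting trivial cylinder.
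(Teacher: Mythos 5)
Your proof is correct, but it takes a genuinely different route from the paper's. The paper treats the two estimates asymmetrically: it gets \eqref{unif_estimates_z_n_comp} for $\beta=(0,\beta_2)$ directly from Lemma~\ref{lemma_localizing_the_ends}, proves uniform gradient bounds on $[\ubar{s},+\infty)\times\R/\Z$ by ruling out bubbling with the area bound (E) and then bootstraps to uniform $C^\infty$ bounds, deduces the $|\beta|\le 1$ case of \eqref{unif_estimates_z_n_comp} from the Cauchy--Riemann system \eqref{CR_local_coords_u_n}, proves \eqref{unif_estimates_derivatives_symp_comp} by a rescaling argument whose limit is an entire holomorphic function pinned down by Liouville's theorem to be $s+it$, and finally obtains all derivatives of $z_n$ by differentiating the equation for $z_n$ and feeding in the resulting bounds on $S_n$. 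You instead run one contradiction scheme for both estimates: after disposing of bounded $n_k$ via ($*$) and Theorem~\ref{thm_nondeg_implies_asymp_formula}, you translate by $\tau_k$, invoke the claim established inside the proof of Lemma~\ref{lemma_localizing_the_ends} that such translates subconverge in $C^\infty_\loc$ to a trivial cylinder over $\gamma^+$, and read off the vanishing of the relevant derivatives in the coordinates of Lemma~\ref{lemma_special_coordinates_covering}. This recycles the analytic core (no bubbling by (E), elliptic bootstrapping, identification of the limit by the intersection-theoretic argument) already done for Lemma~\ref{lemma_localizing_the_ends}, and it avoids both the Liouville step and the CR bootstrap; the paper's route, in exchange, stays entirely inside the fixed coordinate system and never has to re-identify the limit of translates.

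Two steps you treat lightly should be spelled out, though neither is a gap. First, the transfer of $C^\infty_\loc$-convergence to the lifted coordinates: this is clean because $P_n=P_+\circ\phi_n^{-1}$ with $\phi_n\to\mathrm{id}$ in $C^\infty$, so over a small ball $V$ evenly covered by $P_+$ the translated lift lies in a single sheet and equals $\phi_{n_k}\circ\bigl(P_+|_{\tilde V_i}\bigr)^{-1}\circ u_{n_k}(\cdot+\tau_k,\cdot)$ there, which converges in $C^\infty$ to a lift of the orbit parametrization; the deck ambiguity only shifts $\theta$ by a constant, which is irrelevant for $z$ and is annihilated by $D^\beta$ with $\beta\neq(0,0)$ (and for $\beta=(1,0)$ in the $a$-part you correctly combine $\partial_s a_{n_k}(\tau_k,\cdot)\to T^+$ with $T^+_{n_k}\to T^+$ from (B)). Second, in the bounded-$n_k$ case the asymptotic formula of Definition~\ref{def_asymp_formula} is phrased in a Martinet tube around $\gamma^+_{n_0}$, so one should invoke its independence of the choice of coordinates to read off the decay of $D^\beta z_{n_0}$, $D^\beta[a_{n_0}-T^+_{n_0}s]$ and $D^\beta[\theta_{n_0}-k_+t]$ in the $P_{n_0}$-coordinates; this is routine bookkeeping.
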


\begin{proof}
By Lemma~\ref{lemma_localizing_the_ends},~\eqref{unif_estimates_z_n_comp} holds for all $\beta = (0,\beta_2)$.
We claim that 
\begin{equation}
\label{uniform_gradient_bounds}
\sup_{s \in [\ubar{s},+\infty), \, t\in\R/\Z, \, n} |\nabla \util_n(s,t)| < \infty \, . 
\end{equation}
If not then, up to a subsequence, we find bubbling-off points around which we could rescale and find a nonconstant holomorphic plane with finite Hofer energy. 
Such a plane would need to have $d\lambda$-area at least equal to $T_{\min}(\lambda)$, but (E) forces the $d\lambda$-area of this plane to be at most $\frac{1}{2}T_{\min}(\lambda)$.
This contradiction proves~\eqref{uniform_gradient_bounds}.
Standard boot-strapping arguments show that~\eqref{uniform_gradient_bounds} implies uniform bounds 
\begin{equation}
\label{uniform_higher_derivatives_bounds}
\sup_{s \in [\ubar{s},+\infty), \, t\in\R/\Z, \, n} |D^\alpha\nabla\util_n(s,t)| < \infty \quad \forall \alpha
\end{equation}
for the higher-order derivatives, see~\cite[Lemma~7.1]{convex}.
It follows from~\eqref{uniform_higher_derivatives_bounds},~\eqref{formula_S_n}, and~\eqref{CR_local_coords_u_n}, that~\eqref{unif_estimates_z_n_comp} holds when $|\beta| \leq 1$.

Choose lifts $\tilde\theta_n : \R \times \R \to \R$ of $\theta_n$.
If~\eqref{unif_estimates_derivatives_symp_comp} does not hold then one finds $\beta \neq (0,0)$, $\epsilon > 0$ and sequences $s_m \in \R$, $t_m \in \R/\Z$, $n_m \in \N$ satisfying $s_m \to +\infty$, $n_m \to +\infty$ and 
\begin{equation}
\label{contradiction_control_derivatives_symp_direction}
|D^\beta[a_{n_m}-T^+_{n_m}s](s_m,t_m)| + |D^\beta[\tilde\theta_{n_m}-k_+t](s_m,t_m)| \geq \epsilon \, .
\end{equation}
Define $h_m : \C \to \C$ by
$$
\begin{aligned}
h_m(s+it) & = \frac{1}{T_{n_m}^+} (a_{n_m}(s+s_m,t+t_m) - a_{n_m}(s_m,t_m)) \\
& \quad + \frac{i}{k_+} (\tilde\theta_{n_m}(s+s_m,t+t_m) - \tilde\theta_{n_m}(s_m,t_m)) \, .
\end{aligned}
$$
By~\eqref{uniform_higher_derivatives_bounds} $\nabla h_m$ is $C^\infty_\loc$-bounded. 
Together with $h_m(0)=0$ we get $C^\infty_\loc$-bounds for $h_m$.
Hence, up to a taking a subsequence, we can assume without loss of generality that $h_m$ is $C^\infty_\loc$ convergent to some $h : \C \to \C$.
Equations~\eqref{CR_local_coords_u_n}, together with~\eqref{unif_estimates_z_n_comp} for $|\beta| \leq 1$, imply that $h$ is holomorphic.
Moreover,~\eqref{uniform_higher_derivatives_bounds} implies that $h$ has bounded derivatives.
By Lemma~\ref{lemma_localizing_the_ends}, $\partial_th \equiv 1$ and $h$ is not constant.
By Liouville's theorem, $h(s+it)$ must be a degree-one complex polynomial in $s+it$ satisfying $h(0)=0$ and $\partial_th \equiv 1$.
Hence, $h(s+it) = s+it$.
By~\eqref{contradiction_control_derivatives_symp_direction} we have $|D^\beta[h_m-s-it](0)|$ bounded away from zero, but we have just proved that $h_m-s-it$ $C^\infty_\loc$-converges to $0$.
This contradiction concludes the proof that~\eqref{unif_estimates_derivatives_symp_comp} holds.

The proof that~\eqref{unif_estimates_z_n_comp} holds for all $\beta$ can now be completed by differentiating the equation for $z_{n}$ in~\eqref{CR_local_coords_u_n} and plugging  the bounds for $S_n$ that follow from~\eqref{unif_estimates_derivatives_symp_comp}.
\end{proof}

Let $$ \hat M_n : \R/j_+\Z \times B \to Sp(2) \qquad \hat M : \R/j_+\Z \times B \to Sp(2) $$ be smooth functions satisfying $\hat M_n\hat J_n = i\hat M_n$, $\hat M\hat J = i\hat M$ pointwise.
Since $\hat J_n \to \hat J$, we can arrange these functions to satisfy
\begin{equation*}
\hat M_n \to \hat M \quad \text{in} \quad C^\infty_\loc \, .
\end{equation*}
Consider the smooth functions $\R/\Z \to \R^{2\times 2}$
$$
\begin{aligned}
j^\infty(t) &= \hat J(k_+t,0) \\
D^\infty(t) &= D_2 \tilde R(k_+t,0) \\
S^\infty(t) &= -T_+ j^\infty(t) D^\infty(t) \\
M^\infty(t) &= M(k_+t,0) \, .
\end{aligned}
$$
As before, we abbreviate 
$$
\begin{aligned}
M_n(s,t) = \hat M_n(\theta_n(s,t),z_n(s,t)) \, , \qquad
M(s,t) = \hat M(\theta(s,t),z(s,t)) \, .
\end{aligned}
$$
The next lemma is~\cite[Lemma~4.12]{elliptic} or~\cite[Lemma~3.33]{HSW}.

\begin{lemma}
\label{lemma_data_for_CR_asymptotic}
For every pair of sequences $s_m \to \infty$, $n_m \to \infty$ there exist $c\in[0,1)$ and a sequence of indices~$m_l\to\infty$ such that 
$$ \begin{aligned}
& \lim_{l\to+\infty} \|D^\beta[j_{n_{m_l}}(s,t) - j^\infty(t+c)](s_{m_l},\cdot)\|_{L^\infty(\R/\Z)} = 0 \\
& \lim_{l\to+\infty} \|D^\beta[D_{n_{m_l}}(s,t) - D^\infty(t+c)](s_{m_l},\cdot)\|_{L^\infty(\R/\Z)} = 0 \\
& \lim_{l\to+\infty} \|D^\beta[S_{n_{m_l}}(s,t) - S^\infty(t+c)](s_{m_l},\cdot)\|_{L^\infty(\R/\Z)} = 0 \\
& \lim_{l\to+\infty} \|D^\beta[M_{n_{m_l}}(s,t) - M^\infty(t+c)](s_{m_l},\cdot)\|_{L^\infty(\R/\Z)} = 0
\end{aligned} $$
hold for every partial derivative $D^\beta = \partial^{\beta_1}_s\partial^{\beta_2}_t$.
\end{lemma}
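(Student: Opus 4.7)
The plan is to combine Lemma~\ref{lemma_uniform_bounds_ends} with the $C^\infty_\loc$-convergences $\hat J_n \to \hat J$, $\hat M_n \to \hat M$, and $\tilde R_n \to \tilde R$, and then chase the chain/Leibniz rule. First, I would apply Lemma~\ref{lemma_uniform_bounds_ends} along the given sequences $s_m, n_m$ to obtain $z_{n_m}(s_m,\cdot) \to 0$ in $C^\infty(\R/\Z)$ and $D^\beta[\theta_{n_m}(s,t) - k_+ t](s_m,\cdot) \to 0$ uniformly for every $\beta \neq (0,0)$. Lifting $\theta_n$ to an $\R$-valued function $\tilde\theta_n$ with $\tilde\theta_n(s,t+1)=\tilde\theta_n(s,t)+k_+$, the value $\tilde\theta_{n_m}(s_m,0) \in \R/j_+\Z$ lives in a compact space; I would pass to a subsequence $m_l$ so that this value converges to some $\theta_* \in \R/j_+\Z$.

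Next, since $k_+ = j_+ l_+ \geq j_+$, the map $c \mapsto k_+ c \pmod{j_+}$ from $[0,1)$ to $\R/j_+\Z$ is surjective, so I can choose $c \in [0,1)$ with $k_+ c \equiv \theta_*$ modulo $j_+$. Combining the control on $\tilde\theta_{n_{m_l}}(s_{m_l},\cdot) - k_+ t$ with the convergence $z_{n_{m_l}}(s_{m_l},\cdot) \to 0$, the pair $(\theta_{n_{m_l}}(s_{m_l}, t), z_{n_{m_l}}(s_{m_l}, t))$ converges in $C^\infty(\R/\Z)$ to $(k_+(t+c), 0)$ modulo $(j_+, 0)$; here the $j_+$-periodicity of $\hat J$, $\hat M$, and $D_2\tilde R$ absorbs the modular ambiguity.

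With this in place, the convergences for $j_n, D_n, M_n$ follow by expanding $D^\beta$ of a composition via the chain and Leibniz rules. Each summand in such an expansion is a product of a derivative of $\hat J_n$ (resp.\ $D_2\tilde R_n$, $\hat M_n$) evaluated at $(\theta_n, z_n)$, times factors $D^\alpha \theta_n$ and $D^\gamma z_n$ with $|\alpha|+|\gamma| \le |\beta|$. At $(s_{m_l}, t)$, the evaluated derivatives of $\hat J_n$ converge uniformly to the corresponding derivatives of $\hat J$ at $(k_+(t+c), 0)$ by $C^\infty_\loc$ convergence of $\hat J_n$; the factors $D^\alpha \theta_n$ with $\alpha \neq (0,0)$ converge uniformly to $k_+$ if $\alpha = (0,1)$ and to $0$ otherwise; the factors $D^\gamma z_n$ converge uniformly to $0$ for all $\gamma$. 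Matching terms against the analogous expansion of $D^\beta[\hat J(k_+(t+c), 0)]$ (in which $\partial_s \equiv 0$ and each $\partial_t$ brings down a factor of $k_+$) yields the stated uniform convergence, and the cases of $D_n$ and $M_n$ are identical with $\hat J_n$ replaced by $D_2\tilde R_n$ and $\hat M_n$.

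For $S_n = [\partial_t a_n\,I - \partial_s a_n\,j_n] D_n$, I would additionally invoke Lemma~\ref{lemma_uniform_bounds_ends} applied to $a_n$: for $\beta \neq (0,0)$, $D^\beta[a_n - T_n^+ s](s_m,\cdot) \to 0$, which combined with $T_n^+ \to T^+$ gives $\partial_s a_n \to T^+$ and $D^\alpha a_n \to 0$ for every other nontrivial multi-index. Feeding these into the Leibniz expansion of $D^\beta S_n$, together with the already established convergences of $j_n$ and $D_n$, yields $D^\beta S_{n_{m_l}}(s_{m_l},\cdot) \to D^\beta S^\infty(\cdot + c)$ in $L^\infty$; note that $S^\infty = -T^+ j^\infty D^\infty$ depends only on $t$, and the vanishing of its $s$-derivatives matches the limits of the $s$-derivatives of $S_n$, each of whose terms carries at least one factor ($\partial_s^{k\ge 2} a_n$, $\partial_s\partial_t a_n$, $\partial_s j_n$, or $\partial_s D_n$) that tends to zero. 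No genuine obstacle arises; the only bookkeeping point is selecting $c$ modulo $1$ from a limit $\theta_*$ modulo $j_+$, which is handled by the inequality $k_+ \geq j_+$.
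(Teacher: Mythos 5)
Your proof is correct. Note that the paper does not actually prove Lemma~\ref{lemma_data_for_CR_asymptotic}: it is quoted from \cite[Lemma~4.12]{elliptic} and \cite[Lemma~3.33]{HSW}, so your argument serves as a self-contained substitute, and it is the expected one. The key observation you exploit is that Lemma~\ref{lemma_uniform_bounds_ends} controls all derivatives of $\tilde\theta_n-k_+t$ and $a_n-T^+_ns$ of order $\geq 1$ but says nothing about the zeroth-order term of $\theta_n$, i.e.\ the phase; you recover it by compactness of $\R/j_+\Z$, extracting $\theta_*=\lim_l \theta_{n_{m_l}}(s_{m_l},0)$ along a subsequence and then solving $k_+c\equiv\theta_* \ ({\rm mod}\ j_+)$ with $c\in[0,1)$, which is possible precisely because $k_+=l_+j_+$, and which is all that is needed since $\hat J,\hat M,D_2\tilde R$ only see $\theta$ modulo $j_+$. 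After that the four convergences are pure bookkeeping: uniform convergence of the arguments $(\theta_{n_{m_l}}(s_{m_l},\cdot),z_{n_{m_l}}(s_{m_l},\cdot))\to(k_+(\cdot+c),0)$ into a fixed compact set, the $C^\infty_\loc$-convergences $\hat J_n\to\hat J$, $\hat M_n\to\hat M$, $D_2\tilde R_n\to D_2\tilde R$, and the chain/Leibniz expansion in which every term carrying a factor $D^\gamma z_n$, a factor $D^\alpha\theta_n$ with $\alpha\notin\{(0,0),(0,1)\}$, or a factor $D^\alpha a_n$ with $\alpha\notin\{(0,0),(1,0)\}$ tends to zero, while the surviving terms reproduce the derivatives of $j^\infty(t+c)$, $D^\infty(t+c)$, $M^\infty(t+c)$, $S^\infty(t+c)$ (the inner map being affine in $t$). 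Two minor points to spell out in a final write-up: to upgrade convergence of $\theta_{n_{m_l}}(s_{m_l},0)$ to uniform convergence of $\theta_{n_{m_l}}(s_{m_l},\cdot)$ you integrate the uniform bound on $\partial_t\tilde\theta_n-k_+$ in $t$ (this is what your ``converges in $C^\infty$'' claim rests on), and for $D_n$ one differentiates under the $v$-integral, which is harmless since all estimates are uniform in $v\in[0,1]$.
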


Define functions
\begin{equation*}
\begin{aligned}
\zeta_n(s,t) &= M_nz_n \, , \\
\Lambda_n(s,t) &= (M_nS_n-\partial_sM_n-i\partial_tM_n)M_n^{-1} \, , \\
\Lambda^\infty(t) &= (M^\infty S^\infty - i\partial_tM^\infty)(M^\infty)^{-1} \, .
\end{aligned}
\end{equation*}

\begin{lemma}
The following hold:
\begin{itemize}
\item[(a)] 
$\partial_s\zeta_n + i\partial_t\zeta_n + \Lambda_n\zeta_n = 0$.
\item[(b)] For every pair of sequences $s_m \to \infty$, $n_m \to \infty$ there exist $c\in[0,1)$ and a sequence of indices~$m_l\to\infty$ such that 
$$
\lim_{l\to+\infty} \|D^\beta[\Lambda_{n_{m_l}}(s,t) - \Lambda^\infty(t+c)](s_{m_l},\cdot)\|_{L^\infty(\R/\Z)} = 0 \, .
$$
\end{itemize}
\end{lemma}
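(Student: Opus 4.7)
For part (a), the plan is a direct computation using the product rule, the Cauchy-Riemann equation for $z_n$, and the intertwining relation. Starting from $\zeta_n=M_nz_n$, expand
\[
\partial_s\zeta_n + i\partial_t\zeta_n = (\partial_sM_n + i\partial_tM_n)z_n + M_n(\partial_sz_n + i\partial_tz_n).
\]
Use the third equation in \eqref{CR_local_coords_u_n} to replace $\partial_sz_n$ by $-j_n\partial_tz_n - S_nz_n$, and then invoke $M_nj_n = iM_n$ to get $M_nj_n\partial_tz_n = iM_n\partial_tz_n$. The two $\partial_t z_n$ terms cancel, leaving $M_n(\partial_sz_n + i\partial_tz_n) = -M_nS_nz_n$. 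Substituting and factoring $M_n^{-1}M_n$ out of $z_n$ gives
\[
\partial_s\zeta_n + i\partial_t\zeta_n = -\bigl(M_nS_n - \partial_sM_n - i\partial_tM_n\bigr)M_n^{-1}\zeta_n = -\Lambda_n\zeta_n,
\]
which is the desired equation.

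For part (b), the plan is to apply Lemma~\ref{lemma_data_for_CR_asymptotic} to the given sequences $s_m\to\infty$, $n_m\to\infty$, producing $c\in[0,1)$ and a subsequence $m_l$ along which, for every multi-index $\beta$,
\[
\|D^\beta[M_{n_{m_l}}(s,t) - M^\infty(t+c)](s_{m_l},\cdot)\|_{L^\infty(\R/\Z)}\to 0,
\]
together with the analogous statements for $S_{n_{m_l}}$ and $j_{n_{m_l}}$. Since $M^\infty(t+c)$ is independent of $s$, this convergence specializes (taking $\beta=(1,\beta_2)$) to
\[
\|D^\beta\partial_sM_{n_{m_l}}(s_{m_l},\cdot)\|_{L^\infty(\R/\Z)} \to 0 \quad \forall \beta,
\]
while (taking $\beta=(0,\beta_2+1)$) we obtain $\partial_tM_{n_{m_l}}(s_{m_l},\cdot) \to \partial_t[M^\infty(t+c)]$ in every $C^k$. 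Combined with $M_{n_{m_l}}S_{n_{m_l}}(s_{m_l},\cdot) \to M^\infty(t+c)S^\infty(t+c)$ in $C^\infty$, we get
\[
(M_nS_n-\partial_sM_n - i\partial_tM_n)(s_{m_l},\cdot) \longrightarrow M^\infty(t+c)S^\infty(t+c) - i\partial_tM^\infty(t+c)
\]
in $C^\infty$ along the subsequence.

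Finally, since $M_n$ takes values in $Sp(2)$ (a closed subgroup of $GL(2,\R)$ on which inversion is smooth), pointwise convergence of $M_{n_{m_l}}(s_{m_l},\cdot)$ to $M^\infty(t+c)$ in $C^\infty$ gives $M_{n_{m_l}}^{-1}(s_{m_l},\cdot) \to M^\infty(t+c)^{-1}$ in $C^\infty$ by the chain rule and smoothness of inversion. Multiplying the two convergences yields
\[
\Lambda_{n_{m_l}}(s_{m_l},\cdot) \longrightarrow \bigl(M^\infty(t+c)S^\infty(t+c) - i\partial_tM^\infty(t+c)\bigr)(M^\infty(t+c))^{-1} = \Lambda^\infty(t+c)
\]
in every $C^k$, which is exactly the claimed $C^\infty$-convergence. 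No step here is an obstacle; the entire statement is bookkeeping on top of Lemma~\ref{lemma_data_for_CR_asymptotic} and the uniform decay in Lemma~\ref{lemma_uniform_bounds_ends}.
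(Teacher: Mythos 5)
Your proposal is correct and follows the same route as the paper, which simply notes that (a) is a direct computation (the cancellation via $M_nj_n=iM_n$ exactly as you carry out) and that (b) follows directly from Lemma~\ref{lemma_data_for_CR_asymptotic}. Your extra bookkeeping about $\partial_sM_n\to 0$, $\partial_tM_n\to\partial_tM^\infty(t+c)$, and invertibility of $M^\infty\in Sp(2)$ is exactly the implicit content of that deduction.
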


\begin{proof}
A direct computation shows (a), and (b) is a direct consequence of Lemma~\ref{lemma_data_for_CR_asymptotic}.
\end{proof}

\begin{remark}
\label{rmk_Banach_Holder}
Consider $r	\in \N_0$, $\alpha \in (0,1)$, $d<0$ and a finite-dimensional vector space~$V$.
Consider the space $C^{r,\alpha,d}_0([\ubar{s},+\infty)\times\R/\Z,V)$ of functions $h:[\ubar{s},+\infty)\times\R/\Z\to V$ of class $C^{r,\alpha}_\loc$ such that for every~$\beta$ satisfying $|\beta|\leq r$ the $C^{0,\alpha}$-norm of $e^{-d s}D^\beta h$ on $[\rho,+\infty)\times\R/\Z$ is finite and decays to zero as $\rho\to+\infty$.
The norm $\|h\|_{r,\alpha,d} = \|e^{-d s}h\|_{C^{r,\alpha}([\ubar{s},+\infty)\times\R/\Z)}$ turns this space into a Banach space.
\end{remark}

\begin{proposition}[Proposition~4.15 in~\cite{elliptic}]
\label{prop_asymp_control}
Suppose that 
$$
K_n : [0,+\infty) \times \R/\Z \to \R^{2k\times 2k} \qquad K^\infty : \R/\Z \to \R^{2k\times 2k}
$$
are smooth maps satisfying
\begin{itemize}
\item $K^\infty(t)$ is symmetric, for all $t$.
\item For every pair of sequences $s_m \to \infty$, $n_m \to \infty$ there exist $c\in[0,1)$ and a sequence of indices~$m_l\to\infty$ such that 
$$
\lim_{l\to+\infty} \|D^\beta[K_{n_{m_l}}(s,t) - K^\infty(t+c)](s_{m_l},\cdot)\|_{L^\infty(\R/\Z)} = 0 \, .
$$
\end{itemize}
Consider the unbounded self-adjoint operator $L = -J_0\partial_t - K^\infty$ on $L^2(\R/\Z,\R^{2k})$.
Consider $r \in \N$, $\alpha \in (0,1)$.
Let $\delta>0$ be such that $-\delta$ does not belong to the spectrum of $L$.
Consider a $C^\infty_\loc$-bounded sequence of maps $$ X_n \in C^{r,\alpha,-\delta}_0([0,+\infty) \times \R/\Z,\R^{2k}) \cap C^\infty([0,+\infty) \times \R/\Z,\R^{2k}) $$ satisfying $$ \partial_sX_n + J_0\partial_tX_n + K_nX_n = 0 \, . $$ 
Then $X_n$ has a convergent subsequence in $C^{r,\alpha,-\delta}_0([0,+\infty) \times \R/\Z,\R^{2k})$.
\end{proposition}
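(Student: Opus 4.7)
This is the standard weighted compactness lemma for asymptotically self-adjoint Cauchy--Riemann operators, in the spirit of~\cite[Prop.~4.15]{elliptic}. I would split the argument into three steps.

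\emph{Step 1 (local compactness).} The $C^\infty_\loc$-boundedness of $X_n$ together with Arzel\`a--Ascoli and interior elliptic regularity for the first-order equation $\partial_s X_n + J_0\partial_t X_n + K_n X_n = 0$ produces, after passing to a subsequence, a $C^\infty_\loc$-limit $X_\infty$. Passing to a further subsequence one may also assume that $K_n$ converges in $C^\infty_\loc$ to some $K_\infty$, and $X_\infty$ satisfies the corresponding limit equation.

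\emph{Step 2 (uniform exponential decay).} This is the crux. The goal is to produce $\delta' > \delta$, $s_0 > 0$, and $C>0$ independent of $n$ with
\begin{equation*}
\|X_n(s,\cdot)\|_{C^{r,\alpha}(\R/\Z)} \leq C e^{-\delta' s} \qquad \forall s \geq s_0, \ \forall n.
\end{equation*}
Set $Y_n = e^{\delta s} X_n$; the equation becomes $\partial_s Y_n = (L_n(s) + \delta I) Y_n$ with $L_n(s) = -J_0\partial_t - K_n(s,\cdot)$. Let $g_n(s) = \tfrac{1}{2}\|Y_n(s,\cdot)\|_{L^2(\R/\Z)}^2$. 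Differentiating twice in~$s$ and using the near-self-adjointness of $L_n(s)$ for $s,n$ large yields
\begin{equation*}
g_n''(s) = 2\|(L_n(s) + \delta I) Y_n\|_{L^2}^2 - \langle \partial_s K_n\, Y_n, Y_n\rangle_{L^2} + \mathrm{(asymmetry\ error)}.
\end{equation*}
The operators $L_c := -J_0\partial_t - K^\infty(\cdot + c)$ are all unitarily equivalent via a $t$-translation, so $\sigma(L_c) = \sigma(L)$ and the gap $d := \mathrm{dist}(-\delta,\sigma(L)) > 0$ is independent of~$c$, giving $\|(L_c + \delta I) Y\|_{L^2} \geq d\|Y\|_{L^2}$. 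The asymptotic hypothesis on $K_n$ forces $L_n(s)$ to be arbitrarily close to some $L_{c(s,n)}$ once $s$ and $n$ are large enough, so this gap estimate transfers uniformly to $L_n(s)$ with vanishing error. One arrives at a differential inequality of the form $g_n''(s) \geq 4(\gamma')^2 g_n(s)$, valid for any fixed $\gamma' \in (0,d)$ and all $s \geq s_0$ with $s_0$ independent of~$n$. Since each $g_n$ decays to $0$ at $+\infty$ (this is exactly the hypothesis $X_n \in C^{r,\alpha,-\delta}_0$), a maximum-principle comparison with the model solution $g_n(s_0) e^{-2\gamma'(s-s_0)}$ gives $g_n(s) \leq g_n(s_0)\, e^{-2\gamma'(s-s_0)}$ for $s \geq s_0$. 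The $C^\infty_\loc$-boundedness of $X_n$ bounds $g_n(s_0)$ uniformly, yielding uniform $L^2$-decay of $X_n$ at rate $\delta + \gamma' > \delta$. Standard elliptic bootstrapping applied to $s$-translates promotes this to the required $C^{r,\alpha}$-decay.

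\emph{Step 3 (conclusion and main obstacle).} Combining the uniform tail bound with the $C^\infty_\loc$-convergence of Step~1 gives convergence in $C^{r,\alpha,-\delta}_0$: on any compact $s$-interval use Step~1, while on the tail $\{s \geq s_1\}$ the weighted norm of $X_n - X_\infty$ is bounded by a constant times $e^{-(\delta'-\delta) s_1}$, which tends to $0$ as $s_1 \to \infty$. The delicate point, and the main technical obstacle, is the uniformity in Step~2: the hypothesis does \emph{not} give convergence of $L_n(s)$ to a single limit as $s\to\infty$, only subsequential closeness to the family $\{L_c : c \in [0,1)\}$. What rescues us is precisely that the spectral gap~$d$ is $c$-independent, so the lower bound coming from the gap transports uniformly; quantifying the asymmetry, the $\partial_s K_n$ error, and the smallness of $\|L_n(s) - L_{c(s,n)}\|$ using only the asymptotic convergence hypothesis is where the real work lies.
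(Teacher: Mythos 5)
Your outline is essentially the argument this result rests on: the paper itself gives no proof of Proposition~\ref{prop_asymp_control} (it is imported verbatim from Proposition~4.15 of~\cite{elliptic}), and the proof there follows exactly your scheme — a contradiction/subsequence argument to transfer the translation-invariant spectral gap of $L$ to $L_n(s)$ for $s,n$ large, a convexity-type differential inequality for the $L^2$-norm of $e^{\delta s}X_n$ giving uniform exponential decay at a rate beating $\delta$, elliptic bootstrapping on unit strips, and then $C^\infty_{\loc}$-compactness plus the uniform tail estimate to conclude convergence in $C^{r,\alpha,-\delta}_0$. The only cosmetic slip is the claim in your Step~1 that $K_n$ itself has a $C^\infty_{\loc}$-convergent subsequence (not guaranteed by the hypotheses), but this is never needed since the limit equation plays no role in the conclusion.
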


Fix $\alpha\in(0,1)$.
Let $r\in\N$ be arbitrary, and recall the number $\delta>0$ from hypothesis~(G).
We write $C^{r,\alpha,-\delta}_0$ instead of $C^{r,\alpha,-\delta}_0([\ubar{s},+\infty)\times\R/\Z)$ for simplicity.
By assumption,~$\util_n$ has nondegenerate punctures in the sense of Definition~\ref{def_nondeg_puncture}.
By Theorem~\ref{thm_nondeg_implies_asymp_formula}, $\util_n$ has an asymptotic formula at both punctures.
This implies that $z_n \in C^{r,\alpha,-\delta}_0$ since, by~(G), the corresponding asymptotic eigenvalue $\mu_n^+$ of $\util_n$ at the positive puncture satisfies $\mu^+_n < -\delta$.
Moreover, from Lemma~\ref{lemma_uniform_bounds_ends} it follows that 
\begin{equation}
\label{bounds_derivatives_M_n}
\limsup_{\tau\to+\infty} \left( \sup_n \| D^\beta M_n(\tau,\cdot) \|_{L^\infty(\R/\Z)} \right) < \infty \qquad \forall \beta \, . 
\end{equation}
This and the fact that $z_n$ belongs to $C^{r,\alpha,-\delta}_0$ together imply that $\zeta_n \in C^{r,\alpha,-\delta}_0$.
Note that for each $c$ the operator $-i\partial_t-M^\infty(t+c)$ is representation of $A^+$ in a suitable frame and, consequently, does not have~$-\delta$ in the spectrum.
Finally, since $z_n \to z$ in $C^\infty_\loc$, we get $$ \zeta_n \to \zeta := M(s,t)z(s,t) \quad \text{in} \quad C^\infty_\loc \, . $$
Consequently, $\zeta_n$ is $C^\infty_\loc$-bounded.
We have checked all assumptions of Proposition~\ref{prop_asymp_control}, which implies $\zeta \in C^{r,\alpha,-\delta}_0$ and $\zeta_n \to \zeta$ in $C^{r,\alpha,-\delta}_0$.
With ~\eqref{bounds_derivatives_M_n} this gives that $z(s,t) \in C^{r,\alpha,-\delta}_0$ and that $z_n \to z \text{ in } C^{r,\alpha,-\delta}_0$.
Since $r \in \N$ is arbitrary, we conclude the proof of the following statement.

\begin{lemma}
\label{lemma_exponential_decay_z_component}
We have:
$$
\begin{aligned}
& \lim_{\tau\to+\infty} e^{\delta\tau} \left( \|D^\beta z(\tau,\cdot)\|_{L^\infty(\R/\Z)} + \sup_n \|D^\beta z_n(\tau,\cdot)\|_{L^\infty(\R/\Z)} \right) = 0 \qquad \forall \beta \, .
\end{aligned}
$$
\end{lemma}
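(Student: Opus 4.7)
The plan is to apply the abstract decay-with-parameters result, Proposition~\ref{prop_asymp_control}, to the transformed sections $\zeta_n = M_n z_n$, rather than to $z_n$ directly. The point of the transformation is that the inhomogeneous Cauchy--Riemann equation satisfied by $z_n$ in~\eqref{CR_local_coords_u_n} has a complex structure $j_n(s,t)$ which varies with $(s,t)$ and $n$, whereas the resulting equation for $\zeta_n$ is in the standard $\C^k$ form $\partial_s\zeta_n + i\partial_t\zeta_n + \Lambda_n\zeta_n = 0$ of Proposition~\ref{prop_asymp_control}, which is exactly what the abstract compactness statement can absorb.

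The first step is to verify that $\zeta_n$ belongs to $C^{r,\alpha,-\delta}_0$ for each fixed $n$ and every $r$, uniformly across compact intervals of $n$. Here I would invoke the standing hypothesis ($*$) together with Theorem~\ref{thm_nondeg_implies_asymp_formula}: $\util_n$ has nondegenerate punctures, hence an asymptotic formula at $s=+\infty$, with asymptotic eigenvalue $\mu_n^+ < -\delta$ by (G). This forces $z_n$ to decay at rate at least $\delta$ in $C^{r,\alpha}$. The uniform smoothness and boundedness of $M_n$ near the orbit, a consequence of Lemma~\ref{lemma_uniform_bounds_ends} and~\eqref{bounds_derivatives_M_n}, then transfers the decay of $z_n$ to $\zeta_n$.

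Next I would verify the spectral hypothesis. The asymptotic data $K_n = \Lambda_n$ is controlled, along every subsequence $(s_{m_l}, n_{m_l}) \to (+\infty, +\infty)$, by shifts $\Lambda^\infty(t+c)$, by the preceding lemma. The unbounded self-adjoint operator $-i\partial_t - K^\infty$ corresponds, via a symplectic frame change produced by $M^\infty$, to a representation of the asymptotic operator $A^+$ acting on sections of $\xi_{\gamma^+}$. Hypothesis (G) specifies $[-\delta,0) \cap \sigma(A^+) = \emptyset$, so $-\delta$ avoids the spectrum, and all the hypotheses of Proposition~\ref{prop_asymp_control} are met. The $C^\infty_\loc$-boundedness of $\zeta_n$ is immediate since $z_n \to z$ and $M_n \to M$ in $C^\infty_\loc$. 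We conclude $\zeta_n \to \zeta := M z$ in $C^{r,\alpha,-\delta}_0$ for every $r$.

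Finally I would invert: writing $z_n = M_n^{-1}\zeta_n$ and $z = M^{-1}\zeta$, the uniform derivative bounds~\eqref{bounds_derivatives_M_n} for $M_n$ and an analogous bound for $M_n^{-1}$ propagate exponential decay at rate $\delta$ from $\zeta_n$ to $z_n$, uniformly in $n$ and in derivatives of every order, which is exactly the statement of the lemma. The most delicate point in this plan is the identification of the limit operator $L$ with (a representation of) $A^+$, so that (G) can be applied to rule out $-\delta$ from its spectrum; once this is in place the rest of the argument is an essentially mechanical application of Proposition~\ref{prop_asymp_control} followed by a bookkeeping step to pass back from $\zeta_n$ to $z_n$.
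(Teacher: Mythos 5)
Your proposal is correct and follows essentially the same route as the paper's proof: establish $z_n\in C^{r,\alpha,-\delta}_0$ from the asymptotic formula and $\mu_n^+<-\delta$, pass to $\zeta_n=M_nz_n$ via the uniform bounds \eqref{bounds_derivatives_M_n}, identify the limit operator with a representation of $A^+$ so that (G) excludes $-\delta$ from its spectrum, apply Proposition~\ref{prop_asymp_control}, and transfer the weighted convergence back to $z_n$. Your explicit remark that one also needs control of $M_n^{-1}$ (immediate since $M_n$ is $Sp(2)$-valued with bounded derivatives) is a point the paper leaves implicit, but otherwise the two arguments coincide.
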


Since each $\util_n$ has an asymptotic formula, we can find constants $c_n,d_n \in \R$ such that the function $V_n : [\ubar{s},+\infty) \times \R/\Z \to \R^2$ defined by
$$
V_n(s,t) = \begin{pmatrix} a_n(s,t)/T^+_n - s - c_n \\ \tilde \theta_n(s,t)/k_+ - t - d_n \end{pmatrix}
$$
satisfies $\|V_n(s,\cdot)\|_{L^\infty(\R/\Z)} \to 0$ as $s \to +\infty$, for each $n$.
Equations~\eqref{CR_local_coords_u_n} can be written as
\begin{equation}
\label{CR_eqns_symp_comps}
\partial_sV_n(s,t) + J_0\partial_tV_n(s,t) + B_n(s,t)z_n(s,t) = 0
\end{equation}
where $B_n(s,t)$ is a $2 \times 2$ matrix-valued function whose entries consist of smooth functions defined on $\R/j_+\Z \times B$ composed with the map $(s,t) \mapsto (\theta_n(s,t),z_n(s,t))$.
From the bounds given by Lemma~\ref{lemma_uniform_bounds_ends} we get
$$
\limsup_{\tau \to +\infty} \ \sup_n \|D^\beta B_n(\tau,\cdot)\|_{L^\infty(\R/\Z)} < \infty \qquad \forall \beta \, .
$$
Together with Lemma~\ref{lemma_exponential_decay_z_component} this yields
\begin{equation}
\label{decay_RHS_symp_comps}
\limsup_{\tau \to +\infty} \ \sup_n e^{\delta\tau} \|D^\beta [B_nz_n](\tau,\cdot)\|_{L^\infty(\R/\Z)} = 0 \qquad \forall \beta \, .
\end{equation}
It follows as in~\cite[Lemma~3.36]{HSW}, which is a version of~\cite[Lemma~6.3]{fast} for sequences, that one can use~\eqref{CR_eqns_symp_comps} and~\eqref{decay_RHS_symp_comps} to find $b>0$ such that 
\begin{equation}
\lim_{\tau \to +\infty} \ \sup_n e^{b \tau} \|D^\beta V_n(\tau,\cdot)\|_{L^\infty(\R/\Z)} = 0 \quad \text{if} \quad |\beta| \geq 1 \, .
\end{equation}
With $\beta=(1,0)$ we get
$$
\lim_{\tau \to +\infty} \ \sup_n e^{b \tau} \left( \|\partial_sa_n(\tau,\cdot) - T_n^+ \|_{L^\infty(\R/\Z)} + \|\partial_s\tilde\theta_n(\tau,\cdot)\|_{L^\infty(\R/\Z)} \right) = 0
$$
Taking the limit as $n\to\infty$
$$
\lim_{\tau \to +\infty} e^{b \tau} \left( \|\partial_sa(\tau,\cdot) - T^+ \|_{L^\infty(\R/\Z)} + \|\partial_s\tilde\theta(\tau,\cdot)\|_{L^\infty(\R/\Z)} \right) = 0 \, .
$$
It follows from this exponential decay that the integrals 
$$
\begin{aligned}
& c = a(\ubar{s},0) - T^+\ubar{s} + \int_{\ubar{s}}^{+\infty} \partial_sa(s,0)-T^+ \ ds  \\
& d = \tilde \theta(\ubar{s},0) + \int_{\ubar{s}}^{+\infty} \partial_s\tilde \theta(s,0) \ ds
\end{aligned}
$$
are convergent, and 
$$
a(s,0) - T^+s - c \to 0 \qquad \tilde\theta(s,0) - d \to 0 \, .
$$
Together with Lemma~\ref{lemma_zeros_pi_du} and Lemma~\ref{lemma_exponential_decay_z_component}, this concludes the proof that $s=+\infty$ is a nondegenerate puncture of $\util$ in the sense of Definition~\ref{def_nondeg_puncture}, in case $\gamma^+$ is a degenerate periodic orbit.
The case where $\gamma^+$ is nondegenerate was taken care by Corollary~\ref{cor_asymptotics_nondeg_case}.
The analysis at the puncture $s=-\infty$ is identical.
An application of Theorem~\ref{thm_nondeg_implies_asymp_formula} concludes the proof of (a) in Proposition~\ref{prop_main_asymptotic_analysis}.

\subsection{Asymptotic data of the cylinder $\util$}

Having proved (a), we now prove (b) and (c) in Proposition~\ref{prop_main_asymptotic_analysis}.

\begin{lemma}
Choose symplectic trivializations of $\xi$ along $\gamma^+,\gamma^-$ that are homotopic to symplectic trivializations of $\xi$ along $\gamma^+_n,\gamma^-_n$, respectively.
In these trivializations, if $n$ is large enough then 
$$ \begin{aligned} 
\wind(\nu^{<-\delta}(\gamma^+)) & = \wind(\nu^{<-\delta}(\gamma^+_n)) \\
\wind(\nu^{\geq\delta}(\gamma^-)) & = \wind(\nu^{\geq\delta}(\gamma^-_n)) \, .
\end{aligned}
$$
Moreover, if $n$ is large enough then 
$$
p^{-\delta}(\gamma^+) = p^{-\delta}(\gamma^+_n) = 1 \, , \qquad p^{\delta}(\gamma^-) = p^{\delta}(\gamma^-_n) = 1 \, .
$$
\end{lemma}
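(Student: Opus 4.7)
The plan is to reduce the lemma to the spectral continuity of the asymptotic operators $A^\pm_n$ under $C^\infty$-perturbation of the underlying Reeb orbit.

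First, I would set up a common framework to compare the operators. Using a Martinet tube $\Psi$ around $x^+(\R)$ as in Lemma~\ref{lemma_special_coordinates_covering} together with hypothesis (C), for $n$ large the orbit $x^+_n$ lies inside the tube and, after a small smooth ambient isotopy that fixes the underlying orbit, I can identify $(x^+)^*\xi$ with $(x^+_n)^*\xi$ via a symplectic bundle isomorphism that is homotopic to the identity (and in particular preserves the homotopy class of trivializations $\tau$ chosen in the statement). Under this identification, the operators $A^+_n$ become a family of first-order self-adjoint operators on $L^2(\R/\Z,\xi_{\gamma^+})$ whose coefficients, written in any trivialization, converge in $C^\infty$ to those of $A^+$. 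This follows because $\gamma^+_n \to \gamma^+$ in $C^\infty$, $\lambda_n\to\lambda$ in $C^\infty$ (so $R_n \to R$ in $C^\infty$ together with $\nabla R_n\to \nabla R$), and $J_n\to J$ in $C^\infty$. Consequently $A^+_n\to A^+$ in the norm resolvent sense.

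From here, standard perturbation theory gives that the spectrum of $A^+_n$ converges to that of $A^+$: eigenvalues can be matched with multiplicity in any compact interval, and the associated $L^2$-normalized eigensections converge in $C^\infty$ (by elliptic bootstrapping on the eigenvalue equation). Since by~(G) the interval $[-\delta,0)$ contains no point of $\sigma(A^+)$, there is a fixed open neighborhood of $-\delta$ disjoint from $\sigma(A^+)$, and for $n$ sufficiently large the same is true of $\sigma(A^+_n)$. Therefore the eigenvalues $\nu^{<-\delta}(\gamma^+_n)$ and $\nu^{\geq-\delta}(\gamma^+_n)$ stay a definite distance from $-\delta$, and they converge to $\nu^{<-\delta}(\gamma^+)$ and $\nu^{\geq-\delta}(\gamma^+)$ respectively, with eigensections converging in $C^\infty$ to eigensections of the corresponding limit eigenvalues. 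Because eigensections of Cauchy-Riemann type asymptotic operators are nowhere vanishing (Carleman similarity principle, cited in \S\ref{sec_moduli_spaces}), and $C^0$-limits of nowhere vanishing sections that are themselves nowhere vanishing preserve winding number, we obtain, in the chosen trivializations,
\[
\wind(\nu^{<-\delta}(\gamma^+_n)) = \wind(\nu^{<-\delta}(\gamma^+)), \qquad \wind(\nu^{\geq-\delta}(\gamma^+_n)) = \wind(\nu^{\geq-\delta}(\gamma^+))
\]
for all large $n$. The same argument applied to the negative end (using $(0,\delta]\cap\sigma(A^-) = \emptyset$) yields the analogous equalities for $\gamma^-$ and $\gamma^-_n$.

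Finally, by the definition
\[
p^{-\delta}(\gamma) = \wind(\nu^{\geq-\delta}(\gamma)) - \wind(\nu^{<-\delta}(\gamma)),
\]
and by the hypothesis in~(G) that $p^{-\delta}(\gamma^+_n)=1$, the two winding-number equalities above imply $p^{-\delta}(\gamma^+)=1$. The analogous argument gives $p^{\delta}(\gamma^-)=1$. The main conceptual obstacle is the justification of the spectral and eigensection convergence on a single fixed bundle $\xi_{\gamma^\pm}$ even though the orbits move with $n$; this is handled cleanly by the tube and isotopy construction above, which lets all operators be viewed on the same loop in a way compatible with the fixed trivialization $\tau$.
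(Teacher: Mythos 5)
Your proposal is correct and takes essentially the same route as the paper: the paper likewise represents $A^+_n$ and $A^+$ on a common bundle via the coordinates of Lemma~\ref{lemma_special_coordinates_covering} (using (B) and (C)), obtains $C^\infty$-convergence of the coefficients, concludes that the eigenvalues adjacent to $-\delta$ (resp.\ $\delta$) converge with coinciding winding numbers, and then reads off $p^{-\delta}(\gamma^+)=p^{\delta}(\gamma^-)=1$ from hypothesis (G). The only cosmetic difference is that the paper pins down the limits of $\nu^{<-\delta}(A^+_n)$ and $\nu^{\geq-\delta}(A^+_n)$ by a short gap/contradiction argument rather than citing norm resolvent convergence and eigensection convergence, but the content is the same.
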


\begin{proof}
We only prove the statements for $\gamma^+,\gamma_n^+$, the corresponding statements for $\gamma^-,\gamma_n^-$ can be proved analogously.
Represent asymptotic operators $A^+_n,A^+$ in the coordinates given by Lemma~\ref{lemma_special_coordinates_covering}, with respect to the frame $\{\partial_{x_1},\partial_{x_2}\}$ of $\xi$ along $\R/j_+\Z \times \{0\}$.
By (b) and (c) of Lemma~\ref{lemma_special_coordinates_covering}, $A^+_n$ gets represented as $-j_n(t)\partial_t-B_n(t)$, $A^+$ gets represented as $-j(t)\partial_t-B(t)$, where $j_n \to j$ and $B_n \to B$ in $C^\infty$.
Hence, the spectrum of $A^+_n$ is a small perturbation of the spectrum of $A^+$, and winding numbers of nearby eigenvalues in any given frame coincide.
In particular, $\sigma(A^+_n)$ is bounded away from $-\delta$, $\nu^{<-\delta}(A^+_n) \to \alpha \in \sigma(A^+)$, $\nu^{\geq-\delta}(A^+_n) \to \beta \in \sigma(A^+)$ with $\alpha < -\delta$ and $\beta > -\delta$.
We used that, by (G), $-\delta \not \in \sigma(A^+)$.
If $\alpha < \nu^{<-\delta}(A^+)$ then there is an eigenvalue of $A^+$ in $(\alpha,-\delta)$, and for $n$ large enough there is an eigenvalue of $A^+_n$ in $(\nu^{<-\delta}(A^+_n),-\delta)$ which is impossible.
This shows that $\alpha = \nu^{<-\delta}(A^+)$. 
Similarly, one proves that $\beta = \nu^{\geq-\delta}(A^+)$.
Hence $$ n \gg 1 \qquad \Rightarrow \qquad \begin{aligned} & \wind(\nu^{<-\delta}(A^+)) = \wind(\nu^{<-\delta}(A^+_n)) \\ & \wind(\nu^{\geq-\delta}(A^+)) = \wind(\nu^{\geq-\delta}(A^+_n)) \end{aligned} $$ from where it follows that $n \gg 1 \Rightarrow p^{-\delta}(\gamma^+) = p^{-\delta}(\gamma^+_n)$.
\end{proof}

\begin{corollary}
$\ind_\delta(\util) = \ind_\delta(\util_n) = 2$.
\end{corollary}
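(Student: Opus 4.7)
The plan is to read the corollary off directly from the preceding lemma, using the simplified form of the weighted index in the cylindrical case together with hypothesis (G). Since $\util$ and $\util_n$ have domain $\R \times \R/\Z$, we have $g=0$, $m_+=m_-=1$, so the general formula~\eqref{def_Fredholm_index} collapses to
\[
\ind_\delta(\util) = \CZ^{-\delta}_\tau(\gamma^+) - \CZ^\delta_\tau(\gamma^-),
\]
and analogously
\[
\ind_\delta(\util_n) = \CZ^{-\delta}_\tau(\gamma^+_n) - \CZ^\delta_\tau(\gamma^-_n).
\]
Fix the symplectic trivializations of $\xi$ along $\gamma^\pm$ and $\gamma^\pm_n$ furnished by the preceding lemma, so that the two become homotopic for all sufficiently large $n$.

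The key step is to match the two pairs of Conley-Zehnder indices. Recall the identities
\[
\CZ^{-\delta}_\tau(\gamma) = 2\wind_\tau(\nu^{<-\delta}(\gamma)) + p^{-\delta}(\gamma), \qquad \CZ^{\delta}_\tau(\gamma) = 2\wind_\tau(\nu^{<\delta}(\gamma)) + p^{\delta}(\gamma),
\]
together with the elementary observation that $\wind_\tau(\nu^{<\delta}) = \wind_\tau(\nu^{\geq\delta}) - p^\delta$. The preceding lemma provides, for $n \gg 1$, equalities
\[
\wind_\tau(\nu^{<-\delta}(\gamma^+)) = \wind_\tau(\nu^{<-\delta}(\gamma^+_n)), \qquad \wind_\tau(\nu^{\geq\delta}(\gamma^-)) = \wind_\tau(\nu^{\geq\delta}(\gamma^-_n)),
\]
as well as $p^{-\delta}(\gamma^+) = p^{-\delta}(\gamma^+_n) = 1$ and $p^{\delta}(\gamma^-) = p^{\delta}(\gamma^-_n) = 1$. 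Plugging these into the two displays for the Conley-Zehnder indices yields $\CZ^{-\delta}_\tau(\gamma^+) = \CZ^{-\delta}_\tau(\gamma^+_n)$ and $\CZ^{\delta}_\tau(\gamma^-) = \CZ^{\delta}_\tau(\gamma^-_n)$ for all large $n$.

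Subtracting gives $\ind_\delta(\util) = \ind_\delta(\util_n)$ for $n$ sufficiently large; since $\ind_\delta(\util)$ is defined independently of $n$, this common value is well-defined, and hypothesis (G) forces it to equal $2$. There is no real obstacle: the corollary is a direct bookkeeping consequence of the preceding lemma combined with the definitions of the weighted Conley-Zehnder index and of the weighted Fredholm index in the cylindrical case.
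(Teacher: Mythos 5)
Your proof is correct and is essentially the argument the paper intends: the corollary is read off from the preceding lemma by noting that the matching winding numbers and the equalities $p^{-\delta}(\gamma^+)=p^{-\delta}(\gamma^+_n)$, $p^{\delta}(\gamma^-)=p^{\delta}(\gamma^-_n)$ force $\CZ^{-\delta}(\gamma^+)=\CZ^{-\delta}(\gamma^+_n)$ and $\CZ^{\delta}(\gamma^-)=\CZ^{\delta}(\gamma^-_n)$, so the cylindrical index formula gives $\ind_\delta(\util)=\ind_\delta(\util_n)$, which equals $2$ by hypothesis (G). The paper states the corollary without a separate proof, and your bookkeeping (including converting $\wind(\nu^{\geq\delta})$ to $\wind(\nu^{<\delta})$ via the definition of $p^\delta$) fills in exactly the intended details.
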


The above lemma and corollary give (b) in~Proposition~\ref{prop_main_asymptotic_analysis}.
Let $\mu^+$ be the asymptotic eigenvalue of $\util$ at $s=+\infty$, and let $\mu^-$ be the asymptotic eigenvalue of $\util$ at $s=-\infty$.
These exist since we prove in (a) that $\util$ has a nontrivial asymptotic formula at both punctures.
Apply Lemma~\ref{lemma_ineq_wind_pi} to the curve~$\util$ to get $\wind(\mu^+) = \wind(\nu^{<-\delta}(\gamma^+))$ in some (hence any) trivialization along $\gamma^+$, and $\wind(\mu^-) = \wind(\nu^{\geq\delta}(\gamma^-))$ in some (hence any) trivialization along $\gamma^-$.
This proves (c).

To conclude, we need to prove~(d), and we now do this.
Suppose, by contradiction, that $\exists (s_0,t_0)$ such that $u(s_0,t_0) \in x^+(\R) \cup x^-(\R)$.
For simplicity assume $u(s_0,t_0) \in x^+(\R)$, the case $u(s_0,t_0) \in x^-(\R)$ is handled analogously.
By (a), (b) and (c) we can apply Lemma~\ref{lemma_ineq_wind_pi} to conclude that $u$ is an immersion transverse to $R$.
We find $0 < \epsilon < 1/2$ and a compact neighborhood $K$ of $x^+(\R)$ such that, if we denote by $\Delta_\epsilon$ the closed $\epsilon$-disk centered at $(s_0,t_0)$, then the following holds:
\begin{itemize}
\item[(i)] $u(\partial\Delta_\epsilon) \cap K = \emptyset$.
\item[(ii)] $\exists n_0 \in \N$ such that $n \geq n_0 \Rightarrow x_n(\R) \subset K$.
\item[(iii)] $\exists n_1,N \in \N$ such that $n \geq n_1 \Rightarrow 0 < {\rm int}(x_n^+(T^+_n\cdot),u|_{\Delta_\epsilon}) \leq N$.
\item[(iv)] $\exists n_2 \in \N$ such that if $n \geq n_2$ then $u_n|_{\partial\Delta_\epsilon}$ is homotopic to $u|_{\partial\Delta_\epsilon}$ in $Y \setminus K$.
\end{itemize}
These properties follows from the transversality between $u$ and $R$, and from assumptions (A) and (B).
From homotopy invariance of intersection numbers we get
$$
n \geq \max\{n_0,n_1,n_2\} \Rightarrow {\rm int}(u_n|_{\Delta_\epsilon},x_n^+(T^+_n\cdot)) = {\rm int}(u|_{\Delta_\epsilon},x_n^+(T^+_n\cdot)) > 0 \, .
$$
It follows that $u_n(\R\times\R/\Z)$ intersects $x^+_n(\R)$, in contradiction to~(D).
We proved that 
$$
u(\R\times\R/\Z) \cap (x^+(\R) \cup x^-(\R)) = \emptyset \, .
$$

Now let $\gamma=(x,T)$ be a Reeb orbit of $\lambda$ in $Y \setminus (x^+(\R) \cup x^-(\R))$ that satisfies $T \leq T_+$.
Choose a compact neighborhood $K$ of $x^+(\R) \cup x^-(\R)$ such that $x(\R) \cap K = \emptyset$.
By~(B) we find $n_0 \in \N$ such that $$ n \geq n_0 \Rightarrow x^+_n(\R) \cup x^-_n(\R) \subset K \, . $$
By Corollary~\ref{cor_localizing_the_ends} we find $h > 0$ such that $$ |s| \geq h \Rightarrow u(\{s\}\times\R/\Z) \subset K \ \text{and} \ u_n(\{s\}\times\R/\Z) \subset K \ \forall n\, . $$
By~(C), $\gamma=(x,T)$ is a Reeb orbit of $\lambda_n$.
By~(D), ${\rm int}(x(T\cdot),u_n) > 0$ for all $n$.
We find $(s_n,t_n)$ such that $u_n(s_n,t_n) \in x(\R)$.
We get $|s_n| < h \ \forall n$ and, up to a subsequence, we may assume $(s_n,t_n) \to (s_*,t_*)$.
From~(A) we get $u(s_*,t_*) \in x(\R)$.
From the transversality between $u$ and $R$ provided by Lemma~\ref{lemma_ineq_wind_pi} we get ${\rm int}(x(T\cdot),u)>0$, as desired.

Consider a loop $\beta \subset \R \times \R/\Z$.
Since $u(\R\times\R/\Z) \cap (x^+(\R) \cap x^-(\R)) = \emptyset$ we find a compact neighborhood $F$ of $u(\beta)$, a compact neighborhood $K$ of of $x^+(\R) \cap x^-(\R)$ and $n_0 \in \N$ such that $F \cap K = \emptyset$ and 
$$ n \geq n_0 \Rightarrow \text{$u_n(\beta) \subset F$ and $u_n\circ\beta$ is homotopic to $u\circ\beta$ in $F$.} $$
Here we used~(A).
By Corollary~\ref{cor_localizing_the_ends} we find $h > 0$ such that $$ |s| \geq h \Rightarrow u(\{s\}\times\R/\Z) \subset \mathring{K} \ \text{and} \ u_n(\{s\}\times\R/\Z) \subset \mathring{K} \ \forall n\, . $$
Hence, there exists $n_1 \in \N$ such that $$ n \geq n_1 \Rightarrow \text{$u_n(\pm h,\cdot)$ is homotopic to $u(\pm h,\cdot)$ in $K$.} $$
Using the homotopy invariance of intersection numbers
$$ 
\begin{aligned}
n \geq \max\{n_0,n_1\} \quad \Rightarrow \quad {\rm int}(u \circ \beta,u) 
& = {\rm int}(u \circ \beta,u|_{[-h,h] \times \R/\Z}) \\
& = {\rm int}(u_n \circ \beta,u_n|_{[-h,h] \times \R/\Z}) \\
& = 0
\end{aligned}
$$
as desired.
The proof of Proposition~\ref{prop_main_asymptotic_analysis} is completed. 

\subsection{Compactness}

Before moving on, we also prove the compactness statement we will need.
Continue with the notation  $g_n,\lambda_n,\lambda,J_n,J$ as above, and 
consider nonconstant finite-energy $\jtil_n$-holomorphic maps $\util_n = (a_n,u_n)$ as in~\eqref{seq_u_n}.

\begin{lemma}
\label{lemma_compactness}
Assume that hypotheses ($*$), (B) and (E) hold.
There exist sequences $n_j \in \N$, $c_j \in \R$, $(s_j,t_j) \in \R \times \R/\Z$ such that $n_j \to \infty$ and the sequence $$ (s,t) \mapsto (a_{n_j}(s+s_j,t+t_j)+c_j,u_j(s+s_j,t+t_j)) $$ is $C^\infty_\loc$-convergent to a nonconstant finite-energy $\jtil$-holomorphic map
\begin{equation}
\label{limit_compactness_lemma}
\util =(a,u) : \R \times \R/\Z \to \R \times Y
\end{equation}
with a positive puncture at $s=+\infty$ and a negative puncture at $s=-\infty$.
Moreover, if $\lambda$ has finitely many simple Reeb orbits then $u(0,0)$ does not belong to a Reeb orbit of $\lambda$.
\end{lemma}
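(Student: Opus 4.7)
The plan is to apply a Gromov-compactness argument after suitable translations in both the symplectization direction and the cylinder domain, using (E) to prevent bubbling; the delicate point is to choose the normalization so that, under the finiteness hypothesis on $\lambda$, the limit is nontrivial.

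First, (E) rules out bubbling: any rescaled bubble-off would produce a nonconstant finite-energy $\jtil$-holomorphic plane whose $d\lambda$-area is at least $T_{\min}(\lambda)$, contradicting $\int u_n^\ast d\lambda_n \leq \tfrac12 T_{\min}(\lambda)$. Combined with compactness of $Y$ and the identity $\partial_sa = \lambda(\partial_t u)$ from the Cauchy--Riemann equation (which, together with the asymptotic convergence of $\util_n$ to trivial cylinders at the punctures, globally bounds the $a$-component gradient), elliptic bootstrapping yields uniform bounds in $C^k_\loc$ for all $k$ on any translates of $\util_n$.

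Next I would normalize. Let $\mathcal{A}_n(s) := \int_{\R/\Z} u_n(s,\cdot)^\ast \lambda_n$, which by Stokes' theorem and $u_n^\ast d\lambda_n \geq 0$ is nondecreasing in $s$ with limits $T^\pm_n$ at $s \to \pm\infty$; by (B), $T^\pm_n \to T^\pm$, and by (E), $T^+ - T^- \geq r > 0$. Pick some $A_0 \in (T^-, T^+)$; when $\lambda$ has finitely many simple Reeb orbits, additionally require $A_0 \notin P$, where $P := \{kT : k \in \N,\ T \text{ a period of a simple Reeb orbit}\}$ is a discrete closed subset of $\R$ so that $(T^-, T^+) \setminus P$ is open dense. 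For $n$ large, pick $s_n$ with $\mathcal{A}_n(s_n) = A_0$, pick $t_n \in \R/\Z$ arbitrarily, and set $c_n = -a_n(s_n, t_n)$. The shifted curves $\vtil_n(s,t) := (a_n(s+s_n, t+t_n) + c_n,\, u_n(s+s_n, t+t_n))$ are $\jtil_n$-holomorphic, uniformly bounded in $C^k_\loc$, and satisfy $\vtil_n(0,0) \in \{0\} \times Y$. Arzel\`a--Ascoli together with elliptic regularity yields a subsequence $\vtil_{n_j} \to \util = (a,u)$ in $C^\infty_\loc$; the limit is $\jtil$-holomorphic (since $\jtil_n \to \jtil$) with finite Hofer energy (controlled by the convergent asymptotic periods) and is nonconstant because $\int_{\R/\Z} u(0,\cdot)^\ast\lambda = A_0 > 0$. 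The loop-action $s \mapsto \int u(s,\cdot)^\ast\lambda$ of the limit is nondecreasing with positive limits $a_\pm \in [T^-, T^+]$, ruling out removable punctures; since $\partial_sa = \lambda(\partial_t u)$ and the limiting loops are traversed forward in the Reeb direction (positive action), this forces $a \to +\infty$ at $s = +\infty$ (positive puncture) and $a \to -\infty$ at $s = -\infty$ (negative puncture).

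Finally, for the moreover clause, suppose $\lambda$ has finitely many simple Reeb orbits, so their images form a closed $1$-dimensional set $\mathcal{R} \subset Y$. Because $A_0 \notin P$, the limit $\util$ cannot be a cover of any trivial cylinder (whose loop-action would lie in $P$), so $\pi_\lambda \circ du \not\equiv 0$; by the Carleman similarity principle its zeros are isolated, so the image of $u$ is genuinely $2$-dimensional and $u^{-1}(\mathcal{R}) \subsetneq \R \times \R/\Z$ is a proper closed subset. Pick $(s_0, t_0)$ with $u(s_0, t_0) \notin \mathcal{R}$, and replace the chosen shifts by $(s_n + s_0,\, t_n + t_0,\, -a_n(s_n+s_0, t_n+t_0))$; the modified subsequence converges in $C^\infty_\loc$ to a translate of $\util$ whose $Y$-component at $(0,0)$ equals $u(s_0, t_0) \notin \mathcal{R}$, as required. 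The main technical obstacle is precisely this last step, which depends on the choice of $A_0 \notin P$ to preclude trivial-cylinder degeneration; without such a precaution the limit could collapse onto a Reeb orbit and lie entirely inside $\mathcal{R}$, invalidating the moreover clause.
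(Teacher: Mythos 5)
Your proof is correct, and its core is the paper's argument: bubbling is excluded by the upper bound $\tfrac12 T_{\min}(\lambda)$ in (E), giving uniform gradient bounds, bootstrapping and Arzel\`a--Ascoli produce a $C^\infty_\loc$-limit after normalizing the $\R$-coordinate, and the bounds $T^-\le\int_{\R/\Z}u(s,\cdot)^*\lambda\le T^+$ give nonconstancy and the signs of the punctures (your circle-average computation via $\partial_s a=\lambda(\partial_t u)$ just makes the puncture-sign step more explicit than the paper does). Where you genuinely differ is the ``moreover'' clause. The paper uses the upper bound in (E) to get $0<T^+-T^-<T_{\min}(\lambda)$, hence that $\gamma^+$ and $\gamma^-$ are geometrically distinct; since the connected cylinders $u_n$ join uniformly separated asymptotic orbits and there are only finitely many simple orbits, one finds points $u_n(s_n,t_n)$ converging to a point off the orbit set and reruns the compactness argument recentered there. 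You instead normalize at an action level $A_0\in(T^-,T^+)$ chosen outside the discrete set $P$ of orbit periods, so the limit cannot be a cover of a trivial cylinder (using the standard fact that vanishing $d\lambda$-energy forces the image into $\R\times x(\R)$ for a periodic orbit $x$, whose loop actions lie in $P$); then $\pi_\lambda\circ du\not\equiv 0$ yields a point of the limit off the finite embedded orbit set, and you recenter the already-extracted subsequence at that point. Both routes are valid: yours avoids invoking geometric distinctness of $\gamma^\pm$ (you use only the lower bound $r>0$ to make $(T^-,T^+)$ nonempty, and the upper bound solely for no-bubbling), while the paper's avoids the action-level bookkeeping and locates the good point on the approximating curves before passing to the limit. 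One cosmetic remark: the parenthetical claiming that $\partial_s a=\lambda(\partial_t u)$ is needed to bound the $a$-gradient is superfluous, since the bubbling argument already bounds $|d\util_n|$ uniformly, including the $\R$-component.
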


\begin{proof}
From~(B) we know that $\{\util_n\}$ has uniformly bounded Hofer energy.
From~(E) it follows that $d\util_n$ is $C^0_\loc$-bounded: otherwise we would find a nonconstant finite-energy $\jtil$-holomorphic plane with total $d\lambda$-area strictly less than $T_{\min}(\lambda)$, which is impossible.
Define $\vtil_n = (a_n-a_n(0,0),u_n)$.
Then $\vtil_n$ is $C^1_\loc$-bounded.
Elliptic boot-strapping implies $C^\infty_\loc$-bounds.
Hence there is a $C^\infty_\loc$-convergent subsequence $\vtil_{n_j} \to \util = (a,u) : \R \times \R/\Z \to \R \times Y$ to a $\jtil$-holomorphic map with finite Hofer energy.
Since $$ T^-_{n_j} \leq \int_{\R/\Z} u_{n_j}(s,\cdot)^*\lambda_{n_j} \leq T^+_{n_j} \quad \Rightarrow \quad T^- \leq \int_{\R/\Z} u(s,\cdot)^*\lambda \leq T^+ \quad \forall s $$
one gets that $\util$ is  nonconstant, has a positive puncture at $s=+\infty$ and a negative puncture at $s=-\infty$.

Assume that $\lambda$ has finitely many simple Reeb orbits.
From this,~(B) and~(E) we get $0 < T^+-T^- < T_{\min}(\lambda)$.
Hence $\gamma^+,\gamma^-$ are geometrically distinct.
By~(B), the distance between $\gamma^+_{n}$ and $\gamma^-_{n}$ is bounded away from zero, uniformly~in $n\gg1$.
Hence, one finds $(s_n,t_n)$ such that $u_n(s_n,t_n)$ converges to a point in the complement of the set of Reeb orbits of $\lambda$.
One can now run the previous argument with $\util_n(s+s_n,t+t_n)$ in the place of $\util_n$ to get the desired conclusion.
\end{proof}

\section{Two or infinitely many simple Reeb orbits}

We now put everything together to prove our main theorem.  To prove our theorem, we can assume that there are finitely many simple Reeb orbits, and this will be our standing assumption throughout this section.  

Recall the contact structure $\xi = \ker \lambda$.  The following lemma collects what we need to know from Proposition~\ref{prop:main}.

\begin{lemma}
\label{lem:fromprop}
There exists a $d\lambda$-compatible complex structure $J : \xi \to \xi$ and sequences $g_n,J_n,\util_n$ with the following properties.
\begin{enumerate}

\item[(a)] $g_n \in C^\infty(Y)$, $g_n \to 1$ in $C^\infty$, $\lambda_n = g_n\lambda$ is nondegenerate $\forall n$.

\item[(b)] $J_n : \xi \to \xi$ are $d\lambda_n$-compatible (equivalently $d\lambda$-compatible) and $J_n \to J$ in $C^\infty$.

\item[(c)] $\util_n = (a_n,u_n) : \R \times \R/\Z \to \R \times Y$ are non-constant finite-energy $\jtil_n$-holomorphic maps, with a positive puncture at $s=+\infty$ and a negative puncture at $s=-\infty$.

\item[(d)] The asymptotic limits $\gamma^\pm_n=(x^\pm_n,T^\pm_n)$ of $\util_n$ at $s=\pm\infty$ satisfy $\sup_n T^+_n < +\infty$, and $r \leq T^+_n - T^-_n \leq \frac{1}{2} T_{\min}(\lambda)$ for some $r>0$.
Moreover, there are Reeb orbits $\gamma^\pm=(x^\pm,T^\pm)$ of $\lambda$ such that $\gamma^\pm_n \to \gamma^\pm$, in the sense that $x^\pm_n(T^\pm_n(\cdot+t^\pm_n)) \to x^\pm(T^\pm\cdot)$ in $C^\infty(\R/\Z,Y)$ for suitable $t^\pm_n \in \R/\Z$.

\item[(e)] If $x : \R \to Y$ is a periodic $\lambda$-Reeb trajectory with period at most~$T^+$ then $g_n|_{x(\R)} \equiv 1$, $dg_n|_{x(\R)} \equiv 0$ $\forall n$.

\item[(f)] For every $n$, $\util_n(\R\times\R/\Z)$ defines a Birkhoff section for the Reeb flow of~$\lambda_n$.
In particular, $u_n$ is a proper embedding into $Y \setminus (x^+_n(\R) \cup x^-_n(\R))$ transverse to the Reeb vector field of $\lambda_n$.

\item[(g)] Denote asymptotic operators by $A^\pm_n = A_{\gamma^\pm_n}$, $A^\pm=A_{\gamma^\pm}$, and asymptotic eigenvalues of $\util_n$ at $s=\pm\infty$ by $\mu^\pm_n \in \sigma(A^\pm_n)$. For some $\delta>0$ we have $[-\delta,0) \cap \sigma(A^+) = \emptyset$, $(0,\delta] \cap \sigma(A^-) = \emptyset$, $\ind_\delta(\util_n)=2$, $\mu^+_n < -\delta$, $\mu^-_n > \delta$, $p^{-\delta}(\gamma_n^+) = p^\delta(\gamma_n^-) = 1$ $\forall n$.
\end{enumerate}

\end{lemma}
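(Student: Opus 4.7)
My plan is to unpack Proposition~\ref{prop:main} with a careful choice of weight $\delta$ and then verify each of the seven conditions. I will begin by applying Proposition~\ref{prop:main} with $\eta := \tfrac{1}{3} T_{\min}(\lambda)$ to extract the quintuple $(\lambda_k, J_k, \alpha_k, \beta_k, C_k)_{k \geq 1}$; then I set $g_n := f_n$ (so $\lambda_n = g_n\lambda$), take $J_n$ to be the Proposition's $J_n$, and choose biholomorphic parametrizations $\util_n = (a_n,u_n) : \R \times \R/\Z \to \R \times Y$ of the cylinders $C_n$ with $s = +\infty$ as the positive puncture and $s = -\infty$ as the negative puncture.

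With these identifications, conditions (a), (b), and (f) will follow verbatim from the corresponding bullets and conclusion~(d) of Proposition~\ref{prop:main}. Condition (c) holds by construction; finiteness of Hofer energy is automatic for a component of an ECH-index-$2$ holomorphic current whose positive orbit set has bounded action. For (e), I will observe that every periodic $\lambda$-Reeb trajectory is a reparametrization of an iterate of a simple Reeb orbit of $\lambda$, so the third bullet of Proposition~\ref{prop:main} applies to give $g_n \equiv 1$ and $dg_n \equiv 0$ on its image.

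For (d), the upper bound $T^+_n - T^-_n < \eta \le \tfrac{1}{2}T_{\min}(\lambda)$ follows from conclusion~(b) of the Proposition. To obtain the positive lower bound $r$ on $T^+_n - T^-_n$, I will revisit the proof of Proposition~\ref{prop:main}: Lemma~\ref{lem:nonloc} supplies a uniform $\bar h > 0$ such that every $U$-curve whose positive orbit set has action bounded by the $k$-independent threshold $T_q$ from \S\ref{sec:constants} has action drop at least $\bar h$; the cylinders $C_n$ are among such $U$-curves, so I may take $r = \bar h$. The same action threshold gives $\sup_n T^+_n \leq T_q < +\infty$. Passing once and for all to a subsequence along which the underlying simple Reeb orbits of $\alpha_n, \beta_n$ stabilize (possible because $\lambda$ has only finitely many simple orbits under our standing assumption), I obtain the convergence $\alpha_n \to \gamma^+$ and $\beta_n \to \gamma^-$ with $\gamma^\pm$ Reeb orbits of $\lambda$.

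The main work, and the step I expect to be the principal obstacle, is condition (g). I will first choose $\delta \in (0,1)$ strictly less than the distance from $0$ to the nearest nonzero element of $\sigma(A^+) \cup \sigma(A^-)$; discreteness of these spectra makes such a choice possible and immediately yields the first two conditions in (g), while the identification $\CZ^{\mp\delta}_\tau(\gamma^\pm_n) = \CZ_\tau(\gamma^\pm_n)$ valid for nondegenerate $\gamma^\pm_n$ and small $\delta$ will reduce $\ind_\delta(\util_n)$ to the ordinary Fredholm index, which is $2$ by conclusion~(a) of the Proposition. The four remaining conditions require separating into the cases where $\gamma^+$ (respectively $\gamma^-$) is nondegenerate or degenerate. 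In the nondegenerate case, the spectra $\sigma(A^\pm_n)$ stay uniformly bounded away from $[-\delta, \delta]$ by spectral continuity, and the equality case of Proposition~\ref{prop:indexinequality} applied to the embedded curve $C_n$, together with Lemma~\ref{lemma_ineq_wind_pi}, will force $\mu^+_n$ to realize the winding number $\wind_\tau(\nu^{<-\delta}(\gamma^+_n))$, which forces $\mu^+_n < -\delta$ and $p^{-\delta}(\gamma^+_n) = 1$. The degenerate case is more delicate: here $0 \in \sigma(A^+)$ and some eigenvalues of $A^+_n$ approach $0$, and the key input is conclusion~(c) of Proposition~\ref{prop:main}, which pins the rotation number of $\alpha_n$ to $(-1/2, 0)$; I will use this sign information together with spectral perturbation theory to show that the eigenvalue of $A^+_n$ near $0$ is positive (hence cannot equal the negative asymptotic eigenvalue $\mu^+_n$), so that the largest negative eigenvalue of $A^+_n$ stays separated from $0$ uniformly in $n$. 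The negative puncture is handled symmetrically using the other half of (c). The principal obstacle is precisely this spectral bookkeeping in the degenerate limit, where one must combine continuity of $A^+_n \to A^+$ with the one-sided rotation-number control to guarantee that $\mu^+_n$ does not drift into the window $(-\delta, 0)$.
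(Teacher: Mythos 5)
Your overall route is the paper's: items (a)--(f) are read off from Proposition~\ref{prop:main} (with the uniform lower bound $r$ coming, as you say, from the action-gap $\bar h$ of Lemma~\ref{lem:nonloc} established inside that proof), and for (g) your degenerate-case plan --- use Proposition~\ref{prop:main}(c) to see that the rotation number of $\gamma_n^+$ approaches the integer rotation number of $\gamma^+$ from below, so that by winding rigidity under spectral convergence the eigenvalues of $A_n^+$ near $0$ have winding equal to that of the $0$-eigenvalue of $A^+$ and hence must be positive --- is exactly the winding argument the paper runs, and it does yield $[-\delta,0)\cap\sigma(A_n^+)=\emptyset$ for large $n$, hence $\mu_n^+<-\delta$ and $\ind_\delta(\util_n)=\ind(\util_n)=2$.

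The genuine gap is your justification of $p^{-\delta}(\gamma_n^+)=p^{\delta}(\gamma_n^-)=1$. These quantities are spectral invariants of the orbits $\gamma_n^\pm$ alone (once the gap $[-\delta,0)$, resp.\ $(0,\delta]$, is eigenvalue-free they equal $p^0$, i.e.\ they are $1$ exactly when $\CZ(\gamma_n^\pm)$ is odd, equivalently when $\gamma_n^\pm$ is not positive hyperbolic), so no statement about the curve $C_n$ can produce them. In particular your appeal to the equality case of Proposition~\ref{prop:indexinequality} together with Lemma~\ref{lemma_ineq_wind_pi} is circular: the ``Moreover'' clause of Lemma~\ref{lemma_ineq_wind_pi}, which pins the asymptotic eigenvalue's winding, takes $\#\Gamma=\#\Gamma^{\rm odd}$ --- precisely the conditions $p^{-\delta}(\gamma_n^+)=p^{\delta}(\gamma_n^-)=1$ --- as a hypothesis, and even granting its conclusion it says nothing about $p^{\mp\delta}$ and does not by itself force $\mu_n^+<-\delta$. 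The paper closes this by going back into the proof of Proposition~\ref{prop:main}: the cylinder ends are high-multiplicity covers (Lemma~\ref{lem:threshold} via the $\bar h$ bound), and since they sit inside ECH generators, in which hyperbolic orbits carry multiplicity one, the underlying simple orbits are elliptic; nondegeneracy of $\lambda_n$ then makes them irrationally elliptic, so all covers $\gamma_n^\pm$ are elliptic with odd Conley--Zehnder index, giving $p^{\mp\delta}=1$ once the spectral gap is in place. In the degenerate case you could alternatively extract the parity from the non-integrality of the rotation number supplied by Proposition~\ref{prop:main}(c), but in the nondegenerate case that proposition gives no rotation-number information and the ellipticity-from-ECH argument (or an equivalent) is indispensable; as written, your proposal has no valid source for this parity statement.
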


Before giving the proof, a couple of remarks are in order.
The almost complex structures $\jtil_n$ are constructed from $J_n,\lambda_n$ as in Section~\ref{sec_prelim_2}.
By~(a), the cylinders $\util_n$ have an asymptotic formula at both punctures.
By~(d), $\gamma^+_n$ is geometrically distinct from $\gamma^-_n$, and $\gamma^+$ is geometrically distinct from $\gamma^-$ for all $n$ large enough.
Hence, each $\util_n$ is not a trivial cylinder and has a non-trivial asymptotic formula at both punctures when $n$ is large enough.
This allows one to consider asymptotic eigenvalues in~(g).

\begin{proof}[Proof of Lemma~\ref{lem:fromprop}]

This follows from Proposition~\ref{prop:main}; all but $(g)$ are essentially immediate so we only explain (g).
 The reasoning to obtain~(g) from Proposition~\ref{prop:main} is as follows.
We analyze only the positive puncture, the analysis of the negative puncture is left to the reader.
Consider a trivialization of $\xi$ along the simple Reeb orbit underlying $\gamma^+$.
This trivialization can be iterated, thus inducing a trivialization of~$\xi$ along $\gamma^+$, and also homotopy classes of trivializations of~$\xi$ along $\gamma^+_n$ for all $n$ large enough.
In the argument that follows, winding numbers and rotation numbers are computed with respect to these trivializations.
By the asymptotic formula for~$\util_n$ at both punctures, the winding number $\wind(\mu^+_n)$ of $\mu^+_n$ is smaller than the rotation number $\theta_n$ of $\gamma^+_n$. 
Note that $\theta_n \to \theta$ where $\theta$ is the rotation number of~$\gamma^+$.
Moreover, up to a subsequence, there exists an eigenvalue of $A^+$ such that $\mu^+_n \to \mu$.
From $\mu^+_n<0$ we get $\mu\leq0$.
If $\gamma^+$ is nondegenerate then $\theta \not\in \Z$, $\mu<0$, and existence of the gap $[-\delta,0)$ in the spectrum of $A^+$ for some $0<\delta<-\mu$ follows.
If $\gamma^+$ is degenerate then $\theta \in \Z$ is the winding number~$\wind(0)$ of the eigenvalue $0$  of $A^+$.
In this case Proposition~\ref{prop:main} tells us that $\theta_n-\theta<0$, and from $\wind(\mu^+_n)<\theta_n$ we get $\wind(\mu^+_n) \leq \theta -1$.
From this we can conclude that $\mu<0$, since otherwise $\wind(\mu^+_n)$ would converge to $\wind(0) = \theta$ and $\wind(\mu^+_n)$ would be equal to the integer $\theta$ for $n$ large enough.
Again, existence of the gap $[-\delta,0)$ in the spectrum of $A^+$ for some $0<\delta<-\mu$ follows.
The only statement left to be proved for~(g) is $p^{-\delta}(\gamma_n^+) = p^\delta(\gamma_n^-) = 1$.
The cylinders given by Proposition~\ref{prop:main} have high multiplicity ends; this follows from Lemma~\ref{lem:threshold} as explained in Part 2 of the proof of Proposition~\ref{prop:main}. Since these cylinders are part of a curve counted by the U-map in ECH, and such a curve connects orbit sets that are ECH generators and can only have hyperbolic orbits with multiplicity one, we get that $\gamma^+_n$ is elliptic. 
\end{proof}

Recall the moduli space $$ \M = \M_{J,0}(\gamma^+,\gamma^-) $$ defined in Section~\ref{sec_moduli_spaces}.
It is 
crucial to keep in mind that, by definition, finite-energy maps representing curves in $\M$ have nondegenerate punctures.
Hence, by Theorem~\ref{thm_nondeg_implies_asymp_formula}, these maps have an asymptotic formula at both punctures.
Since $\gamma^+$ is geometrically distinct from $\gamma^-$, curves in $\M$ are not trivial cylinders, and we further get a non-trivial asymptotic formula at both punctures.
In particular, at the positive puncture of any curve in $\M$ there is an asymptotic eigenvalue in $(-\infty,-\delta)$, and at the negative puncture there is an asymptotic eigenvalue in $(\delta,+\infty)$.
In what follows $\M$ is equipped with the quotient topology induced by the $C^\infty_\loc$-topology on the space of smooth maps $\R\times\R/\Z \to \R\times Y$.  

The following is a standard fact we will need:

\begin{lemma}
\label{rmk_ends}
Let $C_n \in \mathcal{M}$ be a convergent sequence.
Let $\vtil_n=(b_n,v_n)$ be representatives of $C_n$ that $C^\infty_{\rm loc}$-converge to a representative of the limiting curve.
Then, for every neighborhood $N$ of $x^+(\R) \cup x^-(\R)$ in~$Y$ we find $s_*>0$ and~$n_*$ such that if $|s| \geq s_*$ and $n\geq n_*$ then $v_n(\{s\}\times\R/\Z) \subset N$.
\end{lemma}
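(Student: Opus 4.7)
My plan is to argue by contradiction via a translation-and-bubbling argument, extracting a new limiting level that is inconsistent with the given $C^\infty_{\loc}$-convergence. Suppose the conclusion fails. Then there exist an open neighborhood $N\supset x^+(\R)\cup x^-(\R)$ and sequences $n_j\to\infty$, $|s_j|\to\infty$, $t_j\in\R/\Z$ with $v_{n_j}(s_j,t_j)\notin N$. Pass to subsequences so that $s_j\to+\infty$ and $t_j\to t_*$, and introduce the shifted $\jtil$-holomorphic maps
\[
\wtil_j(s,t):=(b_{n_j}(s+s_j,t)-b_{n_j}(s_j,0),\,v_{n_j}(s+s_j,t)).
\]
Each $\vtil_{n}$ has total $d\lambda$-area $T^+-T^-\le \tfrac12 T_{\min}(\lambda)<T_{\min}(\lambda)$, so any bubble in the translated sequence would contribute $d\lambda$-area at least $T_{\min}(\lambda)$, which is impossible. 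This rules out bubbling-off, giving uniform $C^\infty_{\loc}$-bounds on $\wtil_j$; Arzel\`a-Ascoli and elliptic regularity then produce a $C^\infty_{\loc}$-convergent subsequence $\wtil_j\to\wtil=(b,w)$, with $\wtil$ a $\jtil$-holomorphic map of finite Hofer energy satisfying $w(0,t_*)\notin x^+(\R)\cup x^-(\R)$.

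The second step is an area argument showing that $\wtil$ is a trivial cylinder over a geometrically new Reeb orbit. Since $\vtil_{n_j}\to\vtil$ in $C^\infty_{\loc}$ and both limits have the same total $d\lambda$-area $T^+-T^-$, the mass of $v_{n_j}^*d\lambda$ escapes to infinity at a controlled rate: for every $\epsilon>0$ there are $L_\epsilon,J_\epsilon$ with $\int_{|s|>L_\epsilon}v_{n_j}^*d\lambda<\epsilon$ for $j\ge J_\epsilon$. Since $s_j\to+\infty$, for any fixed $K$ and $j$ large the window $[s_j-K,s_j+K]$ lies beyond $L_\epsilon$, so $\int_{[-K,K]\times\R/\Z}w_j^*d\lambda<\epsilon$; letting $j\to\infty$ and then $\epsilon\to 0$ yields $\int_{[-K,K]\times\R/\Z}w^*d\lambda=0$ for every $K$, hence $w^*d\lambda\equiv 0$. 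Because $\int_{\R/\Z}w(0,\cdot)^*\lambda=\lim_j\int_{\R/\Z}v_{n_j}(s_j,\cdot)^*\lambda\ge T^->0$, the map $\wtil$ is nonconstant, so the standard argument (Cauchy-Riemann with $\pi_\lambda\circ dw\equiv 0$) forces $\wtil$ to be a (possibly branched) cover of a trivial cylinder over some Reeb orbit $\gamma_w=(x_w,T_w)$ of $\lambda$. Since $w(0,t_*)\in x_w(\R)$ and $w(0,t_*)\notin x^+(\R)\cup x^-(\R)$, the image $x_w(\R)$ is geometrically distinct from both $x^+(\R)$ and $x^-(\R)$.

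Finally, I derive a contradiction from this geometric distinctness using the classical monotonicity lemma for $\jtil$-holomorphic curves. Choose small tubular neighborhoods $U^+\supset x^+(\R)$ and $V\supset x_w(\R)$ with $\overline{U^+}\cap\overline{V}=\emptyset$. Fix $s_*>0$ large enough that $v(s_*,\cdot)\subset U^+$; by $C^\infty_{\loc}$-convergence, $v_{n_j}(s_*,\cdot)\subset U^+$ for $j$ large. On the other hand, for $j$ large the slice $v_{n_j}(s_j,\cdot)$ is arbitrarily close to $x_w(\R)$, hence meets $V$, and since $U^+\cap V=\emptyset$, the cylinder $\vtil_{n_j}$ restricted to $[s_*,s_j]\times\R/\Z$ contains points at a definite positive distance from both $x^+(\R)$ and $x_w(\R)$. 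The monotonicity lemma then yields a uniform lower bound, depending only on the distance between $\overline{U^+}$ and $\overline{V}$, on the $d\lambda$-area of $\vtil_{n_j}$ on $[s_*,+\infty)\times\R/\Z$; this contradicts the estimate $\int_{s>L_\epsilon}v_{n_j}^*d\lambda<\epsilon$ for $\epsilon$ sufficiently small and $j$ large, completing the argument. The main technical obstacle is this final step: converting the geometric separation of two Reeb orbits into a hard $d\lambda$-area lower bound via monotonicity in the symplectization, with constants independent of $j$ and independent of where precisely along $[s_*,s_j]$ the cylinder wanders.
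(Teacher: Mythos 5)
Your first two steps (translate at the escaping slices, exclude bubbling by the area bound $T^+-T^-\le\tfrac12 T_{\min}(\lambda)$, show by the mass-concentration argument that the translated limit $\wtil=(b,w)$ has $w^*d\lambda\equiv 0$ and is therefore a nonconstant zero-area cylinder over some Reeb orbit $\gamma_w$ through the limit of the escape points) are exactly the paper's ``standard recipe''. The divergence, and the problem, is your final step. The monotonicity lemma does not give a lower bound on $\int v_{n_j}^*d\lambda$: monotonicity controls area with respect to a form or metric that tames $\jtil$ (e.g.\ the symplectization metric or the $d(\phi\lambda)$-energies), whereas $d\lambda$ is degenerate on $\R\times Y$ — it vanishes on the plane spanned by $\partial_a$ and $R$. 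Concretely, a long piece of a finite-energy cylinder that is $C^\infty$-close to a trivial cylinder over some \emph{other} periodic orbit located in the region between $U^+$ and $V$ (a third simple orbit, or even $x^-$) can travel from $U^+$ to $V$ while carrying arbitrarily little $d\lambda$-area. So ``points at definite distance from $x^+(\R)$ and $x_w(\R)$'' does not force a definite amount of $d\lambda$-area, and the contradiction with $\int_{s>L_\epsilon}v_{n_j}^*d\lambda<\epsilon$ does not follow. The telltale sign is that your argument never uses the standing hypothesis of this section that $\lambda$ has only finitely many simple Reeb orbits, and without some such input the scenario ``the limit is a trivial cylinder over a third orbit'' cannot be excluded this cheaply.

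The paper closes exactly this loophole with the finiteness assumption: one chooses the escape points so that (after a subsequence) they converge to a point lying on \emph{no} periodic orbit of $\lambda$, and then the zero-$d\lambda$-area limit cylinder through that point is immediately impossible. One way to arrange this, consistent with the paper's sketch: fix $d_0>0$ so small that the closed $d_0$-neighborhoods of the finitely many simple orbits are pairwise disjoint; if the escaping slice is trapped in the $d_0$-tube of some third orbit, use the asymptotics of $v_{n_j}$ (its slices for $s\gg s_j$ lie in the tube around $x^+$) and a first-exit argument along the cylinder to replace $(s_j,t_j)$ by a point at distance exactly $d_0$ from the union of all orbits, still with $s$-coordinate tending to $+\infty$; the translated limit through such a point has zero $d\lambda$-area but passes through a point off every periodic orbit, which is the contradiction. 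If you replace your monotonicity step by this selection argument (or an equivalent use of finiteness), your proof becomes correct and essentially coincides with the paper's.
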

\begin{proof}
The proof follows a standard recipe: if not then we would find $n_j \to \infty$, $s_j$ with $|s_j|\to+\infty$, and $t_j \in \R/\Z$ such that $v_{n_j}(s_j,t_j)$ converges to a point in the complement of the union of the finitely many simple Reeb orbits of~$\lambda$.
Reparametrizing to center the maps at $(s_j,t_j)$, and using the fact no bubbling-off points can arise due to low $d\lambda$-area, we would obtain a cylinder with zero $d\lambda$-area through a point not in any periodic orbit, which is impossible.
\end{proof}

With the above preliminaries behind us, we can now prove the crucial fact that the moduli space $\M$ has a compact component; we prove this, along with a few other useful facts, in the following lemma.

\begin{lemma}
\label{prop_moduli_spaces}
The space $\M$ has a compact connected nonempty component $\mathcal{Y}$ which is a smooth $1$-manifold whose elements are represented by maps $\util = (a,u) : \R \times \R/\Z \to \R \times Y$ that are $\tilde J$-holomorphic and have the following properties:
\begin{itemize}
\item[(i)] $u$ is an immersion transverse to the Reeb vector field of $\lambda$.
\item[(ii)] The map $\util$ has a non-trivial asymptotic formula at both punctures, is asymptotic to~$\gamma^+$ at~$s=+\infty$, and to~$\gamma^-$ at~$s=-\infty$. 
The asymptotic eigenvalue $\mu^+$ at $+\infty$ satisfies $$ \mu^+ < -\delta \qquad \wind(\mu^+) = \wind(\nu^{<-\delta}(\gamma^+)) $$ in some (hence any) trivialization along $\gamma^+$.
The asymptotic eigenvalue~$\mu^-$ at $-\infty$ satisfies $$ \mu^- > \delta \qquad \wind(\mu^-) = \wind(\nu^{\geq\delta}(\gamma^-)) $$  in some (hence any) trivialization along $\gamma^-$.
\item[(iii)] ${\rm ind}_\delta(\util) = 2$, $p^{-\delta}(\gamma^+) = p^\delta(\gamma^-) = 1$.
\end{itemize}
\end{lemma}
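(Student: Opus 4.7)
The plan is to produce the desired component $\mathcal{Y}$ as the connected component of $\M$ containing a limit of the perturbation cylinders $\util_n$ furnished by Lemma~\ref{lem:fromprop}. First I would apply Lemma~\ref{lemma_compactness} to the sequence $\util_n$: the assumptions $(*)$, (B), (E) follow directly from Lemma~\ref{lem:fromprop}(a),(d), and the output, using the standing hypothesis of finitely many simple Reeb orbits, gives (after a reparametrization of the $\util_n$) $C^\infty_\loc$-convergence to a nonconstant finite-energy $\jtil$-holomorphic cylinder $\util = (a,u)$ with a positive puncture at $+\infty$, a negative puncture at $-\infty$, and $u(0,0)$ not on any $\lambda$-Reeb orbit.

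Next I would verify that hypotheses $(*)$ and (A)-(G) of Proposition~\ref{prop_main_asymptotic_analysis} hold for the reparametrized sequence and $\util$: $(*)$ from Lemma~\ref{lem:fromprop}(a); (A) from the previous step; (B), (E) from (d); (C) from (e); (D) from the Birkhoff section property (f), which implies that $u_n$ is a proper embedding into $Y \setminus (x^+_n(\R) \cup x^-_n(\R))$ transverse to $R_n$, so loops in its image have intersection zero with $u_n$ and every other Reeb orbit in the complement of the asymptotic link has strictly positive intersection; and (G) from (g). Proposition~\ref{prop_main_asymptotic_analysis} then produces the desired asymptotic data for $\util$: a nontrivial asymptotic formula at each puncture, asymptotic limits $\gamma^\pm$, asymptotic eigenvalues $\mu^\pm$ satisfying $\mu^+ < -\delta$, $\mu^- > \delta$ and $\wind(\mu^+) = \wind(\nu^{<-\delta}(\gamma^+))$, $\wind(\mu^-) = \wind(\nu^{\geq\delta}(\gamma^-))$, together with $\ind_\delta(\util)=2$, $p^{-\delta}(\gamma^+) = p^{\delta}(\gamma^-) = 1$, and the three intersection conclusions in (d). In particular $\util$ has nondegenerate punctures (Definition~\ref{def_nondeg_puncture}) and represents an element $C_\infty \in \M$.

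Properties (ii) and (iii) for $C_\infty$ are now recorded, and Lemma~\ref{lemma_automatic_transversality} applies to $\util$ (with $g=0$, $\ind_\delta=2$, $p^{\mp\delta}(\gamma^\pm)=1$) to give surjectivity of the linearized operator $D_\util$ and immersedness of $u$; combined with $\wind_\pi(\util)=0$ from Lemma~\ref{lemma_ineq_wind_pi} this yields transversality of $u$ with $R$, i.e.\ (i). The same argument applies verbatim to any $C' \in \M$, so the entire moduli space $\M$ is a smooth $1$-manifold (the weighted index is $2$, with one dimension absorbed by the $\R$-action on the target) and every curve in $\M$ satisfies (i)-(iii). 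Define $\mathcal{Y}$ to be the connected component of $\M$ containing $C_\infty$; this is nonempty and a smooth $1$-manifold.

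The main step is to prove $\mathcal{Y}$ compact. Given any sequence $C_m \in \mathcal{Y}$, I normalize representatives $\util_m = (a_m, u_m)$ by $a_m(0,0) = 0$. Since the total $d\lambda$-area equals $T^+ - T^- < T_{\min}(\lambda)$ uniformly, SFT compactness (no plane bubbles and no nontrivial sphere bubbles are possible) yields a subsequential limit which is a chain of cylinders $\vtil_1, \dots, \vtil_k$ meeting at intermediate Reeb orbits of $\lambda$, with $\vtil_k$ positively asymptotic to $\gamma^+$ and $\vtil_1$ negatively asymptotic to $\gamma^-$. If $k=1$, the limit represents an element of $\M$, and since $\mathcal{Y}$ is a connected component (hence closed) it lies in $\mathcal{Y}$, giving convergence. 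To rule out $k \geq 2$, I would first propagate the three intersection properties of Proposition~\ref{prop_main_asymptotic_analysis}(d) from $C_\infty$ to every curve in $\mathcal{Y}$: each property is both open and closed along the smooth $1$-manifold $\mathcal{Y}$, using that every $u'$ is immersed transverse to $R$ (so any intersection of $u'$ with $x^\pm(\R)$ would be transverse and hence persist under smooth deformation) and that intersection numbers are integer-valued continuous invariants; connectedness then gives (d) on all of $\mathcal{Y}$. Now assume $k \ge 2$ and let $\gamma = (x,T)$ be an intermediate orbit, geometrically distinct from $\gamma^\pm$ and of period $T \in (T^-, T^+)$. By the propagated (d), ${\rm int}(x(T\cdot), u_m) > 0$ for every $m$. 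On the other hand, the neck region over $\gamma$ in the building limit produces a sequence $s_m \in \R$ such that, after a suitable shift in $t$, the loops $\ell_m(t) = u_m(s_m, t)$ converge in $C^0$ to $x(T\cdot)$ inside $Y \setminus (x^+(\R) \cup x^-(\R))$; homotopy invariance of intersection numbers in this complement forces ${\rm int}(\ell_m, u_m) = {\rm int}(x(T\cdot), u_m) > 0$, contradicting the fact that $\ell_m$ is itself a loop in the image of $u_m$ and so has intersection number zero with $u_m$ by (d). The main obstacle is precisely this last step: carefully converting the neck behavior of the SFT limit into the homotopy and the contradiction, and verifying that the intersection-number machinery of (d) is preserved under the connectedness propagation throughout $\mathcal{Y}$.
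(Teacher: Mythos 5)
The first half of your argument --- producing the limit cylinder via Lemma~\ref{lemma_compactness}, checking ($*$) and (A)--(G), applying Proposition~\ref{prop_main_asymptotic_analysis}, taking $\mathcal{Y}$ to be the component through the limit, and deducing (i)--(iii) together with the smooth $1$-manifold structure from Lemmas~\ref{lemma_ineq_wind_pi} and~\ref{lemma_automatic_transversality} --- is the same as the paper's. The gap is in your compactness step. You invoke SFT compactness to break a degenerating sequence in $\mathcal{Y}$ into a chain of cylinders joined along intermediate Reeb orbits of $\lambda$; but $\lambda$ may be degenerate, and SFT compactness (indeed even uniqueness of asymptotic limits, cf.\ the remark after Definition~\ref{def_asymptotics}) is not available in that generality --- avoiding it is one of the main technical points of this paper. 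Moreover, even in your unbroken case $k=1$ you cannot conclude that the limit ``represents an element of $\M$'': membership in $\M$ requires nondegenerate punctures, i.e.\ exponential convergence to $\gamma^\pm$, which is not automatic when $\gamma^\pm$ are degenerate. The paper instead gets compactness of $\mathcal{Y}$ by re-applying Lemma~\ref{lemma_compactness} and Proposition~\ref{prop_main_asymptotic_analysis} to the constant sequence $g_n\equiv 1$, $J_n=J$; the uniform exponential decay analysis of Section~\ref{sec_asymptotic_analysis} is exactly what replaces SFT compactness and rules out breaking, and its hypothesis (D) is supplied by first showing that every curve of $\mathcal{Y}$ projects into $Y\setminus(x^+(\R)\cup x^-(\R))$ and satisfies the two intersection properties.

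That propagation step is also where your justification goes in the wrong direction. Transversality of $u'$ to $R$ shows that the set of curves whose projection meets $L=x^+(\R)\cup x^-(\R)$ is open, i.e.\ the link-avoiding set is closed; the delicate direction is openness of the avoiding set, since convergence in $\mathcal{Y}$ is only $C^\infty_\loc$ and the ends of nearby curves accumulate on $L$, so intersections can escape toward the punctures. The paper controls this with a linking-number and positivity-of-intersections argument that bounds the $s$-coordinate of any hypothetical intersection, using the nontrivial asymptotic formula; your remark that ``transverse intersections persist'' does not address it. Finally, in ruling out $k\ge 2$ you assume the intermediate orbit is geometrically distinct from $\gamma^\pm$, but a priori it could be a lower cover of $x^\pm$, lying on $L$, where the positive-intersection property says nothing; the paper excludes this inside Lemma~\ref{lemma_localizing_the_ends} via the action pinching $0<T^+-T^-\le\tfrac12 T_{\min}(\lambda)$.
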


\begin{proof}

In the course of the proof, it will be helpful to establish the following two additional properties:

\begin{itemize}
\item[(iv)] $u(\R\times\R/\Z) \subset Y \setminus (x^+(\R) \cup x^-(\R))$ and every Reeb orbit of $\lambda$ of period $\leq T^+$ has strictly positive intersection number with $u$.
\item[(v)] If $\beta$ is a loop in $\R \times \R/\Z$ then $u\circ\beta$ has intersection number zero with~$u$.
\end{itemize}
We now explain why all of (i) - (v) hold.

The curves $\util_n$ satisfy assumptions~($*$), (B), (C), (D), (E) and (G) from Section~\ref{sec_asymptotic_analysis}.
Lemma~\ref{lemma_compactness} implies that $\{\util_n\}$ is $C^\infty_\loc$-convergent to a $\jtil$-holomorphic finite-energy map $\util=(a,u)$ in~\eqref{limit_compactness_lemma} with a positive puncture at $s=+\infty$ and a negative puncture at $s=-\infty$, and that $u(0,0)$ does not belong to a  Reeb orbit of $\lambda$.
This shows that also hypotheses (A) and (F) from Section~\ref{sec_asymptotic_analysis} are satisfied.
Proposition~\ref{prop_main_asymptotic_analysis} implies that all properties (i)-(v) as in the statement hold for the map~$\util$.
In particular, $\util$ represents a curve in~$\M$.

We denote by $\mathcal{Y}$ the connected component of $\M$ containing the curve represented by the map $\util$ obtained above.
Properties (iv)-(v) are shared by all curves in $\mathcal{Y}$; this follows from the definition of $\M$ and from Lemma~\ref{lemma_ineq_wind_pi}.
Lemma~\ref{lemma_automatic_transversality} gives $\mathcal{Y}$ the structure of a smooth $1$-dimensional manifold when equipped with a topology induced from the functional analytic set-up used in~\cite{props3} to define the Fredholm theory discussed in Section~\ref{sec_moduli_spaces}.
It follows essentially from Proposition~\ref{prop_asymp_control} that the topology on $\M$ agrees with the quotient topology induced from the $C^\infty_{\loc}$-topology.
Thus $\mathcal{Y}$ is a smooth $1$-manifold with the latter topology.

We will show that $\mathcal{Y}$ is compact.
To this end we will first show that every curve in $\mathcal{Y}$ is represented by a map $\vtil = (b,v)$ satisfying properties~(i)-(v).
We already know this to be the case for properties~(iv)-(v).
Lemma~\ref{lemma_ineq_wind_pi} gives~(i) for all curves in~$\mathcal{Y}$.

We claim that every curve in~$\mathcal{Y}$ projects to $Y \setminus (x^+(\R) \cup x^-(\R))$.
To this end, consider the set $\mathcal{Z} \subset \mathcal{Y}$ of curves that project to $Y \setminus (x^+(\R) \cup x^-(\R))$.
We will show that $\mathcal{Z}$ is open and closed.

Let us first show that $\mathcal{Z}$ is open.
Let $C \in \mathcal{Z}$ be represented by a map $\vtil = (b,v)$ such that $v(\R \times \R/\Z)$ does not intersect $x^+(\R)\cup x^-(\R)$.
Suppose, by contradiction, that there exists $C_n \in \mathcal{Y}$, $C_n \to C$ represented by maps $\vtil_n=(b_n,v_n)$, and $(s_n,t_n)$ for which we have that $v_n(s_n,t_n) \in x^+(\R)\cup x^-(\R)$.
Up to translating in the $\R$-direction and reparametrizing, we can assume $\vtil_n \to \vtil$ in $C^\infty_\loc$.
Up to a subsequence, we can further assume that 
one of the following holds: $v_n(s_n,t_n) \in x^+(\R) \ \forall n$ or $v_n(s_n,t_n) \in x^-(\R) \ \forall n$. 
Assume $v_n(s_n,t_n) \in x^+(\R) \ \forall n$, the other case is handled analogously.
We claim that $\inf_n s_n > -\infty$.
If not, since $\gamma^+$ and $\gamma^-$ are geometrically distinct,
we find $s'_n \to -\infty$, $t'_n$ such that $v_n(s'_n,t'_n) \to p$ for some $p \in Y \setminus (x^+(\R)\cup x^-(\R))$.
This contradicts Lemma~\ref{rmk_ends}.
Now we claim that $\sup_n s_n < +\infty$.
Consider a tubular neighborhood $N^+$ of $x^+(\R)$ equipped with coordinates $N^+ \simeq \R/\Z \times \C$ such that $x^+(T^+t) \simeq (t,0)$.
We can define the linking number, relative to the coordinates on $N^+$, of a loop $(c_1(t),c_2(t))$ in $\R/\Z \times (\C\setminus\{0\}) \simeq N^+ \setminus x^+(\R)$ with $x^+(T^+\cdot)$ as the winding number of~$c_2(t)$.
Since $\vtil$ has a non-trivial asymptotic formula at $s=+\infty$, we find $h>0$ such that if $s \geq h$ then the loop $v(s,\cdot)$ lies in $N^+ \setminus x^+(\R)$ and its linking number is $\wind(\nu^{\leq -\delta}(\gamma^+))$, where the latter winding is computed in a trivialization along $\gamma^+$ induced from the coordinates on $N^+$.
Since $\vtil_n \to \vtil$, we can find $n_0$ such that if $n \geq n_0$ then $v_n(h,\cdot)$ lies in $N^+ \setminus x^+(\R)$ and has linking number $\wind(\nu^{\leq -\delta}(\gamma^+))$.
However, if $\sup_ns_n=+\infty$ then we can assume, up to selection of a subsequence, that $s_n \to +\infty$, in particular there is $n_1$ such that $n \geq n_1$ implies that $s_n > h$.
By the non-trivial asymptotic formula for $\vtil_n$ and 
properties (iv)-(v) for~$\vtil_n$, we find for each $n \geq n_1$ some $\tau_n > s_n$ such that $v_n(\tau_n,\cdot)$ lies in $N^+ \setminus x^+(\R)$ and has linking number $\wind(\nu^{\leq -\delta}(\gamma^+))$.
Summarizing, if $n \geq \max\{n_0,n_1\}$ then both loops $v_n(h,\cdot)$, $v_n(\tau_n,\cdot)$ lie in $N^+ \setminus x^+(\R)$ and have the same linking number with $x^+(T^+\cdot)$.
By positivity of intersections, the intersection number of $\vtil_n|_{[h,\tau_n]\times\R/\Z}$ with the trivial cylinder over $x^+(\R)$ is strictly positive, when $n \geq \max\{n_0,n_1\}$.
It follows that $v_n(h,\cdot)$ and $v_n(\tau_n,\cdot)$ have different linking numbers with $x^+(T^+\cdot)$.
This contradiction concludes the proof that $\sup_ns_n<+\infty$.
We have now proved that $s_n$ is bounded.
Up to a subsequence, $(s_n,t_n) \to (s_*,t_*)$ and $v(s_*,t_*) \in x^+(\R)$, which is a contradiction the assumption on~$\vtil$.
Hence,~$\mathcal{Z}$ is open.

The complement of $\mathcal{Z}$ in $\mathcal{Y}$ is obviously open, by the transversality between projections to~$Y$ of curves in $\mathcal{Y}$ and the Reeb vector field (Lemma~\ref{lemma_ineq_wind_pi}).
By connectedness, $\mathcal{Z} = \mathcal{Y}$.
We have proved that all curves in $\mathcal{Y}$ project to~$Y \setminus (x^+(\R) \cup x^-(\R))$, and satisfy properties (i), (iv) and (v).
Properties~(iv) and~(v) for representatives of all curves in $\mathcal{Y}$ now follow from the fact that there is one curve in $\mathcal{Y}$ represented by a map satisfying~(i)-(v).

Finally, compactness of $\mathcal{Y}$ is a direct application of Lemma~\ref{lemma_compactness} and Proposition~\ref{prop_main_asymptotic_analysis} to the constant sequences $g_n=1$, $J_n=J$, in view of the properties obtained above for the maps representing curves in $\mathcal{Y}$.
\end{proof}

At this point, one could attempt to argue by proving an analogue of \cite[Prop. 3.2]{CGHP}; however, one would have to show that the curves in $\mathcal{Y}$ are somewhere injective, and a priori the $\tilde{u}_n$ need not limit to something somewhere injective.
To bypass this point, we implement a small workaround, namely we lift to the moduli space of cylinders via the evaluation map.    The remainder of the paper fills in the details of this and proves Theorem~\ref{thm:main_intro}.  

By the uniformization theorem, 
$\M$ can be described as the set of equivalence classes of finite-energy $\jtil$-holomorphic maps $\util : \R \times \R/\Z \to \R \times Y$ asymptotic to $\gamma^+$ at the positive puncture $s=+\infty$ and to $\gamma^-$ at the negative puncture $s=-\infty$, with an asymptotic formula at both punctures. 
Here $\R \times \R/\Z$ is equipped with the conformal structure obtained by pulling back multiplication by~$i$ in $\C\setminus\{0\}$ via the map $(s,t) \mapsto e^{2\pi(s+it)}$.
Two such maps $\util_0 = (a_0,u_0)$, $\util_1=(a_1,u_1)$ are declared equivalent if there exist $(\Delta s,\Delta t) \in \R\times\R/\Z$, $c\in\R$ such that 
\begin{equation}
\label{rep_curve_moduli_cyls}
(a_1(s,t)+c,u_1(s,t)) = (a_0(s+\Delta s,t+\Delta t),u_0(s+\Delta s,t+\Delta t)) \quad \forall (s,t) \, .
\end{equation}
The moduli space of cylinders with one marked point $\M^{(1)}$ is defined as a set of equivalence classes of pairs $(\util,z)$ where $\util$ is a finite-energy cylinder that represents a curve in $\M$ and $z \in \R \times \R/\Z$.
Two pairs $(\util_0,z_0=(s_0,t_0))$, $(\util_1,z_1=(s_1,t_1))$ are declared equivalent if there exist $(\Delta s,\Delta t) \in \R\times\R/\Z$, $c \in \R$ such that~\eqref{rep_curve_moduli_cyls} holds and $(s_0,t_0) = (s_1 + \Delta s,t_1 + \Delta t)$. 
The equivalence class of $(\util,z)$ will be denoted by $[\util,z]$.
There is a forgetful map $$ F : \M^{(1)} \to \M \, , \qquad F([\util,z]) = [\util] $$ 
and an evaluation map $$ {\rm ev} : \M^{(1)} \to Y \, , \qquad {\rm ev}([\util=(a,u),z]) = u(z) \, . $$
In the following we consider the connected component $\mathcal{Y} \subset \M$ given by Lemma~\ref{prop_moduli_spaces}.
It follows from this proposition that $$ {\rm ev}(F^{-1}(\mathcal{Y})) \subset Y \setminus (x^+(\R) \cup x^-(\R)) \, . $$
Moreover, the automatic transversality from Lemma~\ref{lemma_automatic_transversality} and Lemma~\ref{prop_moduli_spaces} together guarantee that $F^{-1}(\mathcal{Y})$ has the structure of a smooth $3$-manifold such that $F$ is a trivial fibration over~$\mathcal{Y}$ with fiber $\R\times\R/\Z$.
Note also that~$\mathcal{Y}$ is diffeomorphic to the circle, since it is a compact connected smooth~$1$-manifold without boundary.

\begin{lemma}
\label{lemma_ev_proper_local_diffeo}
The map ${\rm ev}|_{F^{-1}(\mathcal{Y})} : F^{-1}(\mathcal{Y}) \to Y \setminus (x^+(\R) \cup x^-(\R))$ is a local diffeomorphism and a proper map.
\end{lemma}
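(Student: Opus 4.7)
The plan is to handle the two claims separately. Throughout, I will use the trivialization $F^{-1}(\mathcal{Y}) \cong \mathcal{Y} \times (\R \times \R/\Z)$ and the properties (i)--(iii) of Lemma~\ref{prop_moduli_spaces} enjoyed by curves in $\mathcal{Y}$.

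For the local diffeomorphism, fix $[\util, z_0] \in F^{-1}(\mathcal{Y})$; the goal is to show $d\,{\rm ev}$ is a linear isomorphism at this point. The marked-point directions contribute $V := du(T_{z_0}(\R\times\R/\Z))$, which is a $2$-plane transverse to $R$ by Lemma~\ref{prop_moduli_spaces}(i). The moduli direction contributes $\pi_Y(\eta(z_0))$, where $\eta \in \ker D_\util \subset \Gamma(N_\util)$ represents a nontrivial tangent vector to $\mathcal{Y}$ at $[\util]$, viewed modulo the $\R$-translation direction $\partial_a^N \in \ker D_\util$. Here $\partial_a^N$ is the normal-bundle projection of $\partial_a$, and it is nowhere vanishing because $u$ is an immersion, so $\partial_a$ is never tangent to the image of $\util$. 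The decisive step is to prove $\pi_Y(\eta(z_0)) \notin V$; assuming the contrary would force $\eta(z_0) \in \mathrm{span}\{\partial_s\util(z_0), \partial_t\util(z_0), \partial_a|_{\util(z_0)}\}$, so $\eta - c\,\partial_a^N$ would be a section of $N_\util$ in $\ker D_\util$ vanishing at $z_0$ for some $c \in \R$. Invoking the zero-counting argument from the proof of Lemma~\ref{lemma_automatic_transversality}, which yields the dichotomy that elements of $\ker D_\util$ either never vanish or vanish identically, gives $\eta = c\,\partial_a^N$ identically, contradicting the nontriviality of $\eta$ modulo $\partial_a^N$. A dimension count then completes the argument.

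For properness, I take a sequence $[\util_n, z_n] \in {\rm ev}^{-1}(K)$ with $K$ compact in $Y \setminus (x^+(\R) \cup x^-(\R))$ and aim to extract a convergent subsequence. After choosing representatives normalized so that $z_n = (0,0)$ and $a_n(0,0) = 0$, the hypothesis gives $u_n(0,0) \in K$. Compactness of $\mathcal{Y}$ produces a $C^\infty_{\rm loc}$-convergent subsequence of reference representatives $\tilde\util_n \to \tilde\util_\infty$, and the normalized $\util_n$ differ from $\tilde\util_n$ by automorphisms $(c_n, \Delta s_n, \Delta t_n)$ satisfying $\tilde u_n(\Delta s_n, \Delta t_n) \in K$. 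Since $K$ is bounded away from the asymptotic limits $x^\pm(\R)$, Lemma~\ref{rmk_ends} yields a uniform bound $|\Delta s_n| \le L$, and extracting a further subsequence gives convergence of $(\Delta s_n, \Delta t_n, c_n)$, hence $C^\infty_{\rm loc}$-convergence of the $\util_n$, and thus convergence of $[\util_n, (0,0)]$ in $F^{-1}(\mathcal{Y})$.

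The principal obstacle is the single-point vanishing argument used in the local diffeomorphism step: it depends on the precise ``nowhere vanishing or identically vanishing'' dichotomy for sections in $\ker D_\util$, which in turn rests on the refined asymptotic winding control furnished by condition (iii) of Lemma~\ref{prop_moduli_spaces}. The standard bound from the index inequality would only produce a nonnegative integer count of zeros; condition (iii) forces this count to be exactly zero, which is what makes the automatic transversality of Lemma~\ref{lemma_automatic_transversality} yield the needed rigidity.
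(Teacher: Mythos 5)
Your proposal is correct and takes essentially the same approach as the paper: for the local diffeomorphism, the paper also works inside $\ker D_{\util}$ with the $\R$-translation section (your $\partial_a^N$, its $\zeta_0$) and invokes the ``nowhere vanishing or identically zero'' dichotomy from the proof of Lemma~\ref{lemma_automatic_transversality} to rule out the moduli direction landing in $du(T_{z_0}(\R\times\R/\Z))$, concluding by the same dimension count. For properness the paper likewise combines compactness of $\mathcal{Y}$ with Lemma~\ref{rmk_ends} to bound the $s$-coordinate of the marked points and extract a convergent subsequence of representatives.
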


\begin{proof}
First we prove that ${\rm ev}$ is a local diffeomorphism.
Consider an arbitrary element of $F^{-1}(\mathcal{Y})$, and represent it by a pair $(\util=(a,u),z_0)$.
The tangent space at $\util$ of the corresponding space of holomorphic maps modulo reparametrization can be identified with the $2$-dimensional vector space $\ker D_\util$, where $D_\util$ is the linearized Cauchy-Riemann operator at~$\util$ considered in Section~\ref{sec_moduli_spaces}.
Differentiating the $(\R,+)$ action at $\util$ we get $\zeta_0 \in \ker D_\util$, $\zeta_0 \neq 0$.
The tangent space $T_{[\util]}\mathcal{Y}$ of $\mathcal{Y}$ at $[\util] = F([\util,z_0])$ can be identified with $(\ker D_\util) / \R\zeta_0$.
The proof of Lemma~\ref{lemma_automatic_transversality} shows that sections in $\ker D_{\util}$ either vanish identically or are nowhere vanishing.
Any section $\zeta \in \ker D_\util$ has components $\zeta = (\zeta^\R,\zeta^Y)$ according to the splitting $T(\R \times Y) = T\R \times TY$.
We claim that if $z \in \R\times\R/\Z$, $\zeta \in \ker D_\util$, $\zeta\neq0$ and $\zeta^Y(z) \in {\rm im} \ du(z)$ then $\zeta(z) =(\alpha,0) + d\util(z)v$ for some $\alpha \in \R \setminus\{0\}$, $v \in T_z(\R\times\R/\Z)$.
In fact, if $\zeta^Y(z) =du(z)v$ for some $v$ then $\zeta(z) = (\alpha,0) + d\util(z)v$ where $\alpha = \zeta^\R(z)-da(z)v$.
We must have $\alpha\neq0$ since $\zeta(z) \neq 0$ is in the normal bundle.
The claim is proved.
Let $\zeta_1 \in \ker D_\util$ be such that $\{\zeta_0,\zeta_1\}$ is a basis of $\ker D_\util$.
We claim that $\zeta_1^Y(z) \not\in {\rm im} \ du(z)$ for every $z$.
We argue indirectly for this. Suppose we find~$z_*$ such that $\zeta_1^Y(z_*) \in {\rm im} \ du(z_*)$.
From the claim above $\zeta_1(z_*) = (\alpha_1,0) + V_1$ for some $V_1 \in {\rm im} \ d\util(z_*)$ and some $\alpha_1 \in\R\setminus\{0\}$.
Similarly, since $\zeta_0^Y(z)$ belongs to ${\rm im} \ du(z)$ for every $z$, we can write $\zeta_0(z_*) = (\alpha_0,0) + V_0$ with $V_0 \in {\rm im} \ d\util(z_*)$ and $\alpha_0 \in\R\setminus \{0\}$.
It follows that $\alpha_0\zeta_1(z_*)-\alpha_1\zeta_0(z_*) \in {\rm im} \ d\util(z_*)$, from where it follows that $\alpha_0\zeta_1(z_*)-\alpha_1\zeta_0(z_*)=0$. 
Hence, $\alpha_0\zeta_1-\alpha_1\zeta_0\equiv 0$ since it vanishes at one point, and this is a contradiction.
Surjectivity of the differential of ${\rm ev}$ at $[\util,z]$ follows since $T_{[\util]}\mathcal{Y}$ is spanned by $\zeta_1 + \R\zeta_0$, and~$u$ is an immersion.
The proof that ${\rm ev}$ is a local diffeomorphism is complete.

To prove that ${\rm ev}|_{F^{-1}(\mathcal{Y})}$ is a proper map into $Y \setminus (x^+(\R) \cup x^-(\R))$, we need to consider a sequence $[\util_n=(a_n,u_n),z_n] \in \mathcal{Y}$ satisfying $u_n(z_n) \to p$, $p \not\in x^+(\R) \cup x^-(\R)$, and prove that some subsequence of $[\util_n,z_n]$ converges in~$\mathcal{Y}$.
From the compactness of $\mathcal{Y}$ given by Lemma~\ref{prop_moduli_spaces} we can assume, up to selection of a subsequence, that $[\util_n] \to [\util] \in \mathcal{Y}$.
Up to reparametrizing and translating in the $\R$-direction, we can further assume $\util_n \to \util$ in $C^\infty_\loc$.
Let $V$ be a closed neighborhood of $x^+(\R) \cup x^-(\R)$ such that $p \not\in V$.
From Lemma~\ref{rmk_ends} we find $\ubar{s} > 0$ such that $|s| > \ubar{s} \Rightarrow u_n(s,t) \in V \ \forall (n,t)$.
Hence $z_n = (s_n,t_n)$ satisfies $|s_n| \leq \ubar{s}$ when $n\gg1$ and, up to a subsequence, we may further assume that $z_n \to z_*$ for some $z_* \in \R\times\R/\Z$.
Hence $[\util_n,z_n] \to [\util,z_*]$.
\end{proof}

We can now put all of this together to prove the main theorem of our paper.

\begin{proof}[Proof of Theorem~\ref{thm:main_intro}]

Recall that we are assuming that the Reeb flow of $\lambda$ has finitely many simple Reeb orbits.  We will now conclude that there are exactly two simple Reeb orbits.

Recall that to each Reeb orbit $\gamma=(x,T)$ of $\lambda$, and each (homotopy class of) symplectic trivialization $\tau$ of~$\xi_\gamma$, there is an associated rotation number ${\rm rot}_\tau(\gamma) \in \R$ uniquely characterized by $$ {\rm rot}_\tau(\gamma) = \lim_{k\to\infty} \frac{\CZ_{\tau^k}(\gamma^k)}{2k}  $$ where $\tau^k$ denotes the trivialization of $\xi_{\gamma^k}$ induced from $\tau$.
It is not assumed here that~$\tau$ comes from a trivialization over the simple Reeb orbit underlying $\gamma$.

Denote $L = x^+(\R) \cup x^-(\R)$.
Denote the primitive period of $x^+$ by $\ubar{T}^+$, and the multiplicity of $\gamma^+$ by $k_+$.
The simple Reeb orbit underlying $\gamma^+$ is $\ubar{\gamma}^+ = (x^+,\ubar{T}^+)$, and we have $\gamma^+ = (\ubar{\gamma}^+)^{k_+} = (x^+,T^+ = k_+\ubar{T}^+)$.

Consider a Martinet tube $\Psi : U^+ \to \R/\Z \times B$ around $\gamma^+$, with coordinates $(\theta,x_1+ix_2)$.
If the argument of $x_1+ix_2$ is denoted by $\varphi \in \R/2\pi\Z$ then, by~(iv) in Lemma~\ref{prop_moduli_spaces}, the slope of $\util$ at the positive puncture with respect to coordinates $(\theta,\frac{\varphi}{2\pi}) \in \R/\Z \times \R/\Z$ is $(k_+,\wind_{\tau^{k_+}}(\nu^{<-\delta}(\gamma^+)))$, where~$\tau$ is the trivialization of $\xi_{\ubar{\gamma}^+}$ induced by the frame $\{\partial_{x_1},\partial_{x_2}\}$, and $\tau^{k_+}$ is the trivialization of $\xi_{\gamma^+}$ induced by $\tau$.

The Reeb vector field~$R$ of $\lambda$ is complete on $Y \setminus L$.
For each $\util = (a,u)$ that represents a curve in $\mathcal{Y}$, the $Y$-component $u$ is transverse to $R$. 
Hence, a trajectory of $R$ has non-negative intersection number with $u$, and any intersection with $u(\R\times\R/\Z)$ counts positively to the intersection number.
We split the remaining arguments into a few claims.  

We want to lift the Reeb flow via the evaluation map, and find a global cross-section for the lifted flow. Our first claim collects what we need to know away from the boundary:

\begin{itemize}
\item[(C1)] For each open neighborhood $W$ of $L$ there exists $\ell_W > 0$ such that if $b - a > \ell_W$ then for every $[\util=(a,u)] \in \mathcal{Y}$ and every piece of trajectory $c : [a,b] \to Y \setminus W$ we have $c([a,b]) \cap u(\R\times\R/\Z) \neq \emptyset$.
\end{itemize}
To prove (C1), consider the map ${\rm ev} : F^{-1}(\mathcal{Y}) \to Y \setminus L$.
This map is a proper local diffeomorphism, by Lemma~\ref{lemma_ev_proper_local_diffeo}.
Denote by $\tilde R$ the lift of $R$ by ${\rm ev}$.
Hence $\tilde R$ is a complete vector field.
Moreover, $F : F^{-1}(\mathcal{Y}) \to \mathcal{Y}$ is a fibration over~$\mathcal{Y}$ and $\tilde R$ is transverse to the fibers.
Therefore we can find a global parameter $s \in \R/\Z$ on $\mathcal{Y}$ satisfying $\iota_{\tilde R}d(s \circ F) > 0$.
Since $Y \setminus W$ is compact in $Y \setminus L$, ${\rm ev}^{-1}(Y \setminus W)$ is a compact subset of $F^{-1}(\mathcal{Y})$.
Let $\Delta$ be the infimum of $\iota_{\tilde R}d(s \circ F)$ on ${\rm ev}^{-1}(Y \setminus W)$.
By compactness, this infimum is a minimum.
Hence, $\Delta>0$.
Let $c:[a,b]\to Y\setminus W$ be a piece of a trajectory of $R$.
Consider a lift $\tilde c:[a,b] \to {\rm ev}^{-1}(Y \setminus W)$ of $c$ by ${\rm ev}$.
Then $\tilde c$ is a piece of trajectory $\tilde R$.
Moreover, a lift $[a,b] \to \R$ of $s \circ F \circ \tilde c$ oscillates more than $\Delta(b-a)$.
This oscillation is strictly larger than $1$ if $b-a > \Delta^{-1}$.
Hence, $\tilde c$ intersects every fiber of $F$ when $b-a > \Delta^{-1}$.
Claim~(C1) is proved if we set $\ell_W = \Delta^{-1}$.

Our next claim collects what we need to know near the boundary.

\begin{itemize}
\item[(C2)] There is a neighborhood $W_*$ of $L$ and $\ell_*>0$ such that if $b-a>\ell_*$ then for every $[\util=(a,u)] \in \mathcal{Y}$ and every piece of Reeb trajectory $c : [a,b] \to W_*$ we have $c([a,b]) \cap u(\R\times\R/\Z) \neq \emptyset$.
\end{itemize}
We prove (C2).
For every $T>0$ there is an open neighborhood $W_*^+ = W_*^+(T)$ of $x^+(\R)$ with coordinates $(\theta,z=x_1+ix_2=|z|e^{i\varphi}) \in \R/\Z \times \C$ such that:
\begin{itemize}
\item $x^+(t) = (t/\ubar{T}^+,0)$.
\item $d\theta \wedge dx_1 \wedge dx_2$ is positive relatively to $\lambda \wedge d\lambda$.
\item For every piece Reeb trajectory of time-length $T$ contained in $W_*^+$, a lift to $\R$ of $\varphi/2\pi$ along the trajectory oscillates not less than ${\rm rot}(\ubar{\gamma}^+)\lfloor T/\ubar{T}^+ \rfloor - 1$.
\end{itemize}
Here ${\rm rot}(\ubar{\gamma}^+)$ is computed with respect to the frame $\{\partial_{x_1},\partial_{x_2}\}$.
As remarked above, for an arbitrary element of~$\mathcal{Y}$ represented by a pseudo-holomorphic map $\util=(a,u)$, if $\rho\gg1$ the loop $t \in \R/\Z \mapsto u(\rho,t) \in W_*^+ \setminus x^+(\R)$ has slope $(k_+,\wind(\nu^{<-\delta}(\gamma_+))$ in the basis $\{d\theta,\frac{d\varphi}{2\pi}\}$.
Here the winding is computed in a trivialization aligned with the frame $\{\partial_{x_1},\partial_{x_2}\}$.
Hence, the argument of a Reeb trajectory of time-length~$T$ contained in $W_*^+$ relative to $u([\rho,+\infty) \times \R/\Z)$ has a lift to $\R$ that oscillates 
$$ 
\begin{aligned} 
D &= 2\pi \left( {\rm rot}(\ubar{\gamma}^+) - \frac{\wind(\nu^{<-\delta}(\gamma_+))}{k_+} \right) \lfloor T/\ubar{T}^+ \rfloor - O(1) \\ 
& = 2\pi \left( \frac{{\rm rot}(\gamma^+) - \wind(\nu^{<-\delta}(\gamma_+))}{k_+} \right) \lfloor T/\ubar{T}^+ \rfloor - O(1) \ \, .
\end{aligned} 
$$
Here $O(1)$ denotes a positive constant independent of $T$ and of $\util$.
We know that 
$$ {\rm rot}(\gamma^+) > \wind(\nu^{<-\delta}(\gamma_+)) $$ 
since $\nu^{<-\delta}(\gamma_+)$ is a negative eigenvalue of the asymptotic operator at $\gamma_+$.
Hence, we can fix~$T$ such that $D>3\pi$, and conclude that any piece of Reeb trajectory contained in $W_*^+=W_*^+(T)$ of time-length $T$ intersects the image of $u$.
An analogous argument at the negative puncture will produce a neighborhood $W_*^-$ of $x^-(\R)$ with similar properties. 
Take $W_* = W_*^+ \cup W_*^-$ and~(C2) is proved.

We now put (C1) and (C2) together to prove the following:

\begin{itemize}
\item[(C3)] There exists $\ell>0$ such that if $b-a>\ell$ then for every piece of Reeb trajectory $c:[a,b] \to Y\setminus L$ and every $[\util=(a,u)] \in \mathcal{Y}$ we have $c([a,b]) \cap u(\R\times\R/\Z) \neq \emptyset$.
\end{itemize}
Let $\phi^t$ denote the Reeb flow of $\lambda$.
To prove (C3), consider a neighborhood $W_*$ of $L$ and $\ell_*>0$ given by (C2). Let $W \subset W_*$ be a neighborhood of $L$ such that if $p\in W$ then $\phi^{[0,\ell_*+1]}(p) \subset W_*$.
Let $\ell_W>0$ be given by (C1).
Consider $p \in Y \setminus L$ arbitrary.
If $\phi^{[0,\ell_W+1]}(p) \cap W = \emptyset$ then, by~(C1), $\phi^{[0,\ell_W+1]}(p)$ intersects the projection to $Y$ of every curve in $\mathcal{Y}$.
If $\phi^{[0,\ell_W+1]}(p) \cap W \neq \emptyset$ then we have a piece of trajectory inside $\phi^{[0,\ell_W+\ell_*+2]}(p)$ of time-length $\ell_*+1$ that is contained in $W_*$.
By (C2) it must intersect the projection to $Y$ of every curve in $\mathcal{Y}$.
We proved that one can take $\ell = \ell_W+\ell_*+2$ for (C3).

Let us now complete the proof.
Lift the Reeb flow $\phi^t$ of~$\lambda$ from $Y\setminus L$ to a flow $\psi^t$ on $F^{-1}(\mathcal{Y})$ via the proper local diffeomorphism ${\rm ev}|_{F^{-1}(\mathcal{Y})}$. 
By Lemma~\ref{prop_moduli_spaces}, the flow $\psi^t$ is transverse to the fibers of the fibration $F:F^{-1}(\mathcal{Y}) \to \mathcal{Y}$.
By (C3) each fiber is a global cross-section for~$\psi^t$.
Lemma~\ref{prop_moduli_spaces}~(i) also tells us that the~$\phi^t$-invariant $2$-form $d\lambda$ lifts to a~$\psi^t$-invariant $2$-form on $F^{-1}(\mathcal{Y})$ that defines an area-form of finite total area on each fiber of the map~$F$.
Franks' results from~\cite{Franks} guarantee that~$\psi^t$ has either infinitely many simple periodic orbits, or no simple periodic orbits at all.

Periodic orbits of~$\psi^t$ are mapped onto periodic orbit of~$\phi^t$ by~$F$.
Conversely, properness of the local diffeomorphism ${\rm ev}|_{F^{-1}(\mathcal{Y})}$ implies that a periodic orbit of~$\phi^t$ in $Y\setminus L$ lifts to finitely many periodic orbits of~$\psi^t$.
It follows that $\psi^t$ has finitely many simple periodic orbits, hence no periodic orbits at all.
This implies that~$\phi^t$ has no periodic orbits on~$Y\setminus L$.
\end{proof}

\bigskip

{\footnotesize

{\sc Dan Cristofaro-Gardiner}

University of Maryland, College Park

{\em dcristof@umd.edu}

\medskip

{\sc Umberto Hryniewicz}

RWTH Aachen, Jakobstrasse 2, Aachen 52064, Germany

{\em hryniewicz@mathga.rwth-aachen.de}

\medskip

{\sc Michael Hutchings}

University of California, Berkeley

{\em hutching@math.berkeley.edu\/}

\medskip

{\sc Hui Liu}

School of Mathematics and Statistics, Wuhan University, Wuhan 430072, Hubei, P.R. China

{\em huiliu00031514@whu.edu.cn}

}

\end{document}